\definecolor{bordeau}{rgb}{0.3515625,0,0.234375}
\theoremstyle{plain}
\newtheorem{theorem}{Theorem}
\newtheorem{prop}{Proposition}
\newtheorem{lemma}{Lemma}
\theoremstyle{remark}
\newtheorem{assumption}{Assumption}
\newtheorem{corollary}{Corollary}
\newtheorem{remark}{Remark}
\newcommand*\circled[1]{\tikz[baseline=(char.base)]{
            \node[shape=circle,draw,inner sep=2pt] (char) {#1};}}
\definecolor{darkmidnightblue}{HTML}{003366}    
\definecolor{midnightblue}{HTML}{0059b3}
\definecolor{chromered}{HTML}{f14233}
\definecolor{darkpowderblue}{rgb}{0.0, 0.2, 0.6}
\definecolor{dukeblue}{rgb}{0.0, 0.0, 0.61}
\newcommand{\dis}{\displaystyle}
\newcommand\prob{\mathbb{P}}
\newcommand\calX{\mathcal{X}}
\newcommand\calY{\mathcal{Y}}
\newcommand\calL{\mathcal{L}}
\newcommand\calN{\mathcal{N}}
\newcommand\calG{\mathcal{G}}
\newcommand\calC{\mathcal{C}}
\newcommand\calF{\mathcal{F}}
\newcommand\calM{\mathcal{M}}
\newcommand\calA{\mathcal{A}}
\newcommand\calT{\mathcal{T}}
\newcommand\calP{\mathcal{P}}
\newcommand\calQ{\mathcal{Q}}
\newcommand{\calW}{\mathcal{W}}
\newcommand{\ZZ}{\mathcal{Z}}
\newcommand{\KL}{D_{\sf KL}}
\newcommand\bThetaEWA{\widehat\bTheta{}^{\sf EWA}}
\newcommand\bThetaLS{\widehat\bTheta{}^{\sf LS}}
\newcommand\WLS{\widehat W{}^{\sf LS}}
\newcommand\WEWA{\widehat W{}^{\sf EWA}}
\newcommand\bs{\boldsymbol}
\newcommand{\Img}{\mathop{\rm Im}}
\def\Cr{C^{\textrm{prod}}}
\def\Zl{Z^{\textrm{cust}}}
\def\Zr{Z^{\textrm{prod}}}
\def\hZl{\widehat Z^{\textrm{cust}}}
\newcommand\bfE{\mathbf E}
\newcommand\bfP{\mathbf P}
\newcommand\bfQ{\mathbf Q}
\newcommand\bfH{\mathbf H}
\newcommand\bfK{\mathbf K}
\newcommand\bfL{\mathbf L}
\newcommand\bfM{\mathbf M}
\newcommand\bfZ{\mathbf Z}
\newcommand\bfA{\mathbf A}
\newcommand\bfa{\mathbf a}
\newcommand\bfb{\mathbf b}
\newcommand\bfB{\mathbf B}
\def\bfZleft{\mathbf Z^{\textrm{cust}}}
\def\bfZright{\mathbf Z^{\textrm{prod}}}
\def\hatbfZleft{\widehat{\mathbf Z}^{\textrm{cust}}}
\def\hatbfZright{\widehat{\mathbf Z}^{\textrm{prod}}}
\newcommand\bB{\boldsymbol B}
\newcommand\bZ{\boldsymbol Z}
\newcommand\bQ{\boldsymbol Q}
\newcommand\bH{\boldsymbol H}
\newcommand\bK{\boldsymbol K}
\newcommand\bU{\boldsymbol U}
\newcommand\bV{\boldsymbol V}
\newcommand\bXi{\boldsymbol\Xi}
\newcommand\bTheta{\boldsymbol\Theta}
\newcommand\bxi{\boldsymbol\xi}
\newcommand{\balpha}{\boldsymbol\alpha}
\newcommand\bx{\boldsymbol x}
\newcommand\bu{\boldsymbol u}
\newcommand\bv{\boldsymbol v}
\newcommand\bw{\boldsymbol w}
\newcommand\var{\mathbf{Var}}
\newcommand\Ex{\mathbb E}
\DeclareMathOperator{\tr}{tr}
\newcommand\bzeta{\bs \zeta}
\newcommand\bchi{\bs\chi}
\def\tilde{\widetilde}
\def\hat{\widehat}
\def\bfE{\mathbf E}
\def\bfs{\boldsymbol s}
\def\bft{\boldsymbol t}
\crefname{cor}{Corollary}{Corollaries}
\crefname{condition}{Condition}{C}
\begin{document}

\begin{frontmatter}

\title{Graphon Estimation in bipartite graphs with 
observable edge labels and unobservable node labels}
\runtitle{Graphon Estimation in bipartite graphs}

\begin{aug}
    \author[A]{\fnms{Etienne}~\snm{Donier-Meroz}\ead[label=e1]{Etienne.Donier-Meroz@ensae.fr }},
    \author[B]{\fnms{Arnak S.}~\snm{Dalalyan}\ead[label=e2]{arnak.dalalyan@ensae.fr}},
    \author[B]{\fnms{Francis}~\snm{Kramarz}\ead[label=e3]{francis.kramarz@ensae.fr}},
    \author[B]{\fnms{Philippe}~\snm{Chon\'e}\ead[label=e4]{philippe.chone@ensae.fr}}
    \and
    \author[B]{\fnms{Xavier}~\snm{D'Haultfoeille}\ead[label=e5]{xavier.dhaltfoeuille@ensae.fr}}
    
    \address[A]{CREST, CNRS, Institut Polytechnique de Paris\printead[presep={,\ }]{e1}}
    
    \address[B]{CREST, ENSAE, Institut Polytechnique de Paris\printead[presep={,\ }]{e2,e3,e4,e5}}
    \end{aug}

\begin{abstract}
    Many real-world data sets can be presented in the form of a matrix whose entries correspond to the interaction between two entities of different natures (the choice of a product by a customer, the number of times a web user visits a web page, a student's grade in a subject, a patient's rating of a doctor, etc.). We assume in this paper that the mentioned interaction is determined by unobservable latent variables describing each entity. Our objective is to estimate the conditional expectation of the data matrix given the unobservable variables. This is presented as a problem of estimation of a bivariate function referred to
    as graphon. We study the cases of piecewise constant and H\"older-continuous graphons. We
    establish finite sample risk bounds for the least squares estimator and the exponentially weighted aggregate. These bounds highlight the dependence of the estimation error on the size of the data set, the maximum intensity of the interactions, and the level of noise. As the analyzed least-squares estimator is intractable, we propose an adaptation of Lloyd's alternating minimization algorithm to compute an approximation of the least-squares estimator. Finally, we present numerical experiments 
    in order to illustrate the empirical performance of the graphon estimator on synthetic data sets. 
\end{abstract}

\begin{keyword}[class=MSC]
\kwd[Primary ]{62G05}
\kwd[; secondary ]{62C20}
\end{keyword}

\begin{keyword}
\kwd{graphon estimation}
\kwd{bipartite graph}
\kwd{risk bound}
\kwd{minimax rate}
\end{keyword}

\end{frontmatter}

\dosecttoc
\faketableofcontents 

\section{Introduction}

In this paper, we consider the problem of estimating
the conditional mean of a random matrix generated by 
a bivariate graphon and (unobserved) latent variables. 
More precisely, let $n$ and $m$ be two positive 
integers assumed to be large, and $\bfH$ be an 
$n \times m$ random matrix with real entries 
$H_{i,j}$.  We assume that the distribution of 
this matrix $\bfH$ satisfies the following 
condition.

\begin{assumption}\label{ass:1}
There is a function $W^*:[0,1]^2\to \mathbb R$,
called the graphon, and two random vectors $\bU 
= (U_1,\ldots, U_n)$ and $\bV = (V_1,\ldots, V_m)$
such that
\vspace{-3pt}
\begin{itemize}\itemsep=0pt
    \item[{\bf A 1.1}] the random variables 
    $U_1,\ldots,U_n,V_1,\ldots,V_m$ 
    are independent and drawn from the uniform 
    distribution $\mathcal U([0,1])$. 
    \item[{\bf A 1.2}] conditionally to $(\bU,\bV)$, 
    the entries $H_{i,j}$ are independent
    and 
    $\Ex[H_{i,j}|\bU,\bV] = W^*(U_i,V_j)$.
\end{itemize}
\end{assumption}

The aforementioned setting corresponds to the practical 
situation in which there are $n$ customers and $m$ products. 
Each customer has an unobserved latent feature $U$ and each 
product has an unobserved latent feature $V$. We observe 
the label $H$ that characterizes the interaction between 
the customer and the product (typically, that the customer chose the product). The function $W^*$ corresponds to 
the mean value of the interaction for given values of
the latent features.

The scope of applicability for both the aforementioned framework and the results derived in this paper encompasses the following main examples:
\vspace{-4pt}
\begin{enumerate}
    \itemsep=0pt   
    \item The entries $H_{i,j}$ take the values $0$ or $1$
    and correspond to the presence of an edge in the
    bipartite graph. In the example of customers and
    products, one might set $H_{i,j}=1$ if and only if
    customer $i$ has already bought the product $j$. It is
    important in this setting to take into consideration
    the case of large and sparse graphs, in which the
    probabilities of having an edge between two nodes are
    small for all pairs of nodes. 
    
    \item The provided data is composed of an aggregation of
    $N$ instances of the customer-product graph outlined in the preceding paragraph. Each entry $H_{i,j}$ signifies the observed frequency of customer $i$ purchasing product $j$. For instance, in a scenario where the
    customer visited the store $N = 10$ times and $H_{i,j}
    = 3/10$, it indicates that the customer acquired
    product $i$ on three out of ten occasions. In this
    context, the range of values for $H_{i,j}$ spans from 0
    to 1. Assuming independence across the aforementioned
    $N$ trials, the variance of $H_{i,j}$ scales with the
    order of $1/N$. As a result, this framework exhibits
    the distinctive trait of having low-noise variance.
    This particular instance can be encompassed within a
    broader context of a sub-Gaussian distribution
    characterized by a small variance parameter $\sigma^2$,
    which corresponds to $1/N$.
    
    \item The provided data corresponds to observations collected within a time window of duration $T>0$. Each $H_{i,j}$ represents the average occurrences of customer $i$ purchasing product $j$ over the time interval with a length of $T>0$. This scenario can be expressed using the formula $H_{i,j} = N_{i,j}(T)/T$, where $N_{i,j}(T)$ denotes a Poisson random variable characterized by an intensity proportional to $T$.  
    The parameter $T$ may be large, thereby facilitating accurate estimation. It is important to distinguish this example from the prior one in which the values of $H_{i,j}$ were confined to the interval $[0,1]$. In the current context, the values of $H_{i,j}$ are not subject to such constraints and can vary arbitrarily in magnitude. 
\end{enumerate}

\begin{figure}
    \centering
    \includegraphics[width = \textwidth]{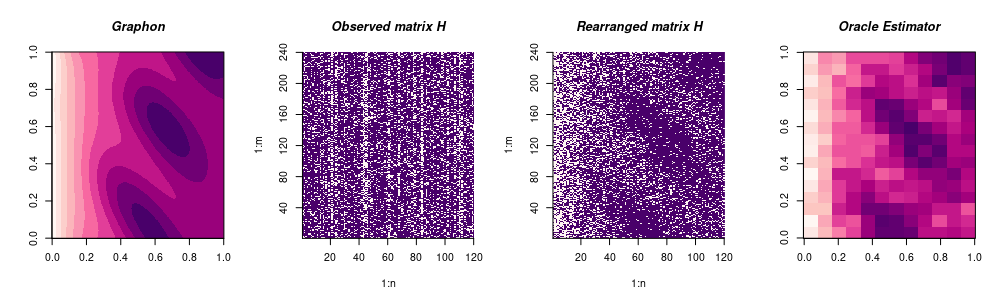}
    \vglue-10pt
    \caption{An illustration of the graphon 
    problem. The leftmost graph represents 
    the unknown graphon $W^*$. The second 
    leftmost graph is the adjacency matrix 
    observed in the graph where the links 
    are made according to the Bernoulli model. 
    The third graph is the adjacency matrix 
    that would be obtained after a rearrangement 
    of the rows and columns if we had access to 
    the latent variables. The rightmost graph represents the histogram estimator obtained 
    from the rearranged adjacency matrix. Our
    goal is to design an estimator which is 
    nearly as good as the oracle, without having
    access to the latent variables.
    }
    \label{fig:illust}
\end{figure}

Our goal is to study the minimax risk of estimating 
$W^*$ and to highlight its dependence on the 
important parameters of the problem. The sizes  
$n$ and $m$ of the matrix are among these parameters, 
but we will also be interested in the dependence on 
the smoothness of $W^*$, on the ``sparsity'' of 
interactions  (denoted by $\rho$) and on the noise 
level (denoted by $\sigma$). These parameters $\rho$ 
and $\sigma$ are positive real numbers such that
\begin{align}
    \|W^*\|_\infty = \sup_{u,v\in[0,1]} 
    |W^*(u,v)|\leqslant \rho \quad 
    \text{and}\quad 
    \var[H_{i,j}|U_i,V_j] \leqslant \sigma^2  
    \ \text{a.s.},
    \quad \forall i\in[n], \forall j\in[m].
\end{align}
The parameters $\rho$ and $\sigma$ may depend 
on $n$ and $m$, but we choose to write $\rho$
instead of $\rho_{n,m}$ and $\sigma_{n,m}$ 
for the sake of simplicity. 

During the process of estimating the graphon $W^*$, 
an important intermediate step involves the
estimation of the matrix $\bTheta^* = W^*(U_i,V_j)$. 
This matrix estimation holds significant value in 
its own right. We perform this task by solving the
least squares problem over the set of 
constant-by-block matrices, with blocks generated
by partitions of the sets of rows and the columns
of the matrix $\bfH$. It will be further shown that
the method of aggregation by exponential weights 
can be used to ensure adaptivity to the number of
blocks. Under the condition that the graphon is 
piecewise constant or $\alpha$-smooth in the sense
of H\"older smoothness, we establish risk bounds 
for the graphon estimator derived from the estimator 
of $\bTheta^*$. These risk bounds are nonasymptotic, 
and shown to be rate optimal in the minimax sense
for a broad range of regimes. 

\subsection{Measuring the quality of an estimator} 
\Cref{fig:illust} provides an illustration of the
graphon estimation problem, in the case where
$\bfH$ is the adjacency matrix of the bipartite
graph, that is the entries of $\bfH$ are either 0
or 1. We see in this figure that the absence of 
knowledge of the latent variables has a strong 
impact on the recovery of the graphon. Indeed, the
adjacency matrix $\bfH$ depicted in the second
leftmost plot carries little information on $W^*$,
as compared with the rearranged adjacency matrix
displayed in the third plot. In fact, when $\bU$ 
and $\bV$ are unknown the graphon $W^*$ is 
unidentifiable. Let us say that two graphons $W$ 
and $W'$ are equivalent, if there exist two 
bijections that preserve the Lebesgue measure 
$\tau_1:[0,1]\to[0,1]$ and $\tau_2:[0,1]\to[0,1]$ 
such that\footnote{We use notation $\tau_1\otimes 
\tau_2$ for the function from 
$[0,1]^2$ to $[0,1]^2$ defined by $(\tau_1 \otimes 
\tau_2)(u,v) = (\tau_1(u),\tau_2(v))$.}  
$W = W'\circ(\tau_1\otimes\tau_2)$. One can check
that two matrices $\bfH$ generated by equivalent
graphons $W^*$ and $\bar W^*$ have the same 
distribution. This implies that one can at best 
estimate the equivalence class containing $W^*$. 
This is the reason underlying the (pseudo)-distance
we use in this work for measuring the quality of
an estimator $\hat W$ of $W^*$, namely
\begin{align}
    \delta(\hat W , W^*) &=\inf_{\tau_1, \tau_2 
    \in \calM} \bigg(\iint_{[0,1]^2} \big| 
    \hat W (\tau_1 (u), \tau_2 (v)) - 
    W^*(u,v) \big|^2 du\,dv\bigg)^{\nicefrac12} \\
    &= \inf_ {\tau_1, \tau_2 \in \calM} \| \hat W 
    \circ (\tau_1\otimes \tau_2) - W^* \|_{\mathbb L^2}
\end{align}
where $\calM$ is the set of all automorphisms 
$\tau : [0,1] \to [0,1]$ such that $\tau$ and 
$\tau^{-1}$ are measurable,  and  $\tau$ 
preserves the Lebesgue measure in the sense 
that $\lambda(\tau^{-1}(B)) = \lambda (B)$ for 
every Borel-set $B \subset [0,1]$. Two graphons 
$W_1$ and $W_2$ are called weakly isomorphic if 
$\delta(W_1,W_2)=0$.

\subsection{Our contributions} 
The main contributions of the present paper are the following:
\vspace{-8pt}
\begin{itemize}\itemsep=0pt
    \item We present a nonparametric framework
    based on bivariate graphon functions and unobservable
    latent variables that offer a flexible way of
    modeling random matrices and, in particular, adjacency matrices of random bipartite graphs.
    
    \item We establish finite sample risk bounds for
    the estimator minimizing the squared error over
    piecewise constant matrices with a given number
    of clusters, as well as for the exponentially
    weighted aggregate that combines the mentioned
    least-squares estimators. These results apply
    to adjacency matrices of bipartite graphs, whose
    entries are drawn from the Bernoulli distribution,
    but they are also valid for the binomial distribution,
    the scaled Poisson distribution and sub-Gaussian
    distributions. 

    \item We present an adaptation of Lloyd's
    algorithm of alternating minimization (including 
    a step of convex relaxation) to our setting, 
    which allows us to obtain a computationally
    tractable approximation of the least squares
    estimator. 
    
    \item In the case of matrices with entries whose
    conditional distribution given latent variables 
    is the Bernoulli one, we prove lower bounds on the 
    worst-case risk, over the set of piecewise constant 
    graphons, for any graphon estimator. These lower 
    bounds,
    in the vast majority of cases, are of the same
    order as the upper bounds obtained for the 
    least-squares estimator.     
\end{itemize}

\subsection{Prior work}
Statistical analysis of matrix and network data, 
based on block models similar to those considered 
in this work, is an active area of research since 
at least two decades. The stochastic block model, 
introduced by \cite{HOLLAND1983109}, is perhaps 
one of the most studied latent structures for
network data, see also \citep{Nowicki2001} and 
\citep{GOVAERT2003} for early references.  
Community detection, which is the problem of 
detecting the underlying block structure, has 
been the focus of much research effort, as evidenced 
by studies such as \citep{Zhao2012, chin2015,
Lei2015, Lei2016, Zhang2016, Wang2017, 
Chen2018,Xu2020} and literature reviews such 
as \citep{Chao2017,Abbe2018}. It should 
be noted that the majority of these studies focused 
on unipartite graphs with binary or discrete 
edge-labels, and their optimality was mostly 
related to identifying the smallest separation 
rate between the parameters of the communities 
that enables their consistent recovery. 
Similar problems for bipartite block models have been
investigated in \citep{FeldmanPV15,FlorescuP16,
Neumann18,ZhouA19,ZhouA20,Cai2021,NdaoudST22}.

In contrast with the aforementioned papers, the 
focus here is on optimality in terms of the 
estimation error for a model that encompasses 
bipartite graphs with real-valued edge labels. 
Consistency of graphon estimators has been
studied in \citep{AiroldiCC13,wolfe2013nonparametric,
olhede2014network}. In the same problem, 
minimax-rate-optimality of the least squares estimator
has been established in \citep{Gao15,GaoLMZ16,Klopp2017,
klopp2019optimal}, see also the survey 
article \citep{Gao21}. To better present our 
contributions within the current state-of-the-art, 
it would be beneficial to provide a brief overview 
of the contents of these papers. \cite{Gao15} 
considered the case of binary observations $H_{i,j}$,
focusing on the dense case $\rho\asymp 1$, and obtained
minimax rates of estimation over the classes
of piecewise constant and H\"older continuous 
graphons. \cite{GaoLMZ16} extended these results
to matrices $\bfH$ with sub-Gaussian entries, some 
of which might be missing completely at random. 
However, as discussed in \Cref{ssec:3.1}, their 
results are sub-optimal in some cases w.r.t. the 
noise variance, for instance when edge-labels are 
drawn from the binomial distribution. In the case 
of a unipartite graph with binary edge-labels, 
\citep{Klopp2017} established the minimax-optimal 
rates of estimation for sparsely connected graphs, 
where $\rho\ll 1$. While \citep{Gao15,GaoLMZ16} 
measured the estimation error using the normalized 
Frobenius norm of the difference between the estimated 
matrix and the true one, \citep{Klopp2017} additionally 
considered the $\mathbb L_2$-distance between the 
equivalence classes of graphons. In 
\citep{klopp2019optimal}, minimax optimal rates of 
graphon estimation in the cut distance have been 
established for unipartite graphs with binary 
observations. 

The problems studied in this work have connections 
with some recent work in econometrics. Indeed, as 
a consequence of \citep[Theorem 1.4]{AHR}, if the
matrix $\bfH$ is row and column exchangeable, then
there is a function $g^*:[0,1]^4\to \mathbb R$ and 
independent random variables $\alpha$, $\{U_i\}$, 
$\{V_j\}$, $\{\xi_{ij}\}$ uniformly distributed in
$[0,1]$ such that the random matrices $\bfH$ and 
$\big(g^*(\alpha, U_i,V_j,\xi_{ij});i\in[n],j\in[m]
\big)$ have the same distribution. The problem under
consideration in this paper is equivalent to 
estimating the random function $W^*(u,v) = \int_0^1 
g^* (\alpha,u,v,z)\,dz$ from the observations 
$g^*(\alpha, U_i,V_j,\xi_{ij});i\in[n],j\in[m]$, without
assuming any parametric form of the function $g^*$. 
If in addition to $H_{i,j}$, we are
also given a feature vector $\bs X_{i,j}$, for every pair
of nodes $(i,j)$ then the extended model defined
by $g^*(\bs X_{i,j},\alpha,U_i,V_j,\xi_{i,j})$ can be
considered. 
This approach is adopted, for instance,  in 
\citep{Graham17}, where the specific parametric form 
$g^*(\bs x,\alpha,u,v,z) = \mathds 1\big(\bs x^\top 
\bs\beta + u + v + \log(z/(1-z))\big)$ is considered. 
In such a parametric context, the parameter of interest
is the vector $\bs\beta$. In \citep{Graham20}, it is 
assumed that the regression function $\int_{[0,1]^3} 
g^*(\bx,\alpha, u,v,z)\,du\,dv\,dz$ has a parametric 
form $\exp\{a + 
\bx^\top\bs\beta\}/(1 + \exp\{a + \bx^\top\bs\beta\})$
and the problem of estimating the vector $(a,\bs\beta)$
is studied. Asymptotic results (law of large numbers and
central limit theorem) for exchangeable arrays have been
proved in \cite{Davezies}.

\subsection{Notation} For an integer $n\geqslant 
1$, we set $[n] = \{1,\ldots,n\}$. In mathematical
formulae, we use bold capitals for matrices and 
bold italic letters for vectors. The integer 
part of a real number $x$ is denoted by $\lfloor 
x\rfloor$, whereas the minimum and the maximum of 
two real number $x,y$ are denoted by $x\wedge y$ 
and $x\vee y$, respectively. For two $n\times m$ 
matrices $\bfB$ and $\bar\bfB$, the inner product is 
defined as
\begin{align}
    \langle \bfB, \bar\bfB \rangle= \tr(\bfB
    \bar\bfB^\top) 
    = \sum_{i=1}^n\sum_{j=1}^m B_{ij} \bar B_{ij} ,
\end{align}
and we denote by $\|\bfB\|_{\sf F}= \sqrt{\langle
\bfB, \bfB \rangle} $ the Frobenius norm of the 
matrix $\bfB$. The sup-norm of $\bfB$ denoted by 
$\|\bfB\|_\infty$ is defined as the largest in 
absolute value entry of $\bfB$. We write $\bB_{i, 
\bullet}$ and $\bB_{\bullet,j}$ for the $i$th 
row and the $j$th column of $\bfB$, respectively. 
The length of an interval $I\subset\mathbb R$ is 
denoted by $|I|$. For $n\in\mathbb N$, $\mathbf 1_n$
is the $n$-vector with all its entries equal to one.

\section{Estimators of the mean matrix and the 
graphon} \label{sec:2}

In this section, we define the estimators of the
mean matrix $\bTheta^* = \bfE[\bfH|\bU,\bV]$ 
and of the graphon $W^*$ that are investigated
in this paper. We focus here on mathematical 
definitions only; computational and algorithmic
properties of these estimators and their 
tractable approximations are deferred to 
\Cref{sec:4}. 

\subsection[Least squares estimator of the mean 
matrix]{Least squares estimator of $\bTheta^*$}

Let us start by introducing some notation. For 
positive integers $n_0,n,K$ satisfying $Kn_0 
\leqslant n$ and $K \geqslant 2$, we define the set  
\begin{align}\label{cluster_matrix}
    \ZZ({n,K,n_0})= \Big\{ \bfZ \in \{0,1\}^{n 
    \times K}: \bfZ\mathbf 1_K = 
    \mathbf 1_n \text{ and } \min_{k\in[K]} \mathbf 
    1_n^\top\bZ_{\bullet,k} \geqslant n_0   \Big\}.
\end{align}
The elements of this set can be seen as assignment 
matrices: each one of the $n$ users is assigned to 
one (and only one) of the $K$ ``communities'', and 
we have the condition that each community has at 
least $n_0$ ``members''. Similarly, we will repeatedly 
use the set $\ZZ({m,L,m_0})$ of the assignment 
matrices corresponding to the items. Since the $n$ 
rows of $\bfH$ correspond to the users and the $m$ 
columns of $\bfH$ correspond to items, the elements 
of $\ZZ({n,K,n_0})$ will be denoted by $\bfZleft$ 
whereas the elements of $\ZZ({m,L,m_0})$ will be 
denoted by $\bfZright$. Matrices $\bfZleft$ and 
$\bfZright$ correspond to a biclustering: the 
clusters of users are specified by the matrix 
$\bfZleft$; in the same way, $\bfZright$ encodes the 
clusters of items.

Given the observed adjacency matrix $\bfH$, the least 
squares estimator is defined by
\begin{align}\label{LSE1}
    (\widehat \bfQ, \hatbfZleft, \hatbfZright)^{\sf LS} 
    \in \arg \underset{\begin{subarray}{c} \bfQ \in 
    \mathbb R^{K \times L} \\ \bfZleft \in \ZZ(n,K,n_0)\\
    \bfZright \in \ZZ(m,L,m_0)  \end{subarray}}{ \min}
    \big\| \bfH - \bfZleft\bfQ(\bfZright)^\top \big\|_{
    \sf F}^2.
\end{align}
Here, $\bfZleft\bfQ(\bfZright)^\top$ is a $n \times m$
constant-by-block matrix. The idea is thus to find the
constant-by-block matrix that is the closest to $\bfH$ 
in the metric induced by the Frobenius norm, where the 
blocs are given by the matrices $\bfZleft$ and $\bfZright$,
and the sizes of blocks are at least $n_0 \times m_0$. 

These estimators computed by \eqref{LSE1} lead to the
constant-by-block least squares estimator of $\bTheta^*$
defined by $\bThetaLS = \hatbfZleft \widehat\bfQ 
(\hatbfZright)^\top$. One can write $\bThetaLS$ 
in the following alternative way. Let us consider the 
class of constant-by-block matrices 
\begin{align}\label{TT}
    \calT = \calT_{n_0,m_0}^{K,L} = 
    \left\{ \bTheta =  \bfZleft\bfQ(\bfZright)^\top 
    \!\!\!:\ (\bfQ, 
    \bfZleft, \bfZright) \in \mathbb R^{K \times L} 
    \!\times\! \ZZ_{n,K,n_0}\! \times\! \ZZ_{m,L,m_0}
    \!\right\}.
\end{align}
The least squares estimator $\bThetaLS$ is a solution to
\begin{align}\label{hat-Theta}
    \bThetaLS = \hat\bTheta{}^{\sf LS}_{n_0,m_0}[K,L]
    \in \mathop{\rm arg}\min_{\bTheta\in \calT_{n_0,m_0}^{K,L}} 
    \|\mathbf H - \bTheta\|_{\textsf{F}}^2.
\end{align}
Our first results, reported in the next section, provide 
non asymptotic upper bounds on the risk of the estimator 
$\bThetaLS$.

\subsection{Aggregation by exponential weights}

The least squares estimator $\bThetaLS$ is constant
on $KL$ blocks. The number of these blocks, chosen
beforehand, is a hyperparameter of the method. If the
true matrix $\bTheta^*$ is far from being 
blockwise constant on $KL$ blocks, then the quality
of estimation by $\bThetaLS_{n_0,m_0}[K,L]$ might
be poor because of the presence of a large bias. 
One can reduce this bias by computing the least squares
estimator for several values of  $K,L$ (but also $n_0$ and $m_0$) and then by aggregating these estimators.  

To this end, we consider here the extended framework in
which two independent copies $\bfH$ and $\bfH'$ are
observed, both satisfying Assumption A 1.2 (with exactly
the same $\bU$ and $\bV$). The matrix $\bfH$ is used to
construct estimators, whereas $\bfH'$ is used to define 
``weights'' which are used for computing the exponentially
weighted aggregate (EWA). More precisely, we denote
by $\hat\bTheta{}_1^{\sf LS}, \ldots, \hat\bTheta{}_s^{
\sf LS}$ least -squares estimators computed by solving
\eqref{hat-Theta} for $s$ different values of 
$(K,L,n_0,m_0)$. We define
\begin{align}\label{def:EWA}
    \bThetaEWA  = \sum_{\ell = 1}^s w_\ell 
    \hat\bTheta{}^{\sf LS}_{\ell},\quad\text{with}
    \quad    w_\ell = \frac{\exp\{\|\bfH' - 
    \hat\bTheta{}^{\sf LS}_{\ell}\|_{\sf F}^2/
    \beta\}}{\sum_{r=1}^s\exp\{\|
    \bfH'-\hat\bTheta{}^{\sf LS}_r\|_{\sf F}^2/\beta\}}\ 
\end{align}
where $\beta>0$ is a parameter often referred to as 
the temperature. Since $\bfH'$ and $\hat\bTheta{
}^{\sf LS}_{\ell}$ have been computed on two independent
data matrices, they are independent; this will play an
important role in the proofs. The choice of $\beta$ 
depends on the nature of the observations and, more
precisely, on the distribution of the noise $\bxi$, see 
the next section for more details. 

\subsection{Adaptations in the case of missing values}

The estimators of $\bTheta^*$ presented in previous 
paragraphs use all the entries of the matrix $\bfH$. 
However, these estimators, as well as the mathematical
results stated in the next sections, are easy to adapt to
the case of missing observations. More precisely, assume
that we observe some iid random variables $M_{i,j}$ taking
values $0$ and $1$ such that the value $H_{i,j}$ is 
revealed to the statistician if and only if $M_{i,j} = 1$. 
Denoting by $\mathsf p=\mathbf P(M_{i,j}=1)$ and assuming that
$M_{i,j}$ is independent of $(H_{i,j},U_i,V_j)$ (this 
case is commonly referred to as missing completely at 
random), we can define the adjusted observation matrix 
$\tilde \bfH$ by its entries $\tilde H_{i,j} = H_{i,j} 
M_{i,j}/\mathsf p$, for $i\in[n]$ and $j\in[m]$. 

Then, to define the least-squares estimator of $\bTheta^*$, it 
suffices to replace $\bfH$ with $\tilde\bfH$ in 
\eqref{LSE1}. Similar modifications can be made for
defining the exponentially weighted aggregate. Note that
this strategy has been already successfully applied 
in \citep{GaoLMZ16}. The entries of matrix $\tilde\bfH$ 
are all observable, the conditional on $(\bU,\bV)$ 
expectation of $\tilde\bfH$ is still $\bTheta^*$, and
$\var[\tilde H_{i,j}|U_i,V_j] \leqslant \big(\sigma^2 + 
\rho^2(1 - \mathsf p)\big)/\mathsf p$, where $\sigma^2$ 
is an upper bound on the conditional variance 
$\var[ H_{i,j}|U_i,V_j]$ and $\rho^2$ is an upper bound 
on $(\Theta_{i,j}^*)^2$. 

\subsection{Estimating the graphon}

Having estimated the matrix $\bTheta^*$, the focus shifts 
to developing an estimator for the graphon $W^*$. 
To this end, for any $n\times m$ matrix $\bTheta$, we
define its associated graphon $W_{\bTheta}:[0,1]^2\to [0,1] 
$ as a constant function on each rectangle $I_i\times J_j =
[(i-1)/n,i/n[ \times [(j-1)/m,j/m[$, for $(i,j)\in[n]\times
[m]$, given by 
\begin{align}\label{est:graphon}
    W_{\bTheta}(u,v) = \Theta_{i,j},\quad \text{for all}
    \quad (u,v)\in I_i \times J_j.
\end{align}
The rationale behind this definition is the following: 
when $n$ and $m$ are large, the order statistics 
$U_{(i)}$ and $V_{(j)}$ lie with high probability 
in the intervals $I_i$ and $J_j$. Therefore, the matrix
$\tilde\bTheta$ defined by $\tilde\Theta_{i,j} = W_{
\bTheta}(U_i,V_j)$ coincides, up to a permutation of
rows and a permutation of columns, with $\bTheta^*$. 
This means that the matrices generated by $W^*$ and
$W_{\bTheta^*}$ are equivalent. Hence, one can expect
that these two graphons are close. 

In addition, for an estimated graphon defined by
\eqref{est:graphon}, the estimation error can be
easily related to the error, measured by the Frobenius
norm, in estimating matrix $\bTheta^*$.  Indeed,
one easily checks that $\|W_{\hat\bTheta} - W_{\bTheta^*} 
\|_{\mathbb L_2} = \|\hat\bTheta-\bTheta^*\|_{\sf F}/ 
\sqrt{nm}$, which leads to 
\begin{align}\label{eq5:0}
    \delta(W_{\hat\bTheta},W^*) \leqslant  
    \|W_{\hat\bTheta} - W_{\bTheta^*}\|_{\mathbb L_2} +
    \delta(W_{\bTheta^*},W^*)
    \leqslant \frac{\|\hat\bTheta - \bTheta^*\|_{\sf F}}{
    \sqrt{n m}} + \delta(W_{\bTheta^*},W^*). 
\end{align}
We will use this inequality both for $\hat\bTheta = 
\bThetaLS$ and $\hat\bTheta = \bThetaEWA$. To ease 
notation, we often write $\hat W^{\sf LS}$ and $\hat 
W^{\sf EWA}$ instead of $W_{\bThetaLS}$ and 
$W_{\bThetaEWA}$, respectively. The decomposition
provided by \eqref{eq5:0} splits the graphon estimation
error into two components: the error of estimating the 
conditional mean matrix $\bTheta^*$ and the bias of 
approximating $W^*$ by the piecewise constant function
$W_{\bTheta^*}$. The former is the only term that depends
on the estimation routine and on the probabilistic 
assumptions on the noise; it will be analyzed in the next
section under various such assumptions. The latter depends only
on the ``smoothness properties'' of the graphon. The next
result allows us to evaluate this term.

\begin{prop}\label{prop:2}
Let $\Theta_{ij}^* = W^*(U_i,V_j)$ for $i\in[n]$ and
$j\in[m]$, where $W^*:[0,1]^2\to [A,B]$ for some 
$A,B$ such that $-\infty\leqslant A<B\leqslant +\infty$. 
\vspace{-8pt}
\begin{enumerate}\itemsep = 0pt
    \item (Piecewise constant graphon) More precisely, for
    $0=a_0<\ldots<a_K = 1$ and $0 = b_0<\ldots <b_L = 1$, 
    the function $W^*$ is constant on each rectangle $[a_k, 
    a_{k+1}[\times [b_\ell,b_{\ell+1}[$. If we define 
    $W_{\bTheta^*} : [0,1]^2\to [A,B]$ by $W_{\bTheta^*}(u,v) 
    = \Theta^*_{i,j}$ for all $(u,v)\in [(i-1)/n,i/n[ \times 
    [(j-1)/n,j/n[$, then 
    \begin{align}
        \Ex[\delta(W_{\bTheta^*}, W^*)^2]^{\nicefrac12} 
        \leqslant \frac{(B-A)}{\sqrt{2}} \bigg(\sqrt{
        \frac{K}{n}} + \sqrt{\frac{L}{m}}\bigg)^{\nicefrac12}.
    \end{align}
    \item (H\"older continuous graphon)
    If the graphon $W^*$ is $\alpha$-smooth, that is $W^*$ 
    is in the H\"older class \footnote{${\mathbb H}_{\alpha, 
    \calL}$ is the set of functions $W:[0,1]^2\to\mathbb R$ 
    satisfying $|W(x,y) - W(x',y')| \leqslant \calL 
    \big( (x-x')^{2} + (y-y')^{2} \big)^{\alpha/2}$ for 
    all $x,y,x',y' \in [0,1]$.} 
    ${\mathbb H}_{\alpha,\calL}$, 
    for some $\alpha\in(0,1]$ and $\calL >0$, then
    \begin{align}
        \Ex[ \delta(W_{\bTheta^*} , W^*)^2]^{\nicefrac12} 
        \leqslant \frac{2 \calL}{n^{\alpha/2}} + \frac{2 \calL}{
        m^{\alpha/2}}.
    \end{align}
\end{enumerate} 
\end{prop}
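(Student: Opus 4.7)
The plan is to exhibit, in each case, specific measure-preserving maps $\tau_1,\tau_2\in\calM$ adapted to the smoothness of $W^*$ and bound $\|W_{\bTheta^*}\circ(\tau_1\otimes\tau_2)-W^*\|_{\mathbb L^2}$ directly; the infimum defining $\delta$ then delivers the stated upper bound. The common key observation is that $W_{\bTheta^*}$ is constant equal to $W^*(U_i,V_j)$ on each rectangle $I_i\times J_j$, so $W_{\bTheta^*}\circ(\tau_1\otimes\tau_2)(u,v)=W^*(U_i,V_j)$ for the unique $(i,j)$ such that $(\tau_1(u),\tau_2(v))\in I_i\times J_j$. Estimating $\delta$ therefore reduces to matching each point $u\in[0,1]$ with an index $i$ whose latent $U_i$ is ``representative'' of $u$ for the purposes of $W^*$, and similarly for $v$.

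For part 1, write $p_k=a_k-a_{k-1}$, $B_k=\{i:U_i\in[a_{k-1},a_k)\}$ and $N_k=|B_k|\sim\mathrm{Bin}(n,p_k)$, with analogous objects in the $v$-direction. I construct $\tau_1$ as a measure-preserving map of $[0,1]$ that, for each $k$, routes as much of $[a_{k-1},a_k)$ as possible into $\bigcup_{i\in B_k}I_i$; since these sets have Lebesgue measures $p_k$ and $N_k/n$, the overlap saturates at $\min(p_k,N_k/n)$. The residual (``unmatched'') masses on either side both equal $E^u=\sum_k(p_k-N_k/n)_+=\tfrac12\sum_k|p_k-N_k/n|$, and the two residual sets can be paired by a measure-preserving bijection built by decomposing each into countably many equal-length subintervals and translating. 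Define $\tau_2$ symmetrically with residual $E^v$. On matched pairs $(u,v)$, both $W_{\bTheta^*}\circ(\tau_1\otimes\tau_2)(u,v)$ and $W^*(u,v)$ lie in the same rectangle of the $W^*$-partition and therefore coincide; elsewhere the pointwise error is at most $B-A$. A union bound yields
\begin{align*}
\|W_{\bTheta^*}\circ(\tau_1\otimes\tau_2)-W^*\|_{\mathbb L^2}^2\le(B-A)^2(E^u+E^v).
\end{align*}
Jensen's inequality and the binomial variance give $\Ex|p_k-N_k/n|\le\sqrt{p_k/n}$; Cauchy--Schwarz with $\sum_k p_k=1$ then yields $\Ex[E^u]\le\tfrac12\sqrt{K/n}$ and symmetrically $\Ex[E^v]\le\tfrac12\sqrt{L/m}$, and a final square root concludes.

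For part 2, I take $\tau_1,\tau_2$ to be the rank-based interval permutations (so $\tau_1$ maps $I_i$ onto $I_{\sigma(i)}$, where $U_{\sigma(1)}<\dots<U_{\sigma(n)}$), giving $W_{\bTheta^*}\circ(\tau_1\otimes\tau_2)(u,v)=W^*(U_{(i)},V_{(j)})$ on $I_i\times J_j$. The Hölder hypothesis together with subadditivity of $x\mapsto x^\alpha$ (valid for $\alpha\le1$) yields the pointwise estimate $|W^*(U_{(i)},V_{(j)})-W^*(u,v)|^2\le\calL^2(|U_{(i)}-u|^{2\alpha}+|V_{(j)}-v|^{2\alpha})$. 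Splitting $U_{(i)}-u=(U_{(i)}-i/n)+(i/n-u)$, Jensen's inequality combined with the $\mathrm{Beta}(i,n-i+1)$-based variance estimate $\Ex(U_{(i)}-i/n)^2\le C/n$ (uniform in $i$) produces $\sum_i\Ex\int_{I_i}|U_{(i)}-u|^{2\alpha}du\le C'/n^{\alpha}$, with a symmetric bound in $v$. Taking expectation and a square root yields the announced $2\calL$ constants.

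The hard part will be the explicit construction of the cell-matching measure-preserving $\tau_1$ in part 1: one must cut each interval $[a_{k-1},a_k)$ into a matched piece of measure $\min(p_k,N_k/n)$ and a residual, and then stitch all residuals across $k$ into a single Lebesgue-preserving bijection of $[0,1]$. Once that combinatorial-measure-theoretic construction is in place, the expectation analysis is routine (binomial variance plus Cauchy--Schwarz), and part 2 reduces to a standard order-statistics moment computation.
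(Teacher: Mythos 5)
Your proposal is correct and follows essentially the same route as the paper: in part~1 you match the empirical cluster proportions $N_k/n$ to the true weights $p_k$ via a mass-transport map (the paper packages this as the labeling functions $\psi_1,\psi_2$ and the intermediate graphon $W^*_\psi$, but it is the same construction), then bound the mismatch region by $\sum_k(p_k-N_k/n)_+$ and use the binomial standard deviation plus Cauchy--Schwarz; in part~2 you both use the rank-based interval permutation, the H\"older bound, and order-statistic moments. The only cosmetic differences are that the paper centers the order statistics at $i/(n+1)$ and uses Minkowski in $L^2$ rather than your $i/n$-centering and $(a^2+b^2)^\alpha\le a^{2\alpha}+b^{2\alpha}$; your sketch leaves the constant $C'$ unspecified, so the claimed factor $2\calL$ would need the expectation computation to be carried out in full, but the strategy is sound.
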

The proof of this result, postponed to \Cref{proof:prop2}, 
follows essentially the steps of \citep[Proposition 3.2]{Klopp2017} 
that deals with symmetric functions only. 
As a minor remark, the constants in our results are 
smaller than those available in the literature.

\section{Finite sample risk bounds}

We have introduced in \Cref{sec:2} the least-squares 
estimator and the exponentially weighted aggregate for
estimating the matrix $\bTheta^*$, as well as their 
associated graphon estimators. We provide in this section
upper bounds for the risks of these estimators. The main 
purpose of these bounds is to highlight the behavior 
of the estimators when $n,m$ are large and $\rho,\sigma$ 
are small ($\sigma$ denoting the noise magnitude).

\subsection[Risk bounds for the least-squares estimator
]{Risk bounds for the least-squares estimator $\bThetaLS$}
\label{ssec:3.1}

We start by stating risk bounds for estimators of 
$\bTheta^*$. To this end, without loss of generality, 
both in the statements and in the proofs, we treat 
$\bTheta^*$ as a deterministic matrix; this is why 
we require $H_{i,j}$ to be independent instead or 
requiring conditional independence given $\bU,\bV$. 

\begin{theorem}\label{prob_matrix}
Let $n, m, n_0, m_0, L, K $ be positive integers
such that $L\geqslant 2$, $K\geqslant 2$, $ 3 
\leqslant n_0\leqslant n$  and $3\leqslant m_0 
\leqslant m$. Let $\mathbf H$ be an $n \times m$
random matrix with independent entries satisfying
$\Ex[H_{ij}] \in[0, \rho]$ for every $i \in [n]$, 
$j \in [m]$ and some $\rho >0$. In addition, assume 
that the random variables $(H_{ij} - \Ex[H_{ij}])$ 
satisfy the $(\sigma^2, b)$-Bernstein condition. 
Then, the least squares estimator $\bThetaLS$ 
of the mean matrix $\bTheta^* = \Ex[\bfH]$, defined 
by \eqref{hat-Theta}, satisfies the exact oracle 
inequality
\begin{align}
     \frac{\Ex\big[\|\bThetaLS - \bTheta^*
     \|^2_{\sf F} \big]^{\nicefrac12}}{\sqrt{nm}} 
     & \leqslant  \inf_{\bTheta\in \calT_{n_0,
     m_0}^{K,L}} \frac{\| \bTheta - \bTheta^*  \|_{
     \sf F}}{\sqrt{nm}} + \big(25\sigma^2 + 4b\rho \big)^{\nicefrac12} r_{n,m}(K,L),
\end{align}
with  $r_{n,m}(K,L)$ given by
\begin{align}\label{r_nm}
    r_{n,m}(K,L) =  
    \Big(\frac{3KL}{nm} + \frac{\log K}{m} + 
    \frac{\log L}{n}\Big)^{\nicefrac12}
\end{align}
provided that $\psi_{n,m}(n_0,m_0) := 
\frac{3}{m_0}\,\log (en/n_0) + \frac{3}{n_0}\,
\log(em/m_0)\leqslant (\sigma/b)^2$.
\end{theorem}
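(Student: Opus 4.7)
I plan to follow the classical three-step template for oracle inequalities over a finite union of linear submodels: a \emph{basic inequality} from the first-order optimality of the least-squares estimator, a \emph{localisation} argument that reduces the task to controlling a single sup-type statistic, and \emph{uniform concentration} of that statistic, obtained by combining a Bernstein tail with a union bound over admissible clusterings.

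\emph{Step~1 (basic inequality).} Writing $\bfH = \bTheta^* + \bxi$ with $\Ex[\bxi] = 0$, and using that $\bThetaLS$ minimises $\bTheta \mapsto \|\bfH - \bTheta\|_{\sf F}^2$ over $\calT_{n_0,m_0}^{K,L}$, I get for every $\bTheta_0 \in \calT_{n_0,m_0}^{K,L}$
\begin{align*}
   \|\bThetaLS - \bTheta^*\|_{\sf F}^2
   \leqslant \|\bTheta_0 - \bTheta^*\|_{\sf F}^2 + 2\langle \bxi,\, \bThetaLS - \bTheta_0 \rangle.
\end{align*}

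\emph{Step~2 (localisation).} Introduce the supremum $Z := \sup\bigl\{ \langle \bxi, \bA \rangle / \|\bA\|_{\sf F} : \bA \in \calT_{n_0,m_0}^{K,L} - \calT_{n_0,m_0}^{K,L},\ \bA \neq \mathbf 0 \bigr\}$. Since $\bThetaLS - \bTheta_0$ lies in the set over which the supremum is taken, inserting $\|\bThetaLS - \bTheta_0\|_{\sf F} \leqslant \|\bThetaLS - \bTheta^*\|_{\sf F} + \|\bTheta_0 - \bTheta^*\|_{\sf F}$ into Step~1 and completing the square yields the deterministic bound
\begin{align*}
   \|\bThetaLS - \bTheta^*\|_{\sf F} \leqslant \|\bTheta_0 - \bTheta^*\|_{\sf F} + 2 Z.
\end{align*}
Taking $L^2$-norms with respect to $\bxi$ and then the infimum over $\bTheta_0 \in \calT_{n_0,m_0}^{K,L}$ reduces the proof to establishing $\Ex[Z^2] \leqslant C (\sigma^2 + b\rho)\, nm\, r_{n,m}^2(K,L)$.

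\emph{Step~3 (single-subspace control).} Decompose $\calT_{n_0,m_0}^{K,L} - \calT_{n_0,m_0}^{K,L}$ as the union, over admissible quadruples of assignment matrices $(\bfZleft, \bfZright, \bfZleft', \bfZright')$, of the linear subspaces $V$ of $n\times m$ matrices that are constant on the common refinement of the two partitions, each of dimension at most $2KL$. For fixed such $V$ with orthogonal projector $\Pi_V$, one has $Z_V^2 := \sup_{\bA \in V,\,\|\bA\|_{\sf F}=1}\langle \bxi,\bA\rangle^2 = \|\Pi_V \bxi\|_{\sf F}^2$ and $\Ex[Z_V^2] \leqslant \sigma^2 \dim V \leqslant 2\sigma^2 KL$. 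Combining an $\varepsilon$-net of the unit sphere of $V$ (of cardinality at most $(3/\varepsilon)^{2KL}$) with Bernstein's inequality applied pointwise to $\langle \bxi,\cdot\rangle$ yields a tail of the form
\begin{align*}
   \prob\!\bigl( Z_V \geqslant c_0\sigma\sqrt{KL} + c_1\sigma\sqrt{u} + c_2 b\, u/\sqrt{n_0 m_0} \bigr) \leqslant e^{-u}, \qquad u \geqslant 0,
\end{align*}
where the factor $1/\sqrt{n_0 m_0}$ encodes the $\ell^\infty$-radius of the unit Frobenius ball of $V$, itself controlled by the minimum block size $n_0\times m_0$.

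\emph{Step~4 (union bound and assembly).} The number of admissible quadruples is at most of order $(en/n_0)^n (em/m_0)^m$, and under the admissibility constraints its logarithm is dominated by a quantity of order $n\log K + m\log L$. Choosing $u$ in Step~3 of this order and applying a union bound produces $\Ex[Z^2] \leqslant C (\sigma^2 + b\rho)(KL + n\log K + m\log L)$; the $b\rho$ contribution enters through the sub-exponential part of Bernstein combined with the sup-norm bound $\|\bTheta^*\|_\infty \leqslant \rho$, which controls the relevant envelope. Plugging into Step~2, dividing by $\sqrt{nm}$, and rewriting $KL + n\log K + m\log L$ as a multiple of $nm\, r_{n,m}^2(K,L)$ delivers the stated coefficient $(25\sigma^2 + 4b\rho)^{\nicefrac12}$ after tracking constants.

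\emph{Main obstacle.} The delicate part is the interplay between the sub-Gaussian ($\sigma^2$) and sub-exponential ($b$) regimes of Bernstein in Steps~3--4. A naive union bound over the exponentially many subspaces $V$ would let the sub-exponential contribution $bu/\sqrt{n_0m_0}$ dominate and degrade the final coefficient from $\sigma^2 + b\rho$ to something of order $(\sigma + b)^2$. The hypothesis $\psi_{n,m}(n_0, m_0) \leqslant (\sigma/b)^2$ is engineered precisely so that, for $u$ of order $n\log K + m\log L$, the ratio $b u/(\sigma\sqrt{u}\sqrt{n_0 m_0})$ stays bounded, i.e.\ the sub-Gaussian regime prevails throughout the union bound. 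Keeping the constants sharp enough to recover the stated coefficient $25\sigma^2 + 4b\rho$, and verifying that the $\ell^\infty$-radii of the subspaces $V$ really are tamed by the minimum cluster sizes, is where the bulk of the technical effort concentrates.
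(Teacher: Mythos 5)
Your overall skeleton---basic inequality, localisation to a supremum of $\langle\bxi,\cdot\rangle$ over a union of subspaces, Bernstein tail plus union bound---is the right family of argument, but the specific decomposition you choose cannot yield the theorem's coefficient, and one of its key geometric claims is false. The paper splits the cross term $\langle\bThetaLS - \bTheta^*, \bfE\rangle$ as $\Xi_1 + \Xi_2$ with $\Xi_1 = \langle \Pi_{\widehat\calT}(\bTheta^*) - \bTheta^*, \bfE\rangle$ and $\Xi_2 = \langle \bThetaLS - \Pi_{\widehat\calT}(\bTheta^*), \bfE\rangle = \|\Pi_{\widehat\calT}(\bfE)\|_{\sf F}^2$, where $\widehat\calT$ is the \emph{random} subspace of matrices constant on the blocks selected by the estimator. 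The term $\Xi_1$ is the source of $b\rho$: conditionally on each of the at most $K^nL^m$ possible realisations of $\widehat\calT$, the matrix $\Pi_{\widehat\calT}(\bTheta^*) - \bTheta^*$ is deterministic with entries in $[-\rho,\rho]$, so Bernstein gives a sub-exponential part proportional to $b\rho$. The term $\Xi_2$ is a chi-square statistic for a \emph{single} admissible biclustering, all of whose blocks have size $\geqslant n_0 m_0$; a separate bound on $\|\Pi_{\widehat\calT}(\bfE)\|_\infty$ (Lemmas~\ref{partition1} and~\ref{partition2}) combined with the hypothesis $\psi_{n,m}(n_0,m_0)\leqslant(\sigma/b)^2$ absorbs its $b$-dependent part into the $\sigma^2$ coefficient. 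Your single supremum $Z = \sup\{\langle\bxi,\bA\rangle/\|\bA\|_{\sf F}: \bA\in\calT-\calT\}$ merges these two mechanisms and loses the structure that produces $25\sigma^2 + 4b\rho$.

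Two things therefore go wrong in Steps~3--4. First, the claim that the unit Frobenius ball of a difference subspace $V$ has $\ell^\infty$-radius of order $1/\sqrt{n_0 m_0}$ is false: an element of $\calT-\calT$ is constant only on the common refinement of two admissible biclusterings, and that refinement can contain blocks as small as a single cell (take $n=4$, $n_0=2$, partitions $\{1,2\},\{3,4\}$ versus $\{1,3\},\{2,4\}$), so a unit Frobenius norm matrix in $V$ can have an entry of order one. The $1/\sqrt{n_0 m_0}$ bound applies only to matrices constant on a single admissible biclustering, which is exactly the object that appears in $\Xi_2$, not in $\calT-\calT$. Second, the factor $\rho$ has no way to enter your estimate of $\Ex[Z^2]$: the definition of $Z$ makes no reference to $\bTheta^*$, so the envelope entering Bernstein is a purely geometric quantity of $\calT-\calT$, not $\|\bTheta^*\|_\infty$. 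The remark in Step~4 that ``$\|\bTheta^*\|_\infty\leqslant\rho$ controls the relevant envelope'' is inconsistent with the object $Z$ you defined. To recover the advertised coefficient you need something like the paper's split, which explicitly projects $\bTheta^*$ onto $\widehat\calT$ so that the residual paired with $\bfE$ has a $\rho$-bounded envelope, and treats the pure-noise projection $\|\Pi_{\widehat\calT}(\bfE)\|_{\sf F}^2$ separately on subspaces whose block sizes are genuinely $\geqslant n_0m_0$.
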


Lower bounds on the minimax risk of 
all possible estimators, showing that the risk bound 
in \Cref{prob_matrix} is rate optimal under various regimes, 
will be stated in \Cref{sec:5}. Let us mention here 
the fact that in the particular case $b=0$ corresponding
to a sub-Gaussian distribution, the condition $\bfE[H_{i
,j}]\in[0,\rho]$ can be removed and the claim of the 
theorem remains true.  

\Cref{prob_matrix} being stated for general distributions, 
it is helpful to see its consequences in the cases of 
common distributions of $H_{i,j}$ mentioned in the 
introduction.  

\begin{corollary}
    We assume that the conditions on $n,m,n_0,m_0,L,K$ 
    required in \Cref{prob_matrix} hold. 
    \vspace{-7pt}
    \begin{enumerate}
        \item If $H_{ij}$'s are independent Bernoulli---or 
        any other distribution with support $[0,1]$---
        random variables with mean $\Theta^*_{i,j}
        \leqslant \rho$, then they satisfy the $(\rho, 
        1/3)$-Bernstein condition and, therefore, 
        \begin{align}\label{rb:bern}
             \frac{\Ex\big[\|\bThetaLS - \bTheta^*
             \|^2_{\sf F} \big]^{\nicefrac12}}{\sqrt{nm}} 
             & \leqslant  \inf_{\bTheta\in \calT_{n_0,
             m_0}^{K,L}} \frac{\| \bTheta - \bTheta^*  \|_{
             \sf F}}{\sqrt{nm}} + 9\sqrt{\rho}\,
            \Big(\frac{KL}{nm} + \frac{\log K}{m} + 
            \frac{\log L}{n}\Big)^{\nicefrac12}
        \end{align}
        provided that $ 
        \frac{1}{m_0}\,\log (en/n_0) + \frac{1}{n_0}\,
        \log(em/m_0)\leqslant 3\rho$.
        
        \item If for some $N\in \mathbb N$, $(NH_{ij})$'s 
        are independent binomial random variables with 
        parameters $(N,\Theta^*_{i,j})$ such that $\Theta^*_{i,j}
        \leqslant \rho$, then $H_{i,j}$'s satisfy the $(\rho/N, 
        1/3N)$-Bernstein condition and, therefore, 
        \begin{align}
             \frac{\Ex\big[\|\bThetaLS - \bTheta^*
             \|^2_{\sf F} \big]^{\nicefrac12}}{\sqrt{nm}} 
             & \leqslant  \inf_{\bTheta\in \calT_{n_0,
             m_0}^{K,L}} \frac{\| \bTheta - \bTheta^*  \|_{
             \sf F}}{\sqrt{nm}} + 9\sqrt{\rho}\,
            \Big(\frac{KL}{Nnm} + \frac{\log K}{Nm} + 
            \frac{\log L}{Nn}\Big)^{\nicefrac12}
        \end{align}
        provided that $ 
        \frac{1}{m_0}\,\log (en/n_0) + \frac{1}{n_0}\,
        \log(em/m_0)\leqslant 3N\rho$.
        
        \item If $H_{ij}$'s are independent sub-Gaussian
        random variables with means $\Theta^*_{i,j}$ and 
        variance proxies $\tau^2_{i,j} \leqslant \sigma^2$, 
        then they satisfy the $(\sigma^2, 0)$-Bernstein 
        condition and, therefore, 
        \begin{align}
             \frac{\Ex\big[\|\bThetaLS - \bTheta^*
             \|^2_{\sf F} \big]^{\nicefrac12}}{\sqrt{nm}} 
             & \leqslant  \inf_{\bTheta\in \calT_{n_0,
             m_0}^{K,L}} \frac{\| \bTheta - \bTheta^*  \|_{
             \sf F}}{\sqrt{nm}} + 5\sigma\,
            \Big(\frac{KL}{nm} + \frac{\log K}{m} + 
            \frac{\log L}{n}\Big)^{\nicefrac12}.
        \end{align}
        
        \item If for some $T>0$, $(TH_{ij})$'s are 
        independent Poisson random variables with 
        parameters $T\Theta^*_{i,j} \leqslant T\rho$, then 
        $H_{i,j}$ satisfy the $(\rho/T, 1/3T)$-Bernstein 
        condition and, therefore, 
        \begin{align}
             \frac{\Ex\big[\|\bThetaLS - \bTheta^*
             \|^2_{\sf F} \big]^{\nicefrac12}}{\sqrt{nm}} 
             & \leqslant  \inf_{\bTheta\in \calT_{n_0,
             m_0}^{K,L}} \frac{\| \bTheta - \bTheta^*  \|_{
             \sf F}}{\sqrt{nm}} + 9\sqrt{\rho}\,
            \Big(\frac{KL}{Tnm} + \frac{\log K}{Tm} + 
            \frac{\log L}{Tn}\Big)^{\nicefrac12}
        \end{align}
        provided that $ 
        \frac{1}{m_0}\,\log (en/n_0) + \frac{1}{n_0}\,
        \log(em/m_0)\leqslant 3T\rho$.
    \end{enumerate}
\end{corollary}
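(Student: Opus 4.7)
The plan is to deduce each of the four claims as a direct specialization of Theorem \ref{prob_matrix}. The template is identical in every case: first I verify that the centered observations $H_{ij} - \Theta^*_{ij}$ satisfy a $(\sigma^2, b)$-Bernstein condition with the parameters announced in the statement; then I check that the mean-boundedness condition $\Ex[H_{ij}] \in [0,\rho]$ and the side-condition $\psi_{n,m}(n_0,m_0) \leqslant (\sigma/b)^2$ take the claimed form; finally I substitute $(\sigma^2, b)$ into the prefactor $\sqrt{25\sigma^2 + 4b\rho}$ of Theorem~\ref{prob_matrix} and bound the result by the numerical constant that appears in the corollary.

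The Bernstein verifications go case by case. For part (1), since $H_{ij} - \Theta^*_{ij}$ is bounded by $1$ in absolute value with variance $\Theta^*_{ij}(1-\Theta^*_{ij}) \leqslant \rho$, the standard moment comparison $\Ex[|H_{ij}-\Theta^*_{ij}|^k] \leqslant \Ex[(H_{ij}-\Theta^*_{ij})^2] \leqslant \rho$ for $k\geqslant 2$ gives the $(\rho, 1/3)$ condition; indeed one checks $\frac{k!}{2}\,\rho\,(1/3)^{k-2} \geqslant \rho$ for every $k\geqslant 2$. For part (2), I write $H_{ij}$ as an average of $N$ i.i.d.\ Bernoulli variables and use that the MGF of an average of i.i.d.\ random variables satisfying a $(\sigma^2, b)$-Bernstein bound satisfies a $(\sigma^2/N, b/N)$-Bernstein bound; applying this to part (1) yields the $(\rho/N, 1/(3N))$ condition. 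Part (3) is immediate: $b=0$ is precisely the sub-Gaussian case, and the inequality $\Ex[X^k] \leqslant \frac{k!}{2}\sigma^{2}\cdot 0^{k-2}$ holds trivially for $k=2$ (reducing to $\var(X) \leqslant \sigma^2$) with the convention used in the Bernstein definition. For part (4), I use the explicit Poisson MGF $\Ex[e^{t(X-\lambda)}] = \exp(\lambda(e^t - 1 - t))$ applied to $TH_{ij}$, rescale by $1/T$, and invoke the elementary bound $e^s - 1 - s \leqslant s^2/(2(1 - s/3))$ valid for $0 < s < 3$, which converts Poisson tails into Bernstein form with parameters $(\Theta^*_{ij}/T, 1/(3T))$.

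Once the Bernstein conditions are in hand, plugging into Theorem \ref{prob_matrix} gives the bound with prefactor $\sqrt{25\sigma^2 + 4b\rho}$ multiplied by $r_{n,m}(K,L)$. The constant $9\sqrt{\rho}$ in parts (1), (2), (4) comes from the estimate $25\sigma^2 + 4b\rho = 25\rho/s + 4\rho/(3s) = (79/3)\cdot \rho/s$ where $s\in\{1,N,T\}$ is the relevant scaling factor, combined with the trivial inequality
\begin{align}
\frac{3KL}{nm} + \frac{\log K}{m} + \frac{\log L}{n} \leqslant 3\Big(\frac{KL}{nm} + \frac{\log K}{m} + \frac{\log L}{n}\Big),
\end{align}
which absorbs the extra factor of $3$ inside the square root and yields $\sqrt{79}\sqrt{\rho/s} \leqslant 9\sqrt{\rho/s}$. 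For the sub-Gaussian case the prefactor is $\sqrt{25\sigma^2} = 5\sigma$. Finally, the small-$\psi_{n,m}$ side-condition $\psi_{n,m}(n_0,m_0) \leqslant (\sigma/b)^2$ becomes, upon substituting $(\sigma^2, b)$ in each case, exactly the stated conditions $\frac{1}{m_0}\log(en/n_0) + \frac{1}{n_0}\log(em/m_0)\leqslant 3s\rho$ (with $s\in\{1,N,T\}$), and no side-condition is needed in the sub-Gaussian case since $b=0$ formally gives $(\sigma/b)^2=+\infty$.

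The only delicate bookkeeping is in part (2) and part (4), where one must carefully track how the Bernstein parameters scale under averaging (for the binomial) or under rescaling a Poisson count by $1/T$. I do not expect any genuine technical obstacle: the content of the corollary is arithmetic manipulation of the Bernstein constants and of the prefactor in Theorem \ref{prob_matrix}.
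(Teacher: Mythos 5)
Your overall approach is exactly right and matches what the paper implicitly does: this corollary has no separate proof in the paper, being a direct substitution of the Bernstein parameters $(\sigma^2,b)$ into Theorem~\ref{prob_matrix}, and your bookkeeping of the Bernstein constants, the side-conditions, and the prefactor arithmetic is correct for parts (1), (2), and (4). In particular, the computation $25\sigma^2+4b\rho=(79/3)\rho/s$, the absorption of the factor $3$ from $r_{n,m}$, and the verification $\sqrt{79}<9$ are all clean, and the side-condition $\psi_{n,m}(n_0,m_0)\leqslant(\sigma/b)^2$ does indeed reduce to the displayed forms after dividing by $3$.

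For part (3), however, there is a small but real gap that you should flag. Theorem~\ref{prob_matrix} with $b=0$ gives the remainder $5\sigma\,r_{n,m}(K,L)=5\sigma\big(\tfrac{3KL}{nm}+\tfrac{\log K}{m}+\tfrac{\log L}{n}\big)^{1/2}$, whereas the corollary asserts $5\sigma\big(\tfrac{KL}{nm}+\tfrac{\log K}{m}+\tfrac{\log L}{n}\big)^{1/2}$ with the factor $3$ dropped but the constant unchanged. Pulling the $3$ outside (as you do for the other three cases) turns $5$ into $5\sqrt{3}\approx 8.66$, not $5$; and even reverting to the tighter intermediate constant $\sqrt{17\sigma^2}$ from the final line of the theorem's proof (the $8b^2\psi$ term vanishes when $b=0$ regardless of the side-condition) still gives $\sqrt{17\cdot 3}\approx 7.14>5$. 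So either the corollary's part (3) constant should be $9\sigma$ for consistency with the other parts, or the displayed remainder should keep the factor $3$; as written it does not follow from Theorem~\ref{prob_matrix}, and your proof of part (3) silently skips this step. The discrepancy is an artifact of the paper's statement rather than a conceptual error on your side, but a careful write-up should note it.
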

The expression of the remainder term appearing in these
risk bounds can be seen as 
\begin{align}
    \textsf{noise magnitude}\times \frac{\textsf{size of
    the parameter space}}{\textsf{sample size}}. 
\end{align}
Indeed, $KL + n\log K + m\log L$ is the order of magnitude
of the logarithm of the covering number of $\calT_{n_0, 
m_0}^{K,L}$, a common measure of the complexity of the 
parameter space. In addition to being instructive, this 
interpretation explains why this upper bound is optimal
up to a multiplicative constant under some mild 
conditions. 

Note also that the least-squares estimator for which the
risk bounds above are established does not require the
knowledge of $\rho$, $\sigma$, and $b$. This explains the
presence of a condition on $\rho$ requiring it to be not
too small. For small values of $\rho$, our proofs may still
be used to get a risk bound for the least-squares estimator. 
For instance, in the Bernoulli model, when $\rho\leqslant 
\frac{1}{m_0}\,\log (en/n_0) + \frac{1}{n_0}\,\log(em/m_0)$,
the remainder term is the same as in \eqref{rb:bern} with
$\rho$ replaced by $\frac{1}{m_0}\,\log (en/n_0) + \frac{1
}{n_0}\,\log(em/m_0)$. However, for such a small value
of $\rho$ smaller risk bounds can be obtained either for 
the estimator that outputs a matrix with all zero entries, 
or for the constrained least-squares estimator with the
constraint $\|\bTheta\|_\infty \leqslant \rho$ (see
\cite{GaoLMZ16,Klopp2017} for results of this flavor). 

\begin{remark}
As mentioned in the introduction, an upper bound 
similar to the one of \Cref{prob_matrix} has been 
established in \citep{GaoLMZ16}, under the condition 
that $H_{i,j} - \Ex[H_{i,j}]$ are $\sigma$-sub-Gaussian. 
The remainder term obtained therein is of the order 
$(\sigma + \rho)r_{n,m}(K,L)$. Since, $\sigma 
$-sub-Guassianness is equivalent to the $(\sigma^2,0)
$-Bernstein condition, our theorem applies to the same
setting and yields a smaller remainder term,  
$\sigma r_{n,m}(K,L)$ (which is independent
of $\rho$). Of particular interest is the case 
where the entries are scaled binomial random 
variables, as in the second claim of the last 
corollary. In this scenario, the risk bound of 
\citep{GaoLMZ16} includes a remainder term of 
order $r_{n,m}(K,L)$ since the sub-Gaussian norm 
of the averages of independent Bernoulli random 
variables is of constant order. Interestingly, 
our upper bound is substantially tighter since 
its remainder term includes a deflation factor of 
$\sqrt{\rho/N}$. In \Cref{sec:5}, we demonstrate 
that this upper bound is tight, at least when $n$ 
and $m$ are of the same order of magnitude.    
\end{remark}

\subsection[Risk bounds for the EWA]{Risk bounds for 
the EWA $\bThetaEWA$}
\label{ssec:3.2}

In the context of signal denoising, pioneering work by 
\cite{LeungBarron} established sharp bounds, initially 
limited to Gaussian noise. Subsequent progress saw 
extensions to encompass a wider array of noise distributions, 
as evidenced by works such as \citep{DT07,DT08,DT12b,
Dal_IHP,Dalalyan2022}. The findings presented in this 
section are derived from 
those expounded in \citep{Dalalyan2022}, which, to the 
best of our knowledge, remain the sole results in the 
literature applicable to models featuring asymmetric 
noise distributions---akin to scenarios found in the 
Bernoulli and binomial models.

\begin{theorem}\label{th:EWA}
    Let $\bfH$ an $n\times m$ matrix with independent 
    entries and let $\bTheta^* = \bfE[\bfH]$. Let 
    $\bfH'$ be an independent copy of $\bH$. Let 
    $\mathfrak P$ be a set of quadruplets $(K,L, 
    n_0,m_0)=:\mathfrak p$ and let $\bThetaLS[
    \mathfrak p]$ be the least squares estimator
    defined by \eqref{hat-Theta}. Let $\bThetaEWA[
    \mathfrak P]$ be the exponentially weighted 
    aggregate \eqref{def:EWA} applied to estimators 
    $\{\bThetaLS[\mathfrak p]: \mathfrak p\in 
    \mathfrak P\}$ with some temperature parameter 
    $\beta>0$. Let $r_{n,m}(K,L)$ be as in \eqref{r_nm}. 
    \begin{enumerate}
        \item (Bernoulli/binomial model) Assume that 
        for some $N\in\mathbb N$, $NH_{i,j} \sim{\sf 
        binomial}(N, \Theta_{i,j}^*)$ with 
        $\Theta^*_{i,j} \leqslant \rho $ for every 
        $i\in[n]$ and $j\in[m]$. For $\beta = 8/(3N)$,
        the estimator $\bThetaEWA = \bThetaEWA[
        \mathfrak P]$ satisfies
        \begin{align}
             \frac{\Ex\big[\|\bThetaEWA - \bTheta^*
             \|^2_{\sf F} \big]^{\nicefrac12}\!\!\!\!}{
             \sqrt{nm}} 
             & \leqslant  \min_{\mathfrak p\in
             \mathfrak P}\Big\{\inf_{\bTheta\in \calT
             [\mathfrak p]} \frac{ \|\bTheta - 
             \bTheta^*\|_{\sf F}\!\!}{\sqrt{nm}} + 
             9\sqrt{\frac{\rho}{N}}\, r_{n,m}(K,L)\Big\} + \Big\{
             \frac{8\log|\mathfrak P|}{3Nnm} \Big\}^{
             \nicefrac12}
        \end{align}
        provided that $ \max_{\mathfrak p\in\mathfrak P}
        \big(\frac{1}{m_0}\,\log (en/n_0) + \frac{1}{n_0}\,
        \log(em/m_0)\big)\leqslant 3N\rho$.
        \item (Gaussian model) Assume that $H_{i,j} \sim
        \calN(\Theta_{i,j}^*,\sigma^2_{i,j})$ with 
        $\sigma^2_{i,j} \leqslant \sigma^2 $ for every 
        $i\in[n]$ and $j\in[m]$. For $\beta = 4\sigma^2$,
        the estimator $\bThetaEWA = \bThetaEWA[\mathfrak 
        P]$ satisfies
        \begin{align}
             \frac{\Ex\big[\|\bThetaEWA - \bTheta^*
             \|^2_{\sf F} \big]^{\nicefrac12}\!\!\!\!}{\sqrt{nm}} 
             & \leqslant  \min_{\mathfrak p\in\mathfrak P}
             \Big\{\inf_{\bTheta\in \calT[\mathfrak p]} \frac{
             \|\bTheta - \bTheta^*\|_{\sf F}\!\!}{\sqrt{nm}} + 5\sigma r_{n,m}(K,L)\Big\} + \Big\{ \frac{\log|\mathfrak P| 
             }{nm} \Big\}^{\nicefrac12}.
        \end{align}
    \end{enumerate}    
\end{theorem}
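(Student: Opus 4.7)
The approach relies on a standard two-step decomposition for the analysis of EWA with a finite dictionary: first, a conditional PAC-Bayesian sharp oracle inequality comparing $\bThetaEWA$ to the best element of the dictionary $\{\bThetaLS[\mathfrak p]: \mathfrak p \in \mathfrak P\}$ given $\bfH$; then an application of \Cref{prob_matrix} to bound the risk of each member of this dictionary. The design feature making this work is that the weights $w_\ell$ in $\bThetaEWA$ depend only on the independent copy $\bfH'$, so that the dictionary and the weights are independent.

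\textbf{Step 1 (conditional sharp oracle inequality).} Conditionally on $\bfH$, the family $\{\bThetaLS[\mathfrak p]: \mathfrak p \in \mathfrak P\}$ is a deterministic finite collection of matrices, and $\bThetaEWA$ is the posterior mean of a Gibbs distribution on this collection, with uniform prior, computed from $\bfH'$. In the Gaussian model, choosing $\beta = 4\sigma^2$ falls in the Leung--Barron regime and their identity, applied entrywise, yields
\begin{align*}
    \Ex\big[\|\bThetaEWA - \bTheta^*\|_{\sf F}^2\,\big|\, \bfH\big] \leqslant \min_{\mathfrak p\in \mathfrak P} \|\bThetaLS[\mathfrak p] - \bTheta^*\|_{\sf F}^2 + \beta \log|\mathfrak P|.
\end{align*}
In the Bernoulli/binomial model, the analogous inequality follows from the PAC-Bayesian bound for asymmetric noise established in \citep{Dalalyan2022}, which requires that the temperature $\beta$ be large enough to dominate the local Bernstein constants of the centered noise. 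The scaled binomial entries $H_{i,j} - \Theta^*_{i,j}$ satisfy the $(\rho/N,1/(3N))$-Bernstein condition, and the admissible value turns out to be $\beta = 8/(3N)$.

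\textbf{Step 2 (unconditioning and bias control).} Taking expectation over $\bfH$ on both sides and upper-bounding the minimum by the value at an arbitrary fixed $\mathfrak p \in \mathfrak P$, one obtains
\begin{align*}
    \Ex\big[\|\bThetaEWA - \bTheta^*\|_{\sf F}^2\big] \leqslant \Ex\big[\|\bThetaLS[\mathfrak p] - \bTheta^*\|_{\sf F}^2\big] + \beta \log|\mathfrak P|.
\end{align*}
Now plug in \Cref{prob_matrix} (applied with the appropriate Bernstein parameters $(\rho/N,1/(3N))$ or $(\sigma^2,0)$) to control $\Ex[\|\bThetaLS[\mathfrak p] - \bTheta^*\|_{\sf F}^2]$ by the bias term $\inf_{\bTheta \in \calT[\mathfrak p]}\|\bTheta-\bTheta^*\|_{\sf F}^2$ plus the variance term in $r_{n,m}(K,L)^2$. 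Use the subadditivity $\sqrt{a+b+c}\leqslant \sqrt{a}+\sqrt{b}+\sqrt{c}$ to pass from expected squared-Frobenius to its root, take the infimum over $\mathfrak p \in \mathfrak P$, and divide by $\sqrt{nm}$ to recover the stated oracle inequalities.

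\textbf{Main obstacle.} The Gaussian case is a direct application of the Leung--Barron identity; the real technical work is in the Bernoulli/binomial case, where Gaussian integration by parts is unavailable. One must instead verify the exponential-moment hypothesis underlying the PAC-Bayesian inequality of \citep{Dalalyan2022} for the centered scaled binomial noise, and track how the Bernstein parameters propagate to fix the sharp temperature $\beta = 8/(3N)$. This is the same Bernstein-type control that drives \Cref{prob_matrix}, but here it must be leveraged at the level of a Gibbs posterior rather than a single empirical-risk minimizer; once that verification is in place, the combination with \Cref{prob_matrix} is mechanical.
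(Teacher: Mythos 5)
Your proposal is correct and follows essentially the same route as the paper's (very terse) proof: the paper simply invokes Corollaries 2 and 4 of \citep{Dalalyan2022}---which are exactly the conditional-on-$\bfH$ PAC-Bayesian sharp oracle inequalities you describe (the Gaussian one being Leung--Barron's)---and combines them with \Cref{prob_matrix}. Your decomposition into a conditional oracle inequality (made possible by the independence of $\bfH'$ from the dictionary), an unconditioning step, and the plug-in of \Cref{prob_matrix} with the appropriate Bernstein parameters $(\rho/N,1/(3N))$ or $(\sigma^2,0)$, is precisely the argument the paper relies on.
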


To prove the first claim of this theorem, it suffices to
combine \citep[Cor.\ 4]{Dalalyan2022} with 
\Cref{prob_matrix}. Similarly, the second claim follows 
from \citep[Cor.\ 2]{Dalalyan2022} and \Cref{prob_matrix}.
Similar results can be obtained for arbitrary distribution
with bounded support and for the Laplace distribution, 
using Corollary 3 and Corollary 5 from \citep{Dalalyan2022}, 
respectively. Unfortunately,  we are currently unaware of 
any result that facilitate the extension of these bounds to 
encompass the Poisson distribution and the broader class of 
sub-Gaussian distributions. 

\begin{remark}\label{rem:1}
The upper bounds obtained in \Cref{th:EWA} show that
the extra error term due to aggregation is not large, 
when the sample size $nm$ is large. Note that the factor $\log 
|\mathfrak P|$ is usually not large. A reasonable 
choice for this set is the following: choose 
geometrically increasing sequences $K_i = \lfloor 
2^{1 + i/2}\rfloor $ and $L_j = \lfloor 2^{1 + j/2}
\rfloor $ for $0\leqslant i\leqslant 2\log_2(n/10)$ 
and  $0\leqslant j\leqslant 2\log_2(m/10)$. Then, for 
each $K_i$ and  $L_j$, choose $n_0\leqslant n/K_i$ 
and $m_0\leqslant m /L_j$ to be of the form $\lfloor 
2^{2+\ell/2}\rfloor$. This method of choosing 
$\mathfrak P$ ensures that $|\mathfrak P| \leqslant 
4 \log_2^2(n/7)\log_2^2(m/7)$. Therefore, the term
$\log |\mathfrak P|$ is, in almost all settings, 
of smaller order than $KL$; indeed, one can check 
that $\log |\mathfrak P|\geqslant 12$ implies that
$\min(n,m)\geqslant 12\times 10^4$.    
\end{remark}

\subsection[Risk bounds for the graphon estimators]{
Risk bounds for the graphon estimators $\hat W^{\sf 
LS}$ and $\hat W^{\sf EWA}$}


A suitable combination of the risk bounds established 
in \Cref{prob_matrix} on the error of estimators of the
mean matrix $\bTheta^* = \bfE[\bfH]$ and of inequality
\eqref{eq5:0},  allows us to get risk bounds for 
the graphon estimators $\hat W^{\sf LS}$ and $\hat 
W^{\sf EWA}$. We will focus on two classes, piecewise
constant and H\"older continuous graphons, for which 
the evaluation of the approximation error is provided 
in \Cref{prop:2}. Obviously, using this strategy makes 
the term $\inf_{\bTheta\in \calT} \|\bTheta -
\bTheta^*  \|_{\sf F}$---the oracle error---appear in 
the error bound, where $\calT$ is the set of
constant by-block-matrices defined in \eqref{TT}. 
In the case of the class of piecewise constant graphons, 
this oracle error vanishes, whereas in the case of
H\"older continuous graphons, it needs to be
evaluated, which is done in the next proposition. 
Note that, unlike in \Cref{ssec:3.1} 
and \Cref{ssec:3.2},  we now return to the original 
framework of random $\bTheta^*$ and all the expectations 
comprise integration with respect to the latent 
variables $\bU$ and $\bV$.

\begin{prop}\label{prop_aprox_error}
Let $W^*$ be $\alpha$-H\"older continuous, 
\textit{i.e.}, $W^* \in {\mathbb H}_{\alpha,\calL}$ for 
some $\alpha\in(0,1]$, $\calL >0$. Let $\Theta^*_{i,j} = 
W^*(U_i,V_j)$ for $i\in[n]$ and $j\in[m]$. Let 
$n_0 \geqslant 2$,  $m_0 \geqslant 2$, $K \leqslant 
n/n_0 $ and $L\leqslant  m/m_0 $ be four integers. 
Then, the $n\times m$ matrix $\bTheta^*$ with entries
$\Theta_{i,j}$ satisfies
\begin{align}\label{smooth_graphon_bound}
    \Ex\bigg[ \inf_{\bTheta\in \calT}\frac{\| 
    \bTheta - \bTheta^*  \|_{\sf F}}{\sqrt{nm}} 
    \bigg]^{\nicefrac12} \leqslant \frac{3\calL}{2} 
    \Big(\frac{1}{K^\alpha} + \frac{1}{L^\alpha}\Big),
\end{align}
where $\calT = \calT_{n_0,m_0}^{K,L}$ is the set 
of constant-by-block matrices defined in \eqref{TT}.
\end{prop}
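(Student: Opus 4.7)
The plan is to exhibit an explicit $\bTheta\in\calT_{n_0,m_0}^{K,L}$, constructed from a rank-based partition of the index sets driven by the latent variables, and then bound its approximation error by invoking the Hölder regularity of $W^*$. Concretely, sort the $U_i$'s, and group the indices into $K$ consecutive blocks $I_1,\ldots,I_K$ of size $\lfloor n/K\rfloor$ or $\lceil n/K\rceil$ based on the ranks; the hypothesis $K\leqslant n/n_0$ forces every block to contain at least $n_0$ indices, so the corresponding assignment matrix lies in $\ZZ(n,K,n_0)$. A symmetric construction on the $V_j$'s yields blocks $J_1,\ldots,J_L$ with $\bZright\in\ZZ(m,L,m_0)$.

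Next I would pick arbitrary representatives $i_k^\star\in I_k$ and $j_\ell^\star\in J_\ell$ and set $Q_{k,\ell}=W^*(U_{i_k^\star},V_{j_\ell^\star})$, so that $\bTheta=\bZleft\bfQ(\bZright)^\top$ belongs to $\calT_{n_0,m_0}^{K,L}$. For any $i\in I_k$ and $j\in J_\ell$, the Hölder condition together with sub-additivity $(a+b)^\alpha\leqslant a^\alpha+b^\alpha$ (valid for $\alpha\in(0,1]$ and $a,b\geqslant 0$) gives the pointwise bound
\begin{align*}
(\Theta^*_{i,j}-\Theta_{i,j})^2 \leqslant \calL^2\big((U_i-U_{i_k^\star})^2 + (V_j-V_{j_\ell^\star})^2\big)^{\alpha} \leqslant \calL^2\big(R_k^{2\alpha}+S_\ell^{2\alpha}\big),
\end{align*}
where $R_k$ and $S_\ell$ are the ranges of the $U_i$'s over $I_k$ and of the $V_j$'s over $J_\ell$, respectively. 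Summing over $(i,j)$ and normalizing by $nm$ collapses the problem to controlling $\Ex[R_k^{2\alpha}]$ and $\Ex[S_\ell^{2\alpha}]$.

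The key probabilistic input is that $R_k$ is the difference of two uniform order statistics, hence $R_k\sim\mathrm{Beta}(|I_k|-1,\,n-|I_k|+2)$, which yields the explicit identity $\Ex[R_k^2]=\frac{(|I_k|-1)\,|I_k|}{(n+1)(n+2)}\leqslant (|I_k|/n)^2$. Jensen's inequality (concavity of $x\mapsto x^\alpha$ on $\mathbb R_+$ for $\alpha\leqslant 1$) then gives $\Ex[R_k^{2\alpha}]\leqslant(|I_k|/n)^{2\alpha}$, and an analogous estimate holds for $S_\ell$. After combining $\sum_k|I_k|\,(|I_k|/n)^{2\alpha}\lesssim n/K^{2\alpha}$ with its row counterpart, one obtains
\begin{align*}
\Ex\bigg[\frac{\|\bTheta-\bTheta^*\|_{\sf F}^2}{nm}\bigg] \leqslant \calL^2\big(K^{-2\alpha}+L^{-2\alpha}\big),
\end{align*}
and the stated bound follows after taking square roots and invoking $\sqrt{a+b}\leqslant\sqrt{a}+\sqrt{b}$.

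The main obstacle, in my view, is tracking the precise constant $3\calL/2$ announced in the proposition: the argument above yields the correct scaling in $K,L,\alpha$, but matching the exact constant requires careful handling of the rounding when $K\nmid n$, of the exponent $\nicefrac12$ that appears in the statement, and of the slackness introduced by the successive Jensen steps. The geometry of the proof is, however, a direct analogue of the symmetric-graphon approximation argument of \citep[Proposition 3.2]{Klopp2017}, adapted to the bipartite setting with an independent row- and column-partition.
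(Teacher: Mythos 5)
Your construction is correct and follows essentially the same route as the paper: exhibit an explicit $\bTheta\in\calT_{n_0,m_0}^{K,L}$ via a rank-based partition of the row and column indices into $K$ and $L$ blocks of size about $n/K$ and $m/L$, invoke the H\"older bound and Jensen, and control the in-block spread of the uniform order statistics. Two minor technical differences are worth flagging. First, the paper sets $Q_{k,\ell}$ equal to the block average of $W^*(U_{(i)},V_{(j)})$ (which introduces an extra Jensen step), whereas you evaluate $W^*$ at one representative per block; both choices give the same final bound. Second, the paper appeals to \citep[Lemma 4.10]{Klopp2017} for $\Ex[|U_{(i')}-U_{(i)}|^2]$, whereas you re-derive this directly from $R_k\sim\mathrm{Beta}(|I_k|-1,n-|I_k|+2)$, obtaining the same inequality $\Ex[R_k^2]\leqslant(|I_k|/n)^2$; this is more self-contained. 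As to the constant you were worried about: the hypotheses $K\leqslant n/n_0$ and $n_0\geqslant 2$ imply $K\leqslant n/2$, hence $|I_k|\leqslant\lceil n/K\rceil\leqslant n/K+1\leqslant 3n/(2K)$. Carrying this through gives
\begin{align}
  \Ex\Big[\frac{\|\bTheta-\bTheta^*\|_{\sf F}^2}{nm}\Big]\leqslant\calL^2\Big(\big(\tfrac{3}{2K}\big)^{2\alpha}+\big(\tfrac{3}{2L}\big)^{2\alpha}\Big),
\end{align}
and taking square roots with $\sqrt{a+b}\leqslant\sqrt{a}+\sqrt{b}$ and $(3/2)^\alpha\leqslant 3/2$ for $\alpha\in(0,1]$ yields exactly $\tfrac{3\calL}{2}(K^{-\alpha}+L^{-\alpha})$, so your intermediate claim $\Ex[\cdot]\leqslant\calL^2(K^{-2\alpha}+L^{-2\alpha})$ was slightly too strong but the announced constant does come out once the rounding factor $3/2$ is tracked.
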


We have now all the necessary ingredients to state the
main results of this paper, quantifying the error of 
estimating the graphon. We do this first for the least 
squares estimator, considering in particular that the 
parameters $K$ and $L$ of the set $\calT$ (on 
which the minimum in \Cref{LSE1} is computed) is fixed. 
We then state the result for the exponentially weighted 
aggregate. 

\begin{theorem}\label{thm:1}
Let $\bfH$ be a $n\times m$ random matrix satisfying 
\Cref{ass:1} with some graphon $W^*:[0,1]^2\to [0,\rho]$. 
Assume that for some constant $\sigma >0$,  
conditionally to $\bU,\bV$, the random variables $(H_{ij}-\Ex[H_{ij}|\bU,\bV])$ satisfy the 
$(\sigma^2, b)$-Bernstein condition.
\begin{enumerate}
    \item Assume that the graphon $W^*$ is $(K,L)
    $-piecewise constant, 
    meaning that for some integers $K,L\geqslant 2$ and 
    for $0=a_0<\ldots<a_K = 1$, $0 = b_1<\ldots <b_L = 1$, 
    such that  
    \begin{align}\label{eq:Delta}
        \Delta^{(K)} := \min_{k\in[K]} |a_k-a_{k-1}|
        \geqslant \frac{8\log(nK)}{n}, \quad 
        \Delta^{(L)} := \min_{\ell\in[L]} |b_\ell - 
        b_{\ell-1}| \geqslant \frac{8\log(mL)}{m},
    \end{align}
    the function $W^*$ is constant on each rectangle $[a_{
    k-1},a_{k}[\times [b_{\ell-1},b_{\ell}[$. Then, the 
    estimator $\WLS = W_{\bThetaLS}$ with $\bThetaLS =
    \hat\bTheta{}_{n_0,m_0}^{\sf LS}[K,L]$ defined by
    \eqref{hat-Theta} satisfies
    \begin{align}
        \Ex[\delta(\WLS, W^*)^2]^{\nicefrac12} 
        &\leqslant \big(27\sigma^2 + 4b\rho \big)^{
        \nicefrac12} 
        \Big(\frac{3KL}{nm} + \frac{\log K}{m} + 
        \frac{\log L}{n} \Big)^{\nicefrac12}\!\!\! +
        \rho \bigg(\sqrt{\frac{K}{n}}
        + \sqrt{\frac{L}{m}}\bigg)^{\nicefrac12}\!\!,
    \end{align}
    provided that $\psi_{n,m}(\Delta^{(K,L)}) = 
    \frac{6\log (2e/\Delta^{(K)})}{m\Delta^{(L)}} + 
    \frac{6\log(2e/\Delta^{(L)})}{n\Delta^{(K)}} 
    \leqslant (\sigma/b)^2$.

    \item Assume that the graphon $W^*$ is 
    $\alpha$-H\"older continuous, meaning that 
    $W^* \in {\mathbb H}_{\alpha, \calL}$ for some 
    $\alpha\in(0,1]$ and $\calL > 0$. Assume 
    that\footnote{The assumption $n\geqslant m$ 
    does not cause any loss of generality, since 
    $n$ and $m$ play symmetric roles in the framework 
    under consideration.} the number of nodes $n,m$ 
    satisfy $n\geqslant m$ and
    \begin{align}\label{th2:1}
        \frac{m^{2\alpha + 1}}{n\log^4 (2n)} 
        \geqslant \calL^2 \frac{(4b/\sigma)^{4 
        (\alpha+1)} \vee 3}{(25\sigma^2 + 4b\rho)}.
    \end{align}
    Let $\beta = \alpha/(2\alpha + 2)$. 
    Then, there is a choice of $K,L,n_0,m_0$ such 
    that the least squares estimator $\WLS = 
    W_{\bThetaLS}$ with $\bThetaLS = \hat\bTheta{}^{
    \sf LS}_{n_0,m_0} [K,L]$ satisfies
    \begin{align}\label{riskW:LS}
     \Ex[\delta(\WLS , W^*)^2]^{\nicefrac12} 
        \leqslant 6\calL^{1 - 2\beta}\bigg( \frac{25 
        \sigma^2 + 4b\rho}{3nm}\bigg)^{\beta} + 
        \Big(\frac{(50\sigma^2 + 8b\rho)\log m}{m} 
        \Big)^{\nicefrac12} + \frac{4\calL}{
        m^{\alpha/2}}.
    \end{align}
\end{enumerate}
\end{theorem}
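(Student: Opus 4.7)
My plan is to derive both parts from the master decomposition \eqref{eq5:0}, namely
\begin{equation*}
\delta(\WLS,W^*) \leqslant \frac{\|\bThetaLS-\bTheta^*\|_{\sf F}}{\sqrt{nm}} + \delta(W_{\bTheta^*},W^*).
\end{equation*}
After squaring and taking expectations, Minkowski's inequality lets me bound the first summand using \Cref{prob_matrix} applied conditionally on $(\bU,\bV)$, and the second summand using \Cref{prop:2}. The only piece that requires genuinely different treatment in the two parts is the oracle error $\inf_{\bTheta\in\calT}\|\bTheta-\bTheta^*\|_{\sf F}$ produced by \Cref{prob_matrix}.

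For part 1, the key observation is that $\bTheta^*_{ij}=W^*(U_i,V_j)$ depends on $(U_i,V_j)$ only through the indices of the $K\times L$ rectangles they fall into, so $\bTheta^*$ belongs to $\calT_{n_0,m_0}^{K,L}$ as soon as each of these intervals receives at least $n_0$ (resp.\ $m_0$) latent points. I would set $n_0 = \lceil n\Delta^{(K)}/2\rceil$ and $m_0=\lceil m\Delta^{(L)}/2\rceil$. The counts $N_k\sim\mathrm{Binomial}(n,a_k-a_{k-1})$ satisfy $\mathbf P(N_k<n\Delta^{(K)}/2)\leqslant\exp(-n\Delta^{(K)}/8)\leqslant(nK)^{-1}$ by a Chernoff bound, where the hypothesis \eqref{eq:Delta} is used in the last inequality; an analogous bound holds for the column counts. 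A union bound gives a good event $\calG$ of probability $\geqslant 1-2/\min(n,m)$ on which the oracle term vanishes. With the above choices of $n_0,m_0$ the identity $\log(en/n_0)=\log(2e/\Delta^{(K)})$ reduces the condition $\psi_{n,m}(n_0,m_0)\leqslant(\sigma/b)^2$ of \Cref{prob_matrix} to the stated hypothesis. On $\calG^c$ I would use a crude $\rho^2 nm$ bound on $\|\bThetaLS-\bTheta^*\|_{\sf F}^2$; this contributes only a lower-order term and is responsible for the slight inflation from $25\sigma^2$ to $27\sigma^2$ in the announced constant. The bias $\delta(W_{\bTheta^*},W^*)$ is finally handled by \Cref{prop:2}(1) with $B-A\leqslant\rho$.

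For part 2, \Cref{prob_matrix}, \Cref{prop_aprox_error} and \Cref{prop:2}(2) combine via Minkowski to
\begin{equation*}
\Ex[\delta(\WLS,W^*)^2]^{\nicefrac12}\leqslant\tfrac{3\calL}{2}\bigl(K^{-\alpha}+L^{-\alpha}\bigr)+(25\sigma^2+4b\rho)^{\nicefrac12}\,r_{n,m}(K,L)+2\calL\bigl(n^{-\alpha/2}+m^{-\alpha/2}\bigr).
\end{equation*}
I would then choose $K=L=\bigl\lceil(\calL^2 nm/(3(25\sigma^2+4b\rho)))^{1/(2\alpha+2)}\bigr\rceil$, which balances the bias $\calL K^{-\alpha}$ against the stochastic term $\sqrt{(25\sigma^2+4b\rho)KL/(nm)}$ and reproduces the rate $\calL^{1-2\beta}((25\sigma^2+4b\rho)/(3nm))^\beta$ with $\beta=\alpha/(2\alpha+2)$. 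Using $n\geqslant m$ together with $\log K\leqslant C\log m$ (which follows from \eqref{th2:1}), the logarithmic contribution of $r_{n,m}$ collapses into $\sqrt{(50\sigma^2+8b\rho)\log m/m}$, while $2\calL n^{-\alpha/2}$ is absorbed into $2\calL m^{-\alpha/2}$, producing the final $4\calL m^{-\alpha/2}$ term.

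The main obstacle is checking that the above choice is admissible inside \Cref{prob_matrix}. Setting $n_0=\lfloor n/(2K)\rfloor$ and $m_0=\lfloor m/(2L)\rfloor$, both exceeding $3$ by virtue of \eqref{th2:1}, the quantity $\psi_{n,m}(n_0,m_0)$ is of order $(K\log K)/m+(L\log L)/n$; substituting the chosen $K=L$ shows that \eqref{th2:1} is precisely calibrated so that $\psi_{n,m}(n_0,m_0)\leqslant(\sigma/b)^2$. Specifically, the factor $(4b/\sigma)^{4(\alpha+1)}$ in \eqref{th2:1} appears after raising the constraint to the power $\alpha+1$ to solve for $K$, and the $\log^4(2n)$ absorbs the $\log K$ factor. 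Once this bookkeeping is completed, the three error contributions assemble into \eqref{riskW:LS}.
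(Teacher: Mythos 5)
The overall architecture of your proposal—decomposition \eqref{eq5:0}, Minkowski, \Cref{prop:2} for the bias, \Cref{prob_matrix} applied conditionally on $(\bU,\bV)$ for the stochastic part, a Chernoff union bound to make the oracle term vanish in Part~1, and the \eqref{th2:1}-calibrated choice $K=L\asymp(\calL^2 nm/(25\sigma^2+4b\rho))^{1/(2\alpha+2)}$ in Part~2—matches the paper. Your identification $n_0=n\Delta^{(K)}/2$ leading to $\log(en/n_0)=\log(2e/\Delta^{(K)})$ is exactly right, and the tail bound $\mathbb P(\calG^c)\leqslant n^{-1}+m^{-1}$ is correct.

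There is, however, a genuine gap in your treatment of $\calG^c$ in Part~1. The claim that $\|\bThetaLS-\bTheta^*\|_{\sf F}^2\leqslant\rho^2 nm$ deterministically is false. The matrix $\bThetaLS$ is the unconstrained least-squares fit over $\calT$: its entries are block averages of the $H_{ij}$, which are \emph{not} clipped to $[0,\rho]$. In the Bernoulli case they lie in $[0,1]\supsetneq[0,\rho]$; in the sub-Gaussian and Poisson cases they are unbounded, and no deterministic crude bound exists at all. What the paper does instead is bound the \emph{conditional} second moment: since $\bThetaLS=\Pi_{\hat\calT}\bfH$ for an orthogonal projection $\Pi_{\hat\calT}$ whose image always contains the constant matrix $(\rho/2)\mathbf 1_n\mathbf 1_m^\top$, the triangle inequality and the contraction property give
\begin{align}
\|\bThetaLS-\bTheta^*\|_{\sf F}\leqslant\|\Pi_{\hat\calT}(\bfH-\bTheta^*)\|_{\sf F}+\|\Pi_{\hat\calT}\bTheta^*-\bTheta^*\|_{\sf F}\leqslant\|\bfH-\bTheta^*\|_{\sf F}+\big\|(\rho/2)\mathbf 1_n\mathbf 1_m^\top-\bTheta^*\big\|_{\sf F},
\end{align}
whence $\Ex\big[\|\bThetaLS-\bTheta^*\|_{\sf F}^2\mid\bU,\bV\big]\leqslant(\sigma+\rho/2)^2nm$. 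Because $\Omega_0^c$ is $(\bU,\bV)$-measurable, this is exactly what is needed, and the inflation from $25\sigma^2$ to $27\sigma^2$ in the statement comes from absorbing $(\sigma+\rho/2)^2(1/n+1/m)$ via the elementary bound $1/n+1/m\leqslant\frac13(\sqrt{K/n}+\sqrt{L/m})$, not from your $\rho^2$ bound. Without this contraction step, the argument does not close in the general Bernstein setting.

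Two smaller remarks on Part~2: your stated choice $K=L=\lceil(\calL^2 nm/(3(25\sigma^2+4b\rho)))^{1/(2\alpha+2)}\rceil$ is off by a constant factor from the balance obtained from \eqref{ing_holder} (the factor $3$ should sit in the numerator, matching the paper's $\lfloor(3nm\calL^2/(25\sigma^2+4b\rho))^{1/2(\alpha+1)}\rfloor$); and with $n_0=\lfloor n/(2K)\rfloor$, $m_0=\lfloor m/(2L)\rfloor$ the hypothesis \eqref{th2:1} only guarantees $m/K>2$, which does not keep $m_0\geqslant 3$ as required by \Cref{prob_matrix}—the paper's choice $n_0=\lfloor n/K\rfloor$, $m_0=\lfloor m/K\rfloor$ avoids this. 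These are fixable, but the Part~1 crude bound is the step that needs a different idea.
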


In order to ease understanding of these results, let 
us make some comments. First, one can note that 
applying the first claim of the theorem to Bernoulli 
random variables\footnote{In fact, exactly the same
result holds if we replace Bernoulli by any 
distribution supported by $[0,1]$.} $H_{i,j}$ 
(the Bernstein condition is then fulfilled with 
$\sigma^2 = \rho$, $b=1/3$), we obtain
\begin{align}\label{upperW:bernoulli}
    \Ex[\delta(\WLS, W^*)^2]^{\nicefrac12} 
    &\leqslant 10 \Big(\frac{\rho KL}{nm} + 
    \frac{\rho\log K}{3m} + \frac{\rho\log L}{3n} 
    \Big)^{\nicefrac12} + \rho
    \bigg(\sqrt{\frac{K}{n}} + \sqrt{\frac{L}{m}}
    \bigg)^{\nicefrac12},
\end{align}
provided that $\frac{2}{3m \Delta^{(L)}} \log (2e/ 
\Delta^{(K)}) + \frac{2}{3n\Delta^{(K)}} \log(2e/ 
\Delta^{(L)})\leqslant \rho $. In the balanced 
setting $n=m$, $K = L$ and $\Delta^{(K)} = 
\Delta^{(L)} = 1/K$, the upper bound in 
\eqref{upperW:bernoulli} simplifies to 
\begin{align}
    10\sqrt{\rho}\,\frac{K}{n} + 9\Big(\frac{\rho
    \log K}{n}\Big)^{\nicefrac12} + \sqrt{2}\rho\Big(
    \frac{K}{n}\Big)^{\nicefrac14},
\end{align}
provided that $\rho\geqslant {2(K/n)\log(2e K)}$. 
The last expression is of the same order as the rate
established in \citep[Corollary 3.3.i)]{Klopp2017} for 
graphons of unipartite graphs. Furthermore, it holds 
under more general conditions on the observations 
and contains explicit values for the constants.

Second, one can have a closer look at the order of 
magnitude of the three terms appearing in \eqref{riskW:LS}
in the case $n=m$ tending to infinity and assuming $\calL, 
\sigma,b$ and $\rho$ to be of order one. Then the first
term is of order $(n^2)^{-\alpha/(2\alpha + 2)}$, which is
known to be the minimax optimal rate of estimating an
$\alpha$-H\"older continuous, $d=2$-variate regression 
function based on $n^2$ observations. The second term, of
order $(\log n/n)^{-1/2}$, is dominated by the first term
when $\alpha<1$, and has the optimal order up to a 
logarithmic factor when $\alpha = 1$. The third term 
being of order $n^{-\alpha/2}$, is of optimal order 
$n^{-1/2}$ when $\alpha=1$, and is the largest term of 
the sum for all $\alpha < 1$. 

To the best of our knowledge, the question of whether there 
are estimators of H\"older-continuous graphons that achieve 
a faster rate of convergence than $n^{-\alpha/2}$ remains 
open. The common belief is that this term is unavoidable 
and it is the price to pay for not observing the covariates
$\bU,\bV$. Note that the deterioration caused by this lack 
of information, measured by the ratio of the third and the
first terms of the risk bound in \eqref{riskW:LS} is of
order $n^{\alpha(1-\alpha)/(2\alpha+2)}\leqslant n^{0.086}$, 
when $n$ and $m$ are of the same order. From a practical
point of view, this deterioration is not significant, since
even for $n=10^{9}$, $n^{0.086}\leqslant 6$.

One can also draw the consequences of the second
claim of the theorem under various (conditional to
$\bU$, $\bV$) distributions of $H_{i,j}$. For 
$\alpha = 1$ ($\beta = 1/4$), that is 
Lipschitz-continuous graphons, conditions 
\eqref{th2:1} and inequality \eqref{riskW:LS} are 
reported in \Cref{tab:1}.

\begin{table}
    \centering
    \begin{tabular}{c|c|c|c}
        \toprule
        Distr.\ of $H_{ij}$ & Values $(\sigma^2,b)$ 
        & Condition \eqref{th2:1} & Risk Bound \eqref{riskW:LS}\\
        \midrule 
        Bernoulli$(\rho)$ & $(\rho,1/3)$  & $\rho^5 \geqslant 
        \dis\frac{\calL^2 n\log^4(2n)}{m^3}$ & 
        $\dis \frac{11 \sqrt{\calL} \rho^{1/4}}{(nm)^{1/4}} + 
                \frac{8\sqrt{\rho\log m}}{\sqrt{m}} + 
                \frac{4\calL}{\sqrt{m}}$\\
        \midrule 
        Binomial$(N,\rho)/N$ & $(\rho/N,1/3N)$  & $\rho^5
        \geqslant 
        \dis\frac{\calL^2 N n\log^4(2n)}{m^3}$ & 
        $\dis \frac{11\sqrt{\calL} \rho^{1/4}}{(Nnm)^{1/4}} + 
                \frac{8\sqrt{\rho\log m}}{\sqrt{Nm}} + 
                \frac{4\calL}{\sqrt{m}}$\\
        \midrule 
        sub-Gauss$(\sigma^2)$ & $(\sigma^2, 0)$  & $\sigma^2 
        \geqslant\dis\frac{3\calL^2 n\log^4(2n)}{25m^3}$ & 
        $\dis \frac{11\sqrt{\calL\sigma}}{(nm)^{1/4}} + 
                \frac{8\sigma\sqrt{\log m}}{\sqrt{m}} + 
                \frac{4\calL}{\sqrt{m}}$\\
        \midrule 
        Poisson$(T\rho)/T$ & $(\rho/T,1/3T)$  &  $\rho^5
        \geqslant \dis\frac{\calL^2 T n\log^4(2n)}{m^3}$ & 
        $\dis \frac{11\sqrt{\calL} \rho^{1/4}}{(Tnm)^{1/4}} + 
                \frac{8\sqrt{\rho\log m}}{\sqrt{Tm}} + 
                \frac{4\calL}{\sqrt{m}}$\\
        \bottomrule
    \end{tabular}
    \caption{Upper bound for Lipschitz-continuous
    graphons and various distributions.}
    \label{tab:1}
\end{table}

To close this section, we state the risk bounds that
can be obtained by combining \Cref{th:EWA} and 
\Cref{thm:1}. To keep the statement simple, only the
case of piecewise constant graphon is presented.

\begin{corollary}
    Let $n\leqslant m$ and let $\bfH$ be a $n\times m$ random matrix satisfying 
    \Cref{ass:1} with some $(K,L)$-piecewise constant
    graphon $W^*:[0,1]^2\to [0,\rho]$. This means that 
    for some integers $K,L\geqslant 2$ and for $0 = a_0 
    < \ldots <a_K = 1$, $0 = b_1<\ldots <b_L = 1$ such
    that  \eqref{eq:Delta} holds, the function $W^*$ is
    constant on each rectangle $[a_{k-1},a_{k}[\times
    [b_{\ell-1},b_{\ell}[$. Let $\mathfrak P$ be chosen 
    as in \Cref{rem:1} and let $\WEWA[\mathfrak P] =
    W_{\bThetaEWA[\mathfrak P]}$.
    \vspace{-8pt}

    \begin{enumerate}\itemsep=0pt
        \item (Bernoulli/binomial model) Assume that 
        for some $N\in\mathbb N$, conditionally to $(\bU, 
        \bV), $$NH_{i,j}$ is drawn from the ${\sf binomial}(N, \Theta_{i, 
        j}^*)$ distribution with $\Theta^*_{i,j} = W^*(U_i,V_j) 
        \leqslant \rho $ for every $i\in[n]$ and $j \in
        [m]$. For $\beta = 8/(3N)$, we have
        \vspace{-6pt}
    \end{enumerate}
    \begin{align}
            \Ex[\delta(\WEWA, W^*)^2]^{\nicefrac12} 
        &\leqslant 9
        \Big(\frac{2\rho KL + \log \log_2 m}{nm} + \frac{\rho\log K}{m} + 
        \frac{\rho\log L}{n} \Big)^{\nicefrac12}\!\!\! +
        {\rho} \bigg(\sqrt{\frac{K}{n}}
        + \sqrt{\frac{L}{m}}\bigg)^{\nicefrac12}
    \end{align}
    \begin{enumerate}
        \item[] provided that $ \frac{\log
        (3e/\Delta^{(K)})}{m\Delta^{(L)}} + \frac{\log(3e/
        \Delta^{(L)})}{n\Delta^{(K)}}\leqslant N\rho$.
    \end{enumerate}

    \begin{enumerate}
        \setcounter{enumi}{1} 
        \item (Gaussian model) Assume that, conditionally 
        to $(\bU,\bV)$, the entries $H_{i,j}$ are drawn from the Gaussian 
        $\calN (W^*(U_i,V_j), \sigma^2 (U_i,V_j))$ distribution 
        with $|W^*(u,v)| \leqslant \rho$ and  $\sigma^2(u,v) 
        \leqslant \sigma^2 $ for every $u,v\in[0,1]$. For 
        $\beta = 4\sigma^2$, 
        we have\vspace{-5pt}
    \end{enumerate}
    \begin{align}
            \Ex[\delta(\WEWA, W^*)^2]^{\nicefrac12} 
            &\leqslant 5\sigma
            \Big(\frac{2KL + \log \log_2 m}{nm} + \frac{\log K}{m} + \frac{\log L}{n}
            \Big)^{\nicefrac12}\!\!\! + {2\rho} \bigg(\sqrt{ 
            \frac{K}{n}} + \sqrt{\frac{L}{m}} \bigg)^{ 
            \nicefrac12}\!\!\!.
        \end{align}
\end{corollary}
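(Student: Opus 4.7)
The plan is to combine the EWA matrix risk bound of \Cref{th:EWA} with the triangle-type inequality \eqref{eq5:0} and the approximation bound of \Cref{prop:2}(i). First, \eqref{eq5:0} and Minkowski's inequality give
\[
\Ex[\delta(\WEWA,W^*)^2]^{\nicefrac12} \leqslant \frac{\Ex[\|\bThetaEWA - \bTheta^*\|_{\sf F}^2]^{\nicefrac12}}{\sqrt{nm}} + \Ex[\delta(W_{\bTheta^*},W^*)^2]^{\nicefrac12}.
\]
Since $W^*$ takes values in $[0,\rho]$ and is $(K,L)$-piecewise constant, \Cref{prop:2}(i) bounds the second term by $(\rho/\sqrt{2})(\sqrt{K/n}+\sqrt{L/m})^{\nicefrac12}$, which already produces the second summand of the claimed bound (up to the numerical constant $\rho$ vs.\ $\rho/\sqrt 2$).

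For the first term I would invoke \Cref{th:EWA}(1) in the Bernoulli/binomial case (the Gaussian case follows analogously from \Cref{th:EWA}(2)). The minimum over $\mathfrak p \in \mathfrak P$ in that theorem is controlled by selecting a judicious element $\tilde{\mathfrak p} = (\tilde K, \tilde L, \tilde n_0, \tilde m_0) \in \mathfrak P$ that makes the oracle error $\inf_{\bTheta \in \calT[\tilde{\mathfrak p}]} \|\bTheta - \bTheta^*\|_{\sf F}$ effectively vanish. Because the geometric grid of \Cref{rem:1} has ratio $\sqrt 2$, one can pick $\tilde K \in [K,\sqrt 2 K]$ and $\tilde L \in [L,\sqrt 2 L]$ in the grid; the lower bounds \eqref{eq:Delta} on $\Delta^{(K)}$, $\Delta^{(L)}$ then make it possible to also choose $\tilde n_0, \tilde m_0$ in the grid satisfying $\tilde n_0 \leqslant n\Delta^{(K)}/2$ and $\tilde m_0 \leqslant m\Delta^{(L)}/2$.

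The key probabilistic step is that each true row-cluster $\{i : U_i \in [a_{k-1},a_k)\}$ (and analogously each column-cluster) contains at least $\tilde n_0$ (resp.\ $\tilde m_0$) elements with high probability. A standard binomial concentration bound combined with the lower bounds \eqref{eq:Delta} produces an event $\mathcal E$ of probability $1 - O(1/(nm))$ on which $\bTheta^*$ is block-constant with blocks compatible with some $(\bfZleft,\bfZright) \in \ZZ(n,\tilde K,\tilde n_0)\times \ZZ(m,\tilde L,\tilde m_0)$, so that $\bTheta^* \in \calT[\tilde{\mathfrak p}]$ and the oracle error vanishes. On $\mathcal E^c$ one relies on the uniform boundedness of the entries of $\bTheta^*$ and $\bThetaEWA$ (both are convex combinations of $H_{i,j}\in[0,1]$ in the binomial setting) to show that the contribution is negligible compared to the leading terms.

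Applying \Cref{th:EWA}(1) then yields, modulo the $\mathcal E^c$ remainder,
\[
\frac{\Ex[\|\bThetaEWA - \bTheta^*\|_{\sf F}^2]^{\nicefrac12}}{\sqrt{nm}} \leqslant 9\sqrt{\rho/N}\,r_{n,m}(\tilde K,\tilde L) + \sqrt{\frac{8\log|\mathfrak P|}{3Nnm}}.
\]
Since $\tilde K \leqslant \sqrt 2 K$ and $\tilde L \leqslant \sqrt 2 L$, the quantity $r_{n,m}(\tilde K, \tilde L)$ is at most a constant multiple of $r_{n,m}(K,L)$, and by \Cref{rem:1} together with $n\leqslant m$ one has $\log|\mathfrak P| = O(\log\log_2 m)$. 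Regrouping the two summands under a single square root via $a + b \leqslant \sqrt 2 \sqrt{a^2+b^2}$ produces the form $(2\rho KL + \log\log_2 m)/(nm) + (\rho\log K)/m + (\rho\log L)/n$ under the square root, which yields the first summand of the claimed bound. I expect the main obstacle to be the bookkeeping required to match the explicit numerical constants in the statement---in particular verifying that the bound on $\prob(\mathcal E^c)$ is tight enough that the uniform-boundedness argument absorbs the contribution on $\mathcal E^c$ into the other remainder terms without inflating the constants.
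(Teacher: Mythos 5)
Your strategy is the right one and matches the route implied by the paper (it declares this corollary as obtained by "combining \Cref{th:EWA} and \Cref{thm:1}"; the relevant mechanism is the $\Omega_0$-argument in the proof of the first claim of \Cref{thm:1}, transposed from \Cref{prob_matrix} to \Cref{th:EWA}). Two points in your sketch are off, however, and one of them would leave a real gap in the Gaussian case.

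First, the probability of the bad event is \emph{not} $O(1/(nm))$. With the assumption \eqref{eq:Delta} and a Chernoff bound, each true row-cluster fails to contain at least half its expected number of points with probability at most $e^{-n_k/8}\leqslant 1/(nK)$, and summing over the $K$ clusters and the $L$ column-clusters yields $\prob(\mathcal E^c)\leqslant 1/n + 1/m$. This is still small enough to be absorbed into the second, $\rho(\sqrt{K/n}+\sqrt{L/m})^{1/2}$ term (not into the $\log\log_2 m/(nm)$ term), exactly as in the proof of \Cref{thm:1}, so the argument survives; but as written the claim is wrong, and if you had tried to hide $\prob(\mathcal E^c)$ under the $(nm)^{-1}$-scaled remainder you would have failed.

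Second, the "uniform boundedness of the entries" argument you invoke on $\mathcal E^c$ only works in the binomial model, where $H_{i,j}\in[0,1]$. In the Gaussian claim of the corollary the entries are unbounded, and $\bThetaEWA$ inherits unbounded entries, so that route is dead. What the paper actually does in the proof of \Cref{thm:1} is a deterministic contraction/triangle argument: each least-squares estimator $\hat\bTheta{}^{\sf LS}_\ell$ is an orthogonal projection $\Pi_{\hat\calT_\ell}\bfH$, and $(\rho/2)\mathbf 1_n\mathbf 1_m^\top\in\hat\calT_\ell$, so $\|\hat\bTheta{}^{\sf LS}_\ell-\bTheta^*\|_{\sf F}\leqslant\|\bfH-\bTheta^*\|_{\sf F}+\|(\rho/2)\mathbf 1_n\mathbf 1_m^\top-\bTheta^*\|_{\sf F}$ pointwise; since $\bThetaEWA$ is a convex combination of the $\hat\bTheta{}^{\sf LS}_\ell$, the same bound carries over by the triangle inequality, and conditioning on $(\bU,\bV)$ then gives $\frac1{nm}\Ex[\|\bThetaEWA-\bTheta^*\|_{\sf F}^2\mid\bU,\bV]\leqslant(\sigma+\rho/2)^2$ on $\mathcal E^c$ as well. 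You need this version of the argument, not boundedness of $H_{i,j}$, for claim 2. The remaining bookkeeping you describe (picking $\tilde{\mathfrak p}\in\mathfrak P$ on the grid, absorbing $\log|\mathfrak P|=O(\log\log_2 m)$, refining the $K$ true clusters into $\tilde K$ blocks of size $\geqslant\tilde n_0$) is the right thing to do, though one should verify that the chosen $\tilde n_0\leqslant n/(2\tilde K)$ rather than merely $\tilde n_0\leqslant n\Delta^{(K)}/2$, so that the refinement into $\tilde K$ subclusters of admissible size is actually feasible.
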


\section{Tractable approximation of the least-squares 
estimator}\label{sec:4}

The least squares estimator introduced in \eqref{LSE1}
and studied in previous sections is a solution to
a combinatorial optimization problem that is 
computationally intractable. It is impossible to
compute this estimator in polynomial time.  The goal
of this section is to present a tractable algorithm 
that computes an approximation to $\bThetaLS$. 
Of course, there is no guarantee that the presented
algorithm provides an estimator that is always
close to $\bThetaLS$, but it is plausible that 
this is true in many cases.

The proposed approximation can be seen as a 
version of Lloyd's algorithm for $k$-means 
clustering \citep{Lloyd}. To describe it, let 
us recall  that the least square estimator is 
defined by
\begin{align}\label{LSE2}
    (\widehat \bfQ, \hatbfZleft, \hatbfZright
    )^{\sf LS} \in \arg \min_{\substack{
        \bfQ \in \mathbb R^{K \times L} \\ 
        \bfZleft \in \ZZ(n,K,n_0)\\
        \bfZright \in \ZZ(m,L,m_0)}}
    \big\| \bfH - \bfZleft\bfQ(\bfZright)^\top 
    \big\|_{\sf F}^2.
\end{align}
It turns out that when we fix two of the three 
arguments $\bfQ,\bfZleft,\bfZright$ of $\calL$, 
the minimization problem with respect to the third 
becomes tractable. We can therefore use the 
alternating minimization algorithm below, with 
the guarantee that the cost function $\calL( 
\bfZleft, \bfQ, \bfZright) = \| \bfH - \bfZleft 
\bfQ (\bfZright)^\top \|_{\sf F}^2$ decreases 
at each iteration. Different versions of this
algorithm have been studied in the literature 
on estimation and detection in the presence of 
a latent structure \citep{coja2010graph,Lei2015, 
lu2016statistical, giraud2019partial}. 

\begin{algorithm}
\caption{Lloyd's algorithm of alternating 
minimization for approximating the LSE \eqref{LSE2}} 
\label{alg:lloyd}
\begin{algorithmic}[ht!]
\Require $\bfZleft$, $\bfZright$ the left and 
right cluster matrices with entries in $\{0,1\}$, 
$\bf H$ the data matrix.
\Ensure  $(\bfZleft,\bfQ,\bfZright)$ local 
minimizer of $\calL( \cdot , \cdot ,\cdot )$.
\State Repeat:
\begin{enumerate}
\State Compute $\bfQ = (\bfZleft_{\sf{norm}}
)^\top \bfH \bfZright_{\sf{norm}} $ where 
$\bfZleft_{\sf{norm}}$ is the matrix $\bfZleft$ 
with normalized columns with respect to 
$\ell^1$-norm (the number of 1 in the column), 
and similarly for $\bfZright_{\sf{norm}}$. 

\State Update $\bfZleft$ that minimize $\bfZ 
\mapsto \calL(\bfZ, \bfQ, \bfZright)$ 
\State Update $\bfZright$ that minimize $\bfZ 
\mapsto \calL(\bf \bfZleft, \bfQ, \bfZ)$ 
\end{enumerate}
\end{algorithmic}
\end{algorithm}

The rest of this section provides more details on
each step of this algorithm, as well as on the 
initialization and on the stopping criterion.

\paragraph*{Minimization in the second argument \texorpdfstring{$\bfQ$}{}}

When the clusters are known, meaning
that we know matrices $\bfZleft \in \ZZ(n,K,n_0)$ and
$\bfZright \in \ZZ(m,L,m_0) $, see \eqref{cluster_matrix}, the solution to
\begin{align}
   \widehat \bfQ = \arg \underset{\bfQ \in \mathbb R^{K \times L}}{ \min} \, \calL(\bfZleft, \bfQ, \bfZright) 
\end{align} 
is easy to compute: each entry $\widehat Q_{kl}$ is
the average of the coefficients $H_{ij}$ belonging to
the $(k,l)$-block defined by $\bfZleft$ and $\bfZright$ (a coefficient $(i,j)$ is in the block $(k,l)$ if $\Zl_{ik}=1$ and $\Zr_{jl} = 1$). Formally, this is equivalent to
$\bfQ = (\bfZleft_{\sf{norm}})^\top \bfH \bfZright_{\sf{norm}}$
where $\bfZ_{\sf{norm}} = (\bfZ^\top\bfZ)^{-1}\bfZ$ 
for $\bfZ \in\{\bfZleft, \bfZright\}$.

\paragraph*{Minimization with respect to \texorpdfstring{$\bfZleft$}{}}
We focus now on the problem of minimizing the cost
function $\calL(\bfZleft, \bfQ, \bfZright)$ over
$\bfZleft \in \ZZ(n,K,n_0)$. 
Let us first consider the relatively
simple case $n_0=0$ when there is no constraint 
on the cardinality of left clusters. We aim to find $\hatbfZleft \in \mathbb R^{n \times K}$ that minimizes $\bfZ \mapsto \calL(\bfZ, \bfQ, \bfZright)$ under the constraints that ${\bfZ} \in \{0 ,1 \}^{n \times K}$ and ${\bfZ} {\mathbf 1}_K = {\mathbf 1}_n$
(\textit{i.e.}, each row of $\bfZ$ has only one 
entry equal to $1$). Let us define 
$\Cr_\ell = \{ j \in [m] , \Zr_{i\ell} 
=1 \}$ to be the $\ell$-th right cluster
and introduce notation
\begin{align}
    \quad \bar H^{\rm item}_{
    i\ell} = \frac{1}{|\Cr_\ell|} \sum_{j\in 
    \Cr_\ell} H_{ij}\quad\text{ and }\quad \mathbf D =
    \textsf{diag}(|\Cr_\ell|)_{\ell\in[L]}.
\end{align}
Simple algebra yields
\begin{align}
    \calL(\bfZ, \bfQ, \bfZright) 
    &= \sum_{i=1}^n \| H_{i, \bullet} - \bZ_{i, \bullet}^\top \bfQ (\bfZright)^\top \|_2^2,
\end{align}
where $\bZ_{i, \bullet}$ is $i$th row of $\bfZ$. 
Since $\bZ_{i, \bullet}$ is allowed to have only one nonzero entry, and it should be equal to one, $\bZ_{i, \bullet}^\top \bfQ (\bfZright)^\top$ is merely equal
to one row of $\bfQ (\bfZright)^\top$. 
This implies
that $\hZl_{i \, j} = \mathds 1(j= k_i)$, where
\begin{align}
    \hat k_i &= \arg \min_{k \in [K]} 
    \big\| H_{i, \bullet} - \bfQ_{k,\bullet}
    (\bfZright )^\top \big\|_2^2 
    = \arg \min_{k \in [K]}   
    \sum_{\ell=1}^L \sum_{j \in \Cr_\ell} 
    ( H_{i j} - Q_{kl} )^2 .
\end{align}
We can rewrite the above expression  of $\hat k_i$ as 
\begin{align}
    \hat k_i &= \arg\min_{k \in [K]}
    \, \| \mathbf D^{1/2}(\bar{\bH}_{i, \bullet
    }^{\rm item} - \bQ_{k, \bullet})^\top\|_2^2.
\end{align}
Thus, in order to determine $\bfZleft$, it suffices
to compute the matrix $\bar\bfH^{\rm item}\in\mathbb 
R^{n\times L}$ and then find for each row of 
$\bar\bfH^{\rm item}$ the closest row of $\bfQ$. Of
course, the same procedure is valid for minimizing
$\calL$ with respect to $\bfZright$ for known 
$\bfZleft$ and $\bfQ$.

Let us return to the general case $n_0\geqslant 0$. 
In this case, we show that the minimization of 
$\calL$ with respect to $\bfZleft$ can be done by
solving a linear program.  Indeed, let is define
\begin{align}
    \phi(\bfZ) &= -2\tr(\bfZ \bfQ(\bfZright)^\top
    \bfH^\top) + \sum_{k=1}^K \mathbf 1_n^\top
    \bZ_{\bullet,k} \bQ_{k,\bullet}\mathbf D
    \bQ_{k,\bullet}\\
    \widetilde \ZZ(n,K,n_0) &= \Big\{ \bfZ \in 
    [0,1]^{n \times K}: \bfZ\mathbf 1_K = 
    \mathbf 1_n \text{ and } \min_{k\in[K]} \mathbf 1_n^\top\bZ_{\bullet, k} \geqslant n_0 \Big\}.
\end{align}  
Note that $\phi$ is a linear function of $\bfZleft$, 
whereas $\widetilde \ZZ(n,K,n_0)$ is a convex
polytope containing $ \ZZ(n,K,n_0)$.

\begin{prop}\label{prop:polytope}
The following two claims hold true.
\vspace{-4pt}
\begin{enumerate}\itemsep=0pt
    \item The function $\calL(\bfZ,\bfQ,\bfZright) 
    - \phi(\bfZ) = \|\bfH\|_{\sf F}^2$ is independent 
    of $\bfZ$. 
    \item The set of extreme points of $\widetilde
    \ZZ(n,K,n_0)$ is $\ZZ(n,K,n_0)$. Equivalently, 
    an element of $\tilde\ZZ(n,K,n_0)$ is an extreme
    point if and only if all its entries are either
    0 or 1.
\end{enumerate}
\end{prop}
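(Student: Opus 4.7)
The plan is to handle the two claims separately. For Claim 1, the proof reduces to a direct expansion of the Frobenius norm. Writing
\begin{align*}
    \calL(\bfZ, \bfQ, \bfZright) = \|\bfH\|_{\sf F}^2 - 2\tr(\bfZ\bfQ(\bfZright)^\top\bfH^\top) + \tr(\bfZ\bfQ(\bfZright)^\top\bfZright\bfQ^\top\bfZ^\top),
\end{align*}
I would then use two structural identities specific to the combinatorial constraints. First, since $\bfZright$ has exactly one $1$ per row and $0$s elsewhere, $(\bfZright)^\top\bfZright = \mathbf D$. Second, for $\bfZ \in \ZZ(n,K,n_0)$, the matrix $\bfZ^\top\bfZ$ is also diagonal, with $(\bfZ^\top\bfZ)_{kk} = \mathbf 1_n^\top \bZ_{\bullet,k}$. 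Using the cyclic property of the trace, the quadratic term collapses to $\sum_k (\mathbf 1_n^\top\bZ_{\bullet,k})\,\bQ_{k,\bullet}\mathbf D\bQ_{k,\bullet}^\top$, matching exactly the $\bfQ$-quadratic part of $\phi(\bfZ)$, which yields $\calL(\bfZ,\bfQ,\bfZright) - \phi(\bfZ) = \|\bfH\|_{\sf F}^2$.

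For Claim 2, the inclusion $\ZZ(n,K,n_0) \subseteq \mathrm{ext}(\widetilde\ZZ(n,K,n_0))$ is immediate: if a $\{0,1\}$-valued matrix is a convex combination of two matrices whose entries lie in $[0,1]$, the two matrices must coincide with it entrywise. For the converse, I plan to fix $\bfZ \in \widetilde\ZZ(n,K,n_0)$ with at least one fractional entry and produce two distinct matrices $\bfZ^\pm \in \widetilde\ZZ(n,K,n_0)$ satisfying $\bfZ = \tfrac{1}{2}(\bfZ^+ + \bfZ^-)$. The combinatorial device is the bipartite graph $G$ with vertex set $[n]\sqcup [K]$ and edges $\{(i,k): Z_{ik}\in(0,1)\}$.

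The argument proceeds in three steps. First, no row vertex of $G$ can have degree $1$: if row $i$ contained a single fractional entry $Z_{ik^*}$, the $\{0,1\}$-valued entries in that row would sum to $1 - Z_{ik^*}\notin\mathbb Z$, a contradiction. Second, $G$ is acyclic: alternating $\pm\eps$ along the edges of a cycle (well defined since bipartite cycles have even length) preserves every row and column sum, so the resulting perturbations $\bfZ^\pm$ lie in $\widetilde\ZZ(n,K,n_0)$ for small $\eps$ and witness non-extremity. Consequently, $G$ is a forest whose leaves, in each non-trivial component, can only be columns, and every non-trivial tree component therefore contains at least two column leaves.

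The main subtlety, and the place where the cluster-size constraint must be handled carefully, lies in the final perturbation. I would use a short integrality argument: at a column leaf $k$, the unique fractional entry $Z_{ik}$ makes $\mathbf 1_n^\top \bZ_{\bullet,k} = Z_{ik} + (\text{non-negative integer})$ non-integer, so the constraint $\mathbf 1_n^\top \bZ_{\bullet,k} \geqslant n_0$ is strictly satisfied since $n_0\in\mathbb N$. Choosing two column leaves $k_1,k_2$ in the same tree component and alternating $\pm\eps$ along the unique $k_1$-to-$k_2$ path then preserves the sums of all internal rows and columns (each is incident to one $+\eps$ edge and one $-\eps$ edge), while shifting the column sums at $k_1$ and $k_2$ by $\pm\eps$, which is absorbed by the strict inequalities just established. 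Flipping the sign pattern yields the second perturbation, contradicting extremity and completing the proof.
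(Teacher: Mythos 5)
Your proof is correct, and it follows the same overall strategy as the paper for both claims. Claim~1 is handled identically: expand the Frobenius norm, exploit $(\bfZright)^\top\bfZright = \mathbf D$ and the diagonal structure of $\bfZ^\top\bfZ$, and collapse the quadratic term via the cyclic property of the trace. For Claim~2, both you and the paper argue via the bipartite graph on rows and columns whose edges are the fractional entries, and both construct an alternating $\pm\varepsilon$ perturbation along either a cycle or a path terminating at columns where the cardinality constraint is slack.

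The place where your argument is genuinely tidier is the identification of the path endpoints. The paper's Lemma~\ref{cycle} distinguishes the case where all column sums are integers (forcing a cycle) from the case where some column sum is non-integral, and in the latter case iteratively searches for a column with sum $>n_0$ or a repeated index, requiring a ``shifted sequence'' clean-up. You instead observe that, once cycles are ruled out, the graph is a forest whose leaves must all be columns (since every row with a fractional entry has at least two, because the row sum equals $1\in\mathbb N$), and that at any column leaf the column sum equals a single fractional entry plus a non-negative integer, hence is non-integral, hence automatically strictly exceeds $n_0\in\mathbb N$. This single observation replaces the paper's case analysis on whether column sums are integers and whether the $>n_0$ condition is reached before a repeat, and it makes it immediate that the path between two column leaves in the same tree component yields a feasible perturbation. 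The trade-off is minor: the paper's recursive construction is more constructive in flavor, while yours requires one invocation of the fact that a nontrivial tree has at least two leaves; both deliver the same conclusion, but yours does so with less bookkeeping.
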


The set $\widetilde \ZZ(n,K,n_0)$ is a convex polytope 
because it is defined by linear constraints. A well 
known result \citep[p 65, Thm 2.7]{polytope} implies that if 
$\cal L(\cdot,\bfQ,\bfZright)$  has a minimizer in the 
polytope $\widetilde\ZZ(n,K,n_0)$, then it has at least 
one solution in the set of its  extreme points $\ZZ(n,K,n_0)$. 
There are many efficient solvers for finding such a solution.

\paragraph*{Initialization}

The initialization of \Cref{alg:lloyd} might have a strong 
impact on the final result. One possible strategy is to 
run in parallel $N$ instances of the algorithm with 
different initial values, chosen at random. The final 
estimator is the one that minimizes $\calL$ among the 
resulting ($N$) candidates. 

Another strategy, often used in conjunction with Lloyd's 
algorithm, is based on spectral initialization. When 
the graphon is piecewise constant, the problem 
under consideration is nothing else but the 
bi-stochastic block model for bipartite networks. 
Therefore, initial values $(\bfZleft,\bfZright)$ can
be obtained, for instance, by the spectral method from
\citep{biclustering}. It consists in computing the 
$K$-truncated singular value decomposition of a 
regularized version of matrix $\bfH$, and then, in 
applying $k$-means clustering to the $K$-truncated 
left singular vectors to obtain an initialization for $\bfZleft$.  
The procedure is similar for $\bfZright$.

\paragraph*{Stopping rule}

As mentioned, the cost function is non increasing 
over the iterations, and it takes its values in 
a finite set since there is only a finite number 
of configurations for $(\bfZleft,\bfZright)$. This 
is why, from a certain iteration onwards, the 
values of the cost function remain constant. 
However, the algorithm may require a large number 
of iterations to achieve consistency. This suggests to
stop iterating if either two consecutive
values of the cost function are equal or the maximum 
number of iterations is attained. 

To conclude this section, we stress once again that 
there is no guarantee that the computationally 
tractable algorithms we presented here provide the 
global minimum of the cost function $\calL$. However, 
as can be seen from the numerical examples in 
\Cref{sec:exp}, results are quite satisfactory.

\section{Lower bounds on the minimax risk}
\label{sec:5}

We show in this section, that the least squares 
estimator $\WLS$ is optimal, among all possible
estimators, in the sense of its rate of convergence 
in the worst case over the class
\begin{align}
    \calW_\rho[K,L] = \bigg\{W:[0,1]^2\to [0,\rho] : 
    \exists \ \{I_k\}_{k=1}^K, \{J_\ell\}_{\ell=1}^L
    \text{ s.t. } W = \sum_{k=0}^{K-1}
    \sum_{\ell = 0}^{L-1} W(a_k,b_\ell)\mathds 1
    _{I_k\times J_\ell}
    \bigg\},
\end{align}
where $I_k = [a_k,a_{k+1})$ and $J_\ell = [b_\ell, 
b_{\ell+1})$ form a partition of $[0,1)$ into intervals. 
The lower bound will be proven for the binomial model, 
but all the techniques used in the proof can be extended
to the other models presented in the introduction. 

\begin{figure}
    \centering
    \includegraphics[width = 0.8\textwidth]{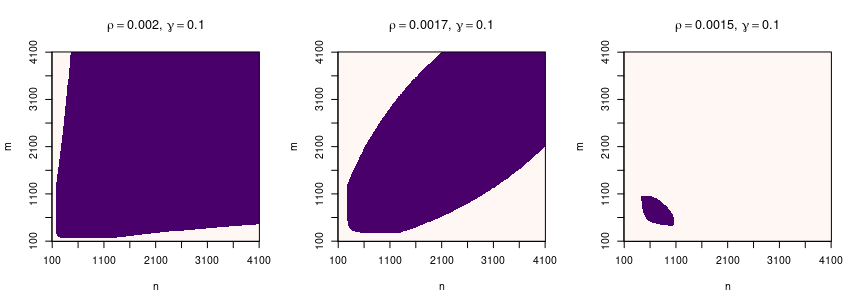}
    \vglue-15pt
    \includegraphics[width = 0.8\textwidth]{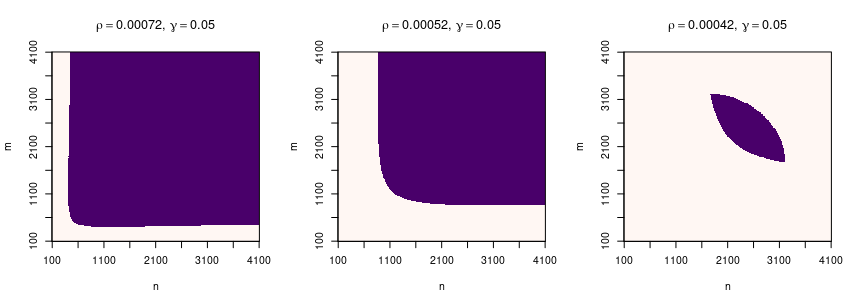}
    \vglue-5pt
    \caption{Illustration of the optimality
    of the least squares estimator for $N=1$. The purple 
    area corresponds to the values of $n$ and $m$,
    for some fixed values of $\rho$ and $\gamma = 
    K/n = L/m$, for which the lower bound is 
    within a constant factor of the upper bound. 
    More precisely, when $\rho\gamma^2\wedge \rho^2 + 
    \rho(nm)^{-1/2}\wedge\rho^2+2\rho^2\sqrt{\gamma}$
    is larger than half of $\rho\gamma^2 + 
    (\rho\log K)/(3m) + (\rho\log L)/(3n) + 
    2\rho^2\sqrt{\gamma}$. We observe that unless 
    $\rho$ is very small, the upper bound established
    for the least-squares estimator is within a 
    constant factor of the lower bound for all
    estimators for most values of $n$ and $m$.     
    }
    \label{fig:lower}
\end{figure}

\begin{theorem}\label{lower_bound}
Assume that conditionally to $(\bU,\bV)$, the 
entries $H_{i,j}$ of the observed $n\times m$ 
matrix $\bfH$ are independent and drawn from 
the Binomial distribution with parameter 
$(N,W^*(U_i,V_j))$. There exist universal constants 
$c,C>0$, such that for any $K,L>C$ satisfying
$KL\geqslant L\log^2 L + K\log^2 K$ and for any
$\rho > 0$,
\begin{align}\label{lowerW:bernoulli}
    \inf_{\widehat W} \sup_{ W^* } 
    \Ex \big[ \delta^2(\widehat W , W^*) 
    \big]^{\nicefrac{1}{2}} \geqslant c  
    \sqrt{\rho} \bigg(\frac{KL}{Nnm}\wedge \rho + \frac{1}{
    N\sqrt{nm}}\wedge\rho \bigg)^{\nicefrac12}\!\!\!\!
    + c\rho \bigg(
    \sqrt{\frac{K}{n}}  +\sqrt{\frac{L}{m}}
    \bigg)^{\nicefrac12}\!\!\!\!,
\end{align}
where the inf is over all possible estimators 
$\hat W$ and the sup is over all $W^*\in 
\calW_\rho[K,L]$.
\end{theorem}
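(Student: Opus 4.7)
The plan is to establish separately each of the three terms appearing in \eqref{lowerW:bernoulli}; since the sum is a lower bound as soon as each summand is, and each of the three families constructed below lies in $\calW_\rho[K,L]$, this will suffice. Every bound will be obtained by reducing the estimation problem to a testing problem over a finite family of hard graphons and invoking Fano's, Le Cam's, or Assouad's inequality depending on the term treated.

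For the \textbf{parametric term} $\sqrt{\rho KL/(Nnm)}$, I would use Varshamov--Gilbert combined with Fano. On the regular grid $I_k=[(k-1)/K,k/K)$ and $J_\ell=[(\ell-1)/L,\ell/L)$, consider
\begin{align}
    W_\omega(u,v) = \tfrac{\rho}{2} + \epsilon\sum_{k,\ell} \omega_{k,\ell}\,\mathds 1_{I_k\times J_\ell}(u,v),\qquad \omega\in\{\pm 1\}^{KL},
\end{align}
with $\epsilon$ of order $\sqrt{\rho KL/(Nnm)}\wedge(\rho/2)$; the clipping at $\rho/2$ is what produces the $\wedge\rho$ under the square root. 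Varshamov--Gilbert yields $M\geqslant 2^{KL/8}$ codewords at pairwise Hamming distance $\gtrsim KL$, so that $\delta^2(W_\omega,W_{\omega'})\gtrsim \epsilon^2$ on the packing (subject to the remark below). The binomial bound $\KL(\mathrm{Bin}(N,p),\mathrm{Bin}(N,q))\lesssim N(p-q)^2/\rho$ gives a total KL of order $Nnm\epsilon^2/\rho \asymp KL \asymp \log M$, which is exactly the regime where Fano's inequality applies. For the \textbf{rearrangement term} $\sqrt{\rho/(N\sqrt{nm})}$, I would invoke Le Cam's two-point method with a pair of piecewise constant graphons on a finer grid of size of order $(nm)^{1/4}\times (nm)^{1/4}$, whose cell-values are chosen so that the $\mathbb L_2$-distance of the two candidates is small (keeping the KL between the data distributions bounded by a constant) while the $\delta$-distance stays at the target level, because no measure-preserving rearrangement can match the two functions below the finer resolution. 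For the \textbf{partition term} $\rho(\sqrt{K/n}+\sqrt{L/m})^{1/2}$, I would construct $(K,L)$-piecewise constant graphons whose breakpoints $a_k,b_\ell$ are shifted by amounts of order $1/(n\sqrt{K})$ and $1/(m\sqrt{L})$ respectively, and apply Assouad's inequality on a hypercube of such perturbations: adjacent perturbations yield distributions with controlled KL, yet their $\delta$-distance is forced to remain of order $\rho\sqrt{K/n}$ (resp.\ $\rho\sqrt{L/m}$) because the cluster membership of a nontrivial fraction of rows/columns cannot be reliably determined.

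The main technical obstacle, common to all three steps, is lower-bounding $\delta(\cdot,\cdot)$ rather than the $\mathbb L_2$-norm, since the former involves an infimum over the huge group of measure-preserving transformations of $[0,1]$. For the regular-grid constructions above, I would handle this by (i) conditioning on a realization of $(\bU,\bV)$ that is sufficiently uniform---this event has high probability by standard concentration of order statistics---and then (ii) arguing that within a random Varshamov--Gilbert subfamily of $\{\pm 1\}^{KL}$, with high probability no block permutation can substantially reduce the pairwise distances, so that $\delta$ is comparable to the $\mathbb L_2$-distance on the packing. The other delicate point is the KL divergence in the mixed model over $(\bU,\bV)$: since KL is convex in the data distribution, it is bounded by the supremum of the conditional KL over realizations of the latent variables, which makes the Fano/Le Cam/Assouad machinery applicable conditionally and then integrable to the unconditional bound. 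Summing the three lower bounds (up to absolute constants) yields \eqref{lowerW:bernoulli}.
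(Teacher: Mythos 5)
The overall three-term decomposition you propose (parametric term, rearrangement term, partition term) is exactly the structure of the paper's proof, and your treatment of the first of these—Varshamov--Gilbert on a regular $K\times L$ grid with $\pm\epsilon$ perturbations and a Fano argument—matches the paper's proof of the bound $c\rho(KL/(Nnm)\wedge\rho)$. However, the treatment of the remaining two terms contains genuine gaps.

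\textbf{Rearrangement term.} The crucial difficulty here is precisely that the convexity bound you invoke gives the \emph{wrong} rate. If you bound $\KL(\bfP_{W_1}\|\bfP_{W_2})$ by the supremum of the conditional KL given $(\bU,\bV)$, you obtain $O(Nnm\rho\varepsilon^2)$; requiring this to be $O(1)$ forces $\varepsilon^2\lesssim 1/(Nnm\rho)$ and thus $\delta^2(W_1,W_2)=\rho^2\varepsilon^2\lesssim \rho/(Nnm)$—off by a factor of $\sqrt{nm}$ from the target $\rho/(N\sqrt{nm})$. The whole point of this term is that marginalizing over the latent variables makes the two observation laws much closer than any conditional pair is: the paper's proof must compute the $\chi^2$-divergence between the \emph{mixtures}, tracking the random intersection sizes $|S_1\cap S_2|$, $|S_1^c\cap S_2^c|$, $|S_1\triangle S_2|$ of the (latent) partitions and using Hoeffding-type concentration to show $\chi^2\leqslant 1/4$ when $\sqrt{nm}\,N\rho\varepsilon^2\lesssim 1$. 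Your fine-grid construction with cells of order $(nm)^{1/4}\times(nm)^{1/4}$ also generically falls outside $\calW_\rho[K,L]$ when $K,L<(nm)^{1/4}$, whereas the paper's $2\times 2$ checkerboard vs.\ a constant graphon lies in $\calW_\rho[2,2]\subset\calW_\rho[K,L]$ for all $K,L\geqslant2$.

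\textbf{Partition term.} Two issues. First, the perturbation scale $1/(n\sqrt{K})$ is off by a factor of $\sqrt{n}$: if cell lengths are perturbed by $\pm\varepsilon$, the $\delta^2$-separation is of order $\rho^2 K\varepsilon$, so to reach $\rho^2\sqrt{K/n}$ one needs $\varepsilon\asymp 1/\sqrt{nK}$, not $1/(n\sqrt{K})$. Second, and more importantly, you write that ``within a random Varshamov--Gilbert subfamily \ldots\ with high probability no block permutation can substantially reduce the pairwise distances,'' but this is precisely the hard combinatorial lemma of the proof: because the cell sizes differ only by $O(\varepsilon)$, a measure-preserving bijection can nearly realign the partitions, so one must in addition make the \emph{values} incompressible under block permutations. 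The paper constructs, via a probabilistic argument, a $\pm1$ matrix $\bfB$ whose rows and columns are nearly orthogonal and whose value pattern is robust to simultaneous row/column permutations (\Cref{matrix_B}), and then proves the $\delta$-separation (\Cref{lem:lower2}) by a nontrivial matching argument that builds a near-permutation from the overlap weights $\omega_{k,k'}$. Your proposal acknowledges the difficulty but offers no construction, and Assouad on breakpoint shifts does not obviously provide one, since the shifts are not independent coordinates (the interval lengths must sum to one).
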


The right-hand side of \eqref{lowerW:bernoulli} 
should be compared to \eqref{upperW:bernoulli}. One 
can observe that if the values of $n$,$m$, $K$, $L$ 
and $\rho$ are such that the dominating term in the
upper bound is one of the terms  $\rho (K/n)^{1/4}$ 
and $\rho (L/m)^{1/4}$, then the lower bound in \eqref{lowerW:bernoulli} is of the 
same order as the upper bound. Therefore, in this
case, the least squares estimator of the graphon is
minimax-rate-optimal. Similarly, if the dominating 
term is $(\rho KL/(Nnm))^{1/2}$ and $\rho\geqslant 
KL/(Nnm)$, then the LSE is minimax-rate-optimal. 
Note also that if $\rho$ is very small, that is 
smaller than both $KL/(Nnm)$ and $(N^2nm)^{-1/2}$, then
the lower bound in \eqref{lowerW:bernoulli} is of
order $\rho$, which might be much smaller than the
upper bound established for the LSE. This is not an
artifact of the proof, but reflects the fact that in
this situation the naive estimator $\hat K\equiv 0$
is better than the LSE. Furthermore, this naive
estimator turns out to be minimax-rate-optimal 
under the mentioned constraint on $\rho$. This
basically means that for such small values of 
$\rho$ the problem of estimating the graphon is not
meaningful from a statistical point of view. 
\Cref{fig:lower} depicts the regions of the values
of $n$ and $m$ where the lower and the upper bounds
are of the same order, illustrating thus the 
optimality of the least-squares estimator. Similarly, 
\Cref{fig:lower2} shows the regions of the values
of $\rho$ and $\gamma=K/n=L/m$, for fixed values of $n$ and
$m$, where the lower bound is larger than half
of the upper bound. We clearly see that even for
very unbalanced graphs ($m$ much larger than $n$), 
the purple region covers almost the whole square, 
which means that the least-squares estimator is 
minimax-rate-optimal in this region.

\begin{figure}
    \centering
    \includegraphics[width = 0.95\textwidth]{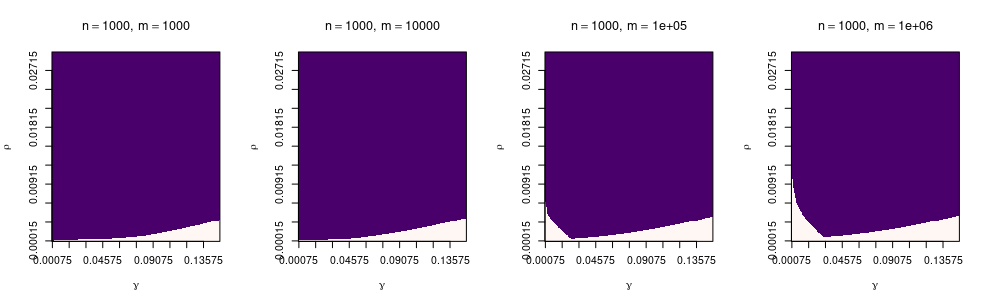}
    \vglue-5pt
    \caption{Illutration of the optimality
    of the least squares estimator. The purple 
    area corresponds to the values of $\rho$ and $\gamma
    = K/n = L/m$, for some fixed values of $n$ and $m$, 
    for which the lower bound is within a constant 
    factor of the upper bound. We observe that unless 
    $\rho$ is very small, the upper bound and the lower 
    bound for are of the same order.     
    }
    \label{fig:lower2}
\end{figure}

\section{Numerical experiments}\label{sec:exp}

In this section, we present the results of some 
numerical experiments illustrating the behavior of 
the error of the estimated graphon $\WLS$ and its
dependence on different parameters of the model. 
We first consider the case of piecewise constant graphons 
and study the estimation error of the matrix $\bTheta^*$. 
We explore the dependence of this error on $n$ for 
different values of $(\rho , K ,L)$ (assuming that 
$m = n/2$) as well as on the sparsity parameter $\rho$ 
for different values of $(n,m,K,L)$. We then show the 
results of  the estimation for a H\"older-continuous 
graphon, when parameters $K$ and $L$ are chosen as functions
of $n$ and $m$ respectively, as recommended by our
theoretical results.

\subsection{Estimation error of the piecewise constant 
matrix \texorpdfstring{$\bTheta^*$}{}}

We report the results of two different experimental
set-ups, referred to as rand-graphon and cos-graphon. 
The two set-ups differ in the choice of the 
graphon only. In both cases, the partitions on which 
the graphon is piecewise constant is the regular 
partition induced by the rectangles of the form 
$[(k-1)/K,k/K)\times [(\ell-1)/L,\ell/L)$  for 
$k\in[K]$ and $\ell\in[L]$. In the rand-graphon 
set-up, the values of $W^*$ are chosen at random
between $0$ and $\rho$, while in the cos-graphon
set-up, $W^*$ is defined as
\begin{align}
    W^*(u,v) = 
        \frac{2\rho}3 + \frac{\rho}3\cos \big( 
        3\pi \lfloor Ku \rfloor \lfloor Lv \rfloor 
        \big),\qquad 
        \forall u,v\in [0,1].
\end{align}
The results obtained in these two set-ups are depicted
in \Cref{fig:error_n} and \Cref{fig:error_n_cos,fig:error_rho}, 
respectively. In each experiment, we chose $m=n/2$ 
and computed the median of the squared error 
$\frac{1}{nm}\| \widehat \bTheta - \bTheta^* 
\|^2_{\sf F}$ for 50 independent repetitions. The 
estimator $\widehat \bTheta$ was computed by 
\Cref{alg:lloyd}. 
    
For better legibility, the errors in the plots are 
presented using a log-scale. To check the consistency of the 
numerical results with our theoretical results, we 
plotted (in green) the remainder term appearing in 
the upper-bound in \cref{prob_matrix}. We also 
displayed the oracle error (red curve) corresponding 
to the error of the best pseudo-estimator that is 
built with the knowledge of the true left and right 
cluster matrices. We only computed the block averages 
in this case. The labels ``spectral'' and ``random'' 
refer to the initialization process used for the 
algorithm. To display the uncertainty, we plotted
colored areas corresponding to the quantiles of 
order 0.1 and 0.9 respectively. (One may be surprised
by the fact that this area grows with $n$ in some cases; 
this is an artifact of the log-scale). In 
\Cref{alg:lloyd}, we chose $\gamma = 10^{-3}$ and 
the maximum number of iterations equal to $40$. 

In these experimental results, an evident trend emerges: 
the error of the ``spectral'' version consistently diminishes 
as $n$ increases. Additionally, as $\rho$ assumes larger 
values and when both $n/K$ and $m/L$ are elevated, the 
error of the estimator is close to the error of the oracle, 
a pattern that aligns with intuitive expectations. A larger 
$\rho$ implies a greater number of links, consequently yielding 
more accurate estimations. Similarly, heightened values of 
$n/K$ and $m/L$ correspond to improved accuracy in estimations.

Conversely, the ``random" version of the algorithm exhibits a 
more erratic behavior. In the majority of instances, its 
error surpasses that of the ``spectral" version, particularly 
when $n/K$ and $m/L$ are large enough.

\begin{figure}
    \centering
     \begin{subfigure}[b]{0.32\textwidth}
         \includegraphics[width=\textwidth]{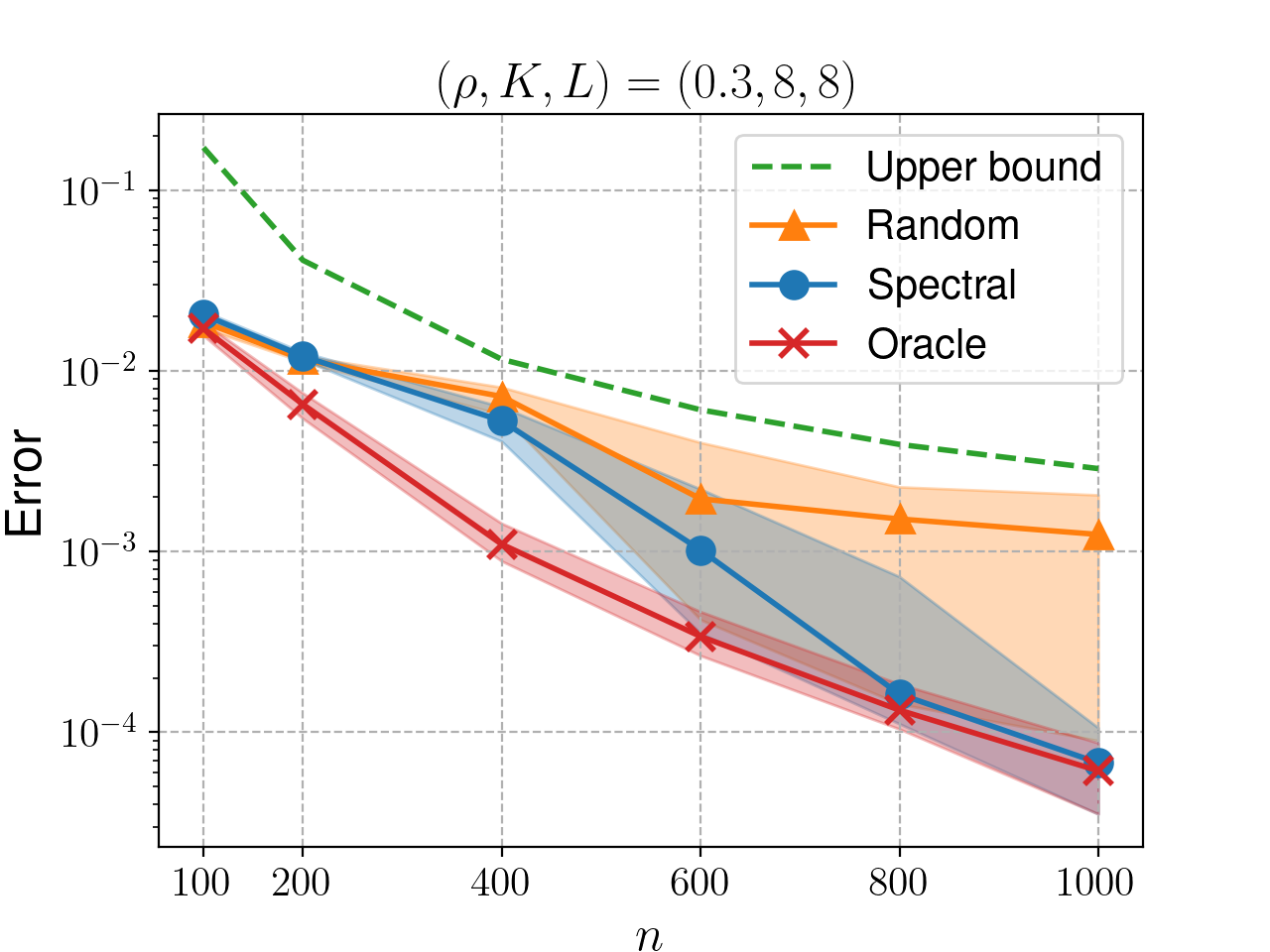}
         \label{fig:(0.3 , 8 , 8)}
     \end{subfigure}
     \begin{subfigure}[b]{0.32\textwidth}
         \includegraphics[width=\textwidth]{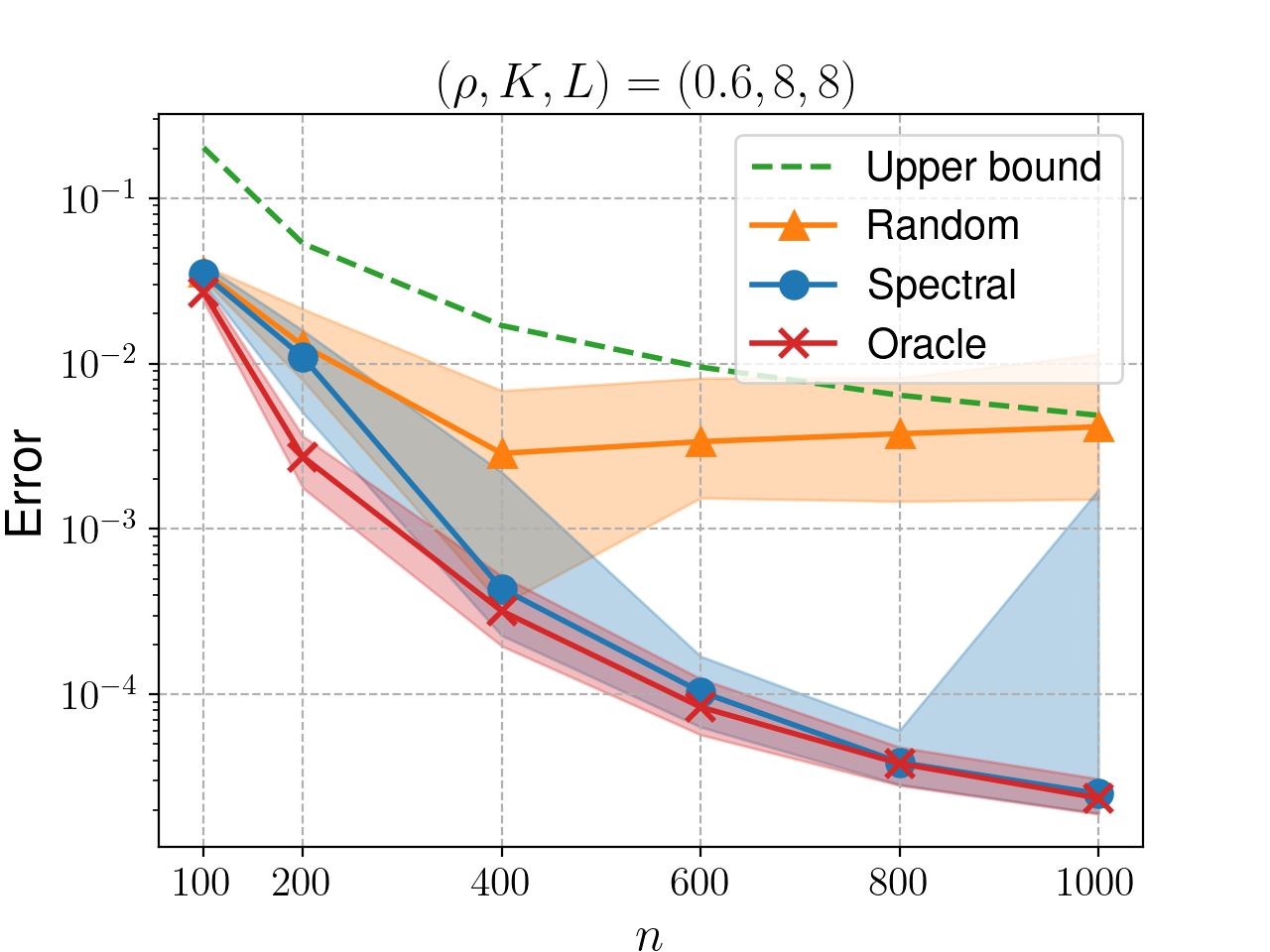}
         \label{fig:(0.6 , 8 , 8)}
     \end{subfigure}
     \begin{subfigure}[b]{0.32\textwidth}
         \includegraphics[width=\textwidth]{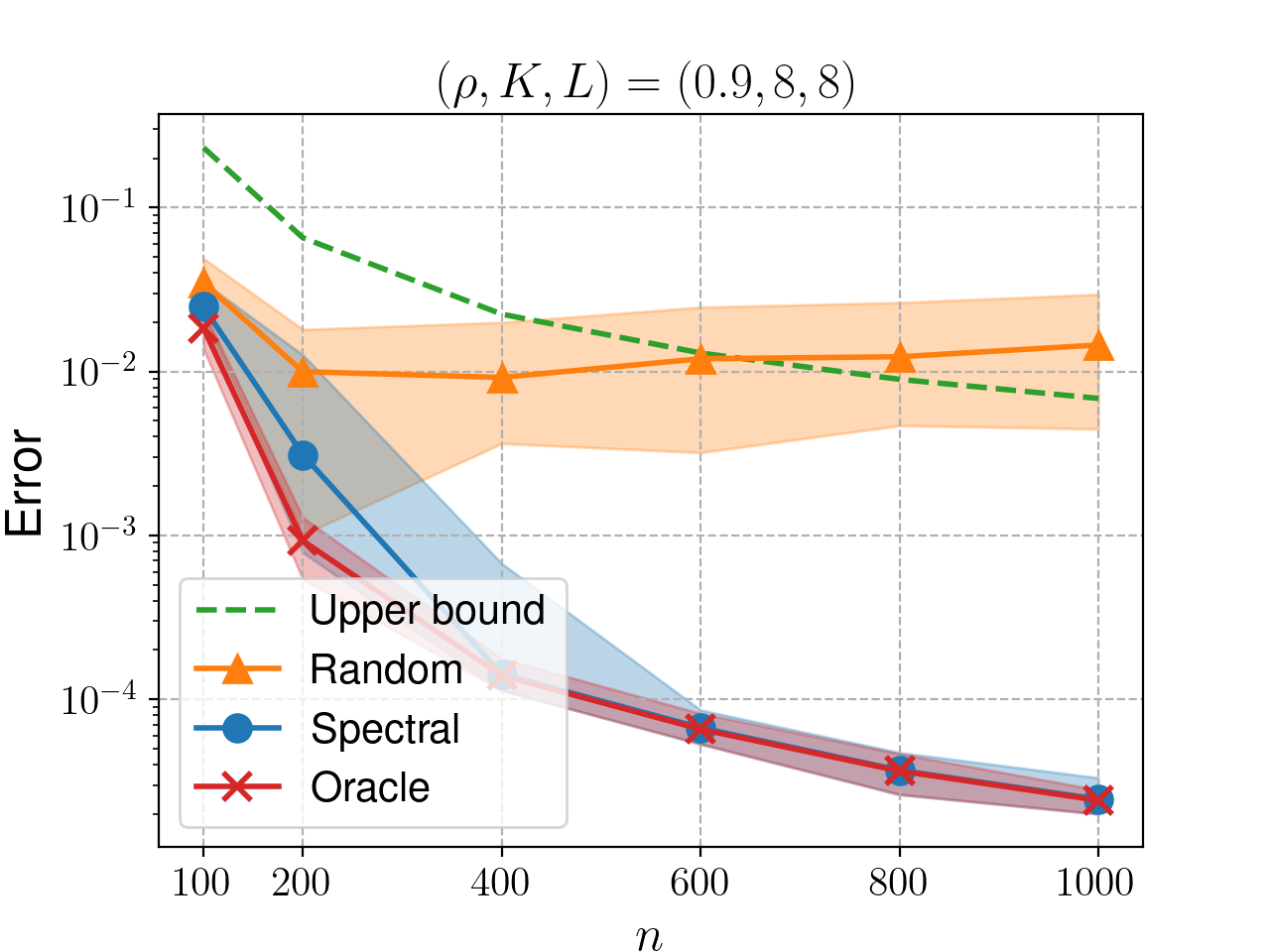}
         \label{fig:(0.9 , 8 , 8)}
     \end{subfigure}
     \begin{subfigure}[b]{0.32\textwidth}
         \includegraphics[width=\textwidth]{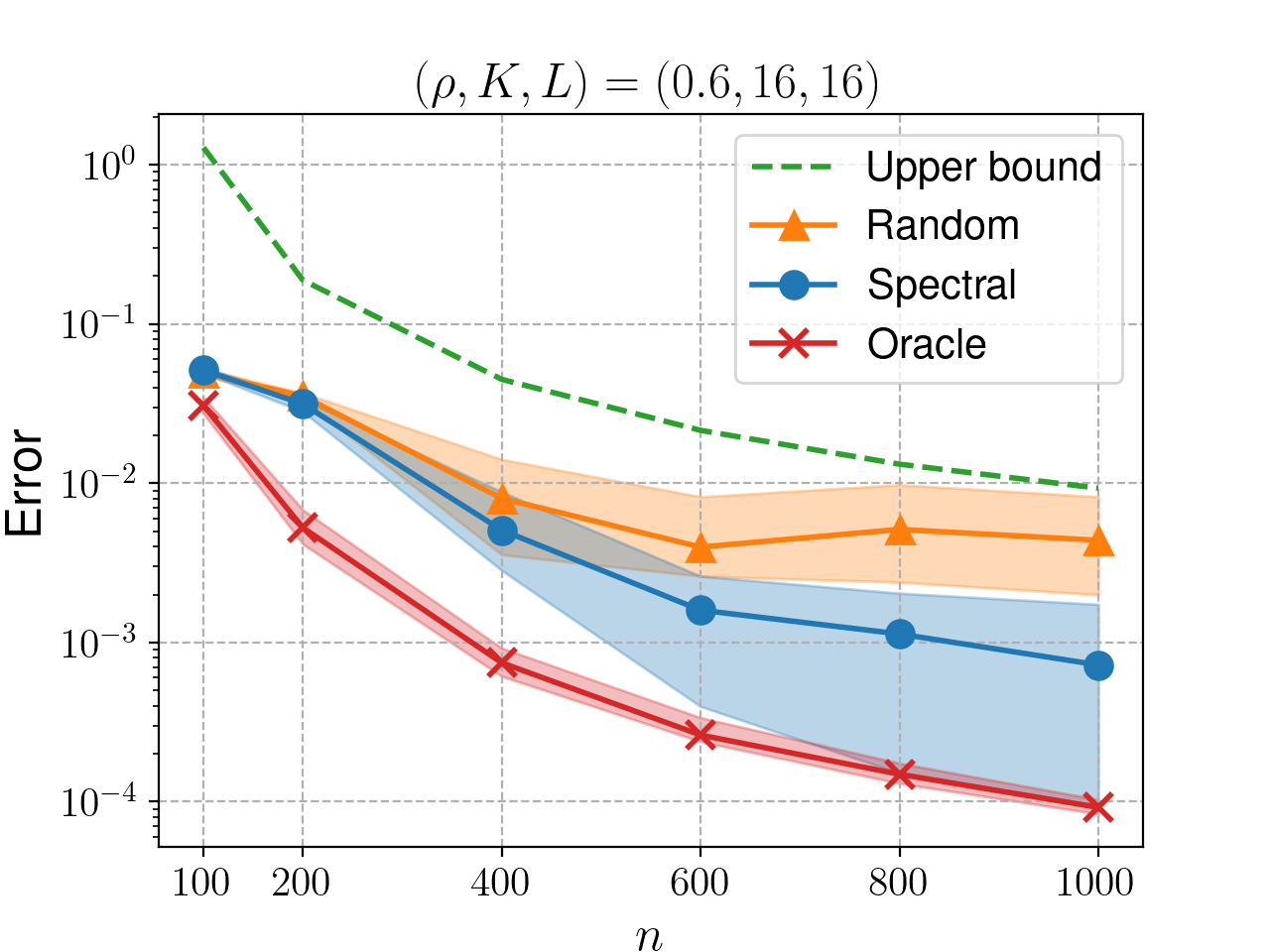}
         \label{fig:(0.6 , 16 , 16)}
     \end{subfigure}
     \begin{subfigure}[b]{0.32\textwidth}
         \includegraphics[width=\textwidth]{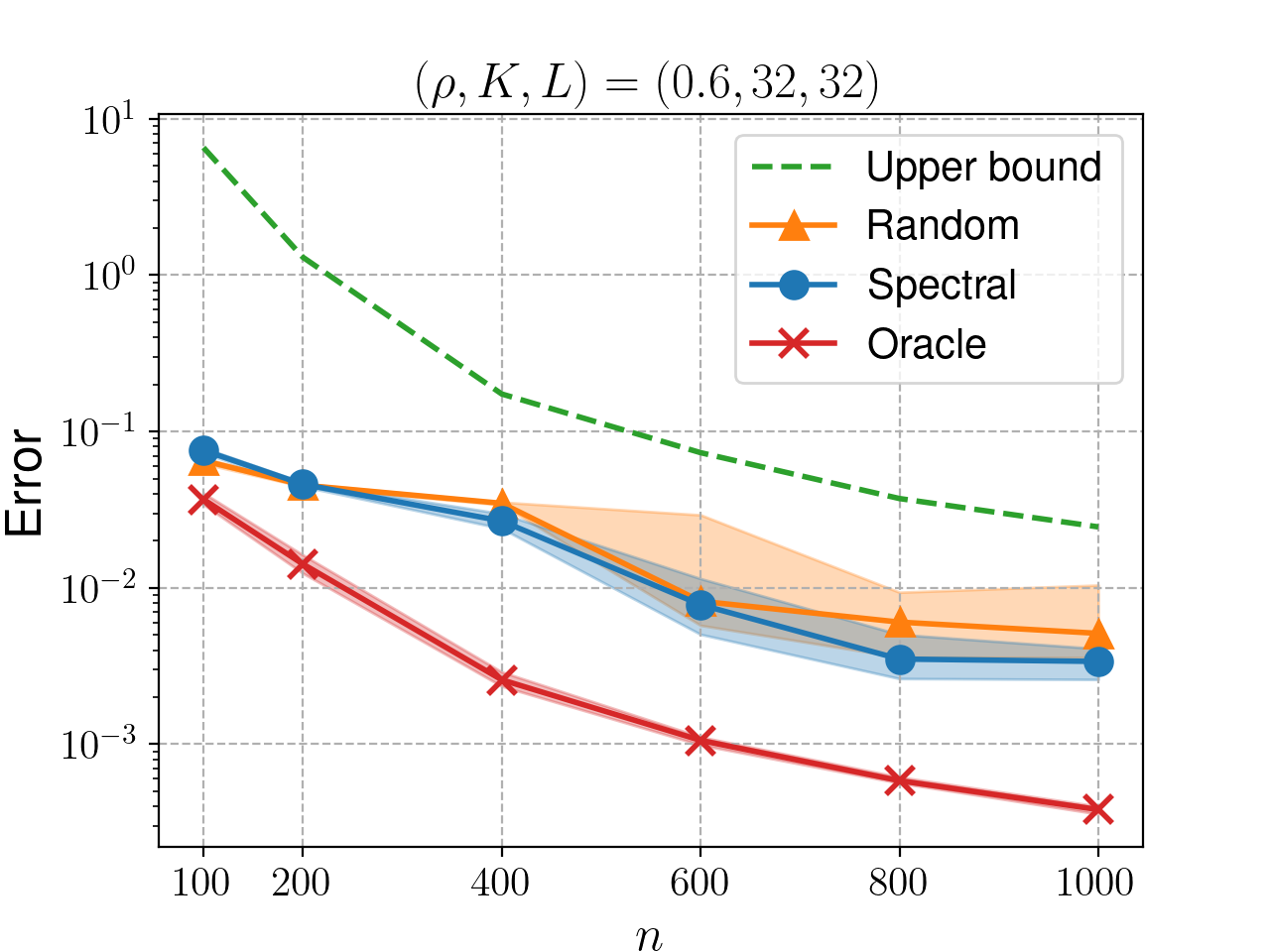}
         \label{fig:(0.6 , 32 , 32)}
     \end{subfigure}
     \begin{subfigure}[b]{0.33\textwidth}
         \includegraphics[width=\textwidth]{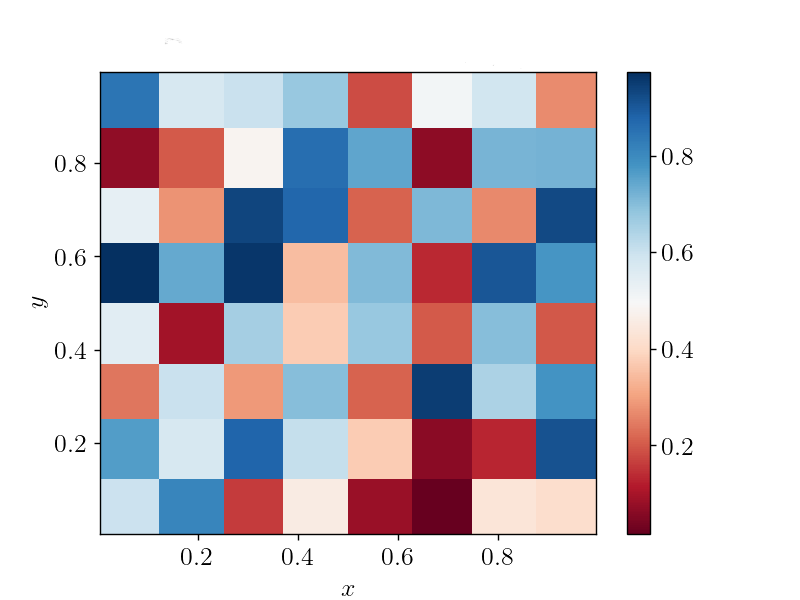}
         \label{Graphon_rand}
     \end{subfigure}
        \caption{Graphon estimation in the rand-graphon set-up.}
        \label{fig:error_n}
\end{figure}

In the set-up of cos-graphon, we displayed in 
\Cref{fig:error_rho} the behavior of the error as
a function of $\rho$. For small values of $\rho$, 
the random initialization appears to be better than 
the spectral one. Moreover, the error is increasing 
for $\rho \in [0,\nicefrac12]$ for both 
initializations. The reason for such a behavior is 
that the estimator we computed tries to mimic the oracle
estimator, which knows the clusters and estimates the
matrix $\bTheta^*$ by computing cluster-wise averages. 
Thus, if $\bTheta ^* = (\bfZleft)^* \bfQ^* (\bfZright)^*$, 
where matrices $(\bfZleft)^*$ and  $(\bfZright)^*$ 
represent the clusters, then the oracle $\widehat 
\bTheta_o= (\bfZleft)^* \widehat \bfQ_o (\bfZright)^*$ 
satisfies 
\begin{align}
   \frac{\Ex \big[\| \hat\bTheta_o - \bTheta^* 
   \|_{\sf F}^2 \big]}{nm} &= \sum_{k,l} 
   \frac{Q_{kl}^*(1-Q_{kl}^*)}{nm} 
   =   \sum_{k,l}  \frac{\rho\tilde Q_{kl}(1- \rho 
   \tilde Q_{kl})}{nm}
   =\frac{\rho(\|\tilde\bfQ\|_{1,1} - \rho\|\tilde\bfQ
   \|_{\sf F}^2)}{nm} \label{tildeQ}
\end{align}
In the above formula, we used the matrix $\tilde\bfQ = 
\bfQ^*/\rho$, which has all its entries in $[0,1]$, 
and denoted by $\|\tilde\bfQ\|_{1,1}$ the sum of 
entries of the matrix $\tilde\bfQ$.  The right-hand 
side of \eqref{tildeQ} is a function of $\rho$ that 
increases for $\rho \in [0 , \|\tilde\bfQ\|_{1,1}/2
\|\tilde\bfQ\|^2_{\sf F}]$, and decreases outside 
this interval. 

\subsection{Estimation error for H\"older-continuous 
graphons}

To illustrate the behavior of the estimator of the 
graphon in the case where the latter is 
H\"older-continuous, we consider the function
displayed in \Cref{fig:Graphon} and given by
\begin{align}
    W^*(u,v)= \frac\rho2 \Big(1 + \exp\big\{-10
    \big((u - \nicefrac12)^2 + (v-\nicefrac12)^2
    \big)\big\} \Big).
\end{align}
This function being Lipschitz-continuous, we have
$\alpha = 1$.

The average squared error of estimation over 50 
repetitions for different values of $n$ and $\rho$ 
is depicted in \Cref{fig:error_hold}. Since the 
true distance $\delta$ is prohibitively hard to 
compute (because of the minimization over all 
measure preserving bijections), we computed an 
approximation of it denoted by $\tilde\delta$. 
Roughly speaking, $\tilde\delta$ is obtained
from $\delta$ by replacing the minimum over all
bijections $\tau_1,\tau_2$ by the value of
the cost function at the particular instances
of bijections, $\tau_{\sigma_1}$ and $\tau_{
\sigma_2}$, used in the proof of \Cref{prop:2}. 
More precisely, if $\sigma_1$ and $\sigma_2$
are permutations of $[n]$ and $[m]$, respectively,
such that the sequences $(U_{\sigma_1^{-1}(i)}
)_{i\in[n]}$ and $(V_{\sigma_2^{-1}(j)}
)_{j\in[m]}$ are nondecreasing, then 
\begin{align}
    \tau_{\sigma_1}(u) = \sum_{i=1}^n 
    \Big(\frac{\sigma_1(i)-1}{n} - u - 
    \frac{i-1}{n}\Big)\mathds 1_{[i-1,i)}(nu),
    \qquad \tau_{\sigma_1}(1) = 
    \frac{\sigma_1(n)}{n},\\
    \tau_{\sigma_2}(v) = \sum_{j=1}^m 
    \Big(\frac{\sigma_2(j)-1}{m} - v - 
    \frac{j-1}{m}\Big)\mathds 1_{[j-1,j)}(nu),
    \qquad \tau_{\sigma_2}(1) = 
    \frac{\sigma_2(m)}{m}.
\end{align}
Then, we define
 \begin{align}
    \tilde \delta^2(\widehat W , W^*) 
    &= \big\|  W^*\circ (\tau_{\sigma_1} \otimes 
    \tau_{\sigma_2}) - \widehat W\|^2_{\mathbb L_2}\\
        &= \big\|  W^*\|^2_{\mathbb L_2} -2 
        \sum_{i=1}^{n} \sum_{j=1}^{m}  
        \widehat \Theta_{ij} \int_{\frac{
        \sigma_1(i)-1}{n}}^{\frac{\sigma_1(i)}{n} } 
	\int_{\frac{\sigma_2(j)-1}{m} 
        }^{\frac{\sigma_2(j)}{m} } W^*(x, y) 
        dxdy + \| \widehat \bTheta \|_{\sf F}^2 .
 \end{align}
In numerical experiments, the integrals appearing 
in the right-hand side of the last display are 
approximated by the Riemann sums. In this case also 
we observe that the error curves obtained by Monte 
Carlo simulations are of the same shape as those
predicted by the theory. Interestingly, and somewhat
surprisingly, the random initialization behaves as
well as the spectral one. We do not have any explanation
for this observation at this stage. 

\begin{figure}
    \centering
     \begin{subfigure}[b]{0.32\textwidth}
         \includegraphics[width=\textwidth]{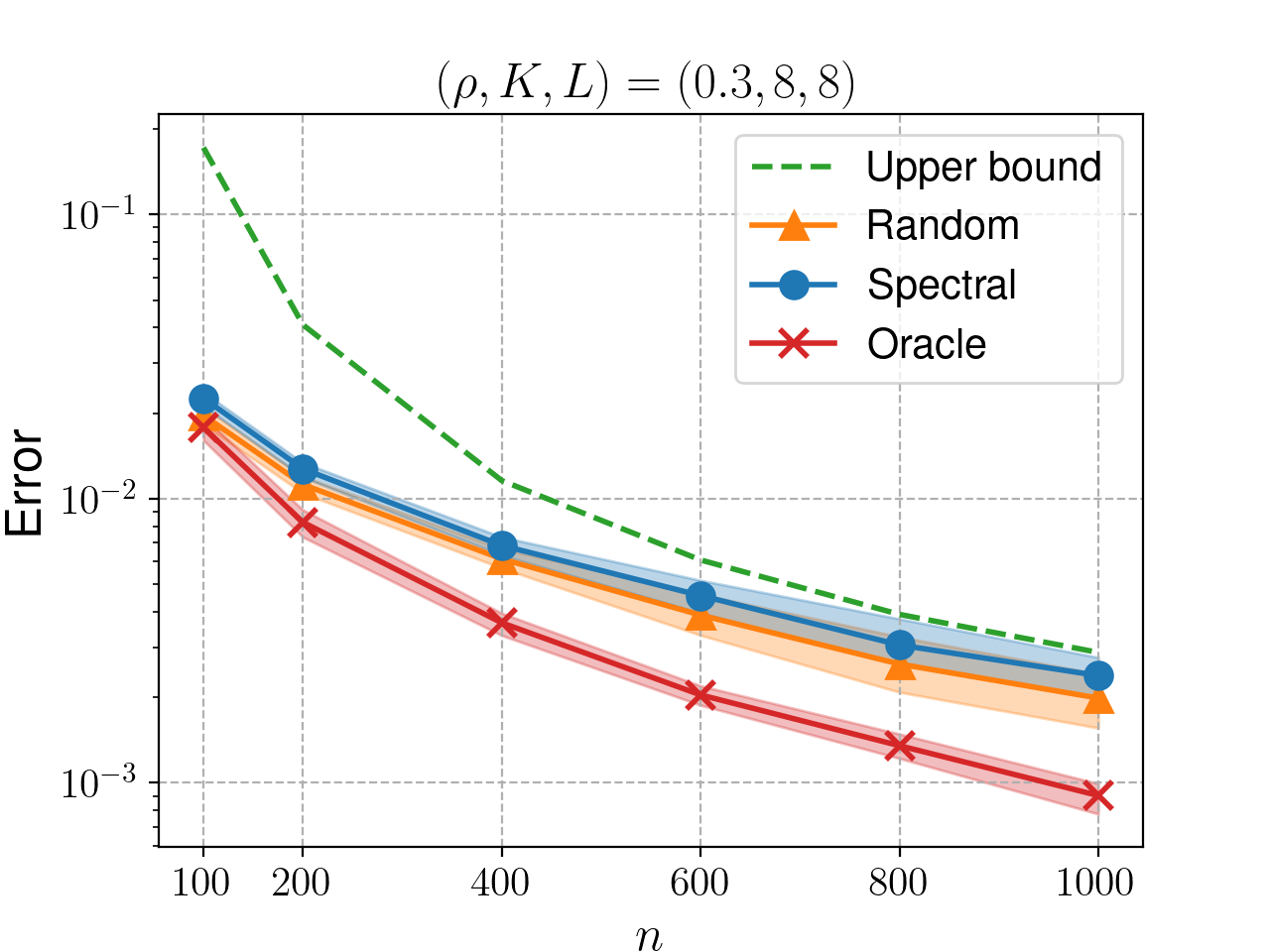}
         \label{fig:(0.3 , 8 , 8)_cos}
     \end{subfigure}
     \begin{subfigure}[b]{0.32\textwidth}
         \includegraphics[width=\textwidth]{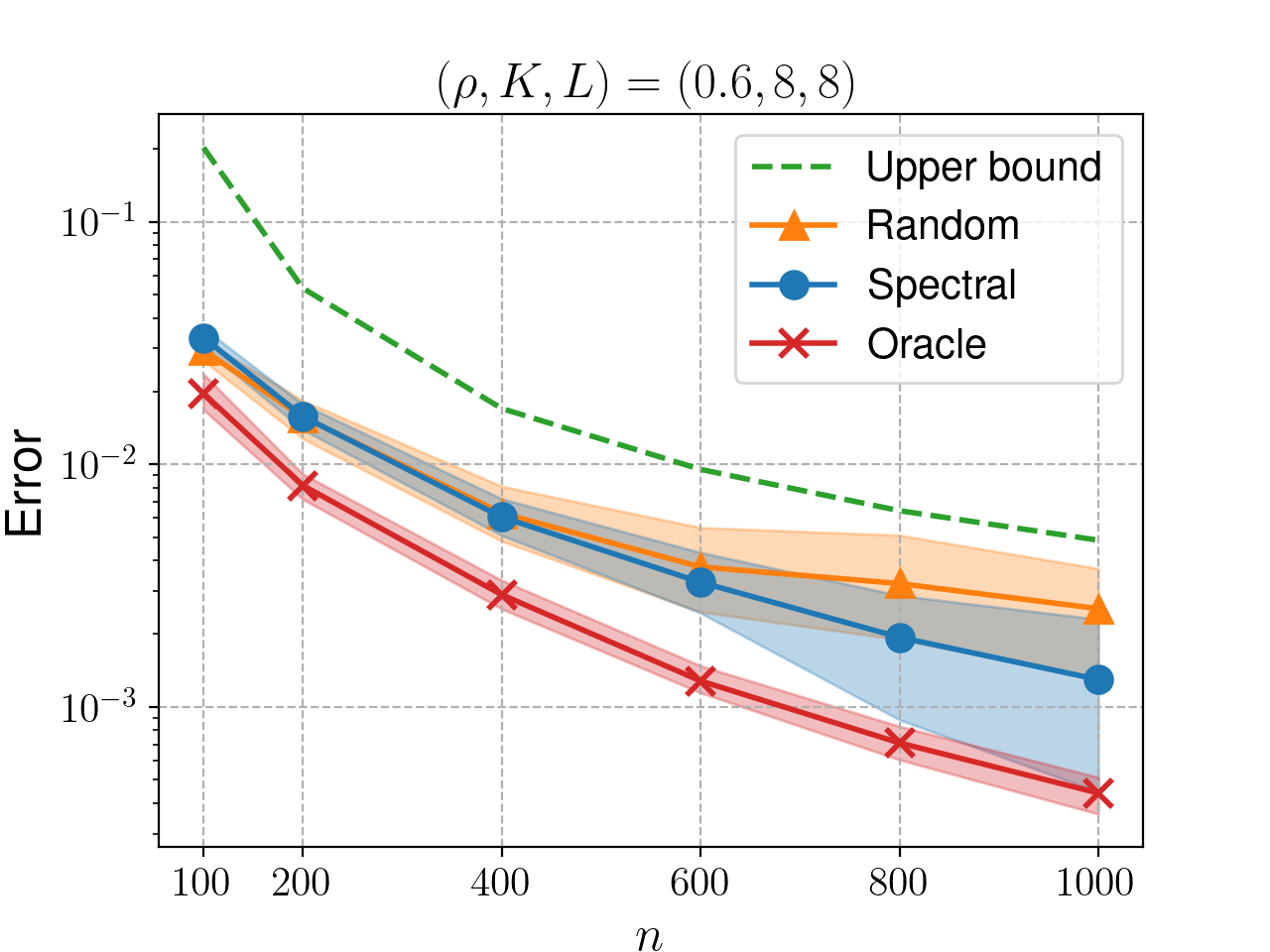}
         \label{fig:(0.6 , 8 , 8)_cos}
     \end{subfigure}
     \begin{subfigure}[b]{0.32\textwidth}
         \includegraphics[width=\textwidth]{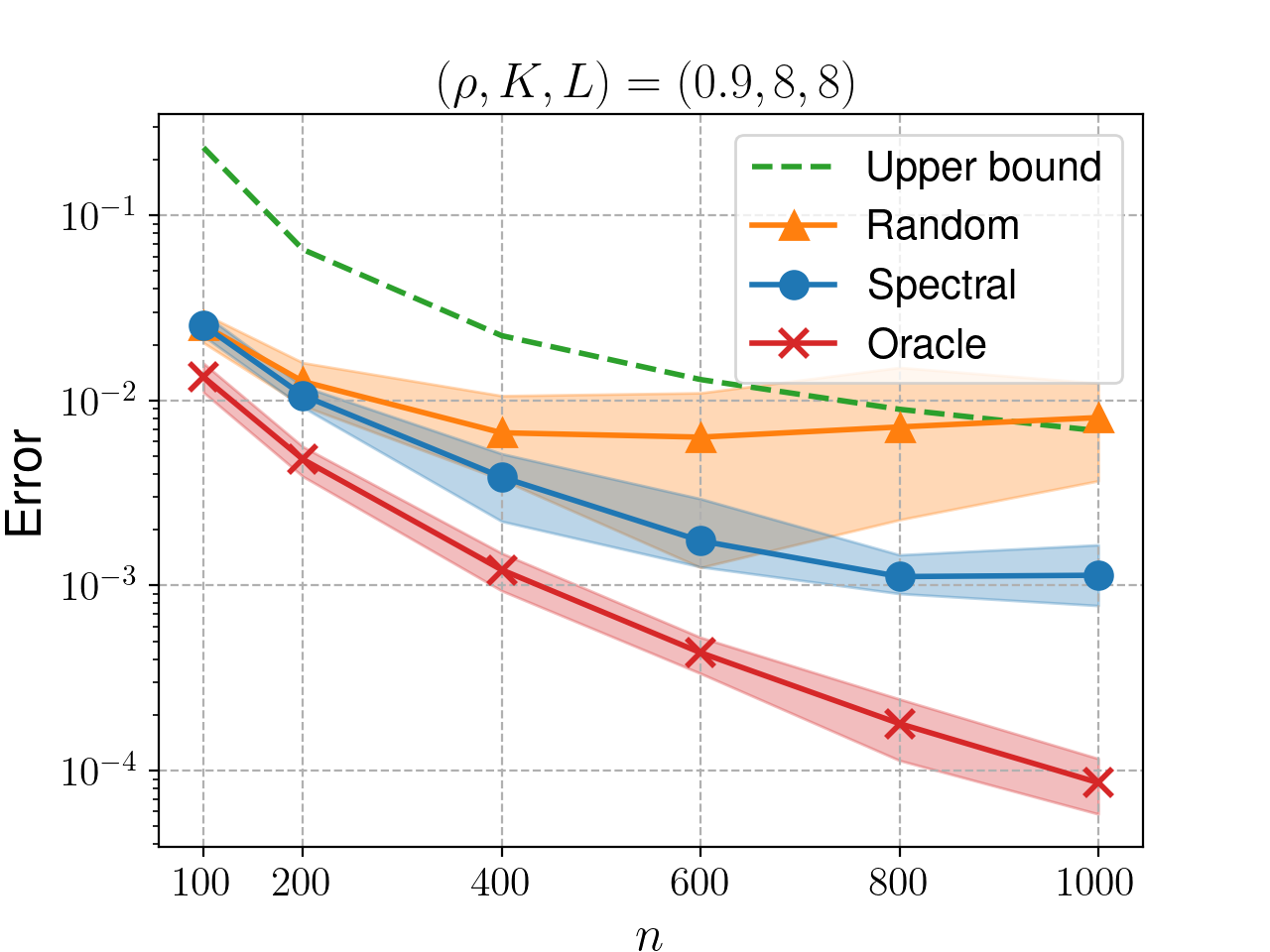}
         \label{fig:(0.9 , 8 , 8)_cos}
     \end{subfigure}
     \begin{subfigure}[b]{0.32\textwidth}
         \includegraphics[width=\textwidth]{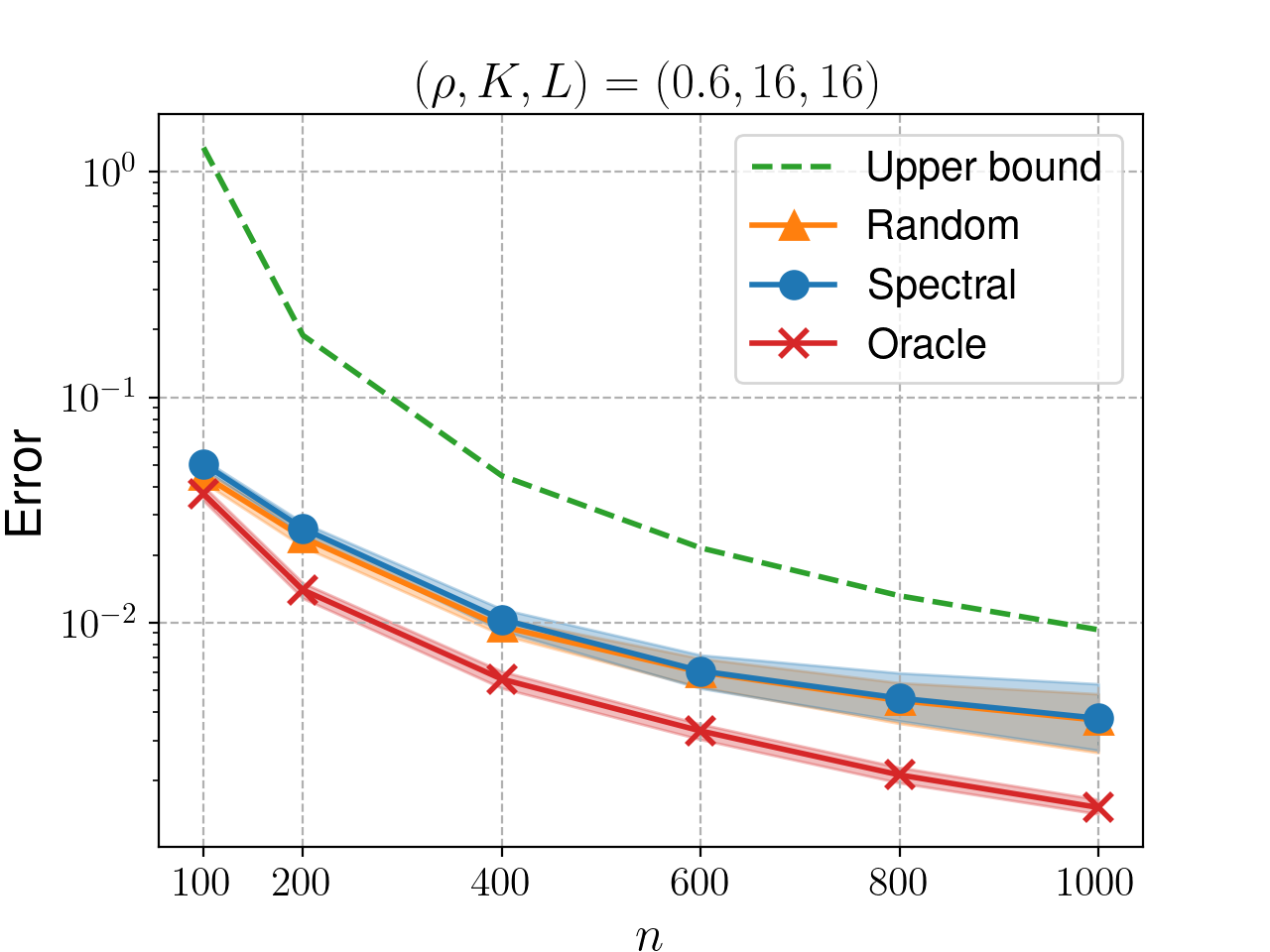}
         \label{fig:(0.6 , 16 , 16)_cos}
     \end{subfigure}
     \begin{subfigure}[b]{0.32\textwidth}
         \includegraphics[width=\textwidth]{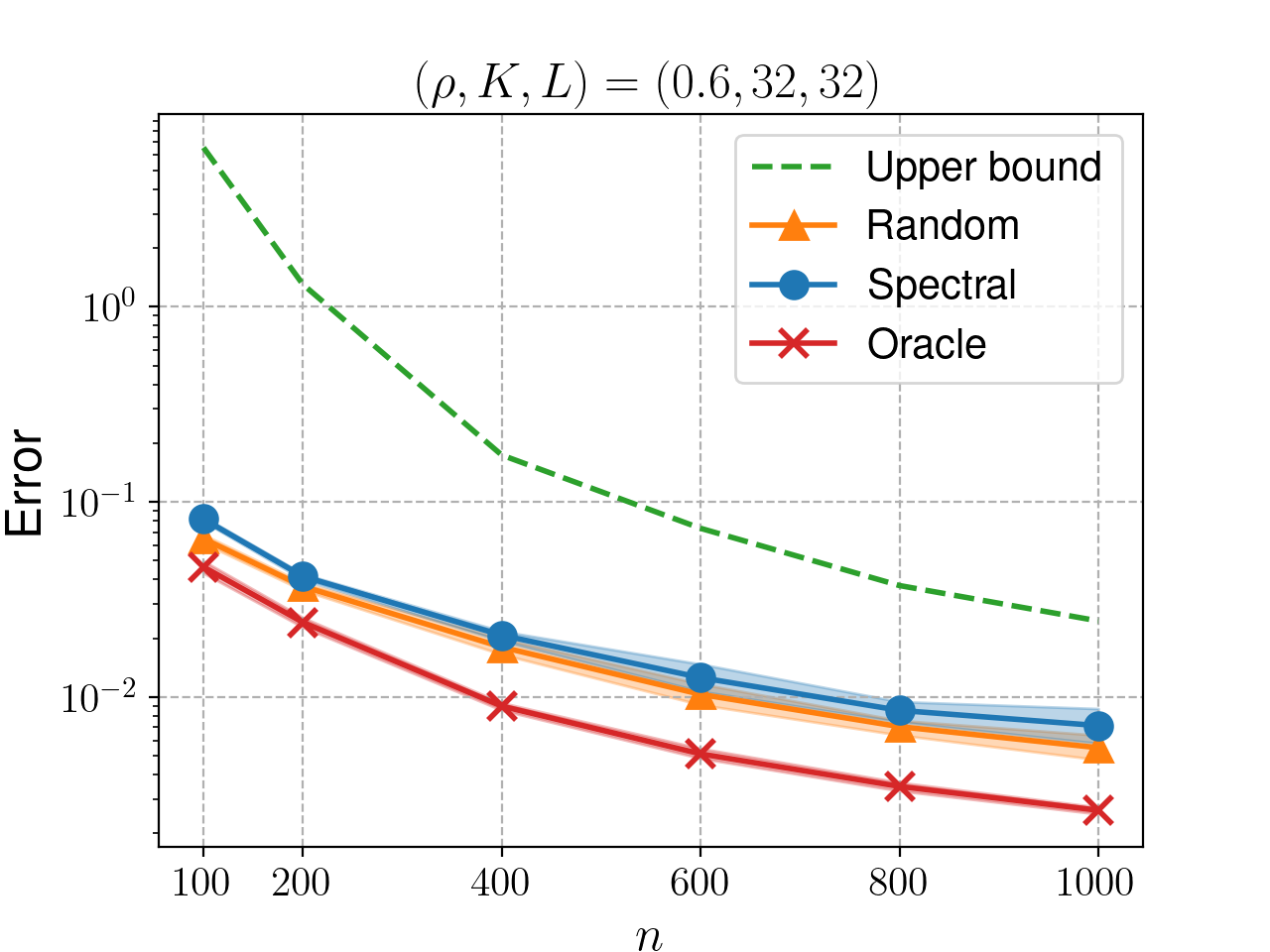}
         \label{fig:(0.6 , 32 , 32)_cos}
     \end{subfigure}
     \begin{subfigure}[b]{0.33\textwidth}
         \includegraphics[width=\textwidth]{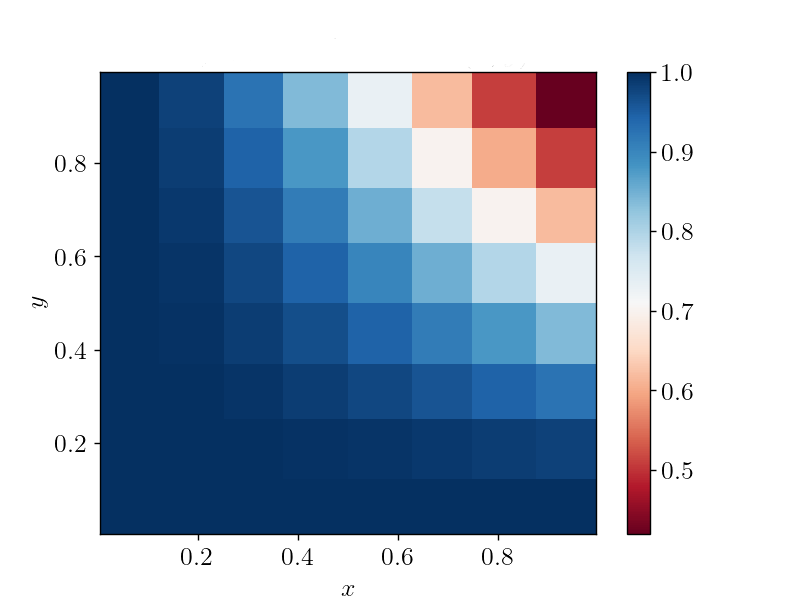}
         \label{Graphon_cos}
     \end{subfigure}
        \caption{Graphon estimation in the cos-graphon 
		set-up, $W^*(x,y) = 
        \frac{2\rho}3 + \frac{\rho}3\cos \big(3 \pi \lfloor Kx 
        \rfloor \lfloor Ly \rfloor \big)$.}
        \label{fig:error_n_cos}
\end{figure}

\etocsettocdepth{subsection}
\section{Proofs of results stated in previous sections}
\label{sec:proofs}


\subsection[Proof of  \texorpdfstring{\Cref{prob_matrix}} ( (risk bound 
for LSE of the mean)]{Proof of \Cref{prob_matrix} 
(risk bound for $\bThetaLS$)}

Let us define
\begin{align}
    \Pi_{\calT}(\bTheta^*) = \arg  \min_{\bTheta \in \calT
    } \| \bTheta - \bTheta^*  \|_{\sf F},
\end{align}
the best approximation of $\bTheta^*$ in Frobenius norm 
by a constant-by-block matrix. Note that the matrix 
$\Pi_{\calT}(\bTheta^*)$ has at most $KL$ distinct entries
each of which is the average of the entries of a submatrix
of $\bTheta^*$. Since $\widehat \bTheta = 
\hatbfZleft \widehat \bfQ (\hatbfZright)^\top $ is the 
least square estimator, we have 
\begin{align}
    \big\| \bfH - \widehat \bTheta \big\|_{\sf F}^2 \leqslant  
    \big\| \bfH - \Pi_{\calT}(\bTheta^*) \big\|_{\sf F}^2. \label{LS_ineq}
\end{align}
Let us define the mean-zero ``noise'' matrix $\bfE= \bfH - 
\Ex[\bfH] = \bfH - \bTheta^*$ and rewrite \eqref{LS_ineq} in the 
following form 
\begin{align}\label{decomp:0}
    \| \widehat \bTheta - \bTheta^* \|_{\sf F}^2 
        \leqslant \| \bTheta^* - \Pi_{\calT}(\bTheta^*) '.
        \|_{\sf F}^2 + 2 \langle \widehat \bTheta - 
        \bTheta^* , \bfE \rangle +  2 \langle \bTheta^* 
        - \Pi_{\calT}(\bTheta^*) , \bfE \rangle.
\end{align}
The expectation of $\bfE$ being zero, the same is true for the last term in the right-hand. We want to 
bound the expectation of $\langle \widehat \bTheta - 
\bTheta^* , \bfE \rangle$. To this end, we define 
\begin{align}
    \widehat \calT=\left\{ \bTheta : \, \exists \bfQ 
    \in \mathbb R^{K \times L} \text{ such that } 
    \bTheta = \hatbfZleft \bfQ (\hatbfZright)^\top  
    \right\} \subset\calT ,
\end{align}
and let $ \Pi_{\widehat\calT}(\bTheta^*) = \arg 
\min_{{\bTheta \in \widehat \calT}} \| \bTheta - 
\bTheta^*  \|_{\sf F}  $ be the best Frobenius 
approximation of $\bTheta^*$ in $\widehat \calT$. 
We use the decomposition 
\begin{align}\label{decomposition}
    \langle \widehat \bTheta - \bTheta^* , \bfE \rangle = \underbrace{\langle  \Pi_{\widehat\calT}(\bTheta^*) - \bTheta^* , \bfE \rangle}_{\Xi_1}
    + \underbrace{\langle \widehat \bTheta - \Pi_{\widehat\calT}(\bTheta^*) , \bfE \rangle}_{\Xi_2}.
\end{align}

\begin{figure}
    \centering
     \begin{subfigure}[b]{0.33\textwidth}
         \includegraphics[width=\textwidth]{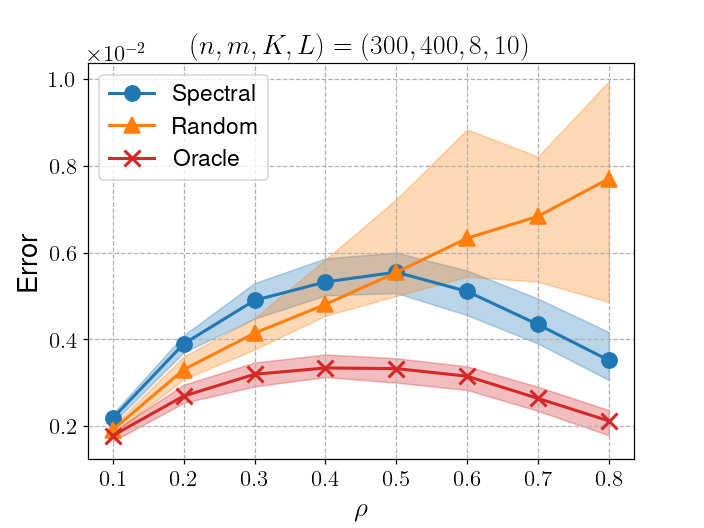}
         \label{fig:(400,200,20,10)}
     \end{subfigure}
     \vspace{-10pt}
     \caption{Estimation error as a function of $\rho$, 
     for $W^*(x,y)= \frac{2\rho}{3} +\frac{\rho}{3}
     \cos ( 3\pi \lfloor Kx \rfloor \lfloor Ly \rfloor)$.}
        \label{fig:error_rho}
\end{figure}

\begin{lemma}\label{lem:Xi1}
    Under the conditions of \Cref{prob_matrix}, we have
    \begin{align}
        \Ex(\Xi_1) &\leqslant  \sigma \sqrt{ 2
        (n \log K + m \log L +1)}\, \Ex[ \| \widehat 
        \bTheta - \bTheta^* \|_{\sf F}^2]^{\nicefrac12}
        + b\rho  (n \log K + m \log L + 1).
    \end{align}
\end{lemma}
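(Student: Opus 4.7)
The crux of the difficulty is that $\bar\bTheta := \Pi_{\widehat\calT}(\bTheta^*)$ depends on $\bfH$ through $\hatbfZleft,\hatbfZright$, so $\langle \bar\bTheta - \bTheta^*, \bfE\rangle$ is not a sum of independent random variables. The plan is to enlarge the index set to all candidate cluster pairs, use independence on each fixed pair, and combine via a union bound and Cauchy--Schwarz. Introduce the finite family
\begin{align*}
    \mathcal V \;=\; \big\{V_{\bZ,\bZ'} := \Pi_{\calT(\bZ,\bZ')}(\bTheta^*) - \bTheta^* : \bZ \in \ZZ(n,K,n_0),\ \bZ' \in \ZZ(m,L,m_0)\big\},
\end{align*}
where $\calT(\bZ,\bZ') = \{\bZ\bfQ(\bZ')^\top : \bfQ \in \mathbb R^{K\times L}\}$. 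Three facts are key: (i) $|\mathcal V| \leqslant K^n L^m$, so $\log|\mathcal V| \leqslant n\log K + m\log L$; (ii) entries of $\Pi_{\calT(\bZ,\bZ')}(\bTheta^*)$ are block averages of entries of $\bTheta^* \in [0,\rho]^{n\times m}$, so $\|V\|_\infty \leqslant \rho$ for every $V\in\mathcal V$; (iii) since $\widehat\bTheta$ is the LSE over $\calT \supseteq \widehat\calT$ and lies in $\widehat\calT$, it equals $\Pi_{\widehat\calT}(\bfH)$, hence $\widehat\bTheta - \bar\bTheta = \Pi_{\widehat\calT}(\bfE)$ is orthogonal to $\bar\bTheta - \bTheta^* \in \widehat\calT^\perp$, and Pythagoras gives the deterministic estimate $\|\bar\bTheta-\bTheta^*\|_{\sf F} \leqslant \|\widehat\bTheta - \bTheta^*\|_{\sf F}$.

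For each fixed $V\in\mathcal V$, the $(\sigma^2,b)$-Bernstein condition on the $E_{ij}$'s transfers (via (ii)) to a $(\sigma^2\|V\|_{\sf F}^2, b\rho)$-Bernstein condition on $\langle V,\bfE\rangle$, so Bernstein's inequality yields $P(|\langle V,\bfE\rangle| > \sigma\|V\|_{\sf F}\sqrt{2x} + b\rho\,x) \leqslant 2 e^{-x}$ for every $x>0$. Setting $x^* := n\log K + m\log L + 1 \geqslant \log(2|\mathcal V|)$ and introducing the normalized supremum
\begin{align*}
    Z \;:=\; \sup_{V \in \mathcal V,\, V\neq 0}\ \frac{(\langle V,\bfE\rangle - b\rho\, x^*)_+}{\|V\|_{\sf F}},
\end{align*}
the shift by $b\rho x^*$ is tuned precisely to absorb the log-cardinality from the union bound, leaving a sub-Gaussian-like tail. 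Integrating $\Ex[Z^2] = \int_0^\infty 2u\, P(Z>u)\,du$ via $P(Z>u) \leqslant \sum_{V\in\mathcal V} P(\langle V,\bfE\rangle > u\|V\|_{\sf F} + b\rho x^*)$ and Bernstein term by term should yield $\Ex[Z^2] \leqslant 2\sigma^2\,x^*$, mirroring the classical bound $\Ex[\max_{i \leqslant N} X_i^2] \leqslant 2\sigma^2(\log N + 1)$ for $N$ centered $\sigma^2$-sub-Gaussian variables.

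Concluding is then straightforward: applying the definition of $Z$ to $V = \bar\bTheta - \bTheta^*$ and using fact (iii) gives $\Xi_1 \leqslant Z\,\|\widehat\bTheta - \bTheta^*\|_{\sf F} + b\rho\,x^*$; taking expectations and using Cauchy--Schwarz,
\begin{align*}
    \Ex[\Xi_1] \;\leqslant\; \Ex[Z^2]^{\nicefrac12}\,\Ex[\|\widehat\bTheta - \bTheta^*\|_{\sf F}^2]^{\nicefrac12} + b\rho\,x^*,
\end{align*}
which combined with $\Ex[Z^2] \leqslant 2\sigma^2 x^*$ produces the claimed inequality. The main technical obstacle is obtaining the sharp bound $\Ex[Z^2] \leqslant 2\sigma^2 x^*$ with constant $2$: after inserting $r = u\|V\|_{\sf F} + b\rho x^*$ into Bernstein's exponent, one must use $2b\|V\|_\infty r \leqslant 2b\rho\,r$ together with the expansion $(u\|V\|_{\sf F} + b\rho x^*)^2 = (u\|V\|_{\sf F})^2 + 2u\|V\|_{\sf F}\cdot b\rho x^* + (b\rho x^*)^2$ to cleanly separate the $u$-dependent Gaussian contribution from the $x^*$-dependent correction, ensuring that after the union bound the surviving integrand matches $u^2/(2\sigma^2)$ and integrates to $2\sigma^2(\log|\mathcal V| + 1)$.
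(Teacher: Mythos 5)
Your setup (the family $\mathcal V$ of candidate projections indexed by cluster-pair, facts (i)--(iii), and the application of Bernstein's inequality conditionally on each fixed pair) is exactly the paper's; the divergence is in the final integration step, and that is where the proof breaks.

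The claim $\Ex[Z^2]\leqslant 2\sigma^2 x^*$ does not hold. The Bernstein tail for $\langle V,\bfE\rangle$ is sub-\emph{exponential}, not sub-Gaussian, and the fixed shift $b\rho x^*$ in the definition of $Z$ does not absorb that. Concretely, the inequality you need after inserting $r = u\|V\|_{\sf F} + b\rho x^*$ into Bernstein's exponent and applying the union bound is
\begin{align}
    \frac{\big(u\|V\|_{\sf F}+b\rho x^*\big)^2}{2\sigma^2\|V\|_{\sf F}^2 + 2b\rho\big(u\|V\|_{\sf F}+b\rho x^*\big)}
    \ \geqslant\ \frac{u^2}{2\sigma^2} + x^*,
\end{align}
and this fails: take $\|V\|_{\sf F}=1$, $\sigma=1$, $b\rho=1$, $x^*=1$, $u=1$, which gives $4/6 = 2/3$ on the left versus $3/2$ on the right. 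The structural reason is that the Bernstein quantile at level $e^{-x}$ is $\sigma\|V\|_{\sf F}\sqrt{2x}+b\rho x$; the second term grows \emph{linearly} in $x$ and is \emph{not} proportional to $\|V\|_{\sf F}$, so after you divide by $\|V\|_{\sf F}$ and subtract the fixed amount $b\rho x^*/\|V\|_{\sf F}$, the normalized variable $Z$ still retains a sub-exponential component whose parameter $b\rho/\|V\|_{\sf F}$ blows up when $\|V\|_{\sf F}$ is small. Consequently $\Ex[Z^2]$ acquires extra terms of order $(b\rho/\|V\|_{\sf F})^2$ and $(b\rho/\|V\|_{\sf F})\,\sigma\sqrt{x^*}$ that are not bounded by $2\sigma^2 x^*$.

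The paper avoids this by never forming a normalized supremum. After the union bound, it keeps $\sqrt{2x}\,\sigma\|\Pi_{\widehat\calT}(\bTheta^*)-\bTheta^*\|_{\sf F} + b\rho x$ intact with $x = n\log K + m\log L + t$, applies the elementary inequality $uv\leqslant \lambda u^2 + v^2/(4\lambda)$ \emph{at the high-probability-bound level} (so the $b\rho$ contribution remains linear in the tail parameter $t$), integrates the resulting tail via Lemma~\ref{exp_bound}, and only then optimizes over $\lambda$ — which is legitimate since $\lambda$ is a deterministic constant and $\Ex[\|\widehat\bTheta-\bTheta^*\|_{\sf F}^2]$ is a deterministic quantity. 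That order of operations (decouple, integrate, then optimize) is what makes the Bernstein tail with $b>0$ tractable, and it produces exactly the constants you were hoping to recover. If you want to keep the Cauchy--Schwarz flavor, you would need to carry the $b\rho$ term as a \emph{random-level-dependent} correction rather than a fixed shift, which essentially re-derives the $\lambda$-argument.
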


\begin{proof} 
    The main steps of the proof consist in applying the
    Bernstein inequality to $\Xi_1$ for a fixed $\bTheta$
    instead of $\Pi_{\hat T}(\bTheta^*)$, using the union
    bound and then integrating the high-probability 
    bound. For the first step, let $\bTheta\in \mathbb 
    R^{n\times m}$ satisfy $\Theta_{i,j}\in [0,\rho]$ 
    for every $(i,j)\in[n]\times[m]$. By definition of 
    the inner product, we have $\langle \bTheta - 
    \bTheta^*, \bfE \rangle = \sum_{(i,j)\in[n]\times[m]}
    (\bTheta - \bTheta^*)_{ij} E_{ij}$. The random 
    variables $E_{ij}$ are independent and satisfy the
    $(\sigma^2, b)$-Bernstein condition. The
    $nm$-vector with entries $(\bTheta - \bTheta^*)_{ij}$ 
    has an infinity norm bounded by $\rho$. Therefore, 
    the version of the Bernstein inequality stated in
    \Cref{Bernstein:ineq} implies that for all $x>0$, we 
    have
    \begin{align}
        \prob\Big( \langle \bTheta - \bTheta^* 
        , \bfE \rangle \geqslant \sqrt{2x}\,\sigma 
        \| \bTheta - \bTheta^*\|_{\sf F} +
        b\rho  x \Big) \leqslant e^{-x}.
    \end{align}
    Let us define $\Omega_{\bfZ,\bfZ'}= \{(\hatbfZleft 
    ,\hatbfZright) = (\bfZ,\bfZ')\}$, for each pair of
    matrices $\bfZ \in \ZZ_{n,K,n_0}  $ and $\bfZ'\in
    \ZZ_{m,L,m_0}$. On the event $\Omega_{\bfZ,\bfZ'}$,
    the matrix $\Pi_{\widehat\calT}(\bTheta^*)$ is
    deterministic and its elements are averages of 
    the elements of $\bTheta^*$. Hence, $0\leqslant  
    (\Pi_{\widehat\calT} (\bTheta^*))_{ij} \leqslant
    \|\bTheta^*\|_\infty$ and
    \begin{align}
        \prob\left( \big\{ \langle  \Pi_{\widehat\calT} 
        (\bTheta^*) - \bTheta^* , \bfE \rangle
        \geqslant 
        \sqrt{2x}\,\sigma \| \Pi_{\widehat\calT} 
        (\bTheta^*) - \bTheta^*\|_{\sf F}  +
        b\rho  x \big\}\cap 
        \Omega_{\bfZ,\bfZ'}\right) \le e^{-x}. 
    \end{align}
    Note also that the cardinality of $\ZZ_{n,K,n_0}$
    is at most $K^n$. Combining the last display with
    the union bound, we get
    \begin{align}
        \prob\Big( \langle \Pi_{\widehat\calT}(\bTheta^*) 
        &- \bTheta^*, \bfE \rangle \geqslant \sqrt{2x
        \rho } \, \sigma \|\Pi_{\widehat\calT} (\bTheta^*) 
        - \bTheta^*\|_{\sf F}  + b\rho\sigma^2  x \Big)\\
        & \leqslant \sum_{(\bfZ,\bfZ')} \prob\left( 
        \big\{ \langle \Pi_{\widehat\calT}(\bTheta^*) 
        - \bTheta^* , \bfE \rangle \geqslant
        \sqrt{2x}\,\sigma \|\Pi_{\widehat\calT} 
        (\bTheta^*) - \bTheta^*\|_{\sf F}  + b\rho  
        x \big\} \cap \Omega_{\bfZ,\bfZ'}\right)\\
        &\leqslant K^n L^m e^{-x}, 
    \end{align}
    where in the first inequality in the above display
    the sum is over $(\bfZ,\bfZ')$ from the set
    $\ZZ_{n,K,n_0}  \times \ZZ_{m,L,m_0} $ and the
    factor $K^nL^m$ corresponds to an upper bound on
    the cardinality of this set. Finally, choosing 
    $x = n\log K + m\log L + t$ for some $t>0$ and
    using the basic inequality $uv \leqslant \lambda
    u^2 + v^2/(4\lambda)$ entails  
    \begin{align}
        \prob\left( \Xi_1 \geqslant \lambda \|
        \Pi_{\widehat\calT} (\bTheta^*) - \bTheta^* 
        \|_{\sf F}^2 + \Big(\frac{\sigma^2}{2\lambda} 
        + b\rho\Big) (n \log K + m \log L + t)\right)
        \leqslant e^{-t} \,
    \end{align}
    for any $\lambda>0$. \Cref{exp_bound} below 
    ensures that 
    \begin{align}
        \Ex(\Xi_1)  & \leqslant \lambda \Ex\Big [ \| \Pi_{\widehat\calT}(\bTheta^*) - \bTheta^* 
        \|_{\sf F}^2 \Big] + \Big(\frac{\sigma^2}{ 
        2\lambda} + b\rho \Big)   (n \log K 
        + m \log L +1)  \\
        & \leqslant  \lambda \Ex\Big [ \| \widehat 
        \bTheta - \bTheta^* \|_{\sf F}^2 \Big] + \Big(\frac{\sigma^2}{2\lambda} + b\rho \Big)  
        (n \log K + m \log L +1).
    \end{align}
     
 \begin{figure}
    \centering
    \begin{subfigure}[b]{0.45\textwidth}
         \includegraphics[width=\textwidth]{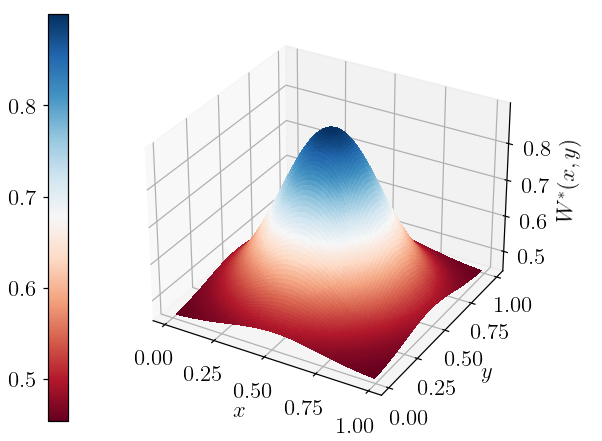}
         \caption{True graphon $W^*$}
    \end{subfigure}
    \begin{subfigure}[b]{0.45\textwidth}
         \includegraphics[width=\textwidth]{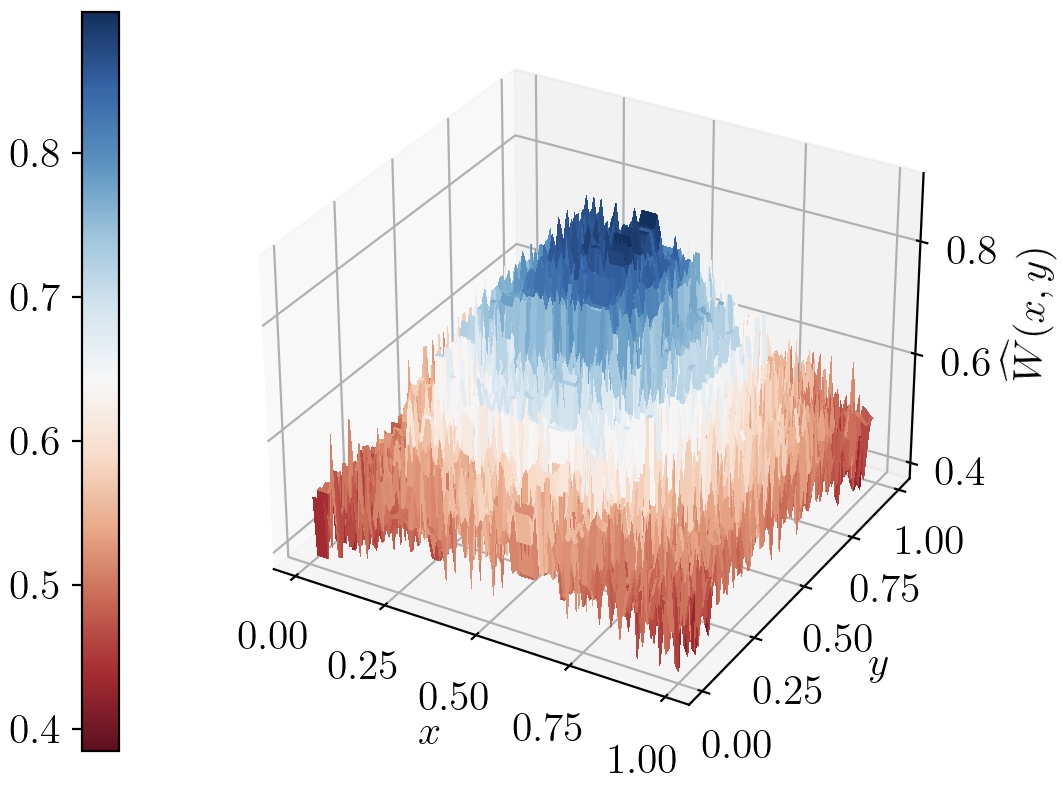}
         \caption{Estimated graphon $\widehat{W}$}
    \end{subfigure}
    \caption{Graphon representation for $\rho=0.9$.}
    \label{fig:Graphon}
 \end{figure}

    Optimizing with respect to $\lambda>0$, we get 
    \begin{align}
        \Ex(\Xi_1)  &\leqslant  \sigma\sqrt{2 
        (n \log K + m \log L +1)} \,\Ex[ \| \widehat 
        \bTheta - \bTheta^* \|_{\sf F}^2]^{\nicefrac12} 
        + b\rho   (n \log K + m \log L + 1).
        \label{bound:I}
    \end{align}
    This completes the proof of the lemma.
\end{proof}

We now switch to the evaluation of $\Ex(\Xi_2)$.  
To this end, we first notice that 
\begin{align}
    \Ex(\Xi_2) = \Ex\big[\langle \Pi_{\widehat\calT}(\bfH) 
    - \Pi_{\widehat\calT}(\bTheta^*) , \bfE \rangle\big] 
    = \Ex\big[\langle \Pi_{\widehat\calT}(\bfE) , \bfE \rangle\big]
    = \Ex\big[\| \Pi_{\widehat\calT}(\bfE)\|_{\sf F}^2\big]. 
\end{align}

\begin{lemma}\label{lem:Xi2} Under the conditions of 
    \Cref{prob_matrix}, we have
    \begin{align}
        \Ex\big[\|\Pi_{\widehat\calT}(\bfE)\|_{\sf F}^2\big]
        \leqslant  4(b+\sigma^2)\big(3KL+n\log K + m\log 
        L\big) \big( 2\rho + b\psi_{n,m}(n_0,m_0)\big).
    \end{align}
\end{lemma}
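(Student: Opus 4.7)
Plan. The strategy is to establish a uniform tail bound for $\|\Pi_{\widehat\calT}(\bfE)\|_{\sf F}^2$ across the random block-partition and then integrate to recover the expectation, mimicking the template of the proof of \Cref{lem:Xi1}. For any fixed $(\bfZ, \bfZ') \in \ZZ_{n,K,n_0}\times\ZZ_{m,L,m_0}$, orthogonality of the projection gives
\begin{align*}
\|\Pi_{\calT(\bfZ,\bfZ')}(\bfE)\|_{\sf F}^2 = \sum_{k=1}^K\sum_{\ell=1}^L S_{k\ell}^2/n_{k\ell},
\end{align*}
where $S_{k\ell}$ is the sum of the $n_{k\ell}\geqslant n_0 m_0$ entries of $\bfE$ in block $(k,\ell)$. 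Since the entries are independent centered $(\sigma^2,b)$-Bernstein variables, $S_{k\ell}$ is itself $(n_{k\ell}\sigma^2, b)$-Bernstein, and applying Bernstein's inequality followed by squaring yields $S_{k\ell}^2/n_{k\ell}\leqslant 4\sigma^2 x + 2b^2 x^2/n_{k\ell}$ with probability at least $1-2e^{-x}$.

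Union-bounding first over the $KL$ blocks inside a given partition and then over the $\leqslant K^n L^m$ choices of $(\bfZ, \bfZ')$, and using $\sum_{k,\ell}1/n_{k\ell}\leqslant KL/(n_0m_0)$, the choice $x = n\log K + m\log L + \log(2KL) + t$ produces the uniform tail
\begin{align*}
\prob\Big(\|\Pi_{\widehat\calT}(\bfE)\|_{\sf F}^2 > 4\sigma^2 KL x + \tfrac{2b^2 KL x^2}{n_0 m_0}\Big)\leqslant e^{-t}.
\end{align*}
Integrating in $t$ via \Cref{exp_bound} then delivers an expectation bound of the schematic shape $\sigma^2\,C + b^2 C^2/(n_0 m_0)$ with $C\asymp KL + n\log K + m\log L$. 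To reach the factorised form stated in the lemma, I would (i) replace the raw Bernstein variance $\sigma^2$ in the leading term by the sharper bound $\rho$, exploiting that $\Ex[E_{ij}^2]\leqslant \rho$ whenever $\Theta^*_{ij}\in[0,\rho]$ and $H_{ij}$ is bounded (Bernoulli, binomial, bounded-support settings); (ii) recast the quadratic contribution $b^2 C^2/(n_0 m_0)$ using $\psi_{n,m}(n_0,m_0) = 3\log(en/n_0)/m_0 + 3\log(em/m_0)/n_0$ together with $K\leqslant n/n_0$ and $L\leqslant m/m_0$, so that $\log K/m_0$ and $\log L/n_0$ are controlled by $\psi$; and (iii) invoke the hypothesis $\psi\leqslant(\sigma/b)^2$ to absorb the remaining $b^2$-contributions into $\sigma^2$-contributions, producing the factor $(b+\sigma^2)(2\rho+b\psi)$.

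The main obstacle is the bookkeeping in the last step: because the Bernstein tail carries a non-linear (quadratic in $x$) piece, the tail integration generates both linear and quadratic terms in $C$, and matching the exact factorisation $(b+\sigma^2)(2\rho+b\psi)(3KL+n\log K+m\log L)$ demands a careful balancing of the variance refinement via $\rho$, the $\psi$-interpretation of the quadratic piece, and the hypothesis on $\psi$ used to absorb the parasitic $b^2$-terms. Once these are reconciled, the claimed bound follows.
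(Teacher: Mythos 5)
Your decomposition $\|\Pi_{\calT(\bfZ,\bfZ')}(\bfE)\|_{\sf F}^2 = \sum_{k,\ell} S_{k\ell}^2/n_{k\ell}$ is correct, but the block-by-block Bernstein tail followed by a union bound is fundamentally lossy, and the resulting inequality does not prove the lemma. After the union bound you obtain $\|\Pi_{\widehat\calT}(\bfE)\|_{\sf F}^2 \leqslant 4\sigma^2 KL\, x + 2b^2 KL x^2/(n_0m_0)$ with $x \approx n\log K + m\log L$, whose leading term is of order $\sigma^2\, KL\,(n\log K + m\log L)$. The lemma asserts an expectation of order $\sigma^2\,(KL + n\log K + m\log L)$. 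The ratio $\frac{KL\,(n\log K + m\log L)}{KL + n\log K + m\log L} \asymp \min(KL,\, n\log K + m\log L)$ is a polynomial factor, not a constant, so no amount of bookkeeping in your step (i)--(iii) can recover the claimed bound from this tail. The source of the loss is that you bound each of the $KL$ terms by its worst-case deviation $\sim\sigma^2 x$, whereas the \emph{typical} value of each term is $\sim\sigma^2$; the sum concentrates around its mean $\sigma^2 KL$ and the deviation should only add $\sim \sigma\sqrt{KL\, x} + \sigma^2 x$, not be multiplied by $KL$.

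This is not a mere sharpening but a different argument. The paper avoids the loss via \Cref{partition1}, which treats $\|\Pi_G\vec{\bfE}\|_2$ as a whole by an $\varepsilon$-net argument on the $KL$-dimensional image of $\Pi_G$: one covers the unit ball of that subspace by $M\leqslant 12^{KL}$ points, applies Bernstein to each linear functional $\bw_m^\top\Pi_G\vec\bfE$, and deduces $\|\Pi_G\vec\bfE\|_2^2 \leqslant 4(t+\log M)\big(2\sigma^2 + b\|\Pi_G\vec\bfE\|_\infty\big)$ for a \emph{fixed} partition $G$ with probability $\geqslant 1-e^{-t}$; note $\log M \leqslant 2.5\,KL$ enters \emph{additively} with $t$, not multiplicatively. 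The $\ell_\infty$ term is then handled separately via \Cref{partition2}, which produces the $\psi_{n,m}(n_0,m_0)$ factor. Union-bounding over the $\leqslant K^nL^m$ partitions then only shifts $t$ to $t + n\log K + m\log L$, giving exactly the factor $(3KL + n\log K + m\log L)$. The contrast with \Cref{lem:Xi1} is the crucial point you overlooked: there, $\Xi_1$ is a \emph{linear} functional of $\bfE$, so a single Bernstein bound per partition captures the right order; here, $\Xi_2 = \|\Pi_{\widehat\calT}(\bfE)\|_{\sf F}^2$ is a \emph{quadratic} form, whose concentration requires a chaining argument (equivalently, a $\chi^2$-type bound), not per-block tails.
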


\begin{proof} 
Recall that $\psi_{n,m}(n_0,m_0) = \frac{3 \log 
(en/n_0)}{m_0}  + \frac{3\log (em/m_0)}{n_0}$. 
The scheme of the proof is to apply successively 
\Cref{partition1} and \Cref{partition2}. We will 
proceed by vectorizing the matrices in order to work with 
vectors only. To this end, let us consider an 
arbitrary bijection
\begin{align}
    \phi : [n] \times [m] \to [nm] \, . 
\end{align}
Let $\calN$ and $\calM$ be partitions of $[n]$ and $[m]$, 
respectively, satisfying 
\begin{align}
    |\calN|=K,\quad  |\calM|=L \qquad \text{and} \qquad
    \min_{A \in \calN} |A| \geqslant n_0\quad \min_{B \in \calM} 
    |B| \geqslant m_0.
\end{align}
We define $\calN \times \calM = \{A \times B : \, A \in \calN, \, 
B \in \calM \}$ which is a partition of $[n] \times [m]$ of 
cardinality $|\calN \times \calM|=KL$ satisfying $|A \times B| 
\geqslant n_0m_0 $ for every $A \times B \in \calN \times \calM$. 
We denote by $\calG$ the family of all partitions 
$\phi(\calN\times\calM)$, where $\calN$ and $\calM$ are as above.  

\begin{figure}
    \centering
    \hglue-3pt
    \includegraphics[width=0.35\textwidth]{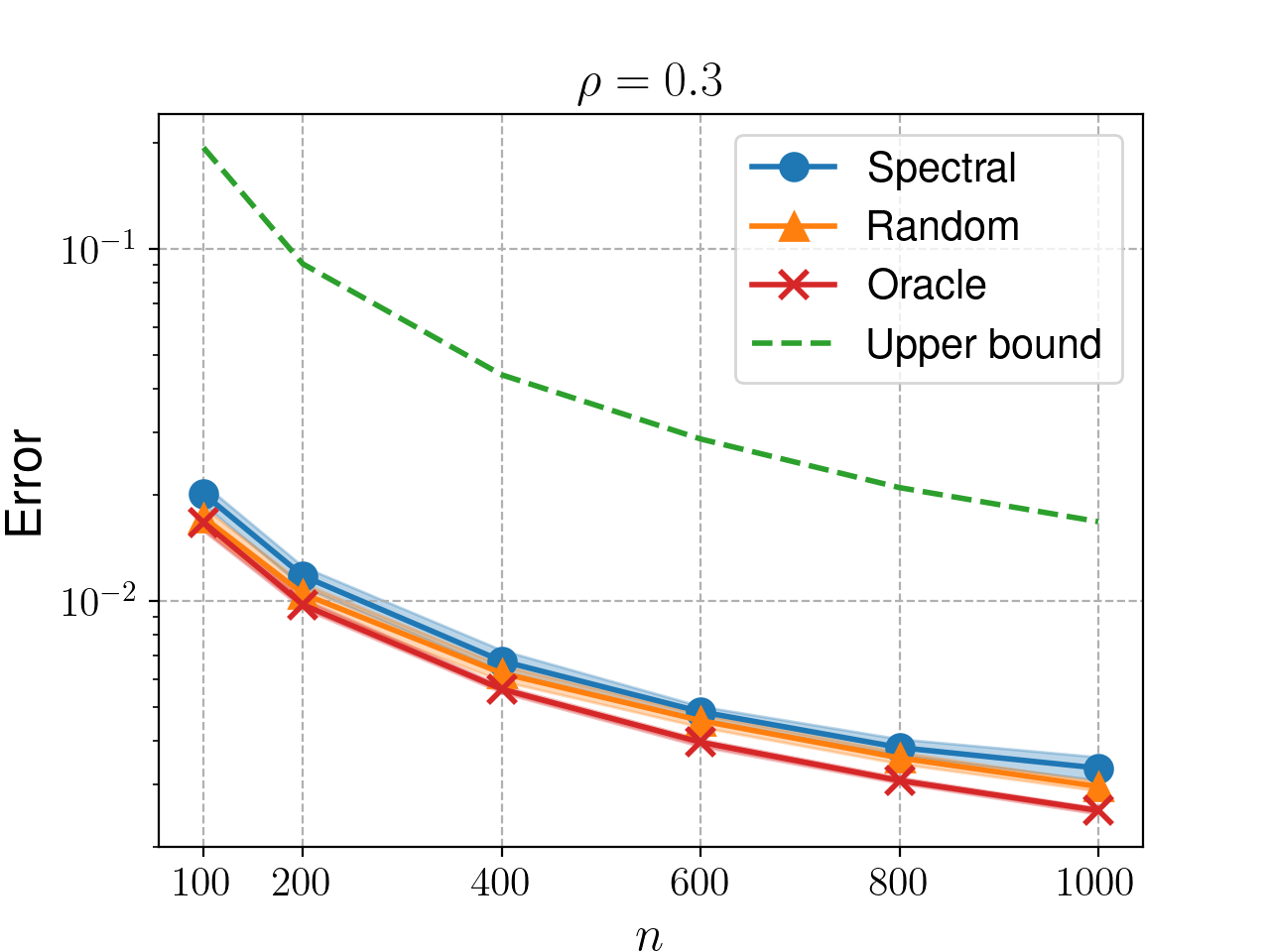}
    \hglue-13pt
    \includegraphics[width=0.35\textwidth]{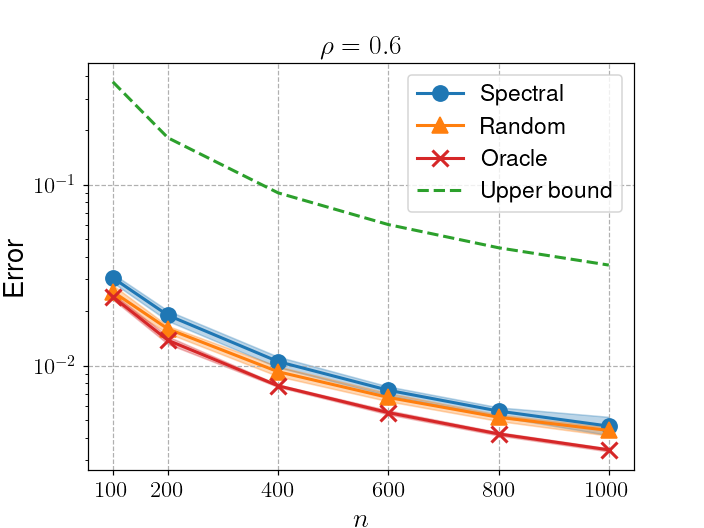}
    \hglue-13pt
    \includegraphics[width=0.35\textwidth]{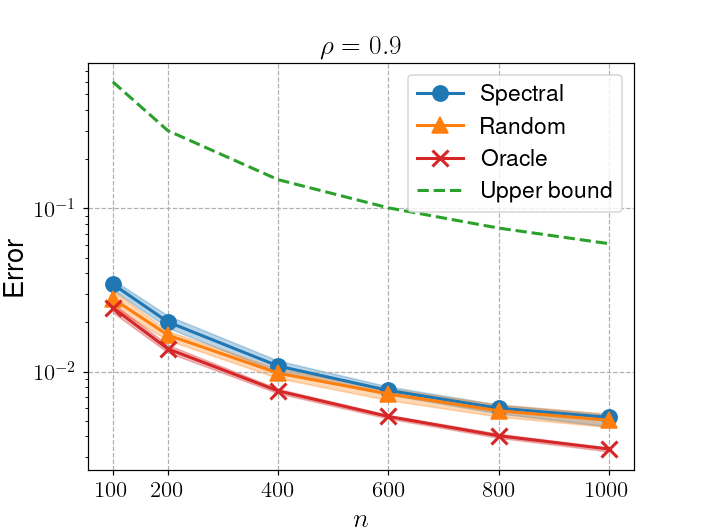}
    \vglue-8pt
    \caption{Evolution of the estimation error for a 
    Lipschitz graphon, $m={n}/{2}$.}
    \label{fig:error_hold}
\end{figure}

Since the entries of $\Pi_{\widehat\calT}(\bfE)$ are averages 
of coefficients of $\bfE$, if we vectorize $\bfE$ according 
to the map $\phi$, meaning that we define $(\vec \bfE)_i = 
E_{\phi^{-1}(i)}$ for all $i \in [nm]$, we have that
\begin{align}
    \| \Pi_{\widehat\calT}(\bfE)\|_{\sf F}^2 &  \leqslant 
    \max_{G \in \mathcal G}  \| \Pi_{G} \vec \bfE \|_2^2 
\end{align}
with entries $(\vec \bfE)_i$ satisfying the assumptions 
of \Cref{partition1}. So by the union bound on $\calG$, we 
obtain that
\begin{align}\label{l2_norm}
    \| \Pi_{\widehat\calT}(\bfE)\|_{\sf F}^2 \leqslant  
    4 \big(t + \log (2M|\calG|)\big)\big( 
    2\sigma^2 + b \max_{G \in \mathcal G}
    \|\Pi_G\vec \bfE \|_\infty \big)
\end{align}
with probability at least $1 - 0.5e^{-t}$, where 
$\log M \leqslant KL \log 12 \leqslant 2.5 KL$.

Let $\calA$ be the family of all the cells of the
partitions in $\calG$, and let  $\calA_{(s,l)} = \{
\phi(A\times B)\in \calA :\, |A|=s \text{ and } |B| 
= l \}$ for $ j  = (s,l) \in [n]\times [m]$. Define 
\begin{align}
    \mathfrak F = \frac{\log(4nm)}{n_0m_0} + 
    \max_{s,l}\frac{\log|\calA_{(s,l)}|}{sl}
    \geqslant \frac{\log(4nm)}{n_0m_0} + 
    \frac{\log|\calA_{(n_0,m_0)}|}{n_0m_0} 
    \geqslant \frac{7}{n_0m_0},\qquad \forall 
    n,m\geqslant 3.
\end{align}
According to \Cref{partition2},  on an event of 
probability at least $1-0.5e^{-t}$,
\begin{align}
    \max_{G \in \mathcal G} \|\Pi_G\vec \bfE \|_\infty 
    & \leqslant \max_{A\in \calA}\frac1{|A|}\Big| 
    \sum_{\ell\in A} (\vec \bfE)_\ell\Big| \leqslant 
    \sigma \sqrt{2 \Big(\frac{t}{n_0m_0} + 
    \mathfrak F\Big)} + b\Big(\frac{t}{n_0m_0}
    + \mathfrak F\Big)\\
    & \leqslant \sigma \sqrt{2\mathfrak F (1+t/7)}
    + b \mathfrak F(1 + t/7)\\
    & \leqslant \sigma \sqrt{2\mathfrak F} \,( 1 
    + t/14 ) + b\mathfrak F(1+t/7)
    .\label{infite_nomr}
\end{align}
Finally, combining \eqref{l2_norm} and 
\eqref{infite_nomr}, we have that with probability 
at least $1 - e^{-t}$,
\begin{align}
    \| \Pi_{\widehat\calT}(\bfE)\|_{\sf F}^2 & 
    \leqslant 4\big(t + \log (2M|\calG|)\big)
    \big( 2\sigma^2 + b\sigma \sqrt{2\mathfrak F} \,( 1 
    + t/14 ) + b^2\mathfrak F(1+t/7) \big).
\end{align}
Using \Cref{exp_bound}, we obtain the following upper 
bound  on the expectation
\begin{align}
    \Ex\big[ \| \Pi_{\widehat\calT}(\bfE)\|_{\sf F}^2 \big] 
    & \leqslant 4\big(1 + \log (2M|\calG|)\big)
    \big( 2\sigma^2 + b\sigma \sqrt{2\mathfrak F} \,( 1 
    + 1/7 ) + b^2\mathfrak F(1+2/7)  \big)\\
    &\leqslant \big(1 + \log(2M|\calG|)\big)
    \big( 9\sigma^2  + 16 \mathfrak F b^2\big).
\end{align}
For any $(s,l)$ such that $n_0 \leqslant s \leqslant n$ 
and $m_0 \leqslant l \leqslant m$, we have $|\calA_{(s,l)}| \leqslant \binom{n}{s} \binom{m}{l}$, which implies 
\begin{align}
|\calA_{(s,l)}| \leqslant  \Big(\frac{en}{s}\Big)^{s} \Big(\frac{em}{l}\Big)^{l}.
\end{align}
Therefore, taking the logarithm of the two sides, we get
\begin{align}
    \frac{\log |\calA_{(s,l)}|}{sl} & \leqslant \frac{\log(en/s)}{l} + \frac{\log(em/l)}{s} 
    \leqslant \frac{\log (en/n_0)}{m_0} + \frac{\log (em/m_0)}{n_0} \, .
\end{align}
This implies that\footnote{This is true since
$\frac{\log(2n)}{n_0\log(e n/n_0)} = \frac1{n_0} + 
\frac{\log(2n_0/e)}{n_0\log(e n/n_0)}\leqslant 
\frac{\log(4n_0)}{n_0\log(2e)}\leqslant 0.5$ provided
that $n_0\geqslant 3$.} $\mathfrak F \leqslant
(1/2)\psi_{n,m}(n_0,m_0)$. Therefore,
\begin{align}
    \Ex\big[ \| \Pi_{\widehat\calT}(\bfE)\|_{\sf F}^2 \big] 
    & \leqslant \big(1 + \log(2M|\calG|)\big)
    \big( 9\sigma^2 + 8b^2 \psi_{n,m}(n_0,m_0)\big)
\end{align}
where $|\calG| = |\calF | \leqslant K^n L^m$ and $M 
\leqslant 12^{KL}$. Taking into account the fact that 
$K \geqslant 2$ and $ L\geqslant 2$, this leads to
\begin{align}
    1 + \log(2M|\mathcal G|) &  \leqslant 1 + \log 2 
    + KL \log 12 + n \log K + m \log L \\
    & \leqslant 3KL + n \log K + m \log L.
\end{align} 
The term $\Ex\big[ \| \Pi_{\widehat\calT}(\bfE)\|_{\sf
F}^2 \big]$ is eventually bounded as follows:
\begin{align}
    \Ex\big[\|\Pi_{\widehat\calT}(\bfE)\|_{\sf F}^2\big]
    \leqslant  \big(3KL+n\log K + m\log L \big) \big(
    9\sigma^2 + 8b^2 \psi_{n,m}(n_0,m_0)\big).
\end{align}
This completes the proof of the lemma.
\end{proof}

In order to ease notation in the rest of the proof, 
let us set $A =  n \log K + m \log L$. To conclude, 
we use the bounds  on $\Xi_1$ and $\Xi_2$ obtained 
in \Cref{lem:Xi1} and \Cref{lem:Xi2}, respectively, 
as well as decompositions \eqref{decomp:0} and 
\eqref{decomposition}. Since $\Ex[\langle \bTheta^* 
- \Pi_{\calT}(\bTheta^*) , \bfE \rangle] = \langle 
\bTheta^* - \Pi_{\calT}(\bTheta^*) , \Ex[\bfE] \rangle 
=0$, we arrive at 
\begin{align}
    \Ex[ \| \widehat\bTheta - \bTheta^*\|_{\sf F}^2] 
    &\leqslant \|\bTheta^* - \Pi_{\calT}\bTheta^*
    \|_{\sf F}^2 + 2 \Ex[\Xi_1] + 2 \Ex[\Xi_2]\\
    &\leqslant \|\bTheta^* - \Pi_{\calT}\bTheta^*
    \|_{\sf F}^2 + 2\sigma \sqrt{2(A+1)}\, \Ex[ \|
    \widehat \bTheta - \bTheta^* \|_{\sf F}^2]^{\nicefrac12}
    + 2b\rho(A+1) + 2 \Ex[\Xi_2].
\end{align}
One can check that the last inequality leads to
\begin{align}
    \big(\Ex[ \| \widehat \bTheta - \bTheta^* \|_{
    \sf F}^2]^{\nicefrac12} - \sigma\sqrt{2(A+1)} \big)^2
    &\leqslant \| \bTheta^* - \Pi_{\calT}
    (\bTheta^*) \|_{\sf F}^2 + 2(A+1) (\sigma^2+b\rho) 
    + 2 \Ex[\Xi_2].
\end{align}
This readily yields
\begin{align}
    \Ex[ \| \widehat \bTheta - \bTheta^* \|_{\sf F}^2
    ]^{\nicefrac12} &\leqslant \| \bTheta^* - \Pi_{\calT}
    (\bTheta^*) \|_{\sf F} + \sigma\sqrt{2 (A+1)} + 
    \big(2(A+1)(\sigma^2 + b\rho) + 2 \Ex[\Xi_2]
    \big)^{\nicefrac12}\\
    & \leqslant \| \bTheta^* - \Pi_{\calT}
    (\bTheta^*) \|_{\sf F} + \big((A+1)(8\sigma^2 + 
    4b\rho) + 4\Ex[\Xi_2]\big)^{\nicefrac12}\\
    & \leqslant \| \bTheta^* - \Pi_{\calT}
    (\bTheta^*) \|_{\sf F} + \big(17\sigma^2 + 4b\rho 
    + 8 b^2 \psi_{n,m}(n_0,m_0) \big)^{\nicefrac12} 
    (3KL + A)^{\nicefrac12},
\end{align}
where in the second line we have used the inequality 
$\sqrt{x} + \sqrt{y} \leqslant \sqrt{2x + 2y}$. Finally,
under the condition $\psi_{n,m}(n_0,m_0)\leqslant 
(\sigma/b)^2$, we get the claim of the theorem.

\subsection[Proof of \Cref{thm:1} (risk bound for the 
LSE of the graphon)]{Proof of \Cref{thm:1} (risk bound 
for $\hat W^{\sf LS}$)}

\paragraph*{First claim: piecewise constant graphon}
In view of \eqref{eq5:0}, the fact that
$\hat W = W_{\hat\bTheta}$ and \Cref{prop:2}, 
we have
\begin{align}
    \Ex[\delta(W_{\hat\bTheta},W^*)^2]^{\nicefrac12} 
    &\leqslant  \frac{\Ex[\|\hat\bTheta - 
    \bTheta^*\|_{\sf F}^2]^{\nicefrac12}}{\sqrt{nm}} 
    + \Ex[\delta(W_{\bTheta^*},W^*)^2]^{\nicefrac12}\\
    &\leqslant \frac{\Ex[\|\hat\bTheta - 
    \bTheta^*\|_{\sf F}^2]^{\nicefrac12}}{\sqrt{nm}} 
    + \frac{\rho}{\sqrt{2}}\,\bigg(\sqrt{\frac{K
    }{n}} + \sqrt{\frac{L}{m}}\bigg)^{\nicefrac12}.
    \label{eq5:6b}
\end{align}
Let $\hat{\calT}$ and $\calT^*$ be the 
sets of all $n\times m$ matrices with real 
entries that are constant by block on the same 
blocks as $\hat\bTheta$ and $\bTheta^*$, 
respectively. Clearly, $\hat{\calT}$ and 
$\calT^*$ are linear subspaces of the 
space of $n\times m$ real matrices equipped 
with the scalar product $\langle \bfM_1, 
\bfM_2\rangle = \tr(\bfM_1^\top \bfM_2)$. Let
$\Pi_{\hat{\calT}}$ be the orthogonal projections 
onto $\hat{\calT}$. We have $\Pi_{\hat{\calT}}\bfH 
= \hat\bTheta$. Therefore, 
\begin{align}
    \|\hat\bTheta - \bTheta^*\|_{\sf F}
     = \|\Pi_{\hat{\calT}}\bfH - \bTheta^*\|_{\sf F} 
    &\stackrel{\circled{\tiny 1}}{\leqslant} 
    \|\Pi_{\hat{\calT}}(\bfH - \bTheta^*)\|_{\sf F}
    + \|\Pi_{\hat{\calT}}\bTheta^* - \bTheta^*\|_{\sf F}
    \\
    &\stackrel{\circled{\tiny 2}}{\leqslant} 
    \|\bfH - \bTheta^* \|_{\sf F} + \|(\rho/2)
    \mathbf 1_n\mathbf 1_m^\top - \bTheta^*\|_{\sf F}.
\end{align}
Above, $\circled{\tiny 1}$ is a consequence of the 
triangle inequality, whereas $\circled{\tiny 2}$ follows
from the  fact that $\Pi_{\hat{\calT}}$ is an orthogonal 
projection (hence, a contraction) and the matrix 
$(\rho/2)\mathbf 1_n\mathbf 1_m^\top$ belongs to the 
image of $\Pi_{\hat{\calT}}$. Hence
\begin{align}
    \frac1{nm}\Ex\big[\|\hat\bTheta - \bTheta^*
    \|_{\sf F}^2 \,\big|\,\bU,\bV \big] \leqslant 
    (\sigma + 0.5 \rho)^2.
\end{align}
For every $k\in[K]$ and $\ell\in[L]$, we define 
$n_k = n|a_k-a_{k-1}|$, $N_k = \#\{i:U_i\in[a_{k-1}, 
a_k[\}$, $m_\ell = m|b_{\ell}-b_{\ell-1}|$ and $M_\ell 
= \#\{j:V_j\in[b_{\ell-1}, b_\ell[\}$. We also define
the event $\Omega_0 = \{N_k\geqslant n_k/2; M_\ell 
\geqslant m_\ell/2\ \text{for all}\ k\in[K]\ \text{and}
\ \ell\in[L]\}$. Since the event $\Omega_0^c$ is
$(\bU,\bV)$-measurable, we get
\begin{align}
    \frac1{nm}\Ex\big[\|\hat\bTheta - \bTheta^*\|_{
    \sf F}^2 \mathds 1_{\Omega_0^c}\big] = \frac1{nm}
    \Ex\Big(\Ex\big[\|\hat\bTheta - 
    \bTheta^*\|_{\sf F}^2 \,\big|\,\bU,\bV \big]
    \mathds 1_{\Omega_0^c}\Big)\leqslant (\sigma 
    + 0.5\rho)^2\mathbb P(\Omega_0^c).\label{eq5:5}
\end{align}
Using the union bound and the Chernoff inequality, 
one can check that
\begin{align}
    \mathbb P(\Omega_0^c) &\leqslant \sum_{k=1}^K
    \mathbb P(N_k\leqslant n_k/2) + \sum_{\ell=1}^L
    \mathbb P(M_\ell \leqslant m_\ell/2)
    \leqslant \sum_{k=1}^K e^{-n_k/8} + 
    \sum_{\ell=1}^L e^{-m_\ell/8}.
\end{align}
Since we have assumed that $n_k\geqslant 8\log (nK)$ 
and $m_\ell \geqslant 8\log(mL)$, we get $\mathbb P( 
\Omega_0^c) \leqslant n^{-1} + m^{-1}$. If the parameters
$n_0$ and $m_0$ used in the definition of the least 
squares estimator $\hat\bTheta$ satisfy $n_0 = \min_k 
n_k/2 = n\Delta^{(K)}/2$ and $m_0 = \min_\ell m_\ell/2 
= m\Delta^{(L)}/2$, then on the event $\Omega_0$ 
we ca apply \Cref{prob_matrix}. One can check that
$\psi_{n,m}(n_0,m_0) = \psi_{n,m}(\Delta^{(K,L)})$. 
This, in conjunction with the previous inequalities, 
implies that
\begin{align}
    \frac{\Ex[\|\hat\bTheta - \bTheta^*\|_{\sf F}^2 
    ]}{{nm}} &= \frac{\Ex[\|\hat\bTheta - 
    \bTheta^*\|_{\sf F}^2\mathds 1_{\Omega_0}] + 
    \Ex[\|\hat\bTheta - \bTheta^* \|_{\sf F}^2
    \mathds 1_{\Omega_0^c}]}{nm}\\
    &\leqslant\big(25\sigma^2 + 4b\rho \big)
    \Big(\frac{3KL}{nm} + \frac{\log K}{m} + 
    \frac{\log L}{n}\Big) + \frac{
    (\sigma + 0.5\rho)^2}{n} + \frac{
    (\sigma + 0.5\rho)^2}{m}\\
    &\leqslant\Big\{\big(27\sigma^2 + 4b\rho\big)^{\nicefrac12}
    \Big(\frac{3KL}{nm} + \frac{\log K}{m} + 
    \frac{\log L}{n}\Big)^{\nicefrac12} + \frac{\rho}{2}
    \sqrt{\frac1n + \frac1m}\Big\}^2, 
\end{align}
under condition that $\psi_{n,m}(\Delta^{(K,L)})
\leqslant (\sigma/b)^2$. One can also check that if $K,L
\geqslant 2$ and $n,m\geqslant 5$, it holds 
\begin{align}
    \frac1n + \frac1m \le \frac13
    \bigg(\sqrt{\frac{K}n} + \sqrt{\frac{L}m}\bigg).
\end{align}
This inequality, combined 
with \eqref{eq5:6b}, completes the proof of the theorem. 

\paragraph*{Second claim: H\"older continuous graphons}

Using \eqref{eq5:0} and the Minkowski inequality, we get
\begin{align}
    \Ex[\delta(\WLS , W^*)^2]^{\nicefrac12} & = 
    \Ex[\delta(W_{\bThetaLS} , W^*)^2]^{\nicefrac12}
    \leqslant \frac{\Ex[\|\bThetaLS - \bTheta^*\|_{\sf F}^2
    ]^{\nicefrac12}}{\sqrt{nm}} + \Ex[\delta(W_{\bTheta^*} , 
    W^*)^2]^{\nicefrac12}.
\end{align}
Let us set 
\begin{align}
    K = L = \bigg\lfloor\bigg(\frac{3nm \calL^2}{25
    \sigma^2 +4b\rho}\bigg)^{1/2(\alpha+1)}\bigg\rfloor.
\end{align}
In view of \eqref{th2:1}, we have
\begin{align}\label{th2:2}
    (K/m)^{2(\alpha+1)}  \leqslant \frac{3n \calL^2}{(25
    \sigma^2 + 4b\rho)m^{2\alpha+1}} \leqslant \frac{3
    (\sigma/2b)^{4(\alpha+1)}\wedge 1}{\log^4(2n)}.
\end{align}
Let us choose $n_0 = \lfloor n/K\rfloor $ and $m_0 = 
\lfloor m/K\rfloor$. Thanks to \eqref{th2:2}, we
have
\begin{align}
    \frac{m}{K} \geqslant \big(\log^4(2n)\big)^{
    1/2(\alpha+1)} \geqslant \log 8 >2.
\end{align}
This implies that $m_0\geqslant 2$ and, therefore $n_0\geqslant 
2$. Using once again \eqref{th2:2}, one can check that
\begin{align}
    \frac{6\log(em/2)}{n_0}\leqslant 
    \frac{6\log(en/2)}{m_0} \leqslant 
    (\sigma/b)^2.
\end{align}
This implies that
\begin{align}
    \psi_{n,m}(n_0,m_0) & = 
    \frac{3\log (en/n_0)}{m_0} + \frac{3\log (em/m_0)}{n_0} 
     \leqslant \frac{3\log (en/2)}{m_0} + \frac{3\log (em/2)
     }{n_0} \leqslant (\sigma/b)^2.
\end{align}
Combining \Cref{prob_matrix}, \Cref{prop_aprox_error} and 
claim 2 of \Cref{prop:2}, we arrive at
\begin{align} 
    \Ex\big[\delta(\WLS , W^*)^2\big]^{\nicefrac12} 
    &\leqslant \frac{3\calL}{2K^\alpha} + \frac{3\calL}{2L^\alpha}
     + \big(25\sigma^2 + 4b\rho\big)^{\nicefrac12} \Big( 
    \frac{3KL}{nm} + \frac{\log K}{m} + \frac{\log L}{n}
    \Big)^{\nicefrac12}\\
    &\qquad + \frac{2\calL}{n^{\alpha/2}} + 
    \frac{2\calL}{m^{\alpha/2}}\\
    & \leqslant \frac{3\calL}{K^\alpha} 
     + \big(25\sigma^2 + 4b\rho\big)^{\nicefrac12} \Big( 
    \frac{3K^2}{nm} + \frac{2\log K}{m}\Big)^{\nicefrac12} 
    +  \frac{4\calL}{m^{\alpha/2}}\\
    &\leqslant \frac{3\calL}{K^\alpha}  + 3K\Big(\frac{25
    \sigma^2 + 4b\rho}{3nm}\Big)^{\nicefrac12} + \Big(
    \frac{(50\sigma^2 + 8b\rho)\log K}{m} \Big)^{
    \nicefrac12} + \frac{4\calL}{m^{\alpha/2}}.\label{ing_holder}
\end{align}
In the last display, replacing $K$ with its expression
\eqref{th2:2}, we get the claim of the theorem.

\begin{funding}
    This work was partially supported by the grant Investissements d’Avenir (ANR-11-IDEX0003/Labex Ecodec/ANR-11-LABX-0047), 
 funding from the European Research Council (ERC) under the European
Union's Horizon 2020 research and innovation programme grant agreement 741467-FIRMNET and the center Hi! PARIS.
\end{funding}

\bibliography{references.bib}

\itemsep=1cm
\newpage
%
\section{Proofs of the propositions}
\subsection{Proof of \Cref{prop:2} (approximation error
for a graphon)}
\label{proof:prop2}

\paragraph*{First claim (piecewise constant graphon)}
In what follows, $\lambda$ refers to the Lebesgue 
measure on $\mathbb R$ and $\lambda_2$ is the Lebesgue 
measure on $\mathbb R^2$. Let $W^*$ be a graphon such
that for some $K\times L$ matrix $\bfQ^*$ and some 
sequences $a_0<\ldots<a_K$, $b_0<\ldots<b_L$ 
satisfying $a_0=b_0=0$ and $a_K= b_L=1$, we have 
$W^*(u,v) = Q^*_{k,\ell}$ for every $u\in[a_{k-1},a_k)$
and $v\in[b_{\ell-1}, b_\ell)$. Equivalently, 
\begin{align}
    W^*(u,v)=\sum_{k=1}^K \sum_{\ell=1}^L \bfQ_{k,\ell}^*
    \mathds 1_{[a_{k-1},a_k)\times[b_{\ell-1},b_\ell)}
    (u,v).
\end{align}
Let us also define the ``weight'' sequences $w_k^{(1)} =
a_k-a_{k-1}$, $w_\ell^{(2)} = b_\ell-b_{\ell-1}$ and 
\begin{align}
    \hat w_k^{(1)} = \frac{1}{n} \sum_{i=1}^n 
    \mathds 1_{[a_{k-1},a_k[}(U_i) \qquad 
    \text{and} \qquad \hat w_\ell^{(2)} = 
    \frac{1}{m} \sum_{j=1}^m \mathds 1_{ [b_{\ell-1},
    b_\ell[}(V_j).
\end{align}
Notice that all the four weight sequences $w^{(1)}$, 
$w^{(2)}$, $\hat w^{(1)}$ and $\hat w^{(2)}$ are positive
and sum to one. As proved in \citep[p16]{Klopp2017}, 
there exist  two functions $\psi_1 : [0,1] \to [K]$ and
$\psi_2 : [0,1] \to [L]$ such that 
\vspace{-8pt}
\begin{enumerate}\itemsep = 0pt
    \item For all $k \in [K]$ and $x \in \big[a_{k-1},
    (a_{k-1} + \hat w_k^{(1)}) \wedge a_k \big)$, 
    we have $\psi_1(x) = k$ 
    \item  For all $\ell \in [L]$ and $x \in \big[ 
    b_{\ell-1}, (b_{\ell-1} +\hat w_\ell^{(2)}) \wedge
    b_\ell \big)$, we have $\psi_2(x) = \ell$
    \item $\lambda(\psi_1^{-1}(k))= \hat w_k^{(1)}$ 
    for all $k \in [K]$ 
    \item $\lambda(\psi_2^{-1}(\ell))= 
    \hat w_\ell^{(2)}$ for all $\ell \in [L]$ .
\end{enumerate}
Using these mappings $\psi_1$ and $\psi_2$, we construct 
the graphon $W_\psi^*(u,v) = \bfQ^*_{\psi_1(u),\psi_2(v)}$
which satisfies $\delta(W_{\bTheta^*}, W_\psi^*) = 0$. 
This leads to
\begin{align}
    \delta^2(W_{\bTheta^*},W^*) = \delta^2(W_\psi^*, 
    W^*) \leqslant \| W_\psi^* - W^* \|^2_{\mathbb L_2}
    \leqslant (B-A)^2 \lambda_2\big((u,v) : W^*_\psi(u,v) 
    \neq W^*(u,v)\big).
\end{align}
Our choice of $W_\psi^*$ ensures that $W^*(u,v) = 
W_\psi^*(u,v)$ except if $u \in \big[ (a_{k-1} 
+ \hat w_k^{(1)}) \wedge a_k, a_{k}\big)$ or $v \in 
\big[ (b_{\ell-1} + \hat w_\ell^{(2)}) \wedge b_\ell, 
b_\ell \big)$. This implies that
\begin{align}\label{dist_up}
    \delta^2(W_{\bTheta^*},W^*) \leqslant  (B-A)^2 
    \bigg( \sum_{k=1}^K ( w_k^{(1)} - \hat w_k^{(1)})_+ +
    \sum_{\ell=1}^L ( w_\ell^{(2)} - \hat w_\ell^{(2)})_+
    \bigg).
\end{align}
Since $U_i$ are i.i.d.\ random variables uniformly 
distributed in $[0,1]$, $n\hat w_k^{(1)}$ follows 
the binomial distribution $\mathcal B(n, w_k^{(1)})$. 
This implies that $\Ex[\hat w_k^{(1)}] = w_k^{(1)}$ 
for every $k$. Therefore, 
\begin{align}\label{eq5:4}
    \Ex[( w_k^{(1)} - \hat w_k^{(1)})_\pm] 
    = \frac12
    \Ex[ |w_k^{(1)} - \hat w_k^{(1)}|] \leqslant 
    \bigg(\frac{w_k^{(1)}(1-w_k^{(1)})}{4n}\bigg)^{\nicefrac12}
    \leqslant \frac{(w_k^{(1)})^{\nicefrac12}}{2\sqrt{n}}.
\end{align}
A similar upper bound can be obtained for $\Ex[( 
w_\ell^{(2)} - \hat w_\ell^{(2)})_+]$. Therefore, 
combining \eqref{dist_up}, \eqref{eq5:4}, the
Cauchy-Schwarz inequality and the fact that the 
weights $w^{(1)}$ sum to one, we get
\begin{align}
    \Ex[\delta^2(W_{\bTheta^*},W^*)] &\leqslant 
    (B-A)^2 \bigg( \sum_{k=1}^K \Ex[(w_k^{(1)} - 
    \hat w_k^{(1)})_+] + \sum_{\ell=1}^L \Ex[
    (w_\ell^{(2)} - \hat w_\ell^{(2)})_+] \bigg)\\
    &\leqslant \frac{(B-A)^2}{2}\bigg(\sqrt{\frac{K}{n}} + 
    \sqrt{\frac{L}{m}}\bigg).
\end{align}
We thus conclude the proof by taking the square root of 
the obtained inequality.

\paragraph*{Second claim (H\"older continuous graphon)}
To make the subsequent formulae more compact, we set 
$a_n(i) =  i/n$ and assume that $\mathcal L= 1$. We introduce 
$I_i = \big [a_n(i-1), a_n(i)\big[$ and 
$J_j = \big [a_m(j-1), a_m(j) \big[$. 
For every positive $k\in\mathbb N$, let $\mathfrak{S}_n$ 
be the set of the permutations of $[k]$. For $\sigma \in 
\mathfrak{S}_k$, let $\tau_\sigma$ be the specific 
measure-preserving application 
\begin{align}\label{eq5:8}
    \tau_\sigma(x) &= \sum_{i=1}^k  
    \Big(\frac{\sigma(i)-1}{k} + x - \frac{i-1}{k} 
    \Big) \mathds 1_{I_i}(x) \qquad \forall x 
    \in [0,1[,\qquad \tau_\sigma(1) = 
    \frac{\sigma(k)}{k}.
\end{align}
Notice that $\tau_\sigma$ corresponds to 
permutation of intervals $\{I_i:i\in[k]\}$ 
in accordance with $\sigma$. 

Using the definition of $\delta$, we have 
\begin{align}
    \delta^2(W_{\bTheta ^*} , W^*) &\leqslant 
    \inf_{\begin{subarray}{c} \sigma_1 \in 
    \mathfrak{S}_n \\ \sigma_2 \in \mathfrak{S}_m 
    \end{subarray}} \sum_{ i \in [n],  j \in [m]} 
    \iint_{I_i\times J_j} \big( \Theta^*_{ij} - 
    W^*(\tau_{\sigma_1}(x), \tau_{\sigma_2}(y)) 
    \big)^2 dx dy \\
    & = \inf_{\begin{subarray}{c} \sigma_1 \in 
    \mathfrak{S}_n \\ \sigma_2 \in \mathfrak{S}_m 
    \end{subarray}} \sum_{ i \in [n],  j \in [m]} 
    \iint_{I_{\sigma_1(i)}\times J_{\sigma_2(j)}}
    \big( \Theta^*_{ij} - W^*(x, y) \big)^2 dx dy \, .
    \label{eq5:7}
\end{align}
Let $\sigma_1$ be a random permutation satisfying 
$U_{(\sigma_1(i))}=U_i$; for example, let $\sigma_1$ 
be such that $U_{\sigma_1^{-1}(1)} \leqslant U_{
\sigma_1^{-1}(2)} \leqslant \dots \leqslant U_{
\sigma_1^{-1}(n)}$. We choose $\sigma_2$ similarly, 
so that $V_{(\sigma_2(j))}=V_j$. Recall that 
$\Theta^*_{ij} = W^*(U_i,V_j)$. Setting $i' = 
\sigma_1(i)$, $j' = \sigma_2(j)$ and applying 
the triangle inequality, we get
\begin{align}
    | \Theta^*_{i,j} - W^*(x,y) |  & = | W^*(x,y)  -
    W^*(U_{(i')}, V_{(j')}) |\\
    &\leqslant \big|W^*(x,y) - W^*\big( a_{n+1}(i'),
    a_{m+1}(j')\big) \big|\\
    &\qquad + \big| W^*\big(a_{n+1}(i'), 
    a_{m+1}(j') \big) - W^*(U_{(i')}, V_{(j')}) \Big|. 
    \label{triangle_decomp}
\end{align}
If $(x,y) \in I_{\sigma_1(i)} \times J_{\sigma_2(j)} = I_{i'}
\times J_{j'}$, as $\big( a_{n+1}(i'), a_{m+1}(j')
\big)$ belongs to the same set $I_{i'} \times
J_{j'}$, the Hölder property yields
\begin{align}\label{first_term}
    \Big(W^*(x,y) - W^*\big( a_{n+1} (i'), 
    a_{m+1}(j') \big) \Big)^2 \leqslant 
    \Big(\frac{1}{n^2} + \frac{1}{m^2} \Big)^{\alpha}.
\end{align}
For the second term in \eqref{triangle_decomp}, we 
use again the H\"older property, which leads to
\begin{align}
    \big\{ W^*\big( a_{n+1}(i'), a_{m+1}(j') \big) 
    - W^*(U_{(i')},V_{(j')})\big\}^2 \leqslant \big\{
    \big| a_{n+1}(i') - U_{(i')} \big|^{2} + \big|
    a_{m+1}(j') - V_{(j')} \big|^{2} \big\}^{\alpha}.
\end{align}
Then, denoting by $\sum_{i,j}$ the double sum 
$\sum_{i \in [n]}\sum_{j\in [m]}$, we have 
\begin{align}
    \Ex\Bigg[ \sum_{i,j} \iint_{I_{i'}\times J_{j'}}
    \Big( \frac{i'}{n+1} - U_{(i')} \Big)^{2} \Bigg] 
    &= \frac{1}{n} \Ex\bigg[ \sum_{i=1}^n \Big( 
    \frac{i'}{n+1} - U_{(i')} \Big)^2 \bigg] 
    = \frac{1}{n} \Ex\bigg[ \sum_{i=1}^n \Big( 
    \frac{i}{n+1} - U_{(i)} \Big)^2 \bigg] \\
    &\leqslant \max_{i=1, \dots , n} [\var(U_{(i)})]
    \leqslant \frac{1}{4n} \label{second_term}
\end{align}
where we used the fact that $U_{(i)}$ is drawn from 
the beta distribution $\beta(i,n+1-i)$. The term with 
$V_{(j')}$ is treated similarly. Combining 
\eqref{eq5:7}, the Minkowski inequality, 
\eqref{first_term} and \eqref{second_term}, we get
\begin{align}
    \Ex[ \delta^2(W_{\bTheta ^*} , W^*) ]^{\nicefrac12}
    &\leqslant  \Big(\frac{1}{n^2} + \frac{1}{m^2} 
    \Big)^{\alpha/2}\! +  \Ex\Bigg[ \sum_{i,j} \iint_{I_{i'}\times J_{j'}}
    \Big( \frac{i'}{n+1} - U_{(i')} \Big)^{2} + 
    \Big( \frac{j'}{m+1} - V_{(j')} \Big)^{2} 
    \Bigg]^{\alpha/2}\\
    &\leqslant  \Big(\frac{1}{n^2} + \frac{1}{m^2} 
    \Big)^{\alpha/2} + \Big(\frac{1}{4n} + 
    \frac{1}{4m} \Big)^{\alpha/2} \leqslant 2 
    \Big(\frac{1}{n} + \frac{1}{m} \Big)^{\alpha/2}.
\end{align}
This completes the proof. 

\subsection[Proof of \Cref{prop_aprox_error} 
(approximation error for the mean matrix)]{Proof 
of \Cref{prop_aprox_error} (approximation error for 
the matrix $\bTheta^*$)}

Without loss of generality, we prove the desired 
inequality in the case where $K = \lfloor n/n_0 
\rfloor$ and $L = \lfloor m/m_0\rfloor$. Indeed, if
the inequality is true for some value of $K$, it is
necessarily true for any smaller value as well. The 
same is true for $L$. Furthermore, we assume 
$\mathcal L = 1$.

We construct the constant-by-block matrix $\widetilde 
\bTheta \in \calT$, where $\calT$ is defined by 
\eqref{TT} as follows. Let $n=n_0K + r$ with $0 
\leqslant r < n_0$ and $m=m_0L +s$  with $0 \leqslant 
s < m_0$. For all $k \in [K]$ and 
$\ell \in [L]$, set 
\begin{align}
    I_k= [(k-1)n_0 +1 , (kn_0)\wedge n] &\qquad \text{and} 
    \qquad J_\ell= [(\ell-1)m_0 +1 , (\ell m_0)\wedge m]. 
\end{align}
The number of integers contained in 
each set of the form $I_k\times J_\ell$ is 
denoted by $n_{k,\ell}$. We set
\begin{align}
    \widetilde Q_{k,\ell} 
    = \frac{1}{n_{k,\ell}} \sum_{ i \in I_k, 
    j \in J_\ell } W^*(U_{(i)},V_{(j)}). 
\end{align}
Finally, for every $(i,j)\in[n]\times [m]$, we set 
\begin{align}
    \tilde\Theta_{i,j} = \tilde Q_{\lceil\sigma_U(i)
    /n_0\rceil,\lceil\sigma_V(j)/m_0\rceil}  
    =\sum_{k,\ell} \tilde Q_{k,\ell}\mathds 1_{\{
    \sigma_U(i)\in I_k\}} \mathds 1_{\{\sigma_V(j)
    \in J_\ell\}},
\end{align}
where $\sigma_U$ (resp.\ $\sigma_V$) is a permutation
of $[n]$ (resp.\ $[m]$) transforming $(U_i:i\in[n])$
(resp.\ $(V_j:j\in[m])$) into a nondecreasing sequence. 
In other terms, $U_{\sigma_U(i)} = U_{(i)}$ and 
$V_{\sigma_V(j)} = V_{(j)}$ for all $i$ and $j$. 
To bound the approximation error we are interested in, 
note that ($\sum_{k,\ell}$ stands for $\sum_{k\in[K]}
\sum_{\ell\in[L]}$)
\begin{align}
    \Ex\bigg[ \frac{1}{nm} \| \widetilde \bTheta - 
    \bTheta^* \|_{\sf F}^2 \bigg] &= \frac{1}{nm} 
    \sum_{k,\ell }     \sum_{i:\sigma_U(i)\in I_k}
    \sum_{j:\sigma_V(j)\in J_\ell} 
    \Ex\big[ (  \widetilde Q_{k,\ell} - \Theta^*_{ij})^2 
    \big] \\
    &= \frac{1}{nm} \sum_{k,\ell} \sum_{i \in I_k}
    \sum_{j \in J_\ell}\Ex\bigg[ \bigg( \frac{1}{
    n_{k,\ell}} \sum_{i' \in I_k} \sum_{j' \in J_\ell}  
    W^*(U_{(i')},V_{(j')}) - W^*(U_{(i)} , V_{(j)})  
    \bigg)^2 \bigg] \\
    \text{(Jensen)} \qquad  & \leqslant \frac{1}{nm} 
    \sum_{k ,\ell} \sum_{ i \in I_k} \sum_{ j \in J_\ell}   
    \bigg( \frac{1}{n_{k,\ell}} \sum_{ i' \in I_k } 
    \sum_{j' \in J_\ell } \Ex\Big[ \big( W^*(U_{(i')},
    V_{(j')}) - W^*(U_{(i)} , V_{(j)}) \big)^2 \Big]  \bigg).
\end{align}
Using the H\"older property and the Jensen inequality we 
obtain 
\begin{align}
    \Ex\Big[ \big( W^*(U_{(i')},V_{(j')}) - W^*(U_{(i)}, 
    V_{(j)}) \big)^2 \Big] & \leqslant \Ex\bigg[ 
    \bigg\| \begin{bmatrix} U_{(i')} - U_{(i)} \\ 
    V_{(j')} - V_{(j)}\end{bmatrix}\bigg\|^{2\alpha} \bigg]  
    \leqslant \bigg\{\Ex\bigg[ \bigg\| \begin{bmatrix} 
    U_{(i')} - U_{(i)} \\ V_{(j')} - V_{(j)}\end{bmatrix}
    \bigg\|^{2} \bigg]\bigg\}^\alpha .
\end{align}
Since $|i-i'|<n_0+1 \leqslant (3/2)n_0\leqslant 
(3n)/(2K)$, \citep[Lemma 4.10, p27]{Klopp2017} 
leads to
\begin{align}
    \Ex\big[| U_{(i')} - U_{(i)}|^2 \big] \leqslant 
    9/(2K)^2. 
\end{align}
Similarly, $\Ex\big[| V_{(j')} - U_{(j)}|^2 \big]
\leqslant 9/(2L)^2$. Therefore, 
\begin{align}
    \Ex\Big[ \frac{1}{nm} \| \widetilde \bTheta - 
    \bTheta^* \|_{\sf F}^2 \Big]^{\nicefrac12} & \leqslant 
    \calL \Big[ \Big(\frac{3}{2K} \Big)^{2} +
    \Big(\frac{3}{2L} \Big)^{2} \Big]^{\alpha/2} 
    \leqslant \frac{3\calL}{2}  \Big( \frac{1}{K^{\alpha}} 
    + \frac{1}{L^{\alpha}} \Big) ,
\end{align}
where we have used the inequality $(a+b)^{c} \leqslant 
a^c + b^c$ for $a,b\geqslant 0$ and for 
$c\in(0,1]$.

\subsection{Proof of \Cref{prop:polytope} (relaxation to 
a linear program)}

For further reference, we recall that we are interested in solving the problem
\begin{align}
    \min_{\bfZ\in\ZZ (n,K,n_0)} \calL(\bfZ, \bfQ, \bfZright). 
    \tag{OPT 1} \label{OPT1}
\end{align}
\paragraph*{First claim (linearization of the 
cost function)} 
We have
\begin{align}
    \calL(\bfZ,  \bfQ, \bfZright) - \|\bfH\|_{\sf F}^2
    &= -2\tr(\bfZ \bfQ (\bfZright)^\top\bfH^\top) + \tr(\bfZ 
    \bfQ (\bfZright)^\top\bfZright\bfQ^\top\bfZ^\top)\\
    &=  -2\tr(\bfZ \bfQ (\bfZright)^\top\bfH^\top) + \tr( 
    \bfQ \mathbf D\bfQ^\top\bfZ^\top\bfZ).
\end{align}
We will show that $\phi(\bfZ) = -2\tr(\bfZ \bfQ(\bfZright
)^\top \bfH^\top) + \tr(\bfQ \mathbf D\bfQ^\top\bfZ^\top 
\bfZ)$. We notice that because of the constraint on 
the rows of $\bfZ$, its columns are orthogonal, and 
$\bfZ^\top \bfZ = \textsf{diag}(n_k;k\in[K])$, where
$n_k = \bZ_{\bullet \, k}^\top {\mathbf 1}_n$ is the 
number of nonzero entries in the $k$-th column of $\bfZ$. 
So we get 
\begin{align}
    \tr(\bfQ \mathbf D\bfQ^\top\bfZ^\top\bfZ) = 
    \sum_{k=1}^K n_k \bQ_{k,\bullet}\mathbf D\bQ_{k,
    \bullet}^\top 
    = \sum_{k=1}^K \bZ_{\bullet \, k}^\top {\mathbf 1}_n\, \bQ_{k,\bullet}\mathbf D\bQ_{k,
    \bullet}^\top. 
\end{align}
This completes the proof of the first claim and entails
that \eqref{OPT1} is equivalent to 
\begin{align}
    \arg \underset{\bfZ \in \ZZ(n,K,n_0)}{\min} 
    \phi(\bfZ) \tag{OPT 2} \label{OPT2}.
\end{align} 

\paragraph*{Second claim (characterization of extreme 
points)}
An extreme point of a convex polytope $\calP$ is defined as a point in $\calP$ that can not be written as a nontrivial convex combination of two elements in $\calP$. First, let us prove that any point in $\ZZ(n,K,n_0)$ is an extreme point. Let $\bfZ \in \ZZ(n,K,n_0)$, $\lambda \in (0,1)$, $\bfZ_1$ and $\bfZ_2 \in \widetilde \ZZ(n,K,n_0)$ such that 
\begin{align}
    \bfZ &= \lambda \bfZ_1 + (1-\lambda)\bfZ_2 \, .
\end{align}
Fix some $i \in [n]$, because of the constrain on the lines of $\bfZ$, there exist $j \in [K]$ such that
\begin{align}\label{convex}
    1=Z_{ij}= \lambda (Z_1)_{ij} + (1- \lambda)(Z_2)_{ij} \,.
\end{align}
The only way to satisfy \eqref{convex} is to have $(Z_1)_{ij}= (Z_2)_{ij}=1$ because $\lambda \in (0,1)$. Then $(Z_1)_{i, \bullet} = (Z_2)_{i, \bullet} = Z_{i, \bullet} $ because of the row constraints. Finally, this holds for each $i \in [n]$, so $\bfZ=\bfZ_1=\bfZ_2$, which ensures that $\bfZ$ is an extreme point. 

Now it remains to prove that any extreme point of $\widetilde \ZZ(n,K,n_0) $ has all its entries in $\{0,1\}$. Let $\bfZ \in \widetilde \ZZ(n,K,n_0)$ which has at least one entry in $(0,1)$. Let us prove that $\bfZ$ can not be an extreme point, that is, we can write $\bfZ$ as a convex combination of two elements in $\widetilde \ZZ(n,K,n_0) $. The proof uses the next two lemmas.

\begin{lemma}\label{non_null_entries}
Let $i_0 \in [n]$ , $k_0 \in [K]$ such that $Z_{i_0k_0} \in (0,1)$. Then
\vspace{-7pt}
\begin{enumerate}\itemsep=2pt
    \item There exists $k_0' \in [K]$ such that $Z_{i_0,k_0'} 
    \in (0,1)$.
    \item Either $\sum_{i=1}^n Z_{i,k_0} \notin \mathbb N$, 
    or there exists $i_0' \in [n]$ such that $Z_{i_0' k_0} 
    \in (0,1)$.
\end{enumerate}
 \end{lemma}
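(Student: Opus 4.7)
The plan is to reduce both assertions to a single parity-like observation: if the sum of finitely many numbers in $[0,1]$ is required to be an integer, and one of those numbers lies strictly between $0$ and $1$, then at least one other summand must also lie strictly between $0$ and $1$ (otherwise the integer-valued entries would contribute an integer, leaving a non-integer residue).

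For the first claim, I would use the row constraint $\sum_{k=1}^K Z_{i_0, k} = 1$ from the definition of $\widetilde{\ZZ}(n,K,n_0)$. Rewriting this as $\sum_{k \neq k_0} Z_{i_0, k} = 1 - Z_{i_0, k_0}$, the right-hand side lies in $(0,1)$ by the hypothesis on $Z_{i_0, k_0}$, hence is not an integer. If every $Z_{i_0, k}$ with $k \neq k_0$ belonged to $\{0,1\}$, the left-hand side would be an integer, a contradiction. Therefore some $k_0' \neq k_0$ satisfies $Z_{i_0, k_0'} \in (0,1)$.

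For the second claim, I would proceed by dichotomy on whether the column sum $s := \sum_{i=1}^n Z_{i, k_0}$ is an integer. If $s \notin \mathbb{N}$, the first alternative holds and there is nothing more to prove. If $s \in \mathbb{N}$, then $\sum_{i \neq i_0} Z_{i, k_0} = s - Z_{i_0, k_0}$, and since $Z_{i_0, k_0} \in (0,1)$, this difference lies in $(s-1, s)$ and is therefore not an integer. Applying the same integer/non-integer parity argument as above, at least one $i_0' \neq i_0$ must satisfy $Z_{i_0', k_0} \in (0,1)$, which is the second alternative.

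There is no real obstacle here: both parts follow from the row-stochasticity constraint defining $\widetilde{\ZZ}(n,K,n_0)$ together with the elementary fact that a sum of numbers in $\{0,1\}$ is an integer. The only mild subtlety is that for the column version the column sum is not constrained to be an integer in the relaxed polytope, which is why the statement splits into two alternatives rather than being a clean analogue of the first claim.
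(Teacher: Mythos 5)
Your proof is correct and follows essentially the same route as the paper's: both rely on the row-stochasticity constraint $\sum_k Z_{i_0,k}=1$ (an integer) to force a second fractional entry in row $i_0$, and both split the column case according to whether $\sum_i Z_{i,k_0}$ is an integer. The only difference is that you spell out the parity argument more explicitly, which does no harm.
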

\begin{proof}[Proof of \Cref{non_null_entries}]
The proof is a straightforward consequence of the row and column constraints. Indeed, as the sum of the elements of each line is an integer, if a coefficient is not 0 or 1, then there is another coefficient that lives in $(0,1)$ on the same line, which proves $(i)$. Moreover, if $\sum_{i=1}^n Z_{i,k_0} \in \mathbb N$, the same argument gives that there is another coefficient in $(0,1)$ on the column $k_0$.
\end{proof}
If we see matrix $\bfZ$ as a bi-adjacency matrix of a bipartite graph $\calG$, with weighted edges given by the entries of $\bfZ$, the next lemma formally says that either $\calG$ contains a cycle, or it has a path with extreme points that correspond to a column that sums to a number strictly greater than $n_0$ (see \Cref{fig:z_epsilon}).

\begin{lemma}\label{cycle} Let $\bfZ \in \widetilde 
\ZZ(n,K,n_0) \backslash\{0,1\}^{n \times K}$. There 
exists $T \geqslant 1$,  and two sequences $(i_t)_{t=1}^T$ and $(k_t)_{t=0}^T$ of different indices (with possibly 
$k_0=k_T$) such that 
\vspace{-7pt}
\begin{enumerate}\itemsep=2pt
     \item $Z_{i_t k_t} \in (0,1)$ for all $1 \leqslant t \leqslant T$.
     \item $Z_{i_{t+1} k_t} \in (0,1)$ for all $0 \leqslant t \leqslant T-1$.
     \item Either $\sum_{i=1}^n Z_{i k_t} >n_0$ for $t \in \{0,T\}$ (we say that $\bfZ$ has a dead end path), or $k_T=k_0$ ($\bfZ$ has a cycle).
\end{enumerate}
\end{lemma}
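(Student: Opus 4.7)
The plan is to translate the statement into graph-theoretic language and exploit the minimum-degree bounds supplied by the previous lemma. I would introduce the bipartite graph $G$ with vertex set $[n]\sqcup[K]$ and edge set $\{(i,k):Z_{ik}\in(0,1)\}$, and I would call a column vertex $k$ \emph{terminal} when $\sum_{i}Z_{ik}\notin\mathbb N$; since $\sum_i Z_{ik}\geqslant n_0$ and $n_0\in\mathbb N$, a terminal column automatically satisfies $\sum_i Z_{ik}>n_0$. In this language, \Cref{non_null_entries}\,(i) states that every row vertex of positive degree in $G$ has degree at least $2$, while \Cref{non_null_entries}\,(ii) states that every non-terminal column vertex of positive degree in $G$ has degree at least $2$. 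The hypothesis $\bfZ\notin\{0,1\}^{n\times K}$ guarantees that $G$ has at least one edge. The two alternatives offered by the conclusion of the lemma (a cycle, or a path between two terminal columns) correspond exactly to the two possible shapes of a finite graph with these minimum-degree conditions.

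The argument would then split into two cases. If $G$ contains a cycle, bipartiteness forces the cycle to alternate between row and column vertices, so it has even length $2T$ for some $T\geqslant 2$. I would label its vertices cyclically as $k_0,i_1,k_1,i_2,\ldots,i_T,k_T$ with $k_T=k_0$; all $2T$ edges lie in $G$, so properties (1) and (2) hold by construction, and the cyclic alternative of (3) holds as well. If instead $G$ is acyclic, it is a forest. Pick a connected component with at least one edge. Any finite tree with $\geqslant 2$ vertices has at least two leaves, and the degree bounds above force every leaf of $G$ to be a terminal column. Taking the unique simple path between two such leaves produces an alternating sequence $k_0-i_1-k_1-\cdots-i_T-k_T$ whose edges are all fractional, with $k_0\neq k_T$ and both endpoints terminal; properties (1), (2) and the dead-end alternative of (3) then follow immediately.

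The one point needing a little care is the pairwise distinctness of the indices in the two sequences; this is automatic from simplicity of the cycle in the first case and simplicity of the tree path in the second. I expect no real obstacle beyond the initial reformulation into graph-theoretic language and the standard forest/leaf argument, which replaces a potentially messy "greedily extend the path" construction with a transparent case analysis.
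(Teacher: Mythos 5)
Your proof is correct and takes a genuinely different, more conceptual route than the paper's. The paper builds the required sequences by a greedy walk: starting from a fractional entry, it alternately invokes the two parts of \Cref{non_null_entries} to extend a path through rows and columns, stops at the first index repetition or at the first column with sum exceeding $n_0$, and then extracts a clean cycle or dead-end path via a somewhat fiddly reindexing (the ``shifted sequences'' $i^\#, k^\#$) to strip off the tail that may precede the cycle. You instead encode the fractional entries as a bipartite graph $G$ on $[n]\sqcup[K]$, observe that \Cref{non_null_entries} gives minimum degree $\geqslant 2$ at every row of positive degree and at every non-terminal column of positive degree, and then split on whether $G$ contains a cycle. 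The cyclic case is immediate, and the acyclic case uses the textbook fact that a tree with at least two vertices has at least two leaves, which the degree bounds force to be terminal columns; the tree path between them furnishes the alternating sequence. Two small remarks in favour of your version: it avoids the tail-stripping bookkeeping entirely because you pick a simple cycle or a simple tree path rather than a walk, and it delivers a slightly stronger conclusion in the dead-end case (both end columns have non-integer sums, not merely sums $>n_0$), which harmlessly subsumes what the lemma asserts and what the downstream construction of $\bfZ_\varepsilon$ actually uses. What the paper's version buys in return is that it is more directly algorithmic, which may be a minor aid to intuition when reading the later ``$+\varepsilon$ / $-\varepsilon$'' construction, but logically your argument is complete, correct, and cleaner.
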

\begin{proof}[Proof of \Cref{cycle}] Let us first assume that all columns of $\bfZ$ sum to some integers. We denote $Z_{i_1 k_0}$ one element of $\bfZ$ that is in $(0,1)$ According to part $(i)$ of \Cref{non_null_entries}, there exists $k_1 \neq k_0$ such that $Z_{i_1 k_1} \in (0,1)$. Following the same \Cref{non_null_entries}, we can find $i_2 \neq i_1$, and $k_2 \neq k_1$ such that $Z_{i_2 k_1} \in (0,1)$ and $Z_{i_2 k_2} \in (0,1)$ (so $T \geqslant 2$). We iterate the same process until iteration $T$, with $T$ define as the first time at which we have $i_T=i_{t_0}$ or $k_T=k_{t_0}$ for some $0 \leqslant t_0 \leqslant T-1$, meaning that we met a row or a column we already had in the previous iterations. Notice that $t_0 \leqslant T-2$ because we always have $i_t \neq i_{t+1}$ and $k_t \neq k_{t+1}$. Then we consider the following shifted sequences.
\begin{itemize}
    \item $i^\#=(i_{t+t_0-1})_{t=1}^{T-t_0}$ and $k^\#_t=k_{t+t_0}$ for $t \in \{0, \dots , T-t_0-1\}$ and $k^\#_{T-t_0}=k_{t_0}$ in the case where $i_T=i_{t_0}$, meaning that we first met a row we already had in the previous iterations. In this case, $t_0\geqslant1$.
    \item $i^\#=(i_{t+t_0})_{t=1}^{T-t_0}$ and $k^\#=(k_{t+t_0})_{t=0}^{T-t_0}$ in the case where $k_T=k_{t_0}$ meaning that we first met a column we already had in the previous iterations.
\end{itemize}
 These sequences $i^\#$ and $k^\#$ satisfy the cycle conditions of the lemma by construction.

Now we assume that there is a column $k_0$ which has a sum not in $\mathbb N$. Then it has a coordinate $Z_{i_1 k_0} \in (0,1)$. Applying part $(i)$ of \Cref{non_null_entries} gives $k_1 \neq k_0$ such that $Z_{i_1 k_1} \in (0,1)$. 
\begin{itemize}
    \item If $\sum_{i=1}^n Z_{i k_1} > n_0$, then lemma is proven for $T=1$ and the dead end path setting.
    \item Else, $\sum_{i=1}^n Z_{i k_1} = n_0$ then we can iterate similarly the previous process until iteration $T$ define as the first time at which we have $i_T=i_{t_0}$ or $k_T=k_{t_0}$ for some $0 \leqslant t_0 \leqslant T-2$ or $\sum_{i=1}^n Z_{i k_T} > n_0$. In the first two cases, we consider the shifted sequences $i^\#$ and $k^\#$ as before, that satisfy the conditions of the lemma. In the last case, the sequences $(i_t)_{t=1}^T$ and $(k_t)_{t=0}^T$ satisfy the desired dead end path conditions.
\end{itemize}
Notice that $T < +\infty $ because the number of rows and columns is finite.
\end{proof}
The end of the proof of \Cref{prop:polytope} is  
similar to the one in \citep[Prop 3.4]{transport}, 
if we see $\bfZ$ as the bi-adjacency matrix of a
bipartite graph. Let us rewrite the proof adapted 
for our purpose. We introduce
\begin{align}
    \varepsilon_1 &=  \underset{k \in [K]}{\min} 
    \Big\{ \sum_{i=1}^n \frac{Z_{i,k} - n_0}{2} :
    \sum_{i=1}^n Z_{i,k}>n_0 \Big\} \wedge 1 ,\quad 
    \varepsilon_2 = \min_{ i , k } 
    \Big\{ \frac{Z_{ik}}{2} : Z_{ik} > 0 \Big\} 
\end{align}
and $\varepsilon_3 = (1/2)\min_{ i , k } \big\{ 1 -
Z_{ik} : Z_{ik} < 1 \big\}$. 
So $\varepsilon_1$, $\varepsilon_2$ and $\varepsilon_3$ are positive real numbers. By convention, the minimum of the empty set is $+\infty$. We finally define $\varepsilon=\min(\varepsilon_1, \varepsilon_2,\varepsilon_3) \in (0,1)$. Hence, when we add or subtrack $\varepsilon$ to one entry of $\bfZ$ that is in $(0,1)$, it remains in $(0,1)$. Moreover, if this entry is in a column that sums to strictly more than $n_0$, if we subtract $\varepsilon$ from this entry, the sum of the column remains strictly greater than $n_0$. We apply \Cref{cycle} which gives to sequences $(i_t)_{t=1}^T$ and $(k_t)_{t=0}^T$ that satisfy the conditions of the lemma. Then we define $\bfZ_{\varepsilon}$ such that (see \Cref{fig:z_epsilon} for a visual construction of $\bfZ_\varepsilon$)
\begin{align} (Z_{\varepsilon})_{ij} =
\begin{cases}
0  & \text{ if } \qquad i\neq i_t  \text{ or } k \neq k_t \\
+ \varepsilon  &  \text{ if } \qquad i=i_{t+1}, k=k_{t} \qquad \text{ for some } 0 \leqslant t \leqslant T-1 \\
- \varepsilon & \text{ if } \qquad i=i_{t},k=k_{t} \qquad \text{ for some } 1 \leqslant t \leqslant T \, .
\end{cases}    
\end{align}


\begin{figure}
    \centering
    \begin{subfigure}[b]{0.43\textwidth}
    \begin{tikzpicture}[scale=0.9]
    \draw[line width=2pt] (2, 0) -- (1.5, 0) -- (1.5, 5) -- (2, 5)  ;
    \draw[line width=2pt] (8, 0) -- (8.5, 0) -- (8.5, 5) -- (8, 5)  ;

    \draw[line width=0.5pt] (3.5, 2) -- (4.1, 2.6);
    \draw[line width=0.5pt] (3.5, 2.6) -- (4.1, 2);
    \draw[line width=1.5pt,red] (3.5, 2) rectangle (4.1, 2.6);

    \draw[line width=0.5pt,blue] (7, 2) rectangle (7.6, 2.6);
    \draw[line width=0.5pt,blue] (7, 0.5) rectangle (7.6, 1.1);
    \draw[line width=0.5pt,blue] (2, 0.5) rectangle (2.6, 1.1);
    \draw[line width=0.5pt,blue] (2, 4) rectangle (2.6, 4.6);
    \draw[line width=0.5pt,blue] (5.5, 4) rectangle (6.1, 4.6);
    \draw[line width=0.5pt,blue] (5.5, 2) rectangle (6.1, 2.6);

    \draw[dash pattern=on 7pt off 4pt,line width=1pt,blue] (4.1,2.3) -- (5.5,2.3);
    \draw[->,dash pattern=on 7pt off 4pt,line width=1pt,blue] (6.1,2.3) -- (7,2.3);
    \draw[->,dash pattern=on 7pt off 4pt,line width=1pt,blue] (7.3,2) -- (7.3,1.1);
    \draw[->,dash pattern=on 7pt off 4pt,line width=1pt,blue] (7,0.8) -- (2.6,0.8);
    \draw[->,dash pattern=on 7pt off 4pt,line width=1pt,blue] (2.3,1.1) -- (2.3,4);
    \draw[->,dash pattern=on 7pt off 4pt,line width=1pt,blue] (2.6,4.3) -- (5.5,4.3);
    \draw[->,dash pattern=on 7pt off 4pt,line width=1pt,blue] (5.8,4) -- (5.8,2.6);

    \node at (7.32,2.25) {+$\boldsymbol{\varepsilon}$};
    \node at (7.32,0.75) {-$\boldsymbol{\varepsilon}$};
    \node at (2.32,0.75) {+$\boldsymbol{\varepsilon}$};
    \node at (2.32,4.25) {-$\boldsymbol{\varepsilon}$};    
    \node at (5.82,4.25) {+$\boldsymbol{\varepsilon}$};
    \node at (5.82,2.25) {-$\boldsymbol{\varepsilon}$};    
    \end{tikzpicture}
         \caption{Cylcle}
    \end{subfigure}
    \hspace{0.5cm}
    \begin{subfigure}[b]{0.43\textwidth}
    \begin{tikzpicture}[scale=0.9]
    \draw[line width=2pt] (2, 0) -- (1.5, 0) -- (1.5, 5) -- (2, 5)  ;
    \draw[line width=2pt] (8, 0) -- (8.5, 0) -- (8.5, 5) -- (8, 5)  ;

    \draw[line width=1.5pt,red] (3, 4) rectangle (3.6, 4.6);
    \draw[line width=0.5pt,blue] (7, 4) rectangle (7.6, 4.6);
    \draw[line width=0.5pt,blue] (7, 0.5) rectangle (7.6, 1.1);
    \draw[line width=0.5pt,blue] (2, 0.5) rectangle (2.6, 1.1);
    \draw[line width=0.5pt,blue] (2, 3) rectangle (2.6, 3.6);
    \draw[line width=0.5pt,blue] (5, 3) rectangle (5.6, 3.6);

    \draw[->,dash pattern=on 7pt off 4pt,line width=1pt, blue] (3.6,4.3) -- (7,4.3);
    \draw[->,dash pattern=on 7pt off 4pt,line width=1pt, blue] (7.3,4) -- (7.3,1.1);
    \draw[->,dash pattern=on 7pt off 4pt,line width=1pt, blue] (7,0.8) -- (2.6,0.8);
    \draw[->,dash pattern=on 7pt off 4pt,line width=1pt, blue] (2.3,1.1) -- (2.3,3);
    \draw[->,dash pattern=on 7pt off 4pt,line width=1pt, blue] (2.6,3.3) -- (5,3.3);

    \node at (3.27,4.27) {+$\boldsymbol{\varepsilon}$};
    \node at (7.27,4.27) {-$\boldsymbol{\varepsilon}$};
    \node at (7.255,0.755) {+$\boldsymbol{\varepsilon}$};
    \node at (2.25,0.75) {-$\boldsymbol{\varepsilon}$};
    \node at (2.25,3.25) {+$\boldsymbol{\varepsilon}$};    
    \node at (5.25,3.25) {-$\boldsymbol{\varepsilon}$};
    \end{tikzpicture}
         \caption{Dead end path}
    \end{subfigure}
    \caption{
    The starting point is depicted as a red square. In the left-hand side illustration, we encounter a row identical to one we previously encountered, akin to the procedure outlined in \Cref{cycle}. Subsequently, we disregard the preceding steps, symbolized by the crossed-out square. This process generates the sequences necessary for the construction of $\mathbf{Z}_\varepsilon$. On the right-hand side illustration, we have not encountered any previously encountered lines or columns. Nonetheless, the procedure halts due to the last column summing to a value exceeding $n_0$. This occurrence provides the pathway for constructing $\mathbf{Z}_\varepsilon$.}
    \label{fig:z_epsilon}
\end{figure}
The properties of the sequences imply that
\vspace{-8pt}
\begin{itemize}\itemsep=1pt
    \item $\bfZ_\varepsilon {\mathbf 1}_K = {\bf0}_n$.
    Indeed, the $i$-th row of $\bfZ$ sums to 0 when 
    $i \notin \{i_1, \dots , i_T\}$. But if $i=i_t$ 
    for some $t \in [T]$, then there is a 
    $+\varepsilon$ on the $k_t$-th column, and a 
    $-\varepsilon$ on the $k_{t-1}$-th column, and 
    0 anywhere else.
    
    \item Moreover we have $\bfZ_\varepsilon^\top 
    {\mathbf 1}_n = \delta_\varepsilon {\mathbf 1}_K$ 
    where $\delta_\varepsilon \in \{0,\pm\varepsilon
    \}$. Indeed, the $k$-th column sums 
    to 0 for $k \notin \{k_0, \dots , k_T\}$. For 
    $k=k_t$ with $ t \in[T-1]$, 
    then there is a $+\varepsilon$ on the $i_{t+1}$-th 
    row and a $-\varepsilon$ on the $i_{t}$-th row and 
    0 anywhere else. For $t \in \{0,T\}$ we have two 
    cases. If $k_T=k_0$, then  the $k_0$-th column 
    sums to 0. If $k_T\neq k_0$, there is only 
    a $+\varepsilon$ on the $k_0$-th column and 
    only a $-\varepsilon$ on the $k_T$-th column.
\end{itemize}

The choice of $\varepsilon$  ensures that $\bfZ_+=\bfZ+\bfZ_\varepsilon$ and $\bfZ_-=\bfZ-\bfZ_\varepsilon$ live in $\widetilde \ZZ(n,K,n_0)$. Finally, $\dis \bfZ = (\bfZ_+ + \bfZ_-)/2$ and 
$\bfZ$ can not be an extreme point.

\newpage
\section{Proof of \Cref{lower_bound} (lower bounds)}

Although the general structure of the proof and some 
important parts of it are similar to those of the proof 
of \citep[Proposition 3.4]{Klopp2017}, there are some 
technical differences that are due to the fact that 
the graphon and the observed matrix are not symmetric. 
Furthermore, the bounds involve some terms depending on 
$L$ and $m$, which was not the case in lower bounds 
proved in the literature. 

To get the desired lower bound, we divide the problem 
into the following three minimax lower bounds
\begin{align}
    \inf_{\widehat W} \sup_{ W^* \in \calW_\rho[K,L] } \Ex_{W^*} \big[ \delta^2(\widehat W , W^*) \big] &\geqslant c \rho^2 \Big( \sqrt{\frac{K}{n}}  +\sqrt{\frac{L}{m}} \Big) \label{minimax1} \\
    \inf_{\widehat W} \sup_{ W^* \in \calW_\rho[K,L] } \Ex_{W^*} \big[ \delta^2(\widehat W , W^*) \big] &\geqslant c\rho  \Big(\frac{KL}{Nnm} \wedge \rho  
    \Big) \label{minimax2} \\
    \inf_{\widehat W} \sup_{ W^* \in \calW_\rho[2,2]} \Ex_{W^*} \big[ \delta^2(\widehat W , W^*) \big] &\geqslant c\rho  \Big(\frac1{N\sqrt{nm}}\wedge \rho  \Big) \label{minimax3}
\end{align}
If these three inequalities hold true, then the desired
result will be true with the constant $c/3$. The rest
of this section is split into three subsections, each 
of which contains the proof of one of the inequalities
\eqref{minimax1}, \eqref{minimax2} and \eqref{minimax3}. 

\subsection{Proof of \eqref{minimax1}: error due
to the unknown partition}

Since $(K,n)$ and $(L,m)$ play symmetric roles, we will  only prove the lower bound $\rho^2 \sqrt{{K}/{n}}$. The 
same arguments will lead to the lower bound $\rho^2 \sqrt{{L}/{m}}$. As a consequence, we will get the
lower bound $\rho^2(\sqrt{{K}/{n}}\vee \sqrt{{L}/{m}})
\geqslant \rho^2(\sqrt{{K}/{n}} + \sqrt{{L}/{m}})/2$.

Without loss of generality,  we can assume that $K$ 
is a multiple of 16. Indeed, by choosing $C>17$, for
any $K>17$, setting $K'= 16\lfloor K/16\rfloor$, the 
inequality
\begin{align}
    \inf_{\widehat W} \sup_{W^* \in \calW_\rho
    [K',L]} \Ex_{W^*} \big[ \delta^2(\widehat W , W^*)
    \big]\geqslant c \rho^2 \sqrt{\frac{K'}{n}} 
\end{align}
would imply
\begin{align}
    \inf_{\widehat W} \sup_{ W^* \in \calW_\rho[K,L] } \Ex_{W^*} \big[ \delta^2(\widehat W , W^*) \big] & \geqslant  \inf_{\widehat W} \sup_{W^* \in \calW_\rho
    [K',L]} \Ex_{W^*} \big[ \delta^2(\widehat W , W^*)
    \big] 
    \geqslant \frac{c}{4} \rho^2 \sqrt{\frac{K}{n}}.
\end{align}
To establish the desired lower bound, we follow the standard recipe \citep[Theorem 2.7]{Nonparametric} 
consisting in designing a finite set 
of graphons that has the following two properties: 
the graphons from this set are well separated when 
the distance is measured by the metric $\delta^2$ 
and, in the same time, the distributions generated 
by these graphons are close, which makes it difficult to differentiate them based on the data matrix $\bfH$. 

To define this set, we choose a $K\times L$ matrix 
$\bfQ$ with entries from $\{0,\rho\}$ and two positive numbers $\varepsilon,\varepsilon'$; conditions on
$\bfQ$, $\varepsilon$ and $\varepsilon'$ will be
specified later. For any $K\in\mathbb N$, define
\begin{align}
    \calC^K=\Big\{ \bfs \in \{-1, + 1\}^K: 
    \sum_{k=1}^K s_k =0  \Big\},\quad 
    I_{\bfs,\varepsilon}^a = \bigg[\sum_{k=1}^{a-1}
    \Big(\frac1K + \varepsilon s_k\Big), \sum_{k=1
    }^{a} \Big(\frac1K + \varepsilon s_k\Big)\bigg)
\end{align}
for $a \in [K]$ and $\bfs \in \calC^K$, with the 
convention that $\sum_{k=1}^0 = 0$. Similarly, for
every $\bft \in \calC^L$,  we set $J_{\bft, 
\varepsilon'}^b = \big[\sum_{\ell=1}^{b-1} 
\big(\frac1L + \varepsilon' t_\ell\big), \sum_{\ell=1}^b \big(\frac1L + \varepsilon' 
t_\ell\big)\big)$. The length of an interval $I$ will be
denoted by $|I|$. We can now define the class of 
graphons $W_{\bfs, \bft, \varepsilon}$ for each $\bfs \in \calC^K$ and $\bft \in\calC^L$ by
\begin{align}
    W_{\bfs,\bft,\varepsilon}(u,v)= \sum_{k=1}^K
    \sum_{\ell=1}^L
    Q_{k,\ell} \mathds 1_{I_{\bfs,\varepsilon}^k} 
    (u)\mathds 1_{J_{\bft,\varepsilon'}^\ell} (v). 
\end{align}
We denote by $\bfP_{W_{\bfs,\bft,\varepsilon}}$ the 
distribution of $\bfH= (H_{ij}, i \in [n], j\in [m])$, 
where $\bfH$ is sampled according to the Binomial model with graphon $W_{\bfs,\bft,\varepsilon}$. The next three
lemmas, the proofs of which are postponed to the end
of this subsection, provide the main technical tools
necessary to establish the desired lower bound.

\begin{lemma}\label{lem:lowerKL}
If\/ $4K\varepsilon \leqslant 1$, then for all 
$\bfs$ and $\bfs'$ from $\calC^K$ and all $\bft 
\in\calC^L$, the following inequality $\KL( 
\bfP_{W_{\bfs, \bft,\varepsilon}} || \bfP_{
W_{\bfs', \bft,\varepsilon}}) \leqslant 6 n
(K\varepsilon)^2$ holds true.
\end{lemma}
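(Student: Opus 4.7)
The plan is to bound the KL divergence between the two complicated measures on $\bfH$ by a KL divergence between two categorical distributions on $[K]$, by exploiting a common mixture structure.

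The first step is to marginalize out $\bV$. Since $\bV$ has the same (uniform) marginal under both measures and conditionally on $\bV$ the rows of $\bfH$ are independent, joint convexity of KL divergence gives
\begin{align}
    \KL\big(\bfP_{W_{\bfs,\bft,\varepsilon}} \,||\, \bfP_{W_{\bfs',\bft,\varepsilon}}\big)
    \leqslant n\, \Ex_{\bV}\!\big[\KL\big(g_\bfs(\cdot;\bV)\,||\,g_{\bfs'}(\cdot;\bV)\big)\big],
\end{align}
where $g_\bfs(\cdot;\bV)$ is the density of a single row of $\bfH$, integrated over $U_1 \sim \mathcal U([0,1])$.

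The key structural observation is that both conditional densities are mixtures with the \emph{same} component measures. Partitioning $[0,1]=\bigsqcup_k I^k_{\bfs,\varepsilon}$ while integrating $U_1$ gives
\begin{align}
    g_\bfs(\cdot;\bV) = \sum_{k=1}^K p_k\, \mu_k(\cdot;\bV), \qquad p_k = \big|I^k_{\bfs,\varepsilon}\big| = \frac{1}{K}+\varepsilon s_k,
\end{align}
where each $\mu_k(\cdot;\bV)$ is a product of $m$ binomial distributions whose parameters depend on $\bV$ and $\bfQ$ but \emph{not} on $\bfs$. Consequently $g_{\bfs'}(\cdot;\bV) = \sum_k q_k\mu_k(\cdot;\bV)$ with $q_k = 1/K+\varepsilon s'_k$, so the two mixtures share their component measures and differ only through their weight vectors.

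I would then apply the general inequality
\begin{align}
    \KL\Big(\sum_k p_k\mu_k \,||\, \sum_k q_k\mu_k\Big) \leqslant \KL(p\,||\,q),
\end{align}
which is an instance of the data processing inequality applied to the joint laws of $(k,X)$ with $k\sim p$ (resp.\ $q$) and $X\mid k\sim \mu_k$: forgetting the label can only decrease the KL. This makes the bound deterministic in $\bV$, so the outer expectation over $\bV$ disappears. It remains to bound $\KL(p\,||\,q)$ by $\chi^2(p\,||\,q) = \sum_k (p_k-q_k)^2/q_k$ and to note that $4K\varepsilon\leqslant 1$ gives $q_k \geqslant 3/(4K)$ while $(p_k-q_k)^2 \in \{0,4\varepsilon^2\}$; routine arithmetic then yields $\KL(p\,||\,q) \leqslant (16/3)(K\varepsilon)^2 \leqslant 6(K\varepsilon)^2$, and combining with the first display finishes the proof.

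The main conceptual obstacle is recognizing this un-mixing step. A naive chain-rule bound of the form $\KL(\bfP_{W_\bfs}\,||\,\bfP_{W_{\bfs'}}) \leqslant \Ex_{\bU,\bV}\!\big[\sum_{i,j}\KL(\cdots)\big]$ fails outright, because the binomial KL is infinite whenever $W_\bfs(U_i,V_j)=\rho$ while $W_{\bfs'}(U_i,V_j)=0$, an event of positive probability as soon as the partitions $\{I^k_{\bfs,\varepsilon}\}$ and $\{I^k_{\bfs',\varepsilon}\}$ are misaligned. A direct $\chi^2$ estimate on the mixture itself (rather than reducing to the weights) loses an additional factor of $K$ and fails to reach the target rate $6n(K\varepsilon)^2$.
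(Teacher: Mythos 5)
Your proof is correct, and at its core it uses the same key idea as the paper: both $\bfP_{W_{\bfs,\bft,\varepsilon}}$ and $\bfP_{W_{\bfs',\bft,\varepsilon}}$ are mixtures with the \emph{same} component measures (indexed by discretized cluster labels) and differ only through their mixing weights, so data processing (equivalently, joint convexity of $(x,y)\mapsto x\log(x/y)$) collapses the KL between the observation distributions to the KL between categorical weight distributions, after which the $\chi^2$ bound and the estimates $q_k\geqslant 3/(4K)$, $(p_k-q_k)^2\leqslant 4\varepsilon^2$ give $16n(K\varepsilon)^2/3\leqslant 6n(K\varepsilon)^2$. The main difference is organizational: the paper performs the un-mixing on the full $n\times m$ matrix distribution by expanding the integral over $(\bfU,\bfV)$ into a sum over label configurations and then invokes the product structure of the resulting weights, whereas you first marginalize out $\bV$, factorize across the $n$ conditionally i.i.d.\ rows to isolate a single per-row mixture $g_\bfs(\cdot;\bV)=\sum_k p_k\mu_k(\cdot;\bV)$, and apply the un-mixing there. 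Your route is arguably cleaner notationally — it makes explicit that only the $n$ row labels carry a dependence on $\bfs$ (the $\bft$-part cancels because it is shared), and it sidesteps the paper's somewhat awkward per-entry product formula for the likelihood. Your remark about why a naive per-entry chain-rule bound fails (infinite KL when the discretization boundaries are misaligned) correctly identifies the obstruction the mixture decomposition is designed to avoid.
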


\begin{lemma}\label{matrix_B} Assume that $K,L$ are 
large enough integers multiple of 16 and satisfying 
$KL\geqslant L\log^2 L + K\log^2 K$. There exists 
$\bfB \in \{ -1,1 \}^{K \times L}$ satisfying the 
following two properties.
\vspace{-8pt}
\begin{enumerate}\itemsep=2pt
    \item[i)] For all $(k_1,k_2) \in [K]^2$, $k_1 \neq k_2$, 
    and for all $(\ell_1,\ell_2) \in [L]^2$, $\ell_1 
    \neq \ell_2$, it holds that
    \begin{align}
        |\langle \bfB_{k_1 \, \bullet} , \bfB_{k_2 \, \bullet} \rangle| &\leqslant L/4 \qquad
        \text{and}\qquad
        |\langle \bfB_{\bullet \, \ell_1 } , \bfB_{ \bullet \, \ell_2} \rangle | \leqslant K/4
    \end{align}
    \item[ii)]  Let $\pi_i : [K/16] \to [K]$ 
    and $\nu_i : [L/16] \to [L]$, $i=1,2$ 
    be arbitrary bijections such that either 
    $\Img(\pi_1)\cap \Img(\pi_2) = \varnothing$ 
    or $\Img(\nu_1)\cap \Img(\nu_2) = \varnothing$.
    Then
    \begin{align}
        \sum_{k=1}^{K/16}\sum_{\ell=1}^{L/16} 
        \big( B_{\pi_1(k), \nu_1(\ell)} - B_{\pi_2(k),
        \nu_2(\ell)} \big)^2 \geqslant \frac{KL}{512}.
    \end{align}
\end{enumerate}
\end{lemma}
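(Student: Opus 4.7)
The strategy is to invoke the probabilistic method: sample $\bfB \in \{-1,+1\}^{K\times L}$ with i.i.d.\ Rademacher entries $B_{k,\ell}$, and show that both properties (i) and (ii) hold simultaneously with positive probability, so that a realization satisfying both must exist.

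For (i), fix distinct rows $k_1 \neq k_2$. Since the entries are i.i.d.\ Rademacher and products of independent Rademachers are again Rademacher, $\langle \bfB_{k_1,\bullet}, \bfB_{k_2,\bullet}\rangle = \sum_{\ell=1}^L B_{k_1,\ell}B_{k_2,\ell}$ is a sum of $L$ i.i.d.\ Rademacher variables. Hoeffding's inequality gives $\mathbb P(|\langle \bfB_{k_1,\bullet}, \bfB_{k_2,\bullet}\rangle|>L/4) \leq 2e^{-L/32}$, and analogously for columns. A union bound over the at most $K^2$ row pairs and $L^2$ column pairs shows that (i) fails with probability at most $K^2 e^{-L/32} + L^2 e^{-K/32}$. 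The hypothesis $KL \geq K\log^2 K + L\log^2 L$, divided successively by $K$ and by $L$, implies $L \geq \log^2 K$ and $K \geq \log^2 L$, which for $K,L$ large enough makes this at most, say, $1/4$.

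For (ii), fix a quadruple $(\pi_1,\pi_2,\nu_1,\nu_2)$ and assume first $\Img(\pi_1)\cap\Img(\pi_2)=\emptyset$; the other case follows from the symmetric column-based argument. Set $X_{k,\ell} := (B_{\pi_1(k),\nu_1(\ell)} - B_{\pi_2(k),\nu_2(\ell)})^2/4 \in \{0,1\}$. The key step is to check that for any two distinct pairs $(k,\ell)\neq(k',\ell')$, the four matrix positions $(\pi_1(k),\nu_1(\ell))$, $(\pi_2(k),\nu_2(\ell))$, $(\pi_1(k'),\nu_1(\ell'))$, $(\pi_2(k'),\nu_2(\ell'))$ are pairwise distinct: two positions sharing the same subscript $i\in\{1,2\}$ differ because $\pi_i$ and $\nu_i$ are injective, while two positions with different subscripts cannot coincide since their first coordinates lie in the disjoint sets $\Img(\pi_1)$ and $\Img(\pi_2)$. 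Hence the $X_{k,\ell}$ depend on disjoint pairs of entries of $\bfB$ and are i.i.d.\ $\mathrm{Bernoulli}(1/2)$. With $N=(K/16)(L/16)=KL/256$ and $S=\sum_{k,\ell}X_{k,\ell}$, Hoeffding yields $\mathbb P(S<N/8) \leq e^{-9N/32}=e^{-9KL/8192}$; off this event, the target sum equals $4S\geq N/2=KL/512$.

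Finally, union-bound over quadruples. The number of injections $[K/16]\to[K]$ is at most $K^{K/16}$, so the number of relevant quadruples is at most $K^{K/8}L^{L/8}$, and the probability that (ii) fails is at most $K^{K/8}L^{L/8}\,e^{-9KL/8192}$. Its logarithm is negative whenever $9KL/8192 > (K\log K + L\log L)/8$, i.e.\ $KL \gtrsim K\log K + L\log L$, which is implied by $KL\geq K\log^2 K + L\log^2 L$ once $K,L$ exceed a fixed constant $C$. Adding the failure probabilities for (i) and (ii) gives a total strictly less than $1$, so a matrix $\bfB$ satisfying both properties exists. The only delicate step is the disjointness verification that underlies the independence of the $X_{k,\ell}$; the remainder is a routine Hoeffding–union-bound calculation.
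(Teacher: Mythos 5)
Your proof follows the same strategy as the paper's: take $\bfB$ with i.i.d.\ Rademacher entries, apply Hoeffding and union bounds separately for properties (i) and (ii), and use the hypothesis $KL\geqslant K\log^2 K + L\log^2 L$ to control the number of row/column pairs and the number of quadruples of injections. The only (welcome) difference is that you explicitly verify the pairwise disjointness of the four positions underlying each summand in (ii), which the paper states without justification; otherwise the argument, including the exponents and the final union-bound bookkeeping, is essentially identical.
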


For each $\bfs\in\calC^K$, let us define
$\calA_{\bfs}= \{ k \in [K]: s_k = 1 \}$. Notice 
that $|\calA_{\bfs}| = K/2$.

\begin{lemma}\label{lem:lower2}
Choose $\bfQ = \bfB + \mathbf 1_K\mathbf 
1_L{}\!\!\!^\top$, 
where $\bfB$ is given by \Cref{matrix_B}. 
Let $\bfs$ and $\bfs'$ be two distinct vectors from
$\calC^K$ such that $|\calA_{\bfs} \Delta \calA_{\bfs'}| 
\geqslant {K}/{4}$. For $\varepsilon'= 1/(4L)$, 
for any $K\in\mathbb N$, $\varepsilon \in [0,1/(4K)]$
and $\bft\in\calC^L$, we have 
\begin{align}
    \delta^2(W_{\bfs,\bft,\varepsilon}, W_{\bfs',\bft,\varepsilon})
    \geqslant \frac{\varepsilon K\rho^2}{512}.
\end{align}
\end{lemma}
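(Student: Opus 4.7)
The plan is to establish that for every pair of measure-preserving bijections $\tau_1, \tau_2 : [0,1] \to [0,1]$, the squared $\mathbb L_2$ distance between $W_{\bfs,\bft,\varepsilon} \circ (\tau_1\otimes\tau_2)$ and $W_{\bfs',\bft,\varepsilon}$ is at least $\varepsilon K \rho^2/512$. To parametrize this distance, I would introduce the coupling matrices $\bfA = (a_{k,k'})$ with $a_{k,k'} := |\tau_1^{-1}(I_{\bfs,\varepsilon}^k) \cap I_{\bfs',\varepsilon}^{k'}|$ and $\bfC = (c_{\ell,\ell'})$ with $c_{\ell,\ell'} := |\tau_2^{-1}(J_{\bft,\varepsilon'}^\ell) \cap J_{\bft,\varepsilon'}^{\ell'}|$. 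The piecewise-constant structure and Fubini then yield the identity
\begin{align*}
\|W_{\bfs,\bft,\varepsilon} \circ (\tau_1 \otimes \tau_2) - W_{\bfs',\bft,\varepsilon}\|_{\mathbb L_2}^2 = \sum_{k,k',\ell,\ell'} a_{k,k'}\, c_{\ell,\ell'}\, (Q_{k,\ell} - Q_{k',\ell'})^2,
\end{align*}
where $\bfA$ has row and column marginals $(1/K + \varepsilon s_k)_k$ and $(1/K + \varepsilon s'_{k'})_{k'}$, while $\bfC$ has the same marginals $(1/L + \varepsilon' t_\ell)_\ell$ on both sides, since the column partition is shared between the two graphons. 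The task reduces to bounding the double sum below, uniformly over these transportation polytopes.

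The next step would be to use the hypothesis $|\calA_{\bfs} \Delta \calA_{\bfs'}| \geqslant K/4$ to lower bound the off-diagonal mass of $\bfA$. For every $k$ with $s_k \neq s'_k$, the marginal constraints force $a_{k,k} \leqslant \min(|I_{\bfs,\varepsilon}^k|, |I_{\bfs',\varepsilon}^k|) = 1/K - \varepsilon$; summing over $k$ gives $\sum_k a_{k,k} \leqslant 1 - 2p\varepsilon$ with $p := |\calA_{\bfs} \setminus \calA_{\bfs'}| \geqslant K/8$, so that $\sum_{k \neq k'} a_{k,k'} \geqslant K\varepsilon/4$.

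It then remains to show that for every off-diagonal pair $(k,k')$ the inner column sum $\sum_{\ell,\ell'} c_{\ell,\ell'} (Q_{k,\ell} - Q_{k',\ell'})^2$ is bounded below by a constant of order $\rho^2/128$, so that multiplying by the off-diagonal mass bound yields $K\varepsilon/4 \cdot \rho^2/128 = K\varepsilon\rho^2/512$. The choice $\varepsilon' = 1/(4L)$ guarantees $|J_{\bft,\varepsilon'}^\ell| \geqslant 3/(4L)$ for every $\ell$, which prevents $\bfC$ from concentrating its mass on column pairs $(\ell,\ell')$ where $Q_{k,\ell} = Q_{k',\ell'}$. Combined with property~(i) of Lemma~\ref{matrix_B}, which forces any two distinct rows of $\bfB$ to differ in at least $3L/8$ positions, and—for the regimes where $\bfC$ is far from diagonal—property~(ii), which supplies disjoint row/column subsets on which the sum of squared entrywise differences is at least $KL/512$, this should deliver the per-pair bound.

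The main obstacle will be this last step. A soft reduction via Cauchy--Schwarz in the $v$-integration replaces the inner column sum by $M_{k,k'}^2 = (\rho/(2L))^2 \,\langle \bfB_{k,\bullet} - \bfB_{k',\bullet},\, \mathbf{1}_L + \bft/4\rangle^2$, a quantity only of order $\rho^2/L$ on average—and possibly zero for specific pairs—which loses the factor $L$ needed to reach $\rho^2/128$. A finer combinatorial argument is therefore required: for each off-diagonal pair $(k,k')$, or for groups of such pairs indexed by disjoint subsets of $[K]$, one must isolate a column subset on which the row-wise differences of $\bfQ$ remain large for every admissible $\bfC$. Property~(ii) of $\bfB$ is tailored to produce such subsets, and the specific size $\varepsilon' = 1/(4L)$ forbids any column relabelling induced by $\tau_2$ that could hide the mismatch.
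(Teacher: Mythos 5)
Your reduction to the coupling decomposition
\begin{align}
\|W_{\bfs,\bft,\varepsilon}\circ(\tau_1\otimes\tau_2)-W_{\bfs',\bft,\varepsilon}\|_{\mathbb L_2}^2=\sum_{k,k',\ell,\ell'}a_{k,k'}\,c_{\ell,\ell'}\,(Q_{k,\ell}-Q_{k',\ell'})^2
\end{align}
is correct, and your computation $\sum_{k\neq k'}a_{k,k'}\geqslant K\varepsilon/4$ is a valid consequence of $|\calA_{\bfs}\Delta\calA_{\bfs'}|\geqslant K/4$. However, the next step is not a detail to be filled in; it is where the argument breaks. The per-pair lower bound you ask for---that $\sum_{\ell,\ell'}c_{\ell,\ell'}(Q_{k,\ell}-Q_{k',\ell'})^2\gtrsim\rho^2$ for \emph{every} off-diagonal pair $(k,k')$---is false. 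For a fixed $\tau_2$, each $k$ can have exactly one partner $k'$ for which this inner quantity is small (possibly zero), and that partner $k'=\pi(k)$ need not equal $k$. Since $\pi$ is an arbitrary permutation (determined by $\tau_1,\tau_2$), the off-diagonal mass you have bounded from below may sit entirely on those "bad" pairs $(k,\pi(k))$, and then your product $K\varepsilon/4\cdot\rho^2/128$ never materialises. Bounding off-diagonal mass with respect to the identity is the wrong quantity: the relevant quantity is the mass off the $\pi$-matched pairs, $1-\sum_k a_{k,\pi(k)}$, and nothing in your argument controls it. You acknowledge the difficulty but offer only a sketchy appeal to property~(ii) and the size of $\varepsilon'$, without an actual mechanism.

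The paper's proof avoids the per-pair trap by first showing, via property~(i) of the matrix $\bfB$, that the column profiles $h_{\bfs',k'}\circ\tau_2$ are mutually $1/4$-separated in $\mathbb L_2$, so each $h_{\bfs,k}$ can be within $1/4$ of at most one of them; this defines the permutation $\pi$ and yields $\|W_{\bfs,\bft,\varepsilon}-W_{\bfs',\bft,\varepsilon}\circ(\tau_1\otimes\tau_2)\|_{\mathbb L_2}^2\geqslant\frac{1}{16}\big(1-\sum_k\omega_{k,\pi(k)}\big)$. It then dichotomises: if the $\pi$-matched mass is far from $1$, the bound follows immediately; otherwise a pigeonhole argument combined with $|\calA_{\bfs}\Delta\calA_{\bfs'}|\geqslant K/4$ produces disjoint index sets $B$ and $C=\pi(B)$ of size $K/16$ (and a parallel construction on the column side), at which point property~(ii) of Lemma~\ref{matrix_B} delivers the lower bound. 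None of this case analysis or extraction of disjoint subsets appears in your plan, and it is precisely the missing machinery.
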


Lemma 4.4 in \citep{Klopp2017} implies that there 
exists a subset $\widetilde \calC^K \subset \calC^K$
such that $\log |\widetilde \calC^K| \geqslant 
{K}/{16}$ and $|\calA_{\bfs} \triangle 
\calA_{\bfs'}| \geqslant {K}/{4}$ for any $\bfs 
\neq \bfs'$ from $\widetilde \calC^K$. We consider
the set $\{W_{\bfs,\bft,\varepsilon} : \bfs \in 
\tilde\calC^K\}$ for a fixed $\bft\in\calC^L$ and 
for $\varepsilon^{-1} = 2^{4}\sqrt{6nK}$. 
In view of \Cref{lem:lowerKL} and \Cref{lem:lower2}, 
for any  $\bfs,\bfs' \in \widetilde\calC^K$ such
that $\bfs\neq\bfs'$, we have
\begin{align}
    \delta^2(W_{\bfs,\varepsilon},  W_{\bfs',\varepsilon}) &\geqslant \frac{\rho^2}{2^{13}} \sqrt{\frac{K}{6n}}  \quad
    \text{and}\quad
    \KL(\bfP_{W_{\bfs,\varepsilon}} ||\, \bfP_{W_{\bfs',\varepsilon}})      \leqslant \frac{1}{16^2}K  \leqslant \frac{1}{16} \log |\widetilde\calC^K|.
\end{align}
Therefore, we can apply \citep[Theorem 2.7]{Nonparametric} to get
\begin{align}
    \inf_{\widehat W} \sup_{ W^* \in \calW_\rho[K,L] } \Ex_{W^*} \big[ \delta^2(\widehat W , W^*) \big] \geqslant c \rho^2  \sqrt{\frac{K}{n}} \, 
\end{align}
for some universal constant $c>0$. This completes the
proof of \eqref{minimax1}, modulo the proofs of 
three technical lemmas appended below. 

\begin{proof}[Proof of \Cref{lem:lowerKL}]
One can check that, for every matrix $\bfA\in[N]^{n\times m}$, and for every $\bfs\in \calC^K$,   
\begin{align}
    \bfP_{W_{\bfs,\bft,\varepsilon}} (\bfH = \bfA) 
    &= \prod_{i=1}^n\prod_{j=1}^m \int_{[0,1]^2} \binom{N}{A_{ij}}
    W_{\bfs,\bft,\varepsilon}(u_i,v_j)^{A_{ij}}
    (1-W_{\bfs,\bft,\varepsilon} (u_i,v_j) )^{N-A_{ij}}
    \,du_i\,dv_j.
\end{align}
Using the fact that $W_{\bfs,\bft,\varepsilon}$ is 
piecewise constant, we get
\begin{align}
    \bfP_{W_{\bfs,\bft,\varepsilon}} (\bfH = \bfA) 
    & = \prod_{i,j} \sum_{k=1}^K\sum_{\ell=1}^L 
    \int_{I_{ \bfs,\varepsilon}^k}  \int_{J_{\bft, 
    \varepsilon'}^\ell} \binom{N}{A_{ij}} Q_{k\ell}^{A_{i,j}}(1 - 
    Q_{k\ell})^{N-A_{i,j}}\,du_i\,dv_j\\
    & = \prod_{i,j} \sum_{k,\ell} \binom{N}{A_{ij}} Q_{k\ell}^{A_{i,j}}(1 - 
    Q_{k\ell})^{N-A_{i,j}} \big|I_{\bfs, 
    \varepsilon}^k\big|\cdot \big| J_{\bft, 
    \varepsilon'}^\ell\big|\\
    & = \sum_{\bfK,\bfL} \prod_{i,j} 
    \binom{N}{A_{ij}} Q_{K_{ij}L_{ij}}^{A_{ij}} 
    (1-Q_{K_{ij}L_{ij}})^{N - A_{ij}}
    \big| I_{\bfs,\varepsilon}^{K_{ij}} \big|\cdot
    \big|J_{\bft, \varepsilon'}^{L_{ij}} \big|,
\end{align}
where the outer sum of the last line is
over all matrices $\bfK$ and $\bfL$ having
entries respectively in $[K]$ and in $[L]$. 
Let us define
\begin{align}
    \Psi(\bfK,\bfL,\bfA) &= \prod_{i,j} \binom{N}{A_{ij}} Q_{K_{ij}
    L_{ij}}^{A_{ij}} (1-Q_{K_{ij}L_{ij}})^{N-A_{ij}}, 
    \quad w_{\bfs,\bft, \varepsilon}(\bfK,\bfL) =
    \prod_{i,j}  \big| I_{\bfs,\varepsilon}^{K_{ij}} 
    \big|\cdot \big|J_{\bft, \varepsilon'}^{L_{ij}} 
    \big|. 
\end{align} 
The computations above imply that 
\begin{align}
    \KL(\bfP_{W_{\bfs,\bft,\varepsilon}} ||\, \bfP_{W_{\bfs',\bft,\varepsilon}}) 
    &= \sum_{\bfA \in \{0,1\}^{n\times m}} 
    \bfP_{W_{\bfs,\bft,\varepsilon}} (\bfH = \bfA) \,\log\Big(
    \frac{\bfP_{W_{\bfs,\bft,\varepsilon}} (\bfH = \bfA)}{
    \bfP_{W_{\bfs',\bft,\varepsilon}} (\bfH = \bfA)}\Big)\\ 
    &= \sum_{\bfA,\bfK,\bfL} \Psi(\bfK,\bfL,\bfA) w_{\bfs,\bft,\varepsilon}
    (\bfK,\bfL) \log\bigg(\frac{\sum_{\bfK',\bfL'}
    \Psi(\bfK',\bfL',\bfA) w_{\bfs,\bft,\varepsilon}
    (\bfK',\bfL')}{\sum_{\bfK',\bfL'} \Psi(\bfK',
    \bfL',\bfA) w_{\bfs',\bft,\varepsilon}(\bfK',\bfL')}\bigg).
\end{align}
It is clear that $w_{\bfs,\bft,\varepsilon}(\bfK,\bfL) 
\geqslant 0$ and  $\sum_{\bfK,\bfL} w_{\bfs,\bft, 
\varepsilon}(\bfK,\bfL) = 1 $.  Since the function
$(x,y) \mapsto x \log(x/y)$ is convex, we apply the 
Jensen inequality to get
\begin{align}
    \KL(\bfP_{W_{\bfs,\bft\varepsilon}} || \,\bfP_{W_{\bfs',\bft,\varepsilon}}) 
    &\leqslant  \sum_{\bfA,\bfK,\bfL} \Psi(\bfK,
    \bfL,\bfA) w_{\bfs,\bft,\varepsilon} (\bfK,\bfL)
    \log\bigg(\frac{\Psi(\bfK,\bfL, \bfA)
    w_{\bfs,\bft,\varepsilon}(\bfK,\bfL)}{
    \Psi(\bfK, \bfL,\bfA) w_{\bfs',\bft,\varepsilon}
    (\bfK,\bfL)}\bigg)\\
    &= \sum_{\bK,\bfL} w_{\bfs,\bft,\varepsilon} 
    (\bfK,\bfL) \log\bigg(\frac{
    w_{\bfs,\bft,\varepsilon}(\bfK,\bfL)}{w_{\bfs',\bft,\varepsilon}(\bfK,\bfL)}\bigg).
\end{align}
The last expression can be seen as the 
Kullback-Leibler divergence between two product
distributions on $[K]^{n\times m}\times[L]^{n\times 
m}$. Since the Kullback-Leibler divergence between
product distributions is the sum of Kullback-Leibler
divergences, we get
\begin{align}
    \sum_{\bfK,\bfL} w_{\bfs,\bft,\varepsilon} 
    (\bfK,\bfL) \log\bigg(\frac{
    w_{\bfs, \bft, \varepsilon}(\bfK,\bfL)}{w_{\bfs',\bft,\varepsilon}(\bfK,\bfL)}\bigg) 
    &= n\sum_{k=1}^K \big|I_{\bfs,\varepsilon}^k\big|
    \log\bigg(\frac{|I_{\bfs,\varepsilon}^k|}{|I_{\bfs',\varepsilon}^k|}\bigg)
    \leqslant  n \sum_{k=1}^K \frac{\big( 
    |I_{\bfs,\varepsilon}^k|- |I_{\bfs',\varepsilon
    }^k|\big)^2}{|I_{\bfs',\varepsilon}^k|},
\end{align}
where the last inequality follows from the fact that
the Kullback-Leibler divergence does not exceed the 
chi-square divergence. Since $|I_{\bfs',\varepsilon
}^k| = (1/K) + \varepsilon s'_k \geqslant (1/K) -
\varepsilon \geqslant 3/(4K)$, we get 
\begin{align}
    \sum_{\bfK,\bfL} w_{\bfs,\bft,\varepsilon} 
    (\bfK,\bfL) \log\bigg(\frac{
    w_{\bfs,\bft,\varepsilon}(\bfK,\bfL)}{w_{\bfs',\bft,\varepsilon}(\bfK,\bfL)}
    \bigg) &\leqslant
    n \sum_{k=1}^K \frac{(\varepsilon s_k - 
    \varepsilon s'_k\big)^2}{3/(4K)}\leqslant 6 nK^2
    \varepsilon^2.
\end{align}
This completes the proof of the lemma.
\end{proof}

\begin{proof}[Proof of \Cref{matrix_B}] 
Let  $\bXi$ be a $K\times L$ random matrix with iid 
Rademacher entries $\xi_{k,\ell}$, \textit{i.e}, $\bfP(
\xi_{k,\ell} = \pm1) = 1/2$. Then $\langle \bXi_{
k_1 \, \bullet} , \bXi_{k_2 \, \bullet} \rangle = \sum_{\ell=1}^L \xi_{k_1, \ell}\xi_{k_2,\ell}$.  
By the Hoeffding inequality
 \begin{align}
     \bfP ( | \langle \bXi_{k_1 \, \bullet} , 
     \bXi_{k_2 \, \bullet} \rangle | \geqslant L/4 ) \leqslant 2 e^{-L/32}\, .
 \end{align}
By the union bound, we obtain that for all 
$k_1 \neq k_2 \in [K], | \, \langle \bXi_{k_1 \, \bullet} , \bXi_{k_2 \, \bullet} \rangle | \leqslant L/4$ with probability larger than $1-2K^2 e^{-L/32}$, which is larger than $3/4$ for $L \geqslant 480$.  Similarly, one checks
that 
\begin{align}
    \bfP(\max_{\ell_1\neq\ell_2}| \langle \bXi_{\bullet,
    \ell_1} , \bXi_{\bullet,\ell_2} \rangle | 
    \leqslant K/4 ) > 3/4.
\end{align}
Thus, we get 
\begin{align}\label{bxi1}
    \bfP\Big(\max_{\ell_1\neq\ell_2}| \langle 
    \bXi_{\bullet,\ell_1} , \bXi_{\bullet,\ell_2} 
    \rangle | \leqslant K/4 \text{ and } 
    \max_{k_1\neq k_2}| \langle \bXi_{k_1,\bullet}, \bXi_{k_2,\bullet} \rangle | 
    \leqslant L/4\Big) > 1/2.
\end{align}
For the second property stated in the lemma, we fix some $\calX_i,\calY_i$ and $\pi_i,\nu_i$ as in the statement 
and define
\begin{align}
    T(\pi_{1:2},\nu_{1:2},\bXi) = \frac14 \sum_{k=1}^{K/16} 
    \sum_{\ell=1}^{L/16} 
    \big( \xi_{\pi_1(k), \nu_1(\ell)} - \xi_{
    \pi_2(k),\nu_2(\ell)} \big)^2 .
\end{align}
Clearly, $T[\pi_{1:2},\nu_{1:2},\bXi]$ is a sum of 
$KL/2^8$ i.i.d Bernouilli random variables with 
parameter 1/2. Applying again the Hoeffding inequality, 
we have
\begin{align}
    \bfP \Big(T[\pi_{1:2},\nu_{1:2},\bXi] \leqslant 
    \frac{KL}{2^{11}} \Big) &= \bfP \Big(
    \frac{KL}{2^9}-T \geqslant \frac{3KL}{2^{11}} 
    \Big) \leqslant e^{-{9KL}/{2^{13}}} \, .
\end{align}
There are no more than $(K/16)!^2(L/16)!^2$ functions $\pi_1,\pi_2,\nu_1,\nu_2$ satisfying conditions of ii).  Therefore, the union bound implies that
with probability at least  $1-(K/16)!^2(L/16)!^2 e^{-
9KL/2^{13}}$, we have $T[\pi_{1:2},\nu_{1:2},\bXi] 
\geqslant KL/2^{11}$ for all $\pi_1,\pi_2,\nu_1,\nu_2$. 
Choosing $K$ and $L$ large enough, and using the 
condition $KL\geqslant K\log^2 K + L\log^2 L$, 
we get that 
\begin{align}\label{bxi2}
    \bfP\bigg(\min_{\pi_1,\pi_2,\nu_1,\nu_2}
    \sum_{k=1}^{K/16}\sum_{\ell=1}^{L/16} 
    \big( \xi_{\pi_1(k), \nu_1(\ell)} - 
    \xi_{\pi_2(k),c\nu_2(\ell)} \big)^2 
    \geqslant \frac{KL}{512} \bigg) >1/2.
\end{align}
Combining \eqref{bxi1} and \eqref{bxi2}, we get that
the probability that the random matrix $\bXi$ satisfies
properties i) and ii) is strictly positive. This implies
that the set of such matrices is not empty. 
\end{proof}

\begin{proof}[Proof of \Cref{lem:lower2}]
Without loss of generality, throughout this proof, 
we assume that $\rho=1$. Furthermore, since $\bft$ is
fixed, we will often drop it in the notation and 
write $W_{\bfs,\varepsilon}$ instead of $W_{\bfs,\bft,\varepsilon}$. 

It suffices to prove that 
for all measure preserving bijections $\tau_1 
:[0,1] \to [0,1]$ and $\tau_2:[0,1]\to[0,1]$,  
\begin{align}
    \| W_{\bfs,\varepsilon} - W_{\bfs',
    \varepsilon} \circ (\tau_1 \otimes \tau_2) \|^2_{\mathbb L_2} \geqslant \frac{\varepsilon K}{512}.
\end{align}
If $u \in I_{\bfs,\varepsilon}^k$ and $u' \in 
I_{\bfs,\varepsilon}^{k'}$ for some $k,k' \in [K]$, 
then
\begin{align}
    \bigg| \int_0^1 (W_{\bfs,\varepsilon}(u,v) - 1/2 ) ( W_{\bfs,\varepsilon}( u' ,v) - 1/2) dv \bigg| &= 
    \bigg|\sum_{\ell=1}^L (1/L+\varepsilon' t_\ell) 
    \bigg( Q_{k,\ell} -\frac12\bigg)
    \bigg( Q_{k',\ell} -\frac12\bigg)\bigg| \\ 
    &\leqslant \frac{1}{4L}|\langle \bfB_{k,\bullet},
    \bfB_{k',\bullet}\rangle| + \frac{L\varepsilon'}4
    \leqslant \frac18. \label{inner}
\end{align}
For $k,k' \in [K]$, let $\omega_{kk'} = \lambda \big\{ I_{\bfs,\varepsilon}^k \cap \tau_1^{-1} \big( I_{\bfs',\varepsilon}^{k'}\big) \big\}$ where 
$\lambda$ is the Lebesgue measure on $\mathbb R$ . 
Notice that $\sum_{k=1}^K \omega_{kk'} = ({1}/{K}) 
+ \varepsilon s'_{k'}$ and $\sum_{k'=1}^K 
\omega_{kk'} = ({1}/{K}) + \varepsilon s_k$. 
We also introduce $h_{\bfs,k}(v) = W_{\bfs, 
\varepsilon}(u_{\bfs,\varepsilon}^k,v) - 1/2 $, where
$u_{\bfs,\varepsilon}^k$ is any point from 
$I_{\bfs,\varepsilon}^k$. We have
\begin{align}
    \| W_{\bfs,\varepsilon} - W_{\bfs',\varepsilon}(\tau_1\otimes \tau_2) \|_{\mathbb L_2}^2 &= \sum_{k=1}^K\sum_{k'=1}^K \omega_{k,k'} 
    \|h_{\bfs,k} - h_{\bfs',k'}\circ\tau_2\|_{\mathbb L_2}^2.
\end{align}
In view of the fact that $|h_{\bfs',k'}(v) |=1/2$ 
for all $v\in [0,1]$ and \eqref{inner}, 
for any $k'\neq k$, we have 
\begin{align}
    \|h_{\bfs',k'}\circ\tau_2 - h_{\bfs',k''}\circ\tau_2\|_{
    \mathbb L_2}^2 &= \|h_{\bfs',k'} \|_{
    \mathbb L_2}^2 + \| h_{\bfs',k''}\|_{
    \mathbb L_2}^2 - 2 \langle h_{\bfs',k'}
    , h_{\bfs',k''} \rangle \geqslant 1/2 - 1/4 
    =  1/4.
\end{align}
By the triangle inequality
\begin{align}
    \|h_{\bfs,k} - h_{\bfs',k'}\circ\tau_2 \|_{\mathbb L_2} 
    + \|h_{\bfs,k} - h_{\bfs',k''}\circ\tau_2 \|_{
    \mathbb L_2} \geqslant  \|h_{\bfs',k'}\circ\tau_2 - h_{\bfs',k''}\circ\tau_2\|_{
    \mathbb L_2} \geqslant 1/2 \, .
\end{align}
As a consequence, for any $k\in [K]$, there exists at most one $k' \in [K]$ such that $\|h_{\bfs,k} - h_{\bfs',k'} 
\circ\tau_2\|_{\mathbb L_2} < 1/4$. If such a $k'$ exists, we denote it by $\pi(k)$. If 
it does not exist, we set $\pi(k)=k$.  Using the same arguments, for any $k' \in [K]$, there is at most one $k \in [K]$ such that $\|h_{\bfs,k} - h_{\bfs',k'}
\circ\tau_2\|_{\mathbb L_2} < 1/4$. 
This implies that $\pi$ is injective and then it is a permutation of $[K]$. 
Furthermore, we get
\begin{align} \label{sum_K}
    \| W_{\bfs,\varepsilon} - W_{\bfs',\varepsilon}\circ 
    (\tau_1 \otimes \tau_2) \|_{\mathbb L_2}^2 & \geqslant \frac{1}{16} \sum_{k=1}^K \sum_{k' \neq \pi(k)} \omega_{k,k'} = \frac{1}{16} \sum_{k= 1}^K (1/K + \varepsilon s_k - \omega_{k,\pi(k)} ) \, .
\end{align}
If the sum $\sum_{k=1}^K (1/K + \varepsilon s_k - \omega_{k,\pi(k)})$ 
is larger than $K\varepsilon/16$, then the lemma is proved. 

In the sequel, we check that the same
is true if $\sum_{k=1}^K (1/K + \varepsilon s_k - \omega_{k, \pi(k)} ) 
< K\varepsilon/16$ as well. Note that
the last inequality can be rewritten as $\sum_{k=1}^K  \omega_{k,\pi(k)} > 1-
{K\varepsilon}/{16}$. 
Let us show that the cardinality of the
set $A = \{k\in\calA_{\bfs}: s'_{\pi(k)} > 0 \text{ and } \omega_{k,\pi(k)} 
\geqslant 1/K \} $ is at least ${7K}/{16}$. Indeed, notice that because $\omega_{k,k'} \leqslant \big( 1/K + \varepsilon s_k \big) \wedge \big(1/K + \varepsilon s_{k'} \big)$, $\omega_{k,\pi(k)} \geqslant 1/K$ implies $ s_k > 0$ and $s'_{\pi(k)}>0$. Therefore, 
\begin{align}
    1-\frac{K\varepsilon}{16} & \leqslant \sum_{k=1}^K  \omega_{k, \pi(k)} 
     = \sum_{k : s_k < 0 } \omega_{k ,\pi(k)} + \sum_{k \in A} \omega_{k, \pi(k)} + \sum_{ \begin{subarray}c k \notin A \\ s_k >0\end{subarray}} \omega_{k, \pi(k)} \\
    & \leqslant \frac{K}{2}\Big(\frac{1}{K} - \varepsilon \Big) + |A| \Big(\frac{1}{K} + \varepsilon \Big) + \Big(\frac{K}2 -|A| \Big) 
    \frac{1}{K} 
    = 1 + \varepsilon\Big(|A| - \frac{K}2\Big),
\end{align}
which leads to $|A|\geqslant 7K/16$. 

Since $\bfs,\bfs'\in\tilde\calC^K$ are such that
$|\calA_{\bfs}\triangle \calA_{\bfs'}|\geqslant K/4$, 
we have  $|\calA_{\bfs} \cap \calA_{\bfs'} | 
\leqslant {3K}/{8}$. Let us choose $B \subset A 
\cap \calA_{\bfs'}^c$ of cardinality $K/16$ and 
set $C=\pi(B)$. Such a choice is possible since
\begin{align}
    |A \cap \calA_{\bfs'}^c| = 
    |A| - |A \cap \calA_{\bfs'}|
    \geqslant \frac{7K}{16} - 
    |\calA_{\bfs} \cap \calA_{\bfs'}|
    \geqslant \frac{7K}{16} - \frac{3K
    }{8} = \frac{K}{16}.
\end{align}
Note also that $B\cap C = \varnothing$. Indeed, 
if $k\in B$, then $\pi(k) \in \calA_{\bfs'}$. 
Therefore, $\pi(k) \notin B$ meaning that 
$k\notin C$. 

For $\ell,\ell' \in [L]$, let $\omega'_{\ell,\ell'} 
= \lambda \big\{ J_{\bft,\varepsilon'}^\ell
\cap \tau_2^{-1} ( J_{\bft,\varepsilon'}^{\ell'} ) 
\big\}$. Using the same arguments as above, we 
obtain the existence of a permutation $\nu:[L]\to[L]$ 
such that, akin to \eqref{sum_K},  
\begin{align}
    \| W_{\bfs,\varepsilon} - W_{\bfs',\varepsilon}
    \circ 
    (\tau_1 \otimes \tau_2) \|_{\mathbb L_2}^2  & \geqslant \frac{1}{16 }\sum_{b=1}^L (1/L + \varepsilon' t_\ell  - \omega'_{\ell,\nu(\ell)} 
    ).
\end{align}
Define the set $A' = \{ \ell \in [L]: t_\ell>0,
\omega'_{\ell,\nu(\ell)} \geqslant 1/L \}$. If $|A'|
\leqslant L/16$, then 
\begin{align}
    \sum_{\ell=1}^L (1/L + \varepsilon' t_\ell 
    - \omega'_{\ell, \nu(\ell)} ) &\geqslant 
    1 - \sum_{t_{\ell}<0}\omega'_{\ell, \nu(\ell)}
    - \sum_{t_{\ell}>0}\omega'_{\ell,\nu(\ell)}  
    (\mathds 1_{\ell\notin A'} + \mathds 1_{\ell\in
    A'})\\
    & \geqslant 1 - \frac{L}2\Big(\frac1L - 
    \varepsilon'\Big) -\Big(\frac{L}{2} - |A'|\Big)
    \frac1L - |A'|\Big(\frac1L+\varepsilon'\Big) 
    = \Big(\frac{L}{2} - |A'|\Big) \varepsilon' \\
    &\geqslant \frac{7L\varepsilon'}{16} \geqslant
     1/16
\end{align}
and therefore $\| W_{\bfs,\varepsilon} - W_{\bfs', 
\varepsilon} \circ (\tau_1 \otimes \tau_2) \|_{
\mathbb L_2}^2 \geqslant 1/256\geqslant K\varepsilon
/64$, where we used that $ 4 K\varepsilon<1 $. 

Suppose now that $|A'|> L/16$. Let $B'$
be an arbitrary subset of $A'$ of cardinality 
$L/16$. We have 
\begin{align}
    \| W_{\bfs,\varepsilon} - W_{\bfs',\varepsilon}
    \circ (\tau_1 \otimes \tau_2) \|_{\mathbb L_2}^2  
    & \geqslant \sum_{k \in B}\sum_{\ell \in B'}
    \int_{I_{\bfs,\varepsilon}^k \times 
    J_{\bfs',\varepsilon}^{\ell}} \big( 
    W_{\bfs,\varepsilon} - W_{\bfs',\varepsilon}(\tau_1\otimes\tau_2) \big)^2(u,v)\, du\,dv \\
    & \geqslant \sum_{k \in B}\sum_{\ell \in B'} \omega_{k, \pi(k)}\omega'_{\ell, \nu(\ell)} 
    (Q_{k,\ell} - Q_{\pi(k),\nu(\ell)})^2 \\
    & \geqslant \frac{1}{4KL} \sum_{k \in B}
    \sum_{\ell \in B'} (B_{k,\ell} - B_{\pi(k), 
    \nu(\ell)})^2 .
\end{align}
By \Cref{matrix_B}, the last term is larger than 
$1/(4\times 512) \geqslant  K \varepsilon/512$, 
and the claim of the lemma follows.
\end{proof}

\subsection{Proof of \eqref{minimax2}: error
due to the unknown values of the graphon}

Similarly to the previous proof, we will use 
\citep[Theorem 2.7]{Nonparametric}, which needs 
a class of graphons that are well separated for 
the distance $\delta$ and that generate similar
distributions on the space of $n\times m$ matrices. 
In this proof, all the graphons of the set
will have the same partitions and will differ
only by the values of the function taken on
this partition. 

Let $\calQ_0 = \{(\rho/2)(1-\varepsilon), (\rho/2)
(1 + \varepsilon) \}^{K \times L}$ be the set of 
all $K\times L$ matrices with entries equal
either $(\rho/2)(1-\varepsilon)$ or $(\rho/2)(1 
+\varepsilon)$, where $\varepsilon\in(0,1/2)$ 
will be specified later. For any  $\bfQ \in 
\calQ_0$, we define  the graphon
\begin{align}
    W_\bfQ(u,v) = \sum_{k=1}^K\sum_{\ell=1}^L
    Q_{k,\ell} \mathds 1_{[(k-1)/K, k/K)}(u) 
    \mathds 1_{[(\ell-1)/L,\ell/L )}(v) \,.
\end{align} 
We need two technical lemmas for completing the
proof. These lemmas are stated below, whereas 
their proofs are postponed to the end of this
subsection.  For any pair of permutations 
$\pi:[K]\to[K]$ and $\nu:[L]\to[L]$, and any matrix $\bfQ$, 
we denote by $\bfQ^{\pi,\nu}$ the matrix 
with permuted rows and columns $Q^{\pi,\nu
}_{k,\ell} = Q_{\pi(k),\nu(\ell)}$.

\begin{lemma}\label{matrix_B_2}
For $K$ and $L$ large enough satisfying 
$KL\geqslant L\log^2 L + K\log^2 K$, there exists 
a set $\calQ \subset \calQ_0$ satisfying 
$\log|\calQ| \geqslant {KL}/{32}$ and 
$\min_{\pi,\nu}\| \bfQ_1 - \bfQ^{\pi,\nu}_2 
\|_{\sf F}^2 \geqslant {\rho^2\varepsilon^2 
KL}/{8}$ for every ${\bfQ_1,\bfQ_2\in\calQ}$
such that $\bfQ_1\neq \bfQ_2$.
\end{lemma}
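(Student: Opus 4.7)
The plan is to prove the existence of $\calQ$ via the probabilistic method by random sampling in $\calQ_0$. Since each matrix in $\calQ_0$ is specified by a choice of $\pm\varepsilon$ at each of its $KL$ entries, I would draw $N=\lceil e^{KL/32}\rceil$ matrices $\bfQ^{(1)},\ldots,\bfQ^{(N)}$ independently with i.i.d.\ entries taking the values $(\rho/2)(1+\varepsilon)$ and $(\rho/2)(1-\varepsilon)$ with equal probability. For fixed $i\neq j$ and any fixed pair of permutations $(\pi,\nu)$, the entries of $\bfQ^{(i)}-(\bfQ^{(j)})^{\pi,\nu}$ are independent and supported on $\{-\rho\varepsilon,0,+\rho\varepsilon\}$ with probabilities $1/4,1/2,1/4$. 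Consequently $\|\bfQ^{(i)}-(\bfQ^{(j)})^{\pi,\nu}\|_{\sf F}^2 = \rho^2\varepsilon^2\, N_{ij}^{\pi,\nu}$, where $N_{ij}^{\pi,\nu}\sim\mathrm{Binomial}(KL,1/2)$ counts the number of differing entries.

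The next step combines Hoeffding's inequality, which gives $\mathbb P(N_{ij}^{\pi,\nu}<KL/8)\leqslant \exp(-9KL/32)$, with a union bound over the $\binom{N}{2}$ pairs $(i,j)$ and the $K!\,L!$ pairs of permutations $(\pi,\nu)$. If the failure probability $(N^2 K!L!/2)\exp(-9KL/32)$ is strictly less than $1$, then there is a realization of the $\bfQ^{(i)}$'s that are pairwise distinct and satisfy $\|\bfQ^{(i)}-(\bfQ^{(j)})^{\pi,\nu}\|_{\sf F}^2\geqslant \rho^2\varepsilon^2 KL/8$ for all $i\neq j$ and all $(\pi,\nu)$; the set $\calQ$ would then be this realization and has $\log|\calQ|\geqslant KL/32$ as required.

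The main technical obstacle will be the parameter calibration needed to close the union bound, namely to verify $2\log N+\log(K!)+\log(L!)<9KL/32$ with $\log N=\lfloor KL/32\rfloor$. Using Stirling's $\log(K!)\leqslant K\log K$ reduces this to bounding $K\log K+L\log L$ by a small multiple of $KL$, and this is where I would invoke the hypothesis $KL\geqslant K\log^2 K+L\log^2 L$: by Cauchy-Schwarz, $K\log K=\sqrt{K\cdot K\log^2 K}\leqslant \sqrt{K\cdot KL}=K\sqrt L$, and symmetrically $L\log L\leqslant L\sqrt K$. Hence $K\log K+L\log L\leqslant \sqrt{KL}(\sqrt K+\sqrt L)=KL(K^{-1/2}+L^{-1/2})$, which is below $KL/16$ once $K$ and $L$ exceed an absolute threshold. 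The hypothesis is tailored precisely so that the combinatorial penalty from the permutation union bound is of lower order than the Hoeffding exponent, which is what makes the probabilistic argument succeed.
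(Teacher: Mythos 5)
Your proposal is correct, but it takes a genuinely different route from the paper's. The paper proceeds by a volume/packing argument: it lets $\calQ$ be a \emph{maximal} $r$-separated subset of $\calQ_0$ for the permutation-invariant pseudo-distance $\delta(\bfQ_1,\bfQ_2)=\min_{\pi,\nu}\|\bfQ_1-\bfQ_2^{\pi,\nu}\|_{\sf F}$ with $r=\rho\varepsilon\sqrt{KL/2}$, uses maximality to cover $\calQ_0$ by $\delta$-balls, upper-bounds each $\delta$-ball by $K!L!$ Frobenius balls, converts Frobenius balls to Hamming balls, and then invokes the Varshamov--Gilbert bound to control the Hamming-ball volume. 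Your argument instead draws $N=\lceil e^{KL/32}\rceil$ matrices i.i.d.\ uniformly from $\calQ_0$, observes that for each fixed $(i,j,\pi,\nu)$ the number of disagreeing entries of $\bfQ^{(i)}$ and $(\bfQ^{(j)})^{\pi,\nu}$ is $\mathrm{Binomial}(KL,1/2)$ (which does hold, since a fixed permutation preserves the i.i.d.\ Rademacher structure and independence between the two matrices), applies Hoeffding to get failure probability $e^{-9KL/32}$, and closes a union bound over the $\binom{N}{2}K!L!$ events. Both proofs hinge on the hypothesis $KL\geqslant K\log^2K+L\log^2L$ to absorb the $\log(K!L!)$ penalty from the permutation union bound, and both yield the same packing size. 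Your approach is slightly more self-contained in that it does not require the Varshamov--Gilbert lemma as an external input, whereas the paper's is closer to the standard lower-bound recipe in the nonparametric literature. One small remark: the step you label Cauchy--Schwarz ($K\log K=\sqrt{K\cdot K\log^2K}\leqslant K\sqrt L$) is really just monotonicity of the square root combined with the hypothesis, but the inequality is correct and serves the purpose. Also note that pairwise distinctness of the sampled matrices is automatic once the separation holds for the identity permutations, so you need not handle it separately.
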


\begin{lemma}\label{lem:low3}
The following assertions hold true
\vspace{-8pt}
\begin{enumerate}\itemsep=2pt
    \item If $\bfQ_1$ and $\bfQ_2$ are such that 
    $\min_{\pi,\nu} \|\bfQ_1 - \bfQ_2^{\pi,\nu}\|_{
    \sf F}^2 \geqslant \rho^2\varepsilon^2 KL/8$,  
    then $\delta^2(W_{\bfQ_1}, W_{\bfQ_2}) \geqslant
    \rho^2 \varepsilon^2/8$.
    
    \item For any pair of matrices $\bfQ_1$ and 
    $\bfQ_2$ from $\calQ$, we have $\KL(\bfP_{W_{
    \bfQ_1 }} ||\, \bfP_{W_{\bfQ_2}} ) \leqslant 
    6Nnm\rho \varepsilon^2$.
\end{enumerate}
\end{lemma}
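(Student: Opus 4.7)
\textbf{Plan for proving \Cref{lem:low3}.}

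\textbf{Claim (i).} The plan is to reduce the infimum in the definition of $\delta^2$ from arbitrary measure-preserving bijections $(\tau_1,\tau_2)\in\calM^2$ to permutations of $[K]\times[L]$, via Birkhoff's theorem. First, because both $W_{\bfQ_1}$ and $W_{\bfQ_2}$ are piecewise constant on the regular $K\times L$ grid, for any fixed $(\tau_1,\tau_2)\in\calM^2$ I would set
\begin{align}
    \omega^{(1)}_{k,k'} = \lambda\bigl([(k{-}1)/K,k/K)\cap\tau_1^{-1}([(k'{-}1)/K,k'/K))\bigr),
\end{align}
and analogously $\omega^{(2)}_{\ell,\ell'}$, and decompose
\begin{align}
    \|W_{\bfQ_1}-W_{\bfQ_2}\circ(\tau_1\otimes\tau_2)\|_{\mathbb L_2}^2
    = \sum_{k,k',\ell,\ell'}\omega^{(1)}_{k,k'}\omega^{(2)}_{\ell,\ell'}(Q_{1,k,\ell}-Q_{2,k',\ell'})^2.
\end{align}
The measure-preserving property of $\tau_1,\tau_2$ forces both row- and column-sums of $\omega^{(1)}$ (resp.\ $\omega^{(2)}$) to equal $1/K$ (resp.\ $1/L$), so $K\omega^{(1)}$ and $L\omega^{(2)}$ are doubly stochastic.

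\textbf{Birkhoff step.} By Birkhoff--von Neumann, I write $K\omega^{(1)}=\sum_i\alpha_i P_{\pi_i}$ and $L\omega^{(2)}=\sum_j\beta_j P_{\nu_j}$ as convex combinations of permutation matrices, substitute above, and recognize the inner sum over $(k,\ell)$ as $\|\bfQ_1-\bfQ_2^{\pi_i,\nu_j}\|_{\sf F}^2$, obtaining
\begin{align}
    \|W_{\bfQ_1}-W_{\bfQ_2}\circ(\tau_1\otimes\tau_2)\|_{\mathbb L_2}^2
    =\frac{1}{KL}\sum_{i,j}\alpha_i\beta_j\,\|\bfQ_1-\bfQ_2^{\pi_i,\nu_j}\|_{\sf F}^2
    \geqslant \frac{\rho^2\varepsilon^2}{8}
\end{align}
by the standing hypothesis applied term by term, since $\sum_i\alpha_i=\sum_j\beta_j=1$. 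Taking the infimum over $(\tau_1,\tau_2)$ yields (i).

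\textbf{Claim (ii).} I would use joint convexity of the Kullback--Leibler divergence (the mixture representation of the marginal of $\bfH$ integrates out $(\bU,\bV)$) to write
\begin{align}
    \KL(\bfP_{W_{\bfQ_1}}\,\|\,\bfP_{W_{\bfQ_2}})
    \leqslant \Ex_{\bU,\bV}\Bigl[\,\sum_{i,j} N\,\KL\bigl(\mathrm{Ber}(W_{\bfQ_1}(U_i,V_j))\,\|\,\mathrm{Ber}(W_{\bfQ_2}(U_i,V_j))\bigr)\Bigr],
\end{align}
then bound each Bernoulli KL by the chi-square divergence $(p-q)^2/(q(1-q))$. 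For any two entries $p,q\in\{(\rho/2)(1\pm\varepsilon)\}$ we have $(p-q)^2\leqslant \rho^2\varepsilon^2$ and $q(1-q)\geqslant (\rho/4)(1-3\rho/4)$, and a direct expansion of $p\log(p/q)+(1-p)\log((1-p)/(1-q))$ around the common base value $\rho/2$ (using $\log((1+\varepsilon)/(1-\varepsilon))\leqslant 2\varepsilon/(1-\varepsilon^2)$ and the symmetric expansion for the second term) yields the sharper bound $\KL\leqslant 6\rho\varepsilon^2$ for $\varepsilon$ and $\rho$ small enough. Summing over $i\in[n],j\in[m]$ and multiplying by $N$ gives the claim.

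\textbf{Main obstacle.} The genuine subtlety is in (i): since $\delta$ allows arbitrary measure-preserving bijections---not just permutations of the $1/K$-partition---a naive argument only compares $\bfQ_1$ with $\bfQ_2^{\pi,\nu}$ when $\tau_1,\tau_2$ happen to permute the grid cells. The Birkhoff decomposition of the doubly stochastic matrices $K\omega^{(1)}$ and $L\omega^{(2)}$ is what bridges this gap, converting a continuous optimization over $\calM^2$ into a convex combination of discrete permutation instances to which the hypothesis on $\bfQ_1,\bfQ_2$ applies directly. Claim (ii) is comparatively routine, with the only care required being to track constants in the Bernoulli KL expansion tightly enough to land the prefactor $6$ rather than the cruder $\chi^2$ bound $16$.
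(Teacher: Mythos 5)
Your proof of claim (i) is the paper's proof: after decomposing the $\mathbb L_2$-distance over the rectangular cells, the coupling matrices $K\omega^{(1)}$ and $L\omega^{(2)}$ are doubly stochastic, and the Birkhoff--von Neumann expansion into permutation matrices converts the continuous infimum over $(\tau_1,\tau_2)\in\calM^2$ into a convex combination of the discrete quantities $\|\bfQ_1-\bfQ_2^{\pi,\nu}\|_{\sf F}^2$, each at least $\rho^2\varepsilon^2 KL/8$ by hypothesis, and the $1/(KL)$ normalization gives $\rho^2\varepsilon^2/8$.

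For claim (ii), the reduction---condition on the latent labels, apply joint convexity/Jensen, and reduce to a sum of $Nnm$ per-entry Bernoulli/Binomial KL terms---is also the paper's route. The one step you should make precise is the per-entry bound. With $p=\rho(1+\varepsilon)/2$ and $q=\rho(1-\varepsilon)/2$, your chi-square estimate $\kappa(p,q)\leqslant (p-q)^2/(q(1-q))$ gives $4\rho\varepsilon^2/(1-\rho/2)$, which exceeds $6\rho\varepsilon^2$ once $\rho>2/3$: the factor $1-q$ in the denominator is what spoils the constant. The paper avoids this by using $\kappa(p,q)\leqslant (p-q)^2(p^{-1}+q^{-1}) = 4\rho\varepsilon^2/(1-\varepsilon^2)\leqslant 16\rho\varepsilon^2/3$, whose denominator scales like $\rho^{-1}$, so the $\rho^2$ in the numerator cancels and the bound is uniform in $\rho$. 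The ``direct expansion'' you allude to does recover the constant, and in fact more: writing $1-p=a-\delta$ and $1-q=a+\delta$ with $a=1-\rho/2$, $\delta=\rho\varepsilon/2$, and combining $p\log(p/q)\leqslant \rho\varepsilon/(1-\varepsilon)$ (from $\log\tfrac{1+\varepsilon}{1-\varepsilon}\leqslant \tfrac{2\varepsilon}{1-\varepsilon^2}$) with $(1-p)\log\tfrac{1-p}{1-q}\leqslant -2\delta(a-\delta)/a$ (from $\log\tfrac{1+x}{1-x}\geqslant 2x$) gives $\kappa(p,q)\leqslant \rho\varepsilon^2/(1-\varepsilon)+\rho^2\varepsilon^2/(2-\rho)\leqslant 3\rho\varepsilon^2$ for $\varepsilon\leqslant 1/2$, $\rho\leqslant 1$---but this cancellation between the two terms must be carried out explicitly rather than asserted.
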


We set $\varepsilon^2 = \frac{1}{54 \times 32} 
\big(\frac{KL}{Nnm \rho} \wedge 1 \big)$, which allows 
us to apply \citep[Theorem 2.7]{Nonparametric}, since 
in view of \Cref{lem:low3} and \Cref{matrix_B_2},
\begin{align}
    \KL(\bfP_{W_{\bfQ_1}}||\,\bfP_{W_{\bfQ_2}}) 
    &\leqslant \frac{1}{9} \log |\calQ|,\qquad 
    \delta^2(W_{\bfQ_1}, W_{\bfQ_2}) 
    \geqslant \frac{1}{13 824}\Big(\frac{\rho KL}{Nnm} \wedge \rho^2 \Big),
\end{align}
for every $\bfQ_1,\bfQ_2\in\calQ$. 
This completes the proof of \eqref{minimax2}.

\begin{proof}[Proof of \Cref{matrix_B_2}]
Without loss of generality, we assume in this 
proof that $\rho =2$. 
We define the pseudo-distance $ \delta (\bfQ_1, 
\bfQ_2) = \min_{\pi,\nu} \|\bfQ_1 - \bfQ^{\pi, 
\nu}_2\|_{\sf F}$, where the minimum is taken 
over all the permutations of $[K]$ and $[L]$. 
Let $\calQ$ be a maximal subset of $\calQ$ of 
matrices $\bfQ$ that are $r :=\rho\varepsilon 
\sqrt{{KL}/{2}}$-separated with respect to 
$\delta$. By maximality of $\calQ$, we have 
the inclusion
\begin{align}
    \calQ \subset \bigcup_{
    \bfQ \in \calQ} \mathbb B_\delta(\bfQ,r), 
\end{align}
where $\mathbb B_\delta(\bfQ,r)$ is the 
ball centered at $\bfQ$ with radius $r$ with 
respect to the $\delta$-distance. Hence, $|\calQ| 
\cdot|\mathbb B_\delta(\bfQ,r)| \geqslant 2^{KL}$ since all 
the balls have a the same cardinality. Notice 
that $\mathbb B_\delta(\bfQ,r) \subset 
\bigcup_{\pi, \nu} \mathbb B_{\sf F}(\bfQ^{\pi,
\nu},r)$ yielding $|\mathbb B_\delta(\bfQ,r)| 
\leqslant  K!L! \,|\mathbb B_{\sf F}(\bfQ,r)|$. 
If $\bfQ_1 , \bfQ_2 \in \calQ$, we have  $\|
\bfQ_1-\bfQ_2 \|_{\sf F}^2= 4\rho^2\varepsilon^2 
d_{\sf H} (\bfQ_1, \bfQ_2)$ with $d_{\sf H}$ 
the Hamming distance. 
Then $\mathbb B_{\sf F}(\bfQ, r)=\mathbb B_{
\sf H}(\bfQ,{r^2}/{(2\rho\varepsilon)^2})$ with 
${r^2}/{(2\rho\varepsilon)^2} = {KL}/{8}$. The Varshamov-Gilbert 
lemma \citep[Lemma 2.9]{Nonparametric} yields
\begin{align}
    |\calQ'|\cdot|\mathbb B_{\sf H}(\bfQ,KL/8)|
    \leqslant 2^{KL}
\end{align}
with $\calQ'$ a maximal subset of matrices 
$\bfQ$ that are ${KL}/{8}$-separated, 
and $|\calQ'| \geqslant 2^{KL/8}$. Thus we obtain
\begin{align}
    |\calQ| &\geqslant  \frac{2^{KL}}{ |\mathbb B_\delta(\bfQ,r)|} \geqslant \frac{2^{KL}}{K!
    L!\,|\mathbb B_{\sf F}(\bfQ,r)|} = \frac{2^{KL}
    }{K! L!\,|\mathbb B_{\sf H}(\bfQ,KL/8)|}
    \geqslant \frac{2^{KL}}{K!L! 2^{KL} 2^{-KL/8}}\\ 
    &= \exp\Big( \frac{KL}{8} \log 2 - \log(K!L!) \Big) 
    \geqslant \exp\Big( \frac{KL}{16} - K\log K - 
    L\log L \Big) \,.
\end{align}
The last term is greater than $e^{KL/32}$ for $K$ and $L$ greater than some constants.
\end{proof}

\begin{proof}[Proof of \Cref{lem:low3}]
~
\paragraph*{First claim}
Let $\tau_1,\tau_2 :[0,1] \to [0,1]$ be two measure preserving bijections. We want to prove that
\begin{align}
    \|W_{\bfQ_1} - W_{\bfQ_2}\circ(\tau_1\otimes 
    \tau_2)\|_{\mathbb L_2}^2 \geqslant 
    \varepsilon^2/8.
\end{align}
For any $k,k' \in [K]$, let $\omega_{k,k'} 
= \lambda \big([(k-1)/K,k/K] \cap \tau_1^{-1}
\big( [(k'-1)/K,k'/K] \big) \big)$ where 
$\lambda$ is the Lebesgue measure on $[0,1]$. 
Similarly, for any $\ell,\ell'\in [L]$, let $\omega_{\ell,\ell'}'  = \lambda \big([(\ell-1)
/L,\ell/L] \cap \tau_2^{-1} \big( [(\ell'-1)/L, 
\ell'/L] \big) \big)$. 

We have that $\sum_{k} \omega_{k,k'} = {1}/{K} =
\sum_{k'} \omega_{k,k'}$, that is the matrix 
$M\omega$ is doubly stochastic. For any permutation $\pi$ of $[K]$, denote $A(\pi)$ the corresponding permutation matrix. By the Birkhoff-von Neumann theorem, $M\omega$ is a convex combination of permutation matrices, so there exist positive numbers $\gamma_{\pi}$ such that $\omega= \sum_{\pi} \gamma_{\pi} A(\pi)$ and $\sum_{\pi} \gamma_{\pi} 
= {1}/{K}$, where the sums are taken over all the permutations of $[K]$. Thus
\begin{align}
    \|W_{\bfQ_1} - W_{\bfQ_2}\circ(\tau_1
    \otimes\tau_2)\|^2_{\mathbb L_2}& 
    = \sum_{k,k' \in [K]} \sum_{\ell,\ell' 
    \in [L]} \omega_{k,k'} \omega'_{\ell,\ell'} | Q_{1,k,\ell} - Q_{2,k',\ell'} |^2 \\
    & =  \sum_{\pi, \nu} \sum_{k,k' \in [K]} 
    \sum_{\ell,\ell' \in [L]} \gamma_{\pi}
    \gamma_{\nu} A(\pi)_{k,k'} A(\nu)_{\ell,\ell'}|
    Q_{1,k,\ell} - Q_{2,k',\ell'} |^2 \\
    &= \sum_{\pi, \nu} \sum_{k \in [K]} \sum_{\ell 
    \in [L]} \gamma_{\pi}\gamma_{\nu} | Q_{1,k,\ell} - Q_{2,\pi(k),\nu(\ell)} |^2 \\
    &= \sum_{\pi, \nu}\gamma_{\pi}\gamma_{\nu}  \underbrace{\| \bfQ_1 - \bfQ_2^{\pi,\nu} \|_{\sf F}^2}_{\geqslant \rho^2\varepsilon^2KL/{8}} 
    \geqslant \frac{\rho^2\varepsilon^2}{8}
\end{align}
and the claim of the lemma follows.

\paragraph*{Second claim}
Let $\bzeta=(\zeta_1, \dots, \zeta_n)$ be a vector of $n$ i.i.d random variables uniformly distributed on $[K]$. We also denote by $\bchi=(\chi_1, \dots, \chi_m)$ a vector of $m$ i.i.d random variables uniformly distributed on $[L]$. Let $\bTheta_1 \in [0,1]^{n 
\times m}$ with entries $(\Theta_1)_{ij}=
(\bfQ_1)_{\zeta_i, \chi_j}$. Assume that $\bfH_1$ is, conditionally on $\bzeta$ and $\bchi$, a matrix sampled according to the binomial model with parameter $N$ and the probability matrix $\bTheta_1$. Notice that $\bfH_1$ has distribution $\bfP_{W_{\bfQ_1}}$. We introduce $\alpha_\bfa= \bfP(\bzeta=\bfa)$, $\beta_\bfb=\bfP(\bchi=\bfb)$ and $p_{\bfH \bfa \bfb}^{(1)} = \bfP(\bfH_1=\bf H | 
\bzeta = \bfa, \bxi = \bfb)$ for any $\bfa \in [K]^n, 
\bfb \in [L]^m$ and $\bfH \in \{0,1\}^{n \times m}$. 
We have similar notation $p_{\bfH \bfa \bfb}^{(2)}$ replacing the indices above. Then
\begin{align}
    \KL(\bfP_{W_{\bfQ_1}},\bfP_{W_{\bfQ_2}}) &= \sum_{\bfH} \sum_\bfa \sum_\bfb \alpha_\bfa \beta_\bfb p_{\bfH \bfa \bfb}^{(1)} \log \bigg(\frac{\sum_\bfa \sum_\bfb \alpha_\bfa \beta_\bfb p_{\bfH \bfa \bfb}^{(1)}}{\sum_\bfa \sum_\bfb \alpha_\bfa \beta_\bfb p_{\bfH \bfa \bfb}^{(2)}} \bigg) \\
    \text{(Jensen)} \qquad &\leqslant \sum_\bfa \sum_\bfb \alpha_\bfa \beta_\bfb \sum_\bfH p_{\bfH \bfa \bfb}^{(1)} \log \bigg( \frac{p_{\bfH \bfa \bfb}^{(1)}}{p_{\bfH \bfa \bfb}^{(2)}} \bigg) \, .
\end{align}
When $\bfa$ and $\bfb$ are fixed, the sum over $\bfH$
is the Kullback-Leibler divergence between two $nm$-product of Binomial measures, each of which has as parameter either $(N,p)$ or $(N,q)$ with $ p := \rho(1+\varepsilon)/2$ and $q:=\rho(1-\varepsilon)/2$. This gives
\begin{align}
    \KL(\bfP_{W_{\bfQ_1}}||\,\bfP_{W_{\bfQ_2}}) 
    \leqslant Nnm \kappa(p,q)
\end{align}
where $\kappa(p,q)$ is the Kullback-Leibler divergence between two Bernoulli measures with parameter $p$ and $q$ respectively. We have $\kappa(p,q) \leqslant (p-q)^2(p^{-1} +q^{-1}) = 4\rho\varepsilon^2/(1-\varepsilon^2) \leqslant 16\rho\varepsilon^2/3$.
This completes the proof of the lemma. 
\end{proof}

\subsection{Proof of \eqref{minimax3}} 

Fix some $\varepsilon \in (0,1/4)$, and let $W_1 \equiv \rho/2$ be a constant graphon. We define also $W_2(u,v)= \rho(1/2 + \varepsilon)$ if $(u,v) \in [0,1/2)^2 \cup [1/2,1]^2$ and $W_2(u,v)= \rho(1/2 - \varepsilon)$ elsewhere. We get
\begin{align}
    \delta(W_1, W_2)= \rho\varepsilon\,.
\end{align}
Thus we have
\begin{align}
    \inf_{\widehat W} \max_{W^* \in \{ W_1,  W_2 \}} \Ex_{W^*} \big[ \delta^2(\widehat W , W^*) \big] &\geqslant \frac{1}{2} \bigg( \int \delta^2(
    \widehat W, W_1) \, d \bfP_{W_1} + \int \delta^2(\widehat W, W_2) \, d \bfP_{W_2} \bigg) \\
    & \geqslant \frac{1}{2}  \int \delta^2(\widehat W, W_1) + \delta^2(\widehat W, W_2)\, \min(d \bfP_{W_1}, d \bfP_{W_2}) \\
    & \geqslant \frac{\delta^2(W_2, W_1)}{4} \int \min(d \bfP_{W_1}, d \bfP_{W_2}) \\
    & \geqslant \frac{\rho^2 \varepsilon^2}{8} \exp
    (-\chi^2(\bfP_{W_1}||\,\bfP_{W_2})),
\end{align}
where $\chi^2(\bfP_{W_1}||\,\bfP_{W_2})$ stands for 
the chi-square divergence between $\bfP_{W_1}$ and $\bfP_{W_1}$. In the last inequality, we used (2.24) and (2.26) from \citep{Nonparametric}. Finally, the next lemma gives an upper-bound on the chi-square divergence, which allows us to complete the proof, taking $\varepsilon^2={c_0}/{12 \rho N\sqrt{nm}}$.

\begin{lemma}
    If $14 \sqrt{nm} N \rho \varepsilon^2\leqslant 1$, 
    then $\chi^2(\bfP_{W_1}||\,\bfP_{W_2}) \leqslant 
    1/4$.
\end{lemma}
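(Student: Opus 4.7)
The plan is to compute the chi-square divergence by exploiting the mixture structure of $\bfP_{W_2}$ via the Ingster-type expansion, in the direction where the simple product measure $\bfP_{W_1}$ sits in the denominator. Encode the two-block structure by Rademacher variables $\xi_i = 2\mathds 1(U_i \geq 1/2)-1$ and $\eta_j = 2\mathds 1(V_j\geq 1/2)-1$, so that $W_2(U_i,V_j) = p_0 + \rho\varepsilon\xi_i\eta_j$ with $p_0=\rho/2$. Under $\bfP_{W_1}$ the entries $H_{ij}$ are iid $\text{Bin}(N,p_0)/N$, while $\bfP_{W_2}$ is a mixture over $(\xi,\eta)$.

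First, introducing an independent copy $(\xi',\eta')$ and swapping the sum over $\bfH$ with the product, one obtains
\begin{align}
\chi^2(\bfP_{W_2}\|\bfP_{W_1}) + 1 = \Ex_{\xi,\eta,\xi',\eta'}\bigg[\prod_{i,j}\Big(1 + \alpha\,\xi_i\eta_j\xi'_i\eta'_j\Big)^N\bigg],\qquad \alpha=\tfrac{\rho^2\varepsilon^2}{p_0(1-p_0)},
\end{align}
where the factor $(1+\alpha\xi_i\eta_j\xi'_i\eta'_j)^N$ comes from the closed-form binomial sum $\sum_k \binom{N}{k}(p_+p_+')^k((1-p_+)(1-p_+'))^{N-k}/(p_0^k(1-p_0)^{N-k}) = (1+(p-p_0)(p'-p_0)/[p_0(1-p_0)])^N$. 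Setting $s_i=\xi_i\xi'_i$ and $t_j=\eta_j\eta'_j$, which are again independent Rademacher families, and using $(1+x)^N\leqslant e^{Nx}$, the right-hand side is bounded by $\Ex_{s,t}[\exp(N\alpha ZW)]$ with $Z=\sum_i s_i$, $W=\sum_j t_j$ independent centered Rademacher sums.

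Next, conditioning on $W$ and using $\Ex_s[e^{tZ}]=\cosh(t)^n\leqslant e^{nt^2/2}$ gives $\Ex[\exp(N\alpha ZW)] \leqslant \Ex_W[\exp(n(N\alpha)^2 W^2/2)]$. Applying the Gaussianization identity $e^{rW^2}=\Ex_{Y\sim\calN(0,1)}[e^{\sqrt{2r}WY}]$ and $\cosh(x)\leqslant e^{x^2/2}$ yields $\Ex_W[e^{rW^2}]\leqslant (1-2mr)^{-1/2}$ provided $2mr<1$. With $r=n(N\alpha)^2/2$, this produces
\begin{align}
\chi^2(\bfP_{W_2}\|\bfP_{W_1}) \leqslant (1-mn(N\alpha)^2)^{-1/2}-1.
\end{align}
Since the binomial model forces $\rho\leqslant 1$, we have $p_0(1-p_0)\geqslant \rho/4$, so $N\alpha\leqslant 4N\rho\varepsilon^2$ and $mn(N\alpha)^2\leqslant 16mnN^2\rho^2\varepsilon^4$. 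The hypothesis $14\sqrt{nm}N\rho\varepsilon^2\leqslant 1$ gives $mnN^2\rho^2\varepsilon^4\leqslant 1/196$, hence $mn(N\alpha)^2\leqslant 16/196$, and a numerical check gives $(1-16/196)^{-1/2}-1 < 1/4$.

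The statement is phrased with $\chi^2(\bfP_{W_1}\|\bfP_{W_2})$ (mixture in the denominator), which is not directly amenable to the Ingster expansion; a naive convexity argument $\chi^2(\bfP_{W_1}\|\bfP_{W_2})\leqslant \Ex_Z[\chi^2(\bfP_{W_1}\|\bfP_{W_2}(\cdot|Z))]$ loses the square-root rate by eliminating the mixture averaging, giving an exponent of order $nmN\rho\varepsilon^2$ rather than $\sqrt{nm}N\rho\varepsilon^2$. The correct way to conclude is to observe that the parent proof invokes the chi-square bound only through the symmetric quantity $\int\min(d\bfP_{W_1},d\bfP_{W_2})$, and the Tsybakov-style inequality $\int\min(dP,dQ)\geqslant \tfrac{1}{2}\exp(-D(P,Q))$ is valid with either argument order; therefore the above bound on $\chi^2(\bfP_{W_2}\|\bfP_{W_1})$ suffices to conclude \eqref{minimax3}. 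The main obstacle is precisely this point: reconciling the lemma's stated direction with the Ingster expansion, and carrying out the two-layer (Rademacher conditioning plus Gaussianization) concentration argument that extracts the $\sqrt{nm}$ scaling from the product of two independent Rademacher sums.
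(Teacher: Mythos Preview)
Your proof is correct and follows essentially the same Ingster-square strategy as the paper: expand $\Ex_{W_1}[L(\bfH)^2]$ as an expectation over two independent copies of the latent labels, obtain the closed-form product $\prod_{i,j}(1+\alpha s_it_j)^N$, and then control the exponential moment of the product $ZW$ of two independent Rademacher sums. Your execution differs only in technical details: applying $(1+x)^N\leqslant e^{Nx}$ cell-by-cell is cleaner than the paper's route via $C\leqslant 1/A$ followed by $A\leqslant e^{4\rho\varepsilon^2}$ (the latter step being delicate when the exponent $2T-nm$ is negative), and your Gaussianization of $\Ex_W[e^{rW^2}]$ is a tidy substitute for the paper's Hoeffding-plus-sub-exponential argument. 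You also correctly observe that what is actually computed---both here and in the paper---is $\chi^2(\bfP_{W_2}\|\bfP_{W_1})$ (mixture in the numerator), and that this is the direction required for the Tsybakov bound on $\int\min(d\bfP_{W_1},d\bfP_{W_2})$ used upstream; the paper's own proof makes the same identification despite the notation in the lemma statement.
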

\begin{proof}
Let $L(\bfH)$ be the Radon-Nikodym derivative of
$\bfP_{W_2}$ with respect to $\bfP_{W_1}$. We have 
that $\chi^2(\bfP_{W_1}||\,\bfP_{W_2})=\Ex_{W_1}
[L(\bfH)^2]-1$, so it remains to prove that 
$\Ex_{W_1}[L(\bfH)^2] \leqslant 5/4$. In the sequel,
$\Ex[\,\cdot\,]$ will refer to $\Ex_{W_1}[\,\cdot\,]$.
We also introduce $p_0=\rho/2$, $p_1= \rho(1/2 + 
\varepsilon)$ and $p_2= \rho(1/2 - \varepsilon)$. 
To avoid too long formulae, we will use notation
$\bar p_j = 1 - p_j$ for $j=0,1,2$. 

As the graphon $W_2 \in \calW_\rho[2,2]$, we can 
assume that $\{U_i\}$ are i.i.d.\ Bernouilli random variables with parameter 1/2, and similarly for $\{V_j\}$. Given $\{U_i\}$ and $\{V_j\}$, define the set $S= \{ (a,b) : U_a=V_b \}$. For $(i,j) \in S$ (resp. $S^c$), $H_{i j}$ has the binomial distribution of parameters $(N,p_1)$ (resp. $(N,p_2)$). Let $\mu$ be the distribution of $S$, then we have 
\begin{align}
    L(\bfH) = \int L_S(\bfH) \, d\mu(S)
\end{align}
with
\begin{align}
    L_S(\bfH) =\bigg( \frac{\bar p_1}{\bar p_0} \bigg)^{N|S|}  \bigg( \frac{\bar p_2}{\bar p_0} \bigg)^{Nnm-N|S|} \prod_{(a,b) \in S} \bigg( \frac{p_1\bar p_0}{p_0\bar p_1} \bigg)^{H_{a,b}}  \prod_{(a,b) \in S^c} \bigg( \frac{p_2\bar p_0}{p_0\bar p_2} \bigg)^{H_{a,b}} \, .
\end{align}
By Fubini theorem, we can write $\dis \Ex[L(\bfH)^2]= \int \Ex[L_{S_1}(\bfH)L_{S_2}(\bfH)] \, d \mu(S_1)d \mu(S_2)$ with
\begin{align}
    \Ex[L_{S_1}(\bfH)L_{S_2}(\bfH)] = & \Big( \frac{\bar p_1}{\bar p_0} \Big)^{(|S_1|+|S_2|)N} \Big( \frac{\bar p_2}{\bar p_0} \Big)^{(2nm-|S_1|-|S_2|)N} \Ex \Bigg[ \prod_{(a,b) \in S_1 \cap S_2} \Big( \frac{p_1\bar p_0}{p_0\bar p_1} \Big)^{2H_{a,b}} \\
    & \times\prod_{(a,b) \in S_1^c \cap S_2^c} \bigg( \frac{p_2\bar p_0}{p_0\bar p_2} \bigg)^{2 H_{a,b}} \times \prod_{(a,b) \in S_1 \Delta S_2} \bigg( \frac{p_1p_2\bar p_0^2}{p_0^2\bar p_1\bar p_2} \bigg)^{H_{a,b}}    \Bigg] .
\end{align}    
We write
\begin{align}
    \Big( \frac{\bar p_1}{\bar p_0} \Big)^{N|S_1|+N|S_2|}
    &= \Big( \frac{\bar p_1}{\bar p_0} \Big)^{2N|S_1\cap S_2|+N|S_1\Delta S_2|}\\
    \Big( \frac{\bar p_2}{\bar p_0} \Big)^{2Nnm-N|S_1|-N|S_2|} &= \Big( \frac{\bar p_2}{\bar p_0} \Big)^{2N|S_1^c\cap S_2^c| + N|S_1\Delta S_2|}. 
\end{align}
Furthermore, since the expectation is taken with respect to $\bfP_{W_1}$ and  variables $H_{ij}$ are independent conditionally on $\{U_i\}, \{V_j\}$, we have

\begin{align}
    \Ex \bigg[ \prod_{(a,b) \in S} \Big( \frac{q_1q_2\bar p_0^2}{p_0^2\bar q_1\bar q_2} \Big)^{H_{a,b}}\bigg]
    & = \Big\{\Big( \frac{q_1q_2\bar p_0^2}{p_0^2
    \bar q_1\bar q_2} \Big)p_0 + \bar p_0\Big\}^{N|S|} \\
    &= \Big\{\frac{\bar p_0^2}{\bar q_1\bar q_2}
    \Big(  1+ \frac{(q_1 - p_0)(q_2 - p_0))}{p_0\bar p_0}\Big)\Big\}^{N|S|}
\end{align}
for any subset $S$ of $[n]\times [m]$ and for any 
$q_1,q_2\in[0,1]$. Applying this formula to 
the triplets $(S_1\cap S_2, p_1,p_1)$, $(S_1^c\cap 
S_2^c, p_2,p_2)$ and $S_1\Delta S_2, p_1,p_2$, we get
\begin{align}
    \Ex \bigg[ \prod_{(a,b) \in S_1 \cap S_2} \Big( \frac{p_1\bar p_0}{p_0\bar p_1} \Big)^{2H_{a,b}}\bigg]
    &= \Big\{\frac{\bar p_0^2}{\bar p_1^2}\Big(  1+ \frac{(p_1 - p_0)^2}{p_0\bar p_0}\Big)\Big\}^{N|S_1\cap S_2|},\\
    \Ex \bigg[ \prod_{(a,b) \in S_1^c \cap S_2^c} \Big( \frac{p_2\bar p_0}{p_0\bar p_2} \Big)^{2H_{a,b}}\bigg]
    &= \Big\{\frac{\bar p_0^2}{\bar p_2^2}\Big(  1+ \frac{(p_2 - p_0)^2}{p_0\bar p_0}\Big)\Big\}^{N|S_1^c
    \cap S_2^c|},\\
    \Ex \bigg[ \prod_{(a,b) \in S_1 \Delta S_2} \Big( \frac{p_1p_2\bar p_0^2}{p_0^2\bar p_1\bar p_2} \Big)^{H_{a,b}}\bigg]
    &= \Big\{\frac{\bar p_0^2}{\bar p_1\bar p_2}
    \Big(  1+ \frac{(p_1 - p_0)(p_2 - p_0))}{p_0\bar p_0}\Big)\Big\}^{N|S_1\Delta S_2|}.
\end{align}
Combining these results, we arrive at the expression
\begin{align}
    \Ex[L_{S_1}(\bfH)L_{S_2}(\bfH)] = & A^{N|S_1 \cap S_2|} B^{N|S_1^c \cap S_2^c|} C^{N|S_1 \Delta S_2|} 
\end{align}
with 
\begin{align}
    A &= 1+ \frac{(p_1-p_0)^2}{p_0(1-p_0)} = 1 +
    \frac{\rho^2 \varepsilon^2}{p_0(1-p_0)}\leqslant 
    1 + 4\rho \varepsilon^2\\
    B &= 1+ \frac{(p_2-p_0)^2}{p_0(1-p_0)} = 1 +
    \frac{\rho^2 \varepsilon^2}{p_0(1-p_0)}= A\\
    C &= 1+ \frac{(p_1 -  p_0)(p_2 - p_0)}{p_0(1-p_0)} 
    = 1 - \frac{\rho^2 \varepsilon^2}{p_0(1-p_0)}
    \leqslant \frac1A.
\end{align}
This leads to
\begin{align}
    \Ex[L_{S_1}(\bfH)L_{S_2}(\bfH)] 
    &= A^{N|S_1 \cap S_2|+N|S_1^c \cap S_2^c|} C^{N|S_1 \Delta S_2|} 
    \leqslant A^{N|S_1 \cap S_2|+N|S_1^c \cap S_2^c| - N|S_1 \Delta S_2| } \\
    & = A^{ 2N|S_1 \cap S_2| + 2N|S_1^c \cap S_2^c| - Nnm}\\
    & \leqslant  \exp \Big[ \Big( 2|S_1 \cap S_2| + 
    2 |S_1^c \cap S_2^c| - nm  \Big) 4 N \rho \varepsilon^2 \Big] \, .
\end{align}
Thus, it remains to bound an exponential moment of $T =  |S_1 \cap S_2| + |S_1^c \cap S_2^c| $ where $S_1$ and $S_2$ are independent and distributed according to $\mu$. We denote by $(\{U_i\},\{V_j\})$ and $(\{U_i'\},\{V_j'\})$ 
the independent collections of variables that generate 
$S_1$ and $S_2$ respectively. 
This means that $S_1 = \{(i,j)\in[n]\times[m] : U_i = V_j\}$ and $S_2 = \{(i,j)\in[n]\times[m] : U'_i = V'_j\}$. 

Let us also introduce the random variables 
\begin{align}
    X = \sum_{i=1}^n U_iU_i' + (1-U_i)(1-U_i'),\quad
    Y = \sum_{j=1}^m V_jV_j' + (1-V_j)(1-V_j').
\end{align}
One can check that $X$ and Y are independent, drawn 
respectively from the binomial distributions $\mathcal
B(n,1/2)$ and $\mathcal B(m,1.2)$, and that 
\begin{align}
    T & = XY + (n - X)(m-Y) = 2( X - n/2)(Y - m/2) + nm/2.
\end{align}
The Hoeffding inequality \citep[Eq. 4.16]{Hoeffding} implies that
\begin{align}
    \Ex[e^{\lambda (X - n/2)(Y - m/2) }] & \leqslant 
    \Ex[e^{m\lambda^2( X- n/2)^2/8}].
\end{align}
Furthermore, the fact that $X-n/2$ is $n/8$-sub-Gaussian
(once again, according to the Hoeffding inequality)
entails that  $(X-n/2)^2 - n/4$ is sub-exponential 
with the parameter $2n$ \cite[Lemma 1.12]{rigollet2015high}. Hence,
\begin{align}
    \Ex[e^{m\lambda^2( X- n/2)^2/8}] &\leqslant 
    e^{nm\lambda^2/32}e^{n^2m^2\lambda^4/32}
    \leqslant e^{5nm\lambda^2/32}
\end{align}
provided that $\lambda \leqslant 2/\sqrt{nm}$. 
Combining all these results, we arrive at
\begin{align}
    \Ex[L(\bfH)^2] =
    \Ex[e^{(2T - nm) 4N\rho\varepsilon^2}] = 
    \Ex[e^{(X-n/2)(Y-m/2)16N\rho \varepsilon^2}]\\
    &\leqslant e^{40 nmN^2\rho^2\varepsilon^4}
    \leqslant 5/4
\end{align}
provided that $14 \sqrt{nm} N \rho \varepsilon^2 
\leqslant 14\sqrt{(1/40)\log(5/4)}\approx 1.04$. 
This completes that proof of the lemma.
\end{proof}
Since all three lower bounds have been proved,  
the proof of \Cref{lower_bound} is complete.

\section{Auxiliary results}\label{Auxiliary}

\begin{lemma}\label{exp_bound}
Let $X$ be a random variable and $a\in\mathbb R$, $b,c,d
\geqslant 0$ be some constants. If 
\begin{align}
    \prob(X \geqslant a + bt + ct^2) & \leqslant de^{-t}
    \qquad \text{for all $t\geqslant0$},
\end{align}
then $\Ex[X] \leqslant a + bd + 2cd$.
\end{lemma}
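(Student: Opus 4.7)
The plan is to express $\Ex[X]$ as an integral of its survival function and then change variables to import the hypothesis. The starting point is the elementary identity that for any random variable $Y$, one has $\Ex[Y_+] = \int_0^\infty \prob(Y \geqslant s)\,ds$, obtained by Fubini's theorem applied to $\indic\{Y \geqslant s\}$.

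First I would write $\Ex[X] = a + \Ex[X - a] \leqslant a + \Ex[(X - a)_+]$, using only $Y \leqslant Y_+$ pointwise (this step is needed because nothing in the assumption controls the lower tail of $X$). The Fubini identity then gives
\begin{align}
    \Ex[(X - a)_+] = \int_0^\infty \prob(X - a \geqslant s)\,ds.
\end{align}
Next I would perform the substitution $s = bt + ct^2$, so that $ds = (b + 2ct)\,dt$. When $b + c > 0$, the map $\varphi(t) = bt + ct^2$ is a strictly increasing bijection from $[0, \infty)$ onto $[0, \infty)$, so the change of variables is valid and, combined with the assumption $\prob(X \geqslant a + bt + ct^2) \leqslant d\,e^{-t}$, yields
\begin{align}
    \Ex[(X - a)_+] = \int_0^\infty \prob\bigl(X - a \geqslant bt + ct^2\bigr)(b + 2ct)\,dt \leqslant d\int_0^\infty (b + 2ct)\,e^{-t}\,dt = bd + 2cd,
\end{align}
where I used $\int_0^\infty e^{-t}\,dt = 1$ and $\int_0^\infty t\,e^{-t}\,dt = 1$. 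Combining with the first display gives the claim.

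The degenerate case $b = c = 0$ must be treated separately but is trivial: the hypothesis then reads $\prob(X \geqslant a) \leqslant d e^{-t}$ for every $t \geqslant 0$, which forces $\prob(X \geqslant a) = 0$ upon letting $t \to \infty$, so $\Ex[X] \leqslant a = a + bd + 2cd$. There is no real obstacle in this proof; the only subtlety to get right is that the hypothesis controls only the upper tail, so one must reduce to $(X-a)_+$ rather than try to integrate the survival function of $X - a$ directly over the whole real line.
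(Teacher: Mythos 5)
Your proof is correct and takes essentially the same route as the paper's: reduce to the nonnegative variable $(X-a)_+$, write its expectation as the integral of the survival function, and change variables $s = bt + ct^2$ to land on $d\int_0^\infty(b+2ct)e^{-t}\,dt = bd+2cd$. The paper phrases the substitution in the opposite direction (setting $z=t^2+bt$, solving for $t$, then substituting back), which is cosmetically different but the same computation; you are a bit more explicit about the degenerate case $b=c=0$, which the paper dispatches by remarking the $c=0$ case is well-known.
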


\begin{proof}
In the case $c=0$, this inequality is well-known. 
Therefore, we consider only the case $c>0$. Without 
loss of generality, we assume that $a=0$ and $c=1$. 
Indeed, we can always reduce to this case by 
considering the random variable $X' = (X-a)_+/c$ 
with $b' = b/c$. Thus, we know that $\prob(X\geqslant
t^2+bt)\leqslant de^{-t}$ for every $t\geqslant 0$. 
Note that the condition $b\geqslant 0$ entails that 
the mapping $t\mapsto t^2+bt$ defined on $[0,+\infty)$ 
is bijective. Setting $z = t^2+bt$, this implies that
\begin{align}
    \prob\big( X \geqslant z\big)\leqslant d\exp\big\{(b/2)-\sqrt{z+(b/2)^2} \big\},
    \qquad \forall z\ge 0.
\end{align}
This inequality yields
\begin{align}
    \Ex[X] & \leqslant d\int_0^\infty \exp\big\{(b/2)-\sqrt{z+(b/2)^2}\big\}\,dz\\
    & = d\int_0^\infty e^{-t}(2t+b)\,dt\\
    & =  bd + 2d.
\end{align}
This completes the proof. 
\end{proof}

We say that a zero-mean random variable $\zeta$ 
satisfies the $(a,b)$-Bernstein condition, if we have
\begin{align}
    \Ex[e^{\lambda \zeta}] \leqslant \exp\bigg\{\frac{ 
    \lambda^2 a}{2(1-b|\lambda|)}\bigg\} \qquad 
    \text{provided that}\quad |\lambda|\leqslant 1/b.
\end{align}
One can show that if $\xi$ satisfies the $(a,b)$-Bernstein
condition, then the variance of $\xi$ is bounded from 
above by $a$. Indeed, since $x^2\leqslant 2(e^x-1-x - x^3/6)$ 
for every $x\in\mathbb R$, replacing $x$ by $\lambda\xi$
for $\lambda$ small enough and taking the expectation, 
we get
\begin{align}
    \lambda^2\Ex[\xi^2] &\leqslant 2(\Ex[e^{\lambda \xi}]
    - 1 - \lambda\Ex[\xi] -(\lambda^3/6)\Ex[\xi^3])\\
    &\leqslant 2\bigg(\exp\bigg\{\frac{ 
    \lambda^2 a}{2(1-b|\lambda|)}\bigg\} - 1 \bigg) 
    -(\lambda^3/3)\Ex[\xi^3] = \frac{\lambda^2 a}{ 
    1- b|\lambda|} + o(\lambda^2)
\end{align}
as $\lambda\to 0$. Dividing the two sides of the last 
inequality by $\lambda^2$ and letting $\lambda$ go to
zero, we get $\var[\xi] = \Ex[\xi^2]\leqslant a$.

\begin{lemma}[Bernstein inequality]\label{Bernstein:ineq}
    Let $\bxi =(\xi_1,\ldots,\xi_n)$ be a zero-mean
    random vector with independent coordinates and let
    $\balpha\in\mathbb R^n$ be a deterministic
    vector. Assume that for some $a,b>0$, all $\xi_i$'s
    satisfy the $(a,b)$-Bernstein condition. Then, for
    every $\delta\in(0,1)$, we have
    \begin{align}
        \prob\Big(\balpha\!^\top\bxi\leqslant 
        \sqrt{2a\log(1/\delta)}\, \|\balpha\|_2 +  b\|\balpha\|_\infty\log(1/\delta)\Big)
        \geqslant 1-\delta,\\
        \prob\Big(|\balpha\!^\top\bxi|\leqslant 
        \sqrt{2a\log(2/\delta)}\, \|\balpha\|_2 +  b\|\balpha\|_\infty\log(2/\delta)\Big)
        \geqslant 1-\delta.
    \end{align}
\end{lemma}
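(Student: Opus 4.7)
The proof will be a standard Chernoff / exponential-tilt argument calibrated to the $(a,b)$-Bernstein moment generating function bound. My plan has three steps: first, control the MGF of $S := \balpha^\top\bxi$; second, apply exponential Markov with an appropriately tuned tilt to obtain a one-sided tail bound; third, symmetrize to obtain the two-sided inequality.

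For the MGF step, set $v := a\|\balpha\|_2^2$ and $c := b\|\balpha\|_\infty$. For any $\lambda \in [0, 1/c)$, each product $\lambda\alpha_i$ satisfies $|\lambda\alpha_i| \leqslant 1/b$, so the Bernstein condition applies coordinate-wise to $\xi_i$ with tilt $\lambda\alpha_i$, and independence yields
\begin{align}
    \Ex[e^{\lambda S}] = \prod_{i=1}^n \Ex[e^{\lambda\alpha_i\xi_i}] \leqslant \prod_{i=1}^n \exp\!\Big(\tfrac{\lambda^2\alpha_i^2\, a}{2(1-b\lambda|\alpha_i|)}\Big) \leqslant \exp\!\Big(\tfrac{v\lambda^2}{2(1-c\lambda)}\Big),
\end{align}
where I replace each denominator by the uniform lower bound $1-c\lambda$ and sum the numerators via $\sum_i \alpha_i^2 = \|\balpha\|_2^2$.

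For the Chernoff step, fix $x := \log(1/\delta)$ and choose the tilt $\lambda_x := \sqrt{2x/v}/(1 + c\sqrt{2x/v}) \in [0, 1/c)$. A short algebraic verification (the standard Legendre-transform identity for the Bernstein cumulant bound $\psi(\lambda) := v\lambda^2/(2(1-c\lambda))$) yields $\lambda_x(\sqrt{2xv} + cx) - \psi(\lambda_x) = x$. The exponential Markov inequality then gives $\prob(S \geqslant \sqrt{2xv}+cx) \leqslant \exp\!\big({-\lambda_x(\sqrt{2xv}+cx)} + \psi(\lambda_x)\big) = e^{-x} = \delta$, which is the first stated inequality after rewriting $\sqrt{2xv} = \sqrt{2a\log(1/\delta)}\,\|\balpha\|_2$ and $cx = b\|\balpha\|_\infty\log(1/\delta)$.

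For the two-sided bound, the $(a,b)$-Bernstein condition is invariant under $\zeta \mapsto -\zeta$ (it is phrased in terms of $|\lambda|$), so $-S$ satisfies the same MGF bound; the identical one-sided argument applied to $-S$, together with a union bound at level $\delta/2$ on each side, yields the symmetric inequality with $\log(2/\delta)$ replacing $\log(1/\delta)$. The only mildly delicate piece is the choice of tilt $\lambda_x$ in the Chernoff step; once made, the remaining algebra is routine and poses no genuine obstacle.
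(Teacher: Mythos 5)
Your proof is correct and follows essentially the same Chernoff/MGF route as the paper: bound the moment generating function of $\balpha^\top\bxi$ via the coordinate-wise Bernstein condition (replacing each denominator $1-b\lambda|\alpha_i|$ by $1-b\lambda\|\balpha\|_\infty$), apply exponential Markov, and symmetrize with a union bound. The only cosmetic difference is in the final step: the paper first computes the full Legendre transform $\sup_\lambda(\lambda t-\psi(\lambda))=\frac{A}{b^2}h(bt/A)$ with $h(u)=1+u-\sqrt{1+2u}$ and then inverts $h$, whereas you exhibit the (optimal) tilt $\lambda_x$ directly and verify that the exponent equals $x$ at the stated threshold — a slightly more streamlined way to arrive at the same bound.
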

\begin{proof}
    Without loss of generality, we assume that 
    $\|\balpha\|_\infty=1$. The Markov inequality 
    implies that for any $\lambda$ satisfying 
    $|\lambda|\leqslant 1/b$,
    \begin{align}
        \prob(\balpha\!^\top\bxi\geqslant t)\leqslant 
        e^{-\lambda t}\Ex[e^{\lambda\balpha\!^\top\bxi}] 
        = e^{-\lambda t} \prod_{i\in [n]} 
        \Ex[e^{\lambda\alpha_i\xi_i}] \leqslant 
        \exp\Big(-\lambda t + \frac{\lambda^2 a\|\balpha\|_2^2}{2(1-b|\lambda|)}\Big).
    \end{align}
    One can check that, for $h(u) = 1+u - \sqrt{1+2u}$, 
    $u>0$,
    \begin{align}
        \sup_{|\lambda|\leqslant 1/b} \Big(\lambda t - 
        \frac{\lambda^2 A}{2(1-b|\lambda|)}\Big) = 
        \frac{A}{b^2}h\bigg(\frac{bt}{A}\bigg)\geqslant 
        \frac{t^2}{2(A + bt)},\qquad \forall t>0.
    \end{align}
    Hence, we get
    \begin{align}
        \prob(\balpha\!^\top\bxi\geqslant t)\leqslant 
        \exp\bigg\{ - \frac{a\|\balpha\|_2^2}{b^2} h\bigg(\frac{bt}{a\|\balpha\|_2^2}\bigg) 
        \bigg\},\qquad \forall t\in\mathbb R.
    \end{align}
    Thus, if $bt/a\|\balpha\|_2^2 \geqslant h^{-1}
    (b^2z/a\|\balpha\|_2^2)$, we have $ \prob(
    \balpha\!^\top\bxi \geqslant  t)\leqslant e^{-z}$. 
    One can check that $h^{-1}(v) = v + \sqrt{2v}$, 
    which implies that the above condition of $t$ is 
    equivalent to
    \begin{align}
        t\geqslant \frac{a\|\balpha\|_2^2}{b} 
        \bigg(\frac{b^2z}{a\|\balpha\|_2^2} + 
        \frac{b\sqrt{2z}}{\sqrt{a}\|\balpha\|_2}\bigg) = 
        bz + \|\balpha\|_2\sqrt{2az}.
    \end{align}
    Replacing $z$ by $\log(1/\delta)$, we get the first
    inequality of the lemma. 
    
    For the second inequality, we simply remark that 
    the moment generating function of the random 
    variables $-\xi_i$ satisfy the same assumption 
    as the one of $\xi_i$'s. Therefore, we have 
    \begin{align}
        \prob\Big(\balpha\!^\top\bxi\leqslant 
        \sqrt{2a\log(1/\delta)}\, \|\balpha\|_2 +  
        b\log(1/\delta)\Big)\geqslant 1-\delta,\\
        \prob\Big(-\balpha\!^\top\bxi \leqslant 
        \sqrt{2a\log(1/\delta)}\, \|\balpha\|_2 +  
        b\log(1/\delta)\Big)\geqslant 1-\delta.
    \end{align}
    Using the union bound and replacing $\delta$ 
    by $\delta/2$, we get the second claim of the 
    lemma. 
\end{proof}

\begin{lemma}
    Let $B>0$ and $N\in\mathbb N$.
    \vspace{-7pt}
    \begin{enumerate} 
        \item If $\zeta$ is a zero mean random 
        variable such that $\prob(|\zeta|\leqslant 
        B) = 1$, then it satisfies the $(a,b)$-Bernstein 
        condition with $a = \var[\zeta]$ and $b=B/3$.
        
        \item If $\zeta$ is a the average of $N$ 
        independent zero-mean random variables each 
        of which takes values in $[-B,B]$, then it
        satisfies the $(a,b)$-Bernstein condition 
        with $a = \var[\zeta]$ and $b=B/(3N)$.
        
        \item If $\zeta = (\zeta'-\Ex[\zeta'])/N$ 
        with $\zeta'$ drawn from a Poisson distribution
        with intensity $N\theta$, then it satisfies 
        the $(a,b)$-Bernstein condition with $a =
        \var[\zeta]$ and $b=1/(3N)$. 
    \end{enumerate}
\end{lemma}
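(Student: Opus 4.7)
\bigskip

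\noindent\textbf{Proof plan.} The backbone of all three claims is the elementary inequality
\begin{align}
    e^{x} - 1 - x \;\leqslant\; \frac{x^{2}}{2\bigl(1 - |x|/3\bigr)}, \qquad |x|<3,
\end{align}
which follows from the Taylor series of $e^{x}$ together with the bound $k!\geqslant 2\cdot 3^{k-2}$ for all integers $k\geqslant 2$ (check: equality holds for $k=2,3$ and both sides roughly multiply by $(k+1)$ and $3$ respectively at the induction step). So the first thing I would do is establish this inequality cleanly, noting that it is valid for all $x$ with $|x|<3$ (for $x\leqslant 0$ the left-hand side is already bounded by $x^{2}/2$ and the inequality is immediate).

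For claim (1), I would start from the Taylor expansion
\begin{align}
    \Ex[e^{\lambda\zeta}] \;=\; 1 + \lambda\,\Ex[\zeta] + \sum_{k\geqslant 2} \frac{\lambda^{k}\Ex[\zeta^{k}]}{k!},
\end{align}
use $\Ex[\zeta]=0$, and exploit the a.s.\ bound $|\zeta^{k}| \leqslant B^{k-2}\zeta^{2}$ (valid for $k\geqslant 2$ since $|\zeta|\leqslant B$). Factoring out $\Ex[\zeta^{2}]=\var[\zeta]$ and summing the geometric-like tail via $k!\geqslant 2\cdot 3^{k-2}$ yields
\begin{align}
    \Ex[e^{\lambda\zeta}] \;\leqslant\; 1 + \frac{\lambda^{2}\var[\zeta]}{2\bigl(1 - |\lambda|B/3\bigr)},
\end{align}
for $|\lambda|<3/B$, and one finishes with $1+u\leqslant e^{u}$. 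This gives the $(a,b)$-Bernstein condition with $a=\var[\zeta]$ and $b=B/3$.

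For claim (2), I would use independence to write $\Ex[e^{\lambda\zeta}]=\prod_{i=1}^{N}\Ex[e^{(\lambda/N)\xi_{i}}]$ with $\zeta = \tfrac{1}{N}\sum\xi_{i}$, and apply claim (1) to each $\xi_{i}$ with the parameter $\lambda/N$ in place of $\lambda$. This requires $|\lambda/N|\leqslant 3/B$, i.e.\ $|\lambda|\leqslant 1/b$ with $b=B/(3N)$. Summing the exponents and using $\var[\zeta]=N^{-2}\sum_{i}\var[\xi_{i}]$ produces the claimed bound.

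For claim (3), the Poisson moment generating function gives the closed form
\begin{align}
    \Ex[e^{\lambda\zeta}] \;=\; e^{-\lambda\theta}\,\Ex[e^{(\lambda/N)\zeta'}] \;=\; \exp\!\bigl\{N\theta\bigl(e^{\lambda/N} - 1 - \lambda/N\bigr)\bigr\},
\end{align}
so I would just apply the key inequality above with $x=\lambda/N$, valid for $|\lambda|<3N$, to obtain an exponent bounded by $\lambda^{2}(\theta/N)\big/\bigl(2(1-|\lambda|/(3N))\bigr)$. Since $\var[\zeta]=\theta/N$, this is exactly the $(a,b)$-Bernstein condition with $b=1/(3N)$.

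There is no serious obstacle here — the whole lemma is a careful application of the single inequality on $e^{x}-1-x$, combined with the a.s.\ bound $|\zeta|^{k}\leqslant B^{k-2}\zeta^{2}$ (for claim 1), tensorization via independence (for claim 2), and the explicit Poisson MGF (for claim 3). The only point that deserves mild care is checking the range of admissible $\lambda$ at each step so that the factors of the form $1-|\lambda|b$ in the denominators remain positive.
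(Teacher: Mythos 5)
Your proof is correct and takes essentially the route the paper sketches: establish the elementary bound on $e^{x}-1-x$, plug it into the moment generating function, and handle claims (2) and (3) by tensorization across independent summands and the closed-form Poisson MGF, respectively. One point worth flagging: the paper's displayed inequality $e^{z}-z-1\leqslant 3z^{2}/(3-|z|)$ equals $z^{2}/(1-|z|/3)$, which is a factor of $2$ larger than what you prove; your sharper form $z^{2}/\bigl(2(1-|z|/3)\bigr)$ is in fact the one needed to land on $a=\var[\zeta]$ rather than $a=2\var[\zeta]$, and your derivation of it from $k!\geqslant 2\cdot 3^{k-2}$ together with the separate elementary case $x\leqslant 0$ (where $e^{x}-1-x\leqslant x^{2}/2$ already holds) is sound.
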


\begin{proof}
    The proof of these claims is quite standard and based
    on the inequality $e^z - z - 1 \leqslant 3z^2/(3-|z|)$
    provided that $|z| < 3$. 
\end{proof}

\begin{lemma}\label{partition1}
Let $\xi_1, \dots ,\xi_N$ be centered and independent 
random variables satisfying $(\sigma_\xi^2, b_\xi) 
$-Bernstein condition. For every partition $G=\{ A_1,
\dots,A_k\}$ of $[N]$, let us define the projection 
matrix $\Pi_G$ by
\begin{align}
    (\Pi_G\bv)_i = \frac1{|A_j|}\sum_{\ell\in A_j} 
    v_{\ell} \, \quad \text{if}\quad i\in A_j,\quad
    \forall \bv\in\mathbb R^N.
\end{align}
We have that with probability 
at least $1-e^{-t}$ for all $t>0$
\begin{align}
    \forall{\bv}, \ \bv^\top \Pi_G\bxi \leqslant
    \frac12 \|\Pi_G\bxi\|_2\|\bv\|_2 + \sigma_\xi
    \sqrt{2 (t + \log M)}\| \bv\|_2 + b_\xi 
    (t + \log M) \|\bv\|_\infty.
\end{align}
where $\log M \leqslant k\log 12\leqslant 2.5k$. On 
the same event, we have
\begin{align}
    \|\Pi_G\bxi\|_2^2 \leqslant 4  (t + \log M)\big( 
    2\sigma_\xi^2 + b_\xi \| \Pi_G \bxi\|_\infty\big).
\end{align}
\end{lemma}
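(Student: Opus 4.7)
The plan is a standard $\varepsilon$-net argument tailored to the $k$-dimensional subspace $V_G = \Img(\Pi_G)$, combined with the pointwise Bernstein bound (\Cref{Bernstein:ineq}) and a union bound. Since $\Pi_G$ is self-adjoint, $\bv^\top\Pi_G\bxi = \bw^\top\bxi$ with $\bw := \Pi_G\bv \in V_G$; moreover $\|\bw\|_2 \leqslant \|\bv\|_2$ and, because each coordinate of $\Pi_G\bv$ is an average of coordinates of $\bv$, also $\|\bw\|_\infty \leqslant \|\bv\|_\infty$. It therefore suffices to establish the bound with $\bw$ in place of $\bv$ and with $\bw$ ranging over $V_G$. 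By positive homogeneity in $\bw$, we may restrict to the unit sphere $\mathbb{S}_{V_G}$ of $V_G$.

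Next, I would fix a $(1/4)$-net $\mathcal N$ of $\mathbb{S}_{V_G}$ in the $\ell_2$-metric; the usual volumetric bound furnishes such a net with $|\mathcal N|\leqslant(1+2\cdot 4)^{k}=9^{k}\leqslant 12^{k}=:M$, which matches the claim $\log M\leqslant k\log 12\leqslant 2.5k$. For every fixed $\bw'\in\mathcal N$, the linear combination $\bw'^\top\bxi$ is a weighted sum of independent $(\sigma_\xi^2,b_\xi)$-Bernstein variables, so \Cref{Bernstein:ineq} with $\delta=e^{-t}/M$ yields, with probability at least $1-e^{-t}/M$,
\begin{align}
    \bw'^\top\bxi \;\leqslant\; \sigma_\xi\sqrt{2(t+\log M)} + b_\xi(t+\log M)\,\|\bw'\|_\infty.
\end{align}
A union bound over $\mathcal N$ gives, on an event $\Omega_t$ of probability at least $1-e^{-t}$, that the above holds simultaneously for every $\bw'\in\mathcal N$.

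Working on $\Omega_t$, pick an arbitrary $\bw\in\mathbb{S}_{V_G}$ and write $\bw=\bw'+\bu$ with $\bw'\in\mathcal N$ and $\bu\in V_G$, $\|\bu\|_2\leqslant 1/4$. Then $\bw^\top\bxi=\bw'^\top\bxi+\bu^\top\bxi$; since $\bu\in V_G$, Cauchy–Schwarz gives $\bu^\top\bxi=\bu^\top\Pi_G\bxi\leqslant \|\bu\|_2\,\|\Pi_G\bxi\|_2\leqslant \tfrac14\|\Pi_G\bxi\|_2$, which is the source of the first term $\tfrac12\|\Pi_G\bxi\|_2\|\bv\|_2$ after undoing the homogeneity reduction. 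To keep the last term proportional to $\|\bw\|_\infty$ rather than $\|\bw\|_2$, I would use $\|\bw'\|_\infty \leqslant \|\bw\|_\infty + \|\bu\|_2$ and then, to eliminate the residual $\tfrac14 b_\xi(t+\log M)$ piece that does not scale with $\|\bw\|_\infty$, iterate the net approximation geometrically: $\bw=\sum_{i\geqslant 0} 4^{-i}\bw'_i$ with $\bw'_i$ in a rescaled copy of $\mathcal N$, so that the geometric series $\sum 4^{-i}=4/3$ absorbs the residual into a slight adjustment of the constants in front of $\sqrt{t+\log M}$ and $\|\Pi_G\bxi\|_2$ (which are exactly the $\sqrt 2$ and $1/2$ appearing in the target). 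Putting these pieces together yields the first displayed inequality.

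For the second inequality, apply the first with the specific choice $\bv=\Pi_G\bxi$; since $\Pi_G$ is idempotent, $\bv^\top\Pi_G\bxi=\|\Pi_G\bxi\|_2^{2}$ while $\|\bv\|_2=\|\Pi_G\bxi\|_2$ and $\|\bv\|_\infty=\|\Pi_G\bxi\|_\infty$. Rearranging gives $\tfrac12\|\Pi_G\bxi\|_2^{2}\leqslant \sigma_\xi\sqrt{2(t+\log M)}\,\|\Pi_G\bxi\|_2+b_\xi(t+\log M)\|\Pi_G\bxi\|_\infty$, and the Young inequality $\sigma_\xi\sqrt{2(t+\log M)}\,\|\Pi_G\bxi\|_2\leqslant \tfrac14\|\Pi_G\bxi\|_2^{2}+2\sigma_\xi^{2}(t+\log M)$ delivers exactly $\|\Pi_G\bxi\|_2^{2}\leqslant 4(t+\log M)\bigl(2\sigma_\xi^{2}+b_\xi\|\Pi_G\bxi\|_\infty\bigr)$.

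The main technical obstacle is the third step: a naive $\varepsilon$-net argument produces an extra additive contribution $\varepsilon b_\xi(t+\log M)$ from the $\|\bw'\|_\infty$ term that is not homogeneous in $\bw$. The iterative (chaining-like) approximation sketched above is the tool that eliminates this term, trading it for a slightly larger implicit constant in $\log M$ that is absorbed by the bound $\log M\leqslant 2.5k$.
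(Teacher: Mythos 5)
Your overall framework (net of the $k$-dimensional subspace $V_G$, Bernstein plus union bound, then decompose into a net contribution and an $\ell_2$ residual controlled by $\|\Pi_G\bxi\|_2$) and your treatment of the second displayed inequality both match the paper. However, the heart of the lemma is the $\|\bv\|_\infty$ dependence of the $b_\xi$-term, and this is where your argument breaks down. You correctly diagnose the problem: the naive net gives a residual $\tfrac14 b_\xi(t+\log M)$ that is not proportional to $\|\bw\|_\infty$. But the chaining remedy you sketch does not fix it. If you write $\bw=\sum_{i\geqslant 0}4^{-i}\bw_i'$ with each $\bw_i'$ a net point on the unit sphere, then $\|\bw_i'\|_\infty$ for $i\geqslant 1$ is bounded only by $\|\bw_i'\|_2\approx 1$, not by anything proportional to $\|\bw\|_\infty$; summing the geometric weights leaves a term of the form $b_\xi(t+\log M)\big(\|\bw\|_\infty + c\big)$ with a positive universal constant $c$. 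That additive $c\,b_\xi(t+\log M)$ cannot be absorbed into the $\sigma_\xi\sqrt{t+\log M}$ term or into the $\|\Pi_G\bxi\|_2$ term — it has a different dependence on $t$ and on $\bxi$ — and keeping it ruins the downstream use in the proof of the risk bound, where $\|\bw\|_\infty\sim 1/\sqrt{n_0 m_0}$ is precisely what tames the sub-exponential tail.

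The idea you are missing is the one the paper uses to make the net points inherit the small sup-norm of the target vector: instead of applying Bernstein to the net points $\bw_m$, apply it to
\begin{align}
    \bu_m \;=\; \arg\min_{\bu\in V_G}\Big\{\|\bu\|_\infty \,:\, \|\bu-\bw_m\|_2\leqslant \tfrac14\Big\},\qquad m=1,\dots,M.
\end{align}
These $\bu_m$ are deterministic functions of the net, so the union bound applies to them exactly as to the $\bw_m$. The payoff is that if $\|\bw-\bw_m\|_2\leqslant 1/4$ then $\bw$ itself is feasible in the minimization defining $\bu_m$, hence $\|\bu_m\|_\infty\leqslant\|\bw\|_\infty$; meanwhile $\|\bw-\bu_m\|_2\leqslant\|\bw-\bw_m\|_2+\|\bw_m-\bu_m\|_2\leqslant 1/2$, which produces the coefficient $\tfrac12$ (not $\tfrac14$) in front of $\|\Pi_G\bxi\|_2$. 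Writing $\bw^\top\Pi_G\bxi=(\bw-\bu_m)^\top\Pi_G\bxi+\bu_m^\top\Pi_G\bxi$, bounding the first term by Cauchy--Schwarz and the second by the Bernstein event, gives exactly the stated inequality with the clean $\|\bw\|_\infty$ factor and no extraneous additive constant.

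Your treatment of the second inequality (take $\bv=\Pi_G\bxi$ and apply Young's inequality with $\lambda=1/4$) is correct and matches the paper's argument, but it relies on the first inequality holding in the stated form, so the gap above propagates.
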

\begin{proof}
Let $\bw\in\mathbb R^N$ be a vector such that 
$\| \Pi_G \bw \|_2=1$. We have
$\bw^\top \Pi_G \bxi = \sum_{i\in[N]}(\Pi_G\bw)_i\xi_i$
where the terms of the last sum are independent. 
The Bernstein inequality yields
\begin{align}\label{lem5:eq1}
    \prob\big(\bw^\top \Pi_G \bxi \leqslant \sigma_\xi 
    \sqrt{2 t} + b_\xi t \|\Pi_G\bw\|_\infty 
    \big) \geqslant 1-e^{-t},\qquad t\geqslant 0.
\end{align}
Let $V_G$ be the image of the unit ball of $\mathbb R^N$ by 
$\Pi_G$ and let $\mathcal N_G= \{\bw_1,\ldots,\bw_M\}$ be an 
$\varepsilon$-net of $V_G$ for $\varepsilon = 1/4$. The set 
$V_G$ being included in the unit ball of a linear space of 
dimension $k$,  \citep[Lemma 5.13]{Proba} shows that 
$M\leqslant 12^k$. Define
\begin{align}
    \bu_m = \underset{u \in V_G}{\arg\min}\{ \|\bu\|_\infty : \|\bu-\bw_m\|_2\le 1/4\},
    \qquad m=1,\ldots,M.
\end{align}
Using \eqref{lem5:eq1} and the union bound, we get
\begin{align}
    \prob\Big(\forall m\in[M],\ \bu_m^\top \Pi_G \bxi
    \leqslant \sigma_\xi \sqrt{2 (t + \log M)} +
    b_\xi(t + \log M) \|\bu_m\|_\infty \Big)
    \geqslant 1-e^{-t},\qquad t\geqslant 0,
\end{align}
where we used the fact that $\Pi_G\bu_m =\bu_m$. 
Let $\bw$ be an arbitrary vector from $V_G$. Let $\bw_m$ 
be any point from the net $\mathcal N_G$ such that 
$\|\bw - \bw_m\|_2\le 1/4$. We have
\begin{align}
    \|\bu_m\|_\infty \le \|\bw\|_\infty\qquad \text{and}
    \qquad  \|\bw - \bu_m\|_2\le 1/2.
\end{align}
This implies that, with probability at least 
$1-e^{-t}$, for any vector $\bw\in V_G$,
\begin{align}
    \bw^\top \Pi_G\bxi &= (\bw - \bu_m)^\top \Pi_G \bxi
    + \bu_m^\top \Pi_G\bxi\\
    & \leqslant \|\bw - \bu_m\|_2 \|\Pi_G\bxi\|_2 
    +\bu_m^\top \Pi_G\bxi\\
    &\leqslant (\nicefrac12) \|\Pi_G\bxi\|_2 + 
    \sigma_\xi \sqrt{2 (t + \log M)} + b_\xi
    (t + \log M) \|\bu_m\|_\infty\\
    &\leqslant (\nicefrac12) \|\Pi_G\bxi\|_2 + 
    \sigma_\xi \sqrt{2 (t + \log M)} + b_\xi(t + 
    \log M) \|\bw\|_\infty.
\end{align}
Since this inequality is valid for any vector $\bw$ 
in the image of the unit ball ball by $\Pi_G$, it is
also valid for $\bw = \bv/\|\Pi_G\bv\|_2$. 
Replacing this into the last display, then multiplying
the two sides of the inequality by $\|\Pi_G\bv\|_2$, 
we get the first claim of the lemma. 

For the second claim, we take $v= \bxi$ to get
\begin{align}
    \|\Pi_G\bxi\|_2^2 &\leqslant 2\sigma_\xi\sqrt{2 
    (t + \log M)}\|\Pi_G\bxi\|_2 + 2b_\xi (t + \log M)
    \|\Pi_G\bxi\|_\infty\\
    &\leqslant (\nicefrac12)\|\Pi_G\bxi\|_2^2 + 4 
    \sigma_\xi^2 (t + \log M) + 2b_\xi (t + \log M)
    \|\Pi_G\bxi\|_\infty,
\end{align}
where we used the inequality $2uv \leqslant 
{(\nicefrac12)u^2 + 2v^2}$ for all $u,v \in\mathbb R$. 
Rearranging the terms of the last display, we obtain 
the claim of the lemma.
\end{proof}

\begin{lemma}\label{partition2}
Let $\xi_1,\dots,\xi_N$ be independent zero-mean
random variables satisfying the $(\sigma_\xi^2,b_\xi)
$-Bernstein condition with $\sigma^2_\xi,b_\xi > 0$. 
Let $\{\calA_j: j \in J\}$  be families of subsets 
of $\{1,\ldots, N\}$ such that $N_j = \min_{A\in 
\calA_j} |A| \geqslant N_0$. With probability 
at least $ 1- e^{-t}$, for all $j\in J$, we have 
\begin{align}
    \max_{A\in\calA_j} \frac1{|A|}\Big| 
    \sum_{\ell\in A} \xi_\ell\Big| \leqslant 
    \sigma_\xi\sqrt{\frac{2(t + \log(2|J|))}{N_0} + 
    \frac{2\log|\calA_j|}{N_j}} + \frac{b_\xi(t + 
    \log(2|J|))}{N_0} +\frac{b_\xi\log |\calA_j|
    }{N_j}.
\end{align}
\end{lemma}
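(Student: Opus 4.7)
The plan is to reduce the lemma to a single application of the Bernstein inequality (Lemma \ref{Bernstein:ineq}) combined with a carefully weighted union bound over the pairs $(j,A)$ with $j\in J$ and $A\in\calA_j$.

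First, I would fix a set $A\subset[N]$ and view the sum $\sum_{\ell\in A}\xi_\ell$ as $\mathbf 1_A^{\!\top}\bxi$, where $\mathbf 1_A$ is the indicator vector of $A$. Since $\|\mathbf 1_A\|_2 = \sqrt{|A|}$ and $\|\mathbf 1_A\|_\infty=1$, the two-sided Bernstein bound of Lemma \ref{Bernstein:ineq} yields, for every $u>0$,
\begin{align}
    \prob\Big(\Big|\sum_{\ell\in A}\xi_\ell\Big| \geqslant
    \sigma_\xi\sqrt{2u|A|} + b_\xi u\Big) \leqslant 2e^{-u}.
\end{align}

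Next I would apply a union bound with a non-uniform choice of the deviation parameter. For each pair $(j,A)$ with $A\in\calA_j$, I take $u_{j,A} = t + \log(2|J|) + \log|\calA_j|$. Then $2e^{-u_{j,A}} = e^{-t}/(|J|\cdot|\calA_j|)$, and summing over $A\in\calA_j$ followed by $j\in J$ gives a total failure probability at most $e^{-t}$. On the complementary event, I divide by $|A|$ to obtain, simultaneously for every $j$ and every $A\in\calA_j$,
\begin{align}
    \frac{1}{|A|}\Big|\sum_{\ell\in A}\xi_\ell\Big| \leqslant
    \sigma_\xi\sqrt{\frac{2u_{j,A}}{|A|}} + \frac{b_\xi u_{j,A}}{|A|}.
\end{align}

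Finally, I would split $u_{j,A} = \bigl(t+\log(2|J|)\bigr) + \log|\calA_j|$ and bound the two pieces separately using the two available lower bounds on $|A|$: the uniform bound $|A|\geqslant N_0$ for the $t+\log(2|J|)$ part, and the family-dependent bound $|A|\geqslant N_j$ for the $\log|\calA_j|$ part. This yields
\begin{align}
    \frac{2u_{j,A}}{|A|} \leqslant \frac{2(t+\log(2|J|))}{N_0} + \frac{2\log|\calA_j|}{N_j}, \qquad
    \frac{b_\xi u_{j,A}}{|A|} \leqslant \frac{b_\xi(t+\log(2|J|))}{N_0} + \frac{b_\xi\log|\calA_j|}{N_j},
\end{align}
which combined with the previous display gives exactly the claimed bound.

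There is no real obstacle here; the only subtle point is choosing $u_{j,A}$ so that the weighted union bound telescopes to $e^{-t}$, and then splitting the deviation term before dividing by $|A|$ so that the refined sample-size $N_j$ (rather than $N_0$) appears in front of $\log|\calA_j|$. This refinement is exactly what makes the bound useful in the proof of Lemma \ref{lem:Xi2}, where the families $\calA_{(s,l)}$ have very different cardinalities and different minimal block sizes.
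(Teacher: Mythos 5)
Your proof is correct and follows essentially the same strategy as the paper: Bernstein's inequality for a fixed $A$, then a union bound weighted so that the total failure probability telescopes to $e^{-t}$. The only presentational difference is that you perform a single weighted union bound over pairs $(j,A)$ and explicitly split $u_{j,A}$ to place $N_0$ under the $t+\log(2|J|)$ piece and $N_j$ under $\log|\calA_j|$, whereas the paper nests two union bounds (first over $\calA_j$, then over $J$) and arrives at a bound with $N_j$ everywhere, from which the stated form follows immediately since $N_j\geqslant N_0$.
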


\begin{proof}
Using the version of \Cref{Bernstein:ineq} of the 
Bernstein inequality, we find
\begin{align}
    \prob\Big(\Big|\sum_{\ell\in A} \xi_\ell\Big|
    \leqslant \sigma_\xi\sqrt{2 |A| t} + b_\xi t  
    \Big)\geqslant 1-2e^{-t},
\end{align}
which yields 
\begin{align}
    \prob\bigg(\frac1{|A|}\Big|\sum_{\ell\in A}
    \xi_\ell\Big| \leqslant \sigma_\xi \sqrt{
    \frac{2t}{|A|}} + \frac{b_\xi t}{|A|} \bigg)
    \geqslant 1-2e^{-t}.
\end{align} 
It follows from the last display and the union 
bound that
\begin{align}
    \prob\bigg(\max_{A\in\calA_j} \frac1{|A|}\Big| 
    \sum_{\ell\in A} \xi_\ell\Big| \leqslant 
    \sigma_\xi\sqrt{\frac{2(t+\log|\calA_j|)}{N_j}} 
    + \frac{b_\xi(t + \log |\calA_j|)}{N_j} \bigg)
    \geqslant 1-2e^{-t}.
\end{align}
Taking the union bound over $j \in J$,  we get
that with probability at least $ 1- e^{-t}$, 
for all $j\in J$, it holds that
\begin{align}
    \max_{A\in\calA_j} \frac1{|A|}\Big| 
    \sum_{\ell\in A} \xi_\ell\Big| \leqslant 
    \sigma_\xi\sqrt{\frac{2(t + \log(2|J|) + \log|
    \calA_j|)}{N_j}} + \frac{b_\xi(t + \log(2|J|) + 
    \log |\calA_j|)}{N_j}.
\end{align}
This completes the proof of the lemma.
\end{proof}

\end{document}